\providecommand{\U}[1]{\protect\rule{.1in}{.1in}}
\theoremstyle{definition}
\newcommand{\red}{\color{red}}
\newtheorem{theo}{Theorem}[section]
\newenvironment{theorem}[1][]
{\begin{theo}[#1]\begin{leftbar}}
{\end{leftbar}\end{theo}}
\newtheorem{lem}[theo]{Lemma}
\newenvironment{lemma}[1][]
{\begin{lem}[#1]\begin{leftbar}}
{\end{leftbar}\end{lem}}
\newtheorem{prop}[theo]{Proposition}
\newenvironment{proposition}[1][]
{\begin{prop}[#1]\begin{leftbar}}
{\end{leftbar}\end{prop}}
\newtheorem{defi}[theo]{Definition}
\newenvironment{definition}[1][]
{\begin{defi}[#1]\begin{leftbar}}
{\end{leftbar}\end{defi}}
\newtheorem{remk}[theo]{Remark}
\newenvironment{remark}[1][]
{\begin{remk}[#1]\begin{leftbar}}
{\end{leftbar}\end{remk}}
\newtheorem{coro}[theo]{Corollary}
\newenvironment{corollary}[1][]
{\begin{coro}[#1]\begin{leftbar}}
{\end{leftbar}\end{coro}}
\newtheorem{conv}[theo]{Convention}
\newenvironment{convention}[1][]
{\begin{conv}[#1]\begin{leftbar}}
{\end{leftbar}\end{conv}}
\newtheorem{quest}[theo]{Question}
\newenvironment{question}[1][]
{\begin{quest}[#1]\begin{leftbar}}
{\end{leftbar}\end{quest}}
\newtheorem{conj}[theo]{Conjecture}
\newenvironment{conjecture}[1][]
{\begin{conj}[#1]\begin{leftbar}}
{\end{leftbar}\end{conj}}
\newtheorem{exam}[theo]{Example}
\newenvironment{example}[1][]
{\begin{exam}[#1]\begin{leftbar}}
{\end{leftbar}\end{exam}}
\newenvironment{verlong}{}{}
\newenvironment{vershort}{}{}
\newenvironment{noncompile}{}{}
\newenvironment{statement}{\begin{quote}}{\end{quote}}
\newcommand{\undf}{\operatorname*{\bot}}
\newcommand{\upslack}{\mathchoice{\rotatebox[origin=c]{180}{$\displaystyle A$}}{\rotatebox[origin=c]{180}{$\textstyle A$}}{\rotatebox[origin=c]{180}{$\scriptstyle A$}}{\rotatebox[origin=c]{180}{$\scriptscriptstyle A$}}}
\newcommand{\downslack}{A}
\newcommand{\bfupslack}{\mathchoice{\rotatebox[origin=c]{180}{$\displaystyle \mathbf{A}$}}{\rotatebox[origin=c]{180}{$\textstyle \mathbf{A}$}}{\rotatebox[origin=c]{180}{$\scriptstyle \mathbf{A}$}}{\rotatebox[origin=c]{180}{$\scriptscriptstyle \mathbf{A}$}}}
\newcommand{\bfdownslack}{\mathbf{A}}
\DeclareMathOperator{\id}{id}
\DeclareMathOperator{\first}{\mathfrak{first}}
\DeclareMathOperator{\rank}{\mathfrak{rank}}
\DeclareMathOperator{\tilt}{\mathfrak{tilt}}
\let\sumnonlimits\sum
\renewcommand{\sum}{\sumnonlimits\limits}
\newcommand{\are}{\ar@{-}}
\begin{document}

\title{\textbf{Birational rowmotion on a rectangle over a noncommutative ring}}
\author{Darij Grinberg\\{\small Department of Mathematics}\\[-0.8ex] {\small Drexel University}\\[-0.8ex] {\small Philadelphia, U.S.A.}\\{\small \texttt{darijgrinberg@gmail.com}}\\\phantom {!}
\and Tom Roby\\{\small Department of Mathematics}\\[-0.8ex] {\small University of Connecticut}\\[-0.8ex] {\small Connecticut, U.S.A.}\\{\small \texttt{tom.roby@uconn.edu}}}
\date{version {4.0} (\today).\\
{\small Mathematics Subject Classifications: 06A07, 05E99}}
\maketitle

\begin{abstract}
\textbf{Abstract.} We extend the periodicity of birational rowmotion for
rectangular posets to the case when the base field is replaced by a
noncommutative ring (under appropriate conditions). This resolves a conjecture
from 2014. The proof uses a novel approach and is fully self-contained.

Consider labelings of a finite poset $P$ by $\left\vert P\right\vert + 2$
elements of a ring $\mathbb{K}$: one label associated with each poset element
and two constant labels for the added top and bottom elements in $\widehat{P}%
$. \textit{Birational rowmotion} is a partial map on such labelings. It was
originally defined by Einstein and Propp~\cite{einstein-propp} for
$\mathbb{K}=\mathbb{R}$ as a lifting (via detropicalization) of
\textit{piecewise-linear rowmotion}, a map on the order polytope
$\mathcal{O}(P) := \{\text{order-preserving } f: P \to[0,1]\}$. The latter, in
turn, extends the well-studied rowmotion map on the set of order ideals (or
more properly, the set of order filters) of $P$, which correspond to the
vertices of $\mathcal{O}(P)$. Dynamical properties of these combinatorial maps
sometimes (but not always) extend to the birational level, while results
proven at the birational level always imply their combinatorial counterparts.
Allowing $\mathbb{K}$ to be noncommutative, we generalize the birational level
even further, and some properties are in fact lost at this step.

In 2014, the authors gave the first proof of periodicity for birational
rowmotion on rectangular posets (when $P$ is a product of two chains) for
$\mathbb{K}$ a field, and conjectured that it survives (in an appropriately
twisted form) in the noncommutative case. In this paper, we prove this
noncommutative periodicity and a concomitant antipodal reciprocity formula. We
end with some conjectures about periodicity for other posets, and the question
of whether our results can be extended to (noncommutative) semirings.

It has been observed by Glick and Grinberg that, in the commutative case,
periodicity of birational rowmotion can be used to derive Zamolodchikov
periodicity in the type $AA$ case, and vice-versa. However, for noncommutative
$\mathbb{K}$, Zamolodchikov periodicity fails even in small examples (no
matter what order the factors are multiplied), while noncommutative birational
rowmotion continues to exhibit periodicity. Thus, our result can be viewed as
a lateral generalization of Zamolodchikov periodicity to the noncommutative setting.

\bigskip\noindent\textbf{Keywords:} rowmotion; posets; noncommutative rings;
semirings; Zamolodchikov periodicity; root systems; promotion; trees; graded
posets; Grassmannian; tropicalization.

\end{abstract}

%

\tableofcontents

\section*{Introduction}



\begin{noncompile}
TODO2: Add some more to the introduction.

Ideally mention:

- The connection to SSYT promotion (\cite[Proposition A.9]{Hopkin20}) and
quiver reps (\cite{GaPaTh18}).



\end{noncompile}

The goal of this paper is to extend the periodicity of birational rowmotion
for rectangular posets to the case when the base field is replaced by a
noncommutative ring (under appropriate conditions). This resolves a conjecture
from 2014. The proof uses a novel approach (even in the commutative case) and
is fully self-contained.

Let $P$ be a finite poset, and let $\widehat{P}$ be the same poset with two
extra elements added: one global minimum and one global maximum. For the time
being, let $\mathbb{K}$ be a field. A \emph{$\mathbb{K}$-labeling of $P$}
means a map from $\widehat{P}$ to $\mathbb{K}$; we view it as a way of
labeling each element of $\widehat{P}$ by an element of $\mathbb{K}$.
Birational rowmotion, as studied conventially, is a rational map $R$ on such
labelings (i.e., a rational map $R:\mathbb{K}^{\widehat{P}}\dashrightarrow
\mathbb{K}^{\widehat{P}}$). It was introduced by Einstein and
Propp~\cite{einstein-propp} for $\mathbb{K}=\mathbb{R}$, generalizing (via the
tropical limit\footnote{See \cite[Section 4.1]{kirillov-intro} for what we
mean by the \textquotedblleft tropical limit\textquotedblright\ here, and
\cite{kirillov-berenstein} for one of the earliest example of
detropicalization (i.e., the generalization of a combinatorial map to a
rational one).}) the well-studied \emph{combinatorial rowmotion} map on order
ideals of $P$~\cite{brouwer-schrijver,striker-williams,propp-roby,row-slow}.

Birational rowmotion can be defined as a composition of \textquotedblleft
toggles\textquotedblright: For each $v\in P$, we define the $v$\emph{-toggle}
as the rational map $T_{v}:\mathbb{K}^{\widehat{P}}\dashrightarrow
\mathbb{K}^{\widehat{P}}$ that modifies a $\mathbb{K}$-labeling $f$ by
changing the label $f\left(  v\right)  $ to\footnote{The notations $\lessdot$
and $\gtrdot$ mean \textquotedblleft covered by\textquotedblright\ and
\textquotedblleft covers\textquotedblright, respectively (see Sections
\ref{sec.linext} and \ref{sec.ncbr} for details).}%
\[
\left(  \sum\limits_{\substack{u\in\widehat{P};\\u\lessdot v}}f\left(
u\right)  \right)  \cdot\left(  f\left(  v\right)  \right)  ^{-1}\cdot\left(
\sum\limits_{\substack{u\in\widehat{P};\\u\gtrdot v}}\left(  f\left(
u\right)  \right)  ^{-1}\right)  ^{-1},
\]
while leaving all the other labels of $f$ unchanged. Now, birational rowmotion
$R$ is the composition of all the $v$-toggles, where $v$ runs over the poset
$P$ from top to bottom. (That is, we pick a linear extension $\left(
v_{1},v_{2},\ldots,v_{n}\right)  $ of $P$, and set $R=T_{v_{1}}\circ T_{v_{2}%
}\circ\cdots\circ T_{v_{n}}$.)

Dynamical properties at the combinatorial level sometimes extend to higher
levels, while results proven at the birational level always imply their
combinatorial counterparts. In particular, while combinatorial rowmotion
always has finite order (since it is an invertible map on a finite set), there
is no reason to expect periodicity at all at the higher levels. Indeed, for
many nice posets, birational rowmotion has infinite order, including for the
Boolean algebra of order 3 (or those in \cite[Fig.~6]{Roby15}), and there are
only a few infinite classes where it appears to have finite order (mostly
posets associated with representation theory, e.g., root or minuscule posets).
In these cases the order of birational rowmotion is generally the \emph{same}
as for combinatorial rowmotion, e.g., $p+q$ for $P=[p]\times[q]$.

In 2014, the authors gave the first proof of periodicity of birational
rowmotion for rectangular posets (i.e., when $P$ is a product of two chains)
and $\mathbb{K}$ a field~\cite{bir-row-arxiv}. The main idea of this proof was
to embed the space of labelings into an appropriate Grassmannian (where in
each ``sufficiently generic'' $\mathbb{K}$-labeling, the labels can be
expressed as ratios of certain minors of a matrix) and use particular
Pl\"ucker relations to derive the result. There were several serious technical
hurdles to overcome.

The definition of birational rowmotion relies entirely on addition,
multiplication and inverses in $\mathbb{K}$. Thus, it is natural to extend it
to the case when $\mathbb{K}$ is a ring (not necessarily commutative), or even
just a semiring. (At this level, birational rowmotion is no longer a rational
map, just a partial map.) However, there is no guarantee that the properties
of birational rowmotion survive at this level for every poset; and indeed,
sometimes they do not (see, e.g., Example~\ref{exa.illbehaved.claw}). However,
in 2014, the authors experimentally observed that the periodicity for
rectangular posets appears to hold even in this noncommutative setting, as
long as it is appropriately modified: After $p+q$ iterations of birational
rowmotion, the labels are not returned to their original states, but rather to
certain ``twisted variants'' thereof (resembling, but not the same as,
conjugates). See Example~\ref{ex.rowmotion.2x2} to get the sense of this.

Strikingly, this noncommutative generalization has resisted all approaches
that have previously succeeded in the commutative case. The determinantal
computations involved in the proof in~\cite{bir-row-arxiv} can be extended to
the noncommutative setting using the \emph{quasideterminants} of Gelfand and
Retakh, but it seems impossible to make a rigorous proof out of it (lacking,
e.g., any useful notation of Zariski topology in this setting, it is not clear
what it means for a $\mathbb{K}$-labeling to be ``generic''). The alternative
proof of commutative periodicity found by Musiker and Roby~\cite{MusRob17}
(via a lattice-path formula for iterates of birational rowmotion) could not be
generalized as well. Thus the noncommutative case remained an open
problem.\footnote{This is not the first time that rational maps in algebraic
combinatorics have been generalized to the noncommutative case; some other
instances are \cite{IyuShk14, BerRet15, Rupel17, GonKon21}. Each time, the
generalizations have been much harder to prove, not least because very little
of the commutative groundwork is (currently?) available at the noncommutative
level. For instance, it is insufficient to work over the ``free skew fields'',
since an identity between rational expressions can be true in all skew fields
yet fail in some noncommutative rings (such as the identity $x \left(
yx\right)  ^{-1} y = 1$). For this reason, while natural from an algebraic
point of view, the noncommutative setting is only recently and slowly getting
explored.}

At some point, Glick and Grinberg noticed that the $Y$-variables in the
type-$AA$ Zamolodchikov periodicity theorem of Volkov~\cite{volkov} could be
written as ratios of labels under iterated birational rowmotion~\cite[\S ~4.4]%
{Roby15}; this allows the periodicity in one setting to be derived from that
in the other (with some work). However, for noncommutative $\mathbb{K}$,
Zamolodchikov periodicity fails even in small examples such as $r = r^{\prime
}= 2$ (no matter what order we multiply the factors), while noncommutative
birational rowmotion continues to exhibit periodicity. This approach is
therefore unavailable in the noncommutative case as well.

In this paper, we prove the periodicity of birational rowmotion and a
concomitant antipodal reciprocity formula over an arbitrary noncommutative
ring. The proof proceeds from first principles, by studying certain values
$\upslack_{\ell}^{v}$ and $\downslack_{\ell}^{v}$ and their products along
paths in the rectangle. At the core of the proof is a \textquotedblleft
conversion lemma\textquotedblright\ (Lemma \ref{lem.rect.conv}), which
provides an identity between a certain sum of $\upslack_{\ell}^{v}$ products
and a certain sum of $\downslack_{\ell}^{v}$ products for the same $\ell$;
this equality does not actually depend on the concept of rowmotion and might
be of interest on its own. Another important step is the reduction of the
reciprocity claim to the labels on the \textquotedblleft lower
boundary\textquotedblright\ of the rectangle (i.e., to the labels at the
elements of the form $\left(  i,1\right)  $ and $\left(  1,j\right)  $). This
reduction requires subtraction, which is why we are only addressing the case
of a ring, not of a semiring; the latter remains open.


A few words are in order about the relation between our birational rowmotion
and a parallel construction. Combinatorial rowmotion seems first to have been
defined not on the set $J\left(  P\right)  $ of order ideals of $P$, but
rather on the set $\mathcal{A}\left(  P\right)  $ of \emph{antichains} of
$P$~\cite{brouwer-schrijver}. The standard bijection between $J(P)$ and
$\mathcal{A}(P)$ (by taking maximal elements of $I\in J(P)$ or saturating down
from an antichain) makes it easy to go between the two maps and to see that
they have the same periodicity. However, some dynamic properties (e.g.,
homomesy) that depend on the sets themselves are not so easily translated.
Just as Einstein and Propp lifted combinatorial rowmotion on $J\left(
P\right)  $ to a birational map and we continued to the noncommutative
context, Joseph and Roby did a parallel lifting on the antichain side: from
antichain rowmotion to piecewise-linear rowmotion on the \emph{chain
polytope}, $\mathcal{C}(P)$, to birational antichain rowmotion, and finally to
noncommutative antichain rowmotion~\cite{JosRob20,JosRob21}. In particular
they lifted ``transfer maps'' (originally defined by Stanley to go between
$\mathcal{O}(P)$ and $\mathcal{C}(P)$~\cite{stanley-polytopes}) from the
piecewise-linear to the birational and noncommutative realms. These serve as
equivariant bijections at each level, thus showing that periodicity at each
level is equivalent for the order-ideal and antichain liftings. But they were
unable to find a new proof of periodicity for the piecewise-linear and higher
levels, relying instead on the periodicity results for birational order-ideal
rowmotion to deduce it for birational antichain rowmotion. They also lifted a
useful invariant, the \emph{Stanley--Thomas word}, which cyclically rotates
with antichain rowmotion at each level. At the combinatorial level, this gives
an equivariant bijection that proves periodicity~\cite[\S ~3.3.2]{propp-roby};
however, it is no longer a bijection at higher levels. Our paper completes the
story in the case of a ring: Via the transfer maps mentioned above, the
periodicity of noncommutative birational order-ideal rowmotion entails the
periodicity of noncommutative birational antichain rowmotion.


The paper is structured in a fairly straightforward way: In the first sections
(Sections \ref{sec.linext} to \ref{sec.ncbr}), we introduce our noncommutative
setup and define birational rowmotion in it. These include technicalities
about partial maps and the definition of noncommutative toggles. In Section
\ref{sec.rect}, we state our main results. In the sections that follow, we
build an arsenal of lemmas to prove these results; the proofs are completed in
Section \ref{sec.gencase}. (The structure of the proof is outlined at the end
of Section \ref{sec.rect}.) In Sections \ref{sec.semiring} and
\ref{sec.otherposets}, we discuss avenues for further work: a possible
generalization to semirings and conjectured periodicity claims for other
posets. In the final Section \ref{sec.Rf11}, we apply our techniques to
arbitrary posets (not just rectangles), obtaining two identities.

A 12-page survey of the results of this paper (with the main steps of the
proof outlined) can be found in the extended abstract \cite{this-fpsac}.

\subsection{Remark on the level of detail}

This paper comes in two versions: a regular one and a more detailed one. The
regular version is optimized for readability, leaving out the more
straightforward parts and technical arguments. The more detailed version has
many of them expanded.

\begin{vershort}
This is the regular version of the paper. The more detailed one can be
obtained by replacing\newline\texttt{%
$\backslash$%
excludecomment\{verlong\}}\newline\texttt{%
$\backslash$%
includecomment\{vershort\}}\newline by\newline\texttt{%
$\backslash$%
includecomment\{verlong\}}\newline\texttt{%
$\backslash$%
excludecomment\{vershort\}}\newline in the preamble of the LaTeX sourcecode
and then compiling to PDF. It is also available as an ancillary file on the
arXiv page of this paper.
\end{vershort}

\begin{verlong}
This is the more detailed version of the paper. The two versions share the
same .tex file, with the only difference that there are two lines in the
preamble of the file which need to be modified in order to switch between the
short and the detailed version. Namely, these lines are \newline\texttt{%
$\backslash$%
excludecomment\{verlong\}}\newline\texttt{%
$\backslash$%
includecomment\{vershort\}}\newline for the short version and \newline\texttt{%
$\backslash$%
includecomment\{verlong\}}\newline\texttt{%
$\backslash$%
excludecomment\{vershort\}}\newline for the detailed one. It is also available
on the arXiv page of this paper.
\end{verlong}

\subsection{Acknowledgments}

We are greatly indebted to the Mathematisches Forschungsinstitut Oberwolfach,
which hosted us for three weeks during Summer 2021. Much of this paper was
conceived during that stay. We thank Gerhard Huisken and Andrea Schillinger in
particular for their flexibility in the scheduling of the visit.

We are also grateful to Banff International Research Station for hosting a
hybrid workshop on dynamical algebraic combinatorics in November 2021 where
these results were first presented.

We further acknowledge our appreciation of Michael Joseph, Tim Campion, Max
Glick, Maxim Kontsevich, Gregg Musiker, Pace Nielsen, James Propp, Pasha
Pylyavskyy, Bruce Sagan, Roland Speicher, David Speyer, Hugh Thomas, and Jurij
Volcic, for useful advice and conversations. We thank two referees for helpful
corrections and advice.

Computations using the SageMath computer algebra system \cite{sage} provided
essential data for us to conjecture some of the results.

\section{\label{sec.linext}Linear extensions of posets}

This section collects a few standard notions concerning posets and their
linear extensions, needed to define the main characters of our paper. Readers
familiar with the subject may wish to skip forward to
Section~\ref{sec.inverses} or Section~\ref{sec.ncbr}. We start by defining
general notations identical with those in \cite{bir-row-arxiv}, to which we
refer the reader for commentary and comparison to other references.

\begin{convention}
We let $\mathbb{N}$ denote the set $\left\{  0,1,2,\ldots\right\}  $.
\end{convention}

\begin{definition}
Let $P$ be a poset, and $u,v\in P$.

\begin{enumerate}
\item[\textbf{(a)}] We will use the symbols $\leq$, $<$, $\geq$ and $>$ to
denote the lesser-or-equal relation, the lesser relation, the greater-or-equal
relation and the greater relation, respectively, of the poset $P$. (Thus, for
example, \textquotedblleft$u<v$\textquotedblright\ means \textquotedblleft$u$
is smaller than $v$ with respect to the partial order on $P$\textquotedblright.)

\item[\textbf{(b)}] The elements $u$ and $v$ of $P$ are said to be
\emph{incomparable} if we have neither $u\leq v$ nor $u\geq v$.

\item[\textbf{(c)}] We write $u\lessdot v$ if we have $u<v$ and there is no
$w\in P$ such that $u<w<v$. One often says that \textquotedblleft$u$ is
covered by $v$\textquotedblright\ to signify that $u\lessdot v$.

\item[\textbf{(d)}] We write $u\gtrdot v$ if we have $u>v$ and there is no
$w\in P$ such that $u>w>v$. (Thus, $u\gtrdot v$ holds if and only if
$v\lessdot u$.) One often says that \textquotedblleft$u$ covers $v$%
\textquotedblright\ to signify that $u\gtrdot v$.

\item[\textbf{(e)}] An element $u$ of $P$ is called \emph{maximal} if every
$w\in P$ satisfying $w\geq u$ satisfies $w=u$. In other words, an element $u$
of $P$ is called \emph{maximal} if there is no $w\in P$ such that $w>u$.

\item[\textbf{(f)}] An element $u$ of $P$ is called \emph{minimal} if every
$w\in P$ satisfying $w\leq u$ satisfies $w=u$. In other words, an element $u$
of $P$ is called \emph{minimal} if there is no $w\in P$ such that $w<u$.
\end{enumerate}

These notations may become ambiguous when an element belongs to several
different posets simultaneously. In such cases, we will disambiguate them by
adding the words \textquotedblleft in $P$\textquotedblright\ (where $P$ is the
poset which we want to use).\footnotemark
\end{definition}

\footnotetext{For instance, if $R$ denotes the poset $\mathbb{Z}$ endowed with
the \textbf{reverse} of its usual order, then we say (for instance) that
\textquotedblleft$0>3$ in $R$\textquotedblright\ rather than just
\textquotedblleft$0>3$\textquotedblright\ (to avoid mistaking our statement
for an absurd claim about the usual order on $\mathbb{Z}$).}

\begin{convention}
From now on, for the rest of the paper, \textbf{we fix a finite poset }$P$.
Most of our results will concern the case when $P$ has a rather specific form
(viz., a rectangular poset, i.e., a Cartesian product of two finite chains),
but we do not assume this straightaway.
\end{convention}

\begin{definition}
\label{def.linext}A \emph{linear extension} of $P$ will mean a list $\left(
v_{1},v_{2},\ldots,v_{m}\right)  $ of the elements of $P$ such that

\begin{itemize}
\item each element of $P$ occurs exactly once in this list, and

\item any $i,j\in\left\{  1,2,\ldots,m\right\}  $ satisfying $v_{i}<v_{j}$ (in
$P$) must satisfy $i<j$ (in $\mathbb{Z}$).
\end{itemize}
\end{definition}

A linear extension of $P$ is also known as a \emph{topological sorting} of $P$.

We will use the following well-known fact:

\begin{theorem}
\label{thm.linext.ex}There exists a linear extension of $P$.
\end{theorem}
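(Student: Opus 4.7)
The plan is to prove the existence of a linear extension by strong induction on the size $\left\vert P\right\vert$. The base case $\left\vert P\right\vert = 0$ is trivial, since the empty list vacuously satisfies the two requirements of Definition~\ref{def.linext}. For the inductive step, assume $\left\vert P\right\vert = m \geq 1$ and that the result holds for all finite posets of smaller cardinality.

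First I would establish an auxiliary fact: every finite nonempty poset admits at least one minimal element. The argument is a straightforward ``descent'' procedure: pick any $w_0 \in P$; if it is minimal, we are done; otherwise choose $w_1 < w_0$, and continue. Since $<$ is irreflexive and transitive, the elements $w_0, w_1, w_2, \ldots$ produced this way are pairwise distinct, so the process must terminate after at most $\left\vert P\right\vert$ steps, yielding a minimal element $v_1 \in P$.

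Having fixed such a $v_1$, I would consider the subposet $P' := P \setminus \{v_1\}$ (inheriting the order from $P$). Since $\left\vert P'\right\vert = m - 1 < m$, the inductive hypothesis supplies a linear extension $\left(v_2, v_3, \ldots, v_m\right)$ of $P'$. I then claim that $\left(v_1, v_2, \ldots, v_m\right)$ is a linear extension of $P$. Each element of $P$ appears exactly once, since $v_1 \notin P'$ and the $v_2, \ldots, v_m$ enumerate $P'$. For the order-compatibility condition, suppose $v_i < v_j$ in $P$ with $i, j \in \{1, 2, \ldots, m\}$. If $i, j \geq 2$, then both $v_i, v_j \in P'$ and the relation $v_i < v_j$ holds in $P'$, so $i < j$ by the inductive hypothesis. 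The case $i = 1$ forces $1 < j$ automatically (since $j \neq 1$ would require $v_1 < v_1$, which is impossible). The case $j = 1$ cannot occur, because $v_1$ is minimal in $P$ and hence has no strictly smaller element.

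There is no real obstacle here; the only subtlety is the minimal-element lemma, which is purely a finiteness argument. I expect the proof to fit in a few lines once that lemma is in hand.
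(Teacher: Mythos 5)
Your proof is correct. The paper states Theorem~\ref{thm.linext.ex} as a well-known fact and gives no proof of its own, so there is nothing to compare against directly; your argument (strip off a minimal element, apply induction to the remaining subposet, and prepend) is the standard one and all three cases of the order-compatibility check are handled properly. The only point worth noting is that your auxiliary lemma on the existence of a minimal element is exactly Proposition~\ref{prop.poset-minmax}~\textbf{(a)}, which the paper (in its detailed version) proves by a counting argument --- taking $n_{p}$ to be the number of elements below $p$ and minimizing it --- rather than by your descent procedure; both arguments are valid finiteness arguments, so this is a matter of taste.
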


\begin{definition}
\label{def.L(P)}The set of all linear extensions of $P$ will be called
$\mathcal{L}\left(  P\right)  $. Thus, $\mathcal{L}\left(  P\right)
\neq\varnothing$ (by Theorem \ref{thm.linext.ex}).
\end{definition}

The reader can easily verify the following proposition:

\begin{proposition}
\label{prop.linext.switch}Let $\left(  v_{1},v_{2},\ldots,v_{m}\right)  $ be a
linear extension of $P$. Let $i\in\left\{  1,2,\ldots,m-1\right\}  $ be such
that the elements $v_{i}$ and $v_{i+1}$ of $P$ are incomparable. Then $\left(
v_{1},v_{2},\ldots,v_{i-1},v_{i+1},v_{i},v_{i+2},v_{i+3},\ldots,v_{m}\right)
$ (this is the tuple obtained from the tuple $\left(  v_{1},v_{2},\ldots
,v_{m}\right)  $ by interchanging the adjacent entries $v_{i}$ and $v_{i+1}$)
is a linear extension of $P$ as well.
\end{proposition}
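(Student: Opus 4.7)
The plan is to verify the two defining conditions of a linear extension (Definition~\ref{def.linext}) directly for the new tuple. Write $\left(v'_1,v'_2,\ldots,v'_m\right)$ for the tuple obtained by the swap, so $v'_k=v_k$ whenever $k\notin\{i,i+1\}$, while $v'_i=v_{i+1}$ and $v'_{i+1}=v_i$.

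The first condition (each element of $P$ occurs exactly once) is immediate, since $\left(v'_1,\ldots,v'_m\right)$ is merely a permutation of $\left(v_1,\ldots,v_m\right)$, and the latter satisfies this condition by assumption.

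For the second condition, I would fix $a,b\in\{1,2,\ldots,m\}$ with $v'_a<v'_b$ in $P$ and deduce $a<b$ by a short case analysis on whether $a$ and $b$ lie in $\{i,i+1\}$. When neither does, we have $v'_a=v_a$ and $v'_b=v_b$, and the conclusion is just the linear-extension condition for $(v_1,\ldots,v_m)$. When exactly one of them lies in $\{i,i+1\}$ (six subcases, symmetric in pairs), the strict inequality $v'_a<v'_b$ transfers to a strict inequality between some $v_s$ and $v_t$ with $\{s,t\}\cap\{i,i+1\}$ nonempty, and the original linear-extension condition gives $s<t$; one then checks that this still forces $a<b$, using only that $i$ and $i+1$ are consecutive integers. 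The crucial case is $\{a,b\}=\{i,i+1\}$: here $v'_a<v'_b$ would become either $v_i<v_{i+1}$ or $v_{i+1}<v_i$ in $P$, and the assumption that $v_i$ and $v_{i+1}$ are incomparable rules both of these out, so the hypothesis $v'_a<v'_b$ is vacuous in this case.

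There is no real obstacle; the only subtlety is the bookkeeping in the mixed cases, and the one place where the incomparability hypothesis actually gets used is the $\{a,b\}=\{i,i+1\}$ case, which is exactly where a non-incomparable swap would fail. Once all cases are checked, the second condition holds for $(v'_1,\ldots,v'_m)$, completing the proof.
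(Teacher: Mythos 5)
Your proposal is correct and is exactly the routine verification the paper has in mind when it says ``the reader can easily verify'' this proposition: check that the swapped tuple is still a permutation of $P$, and check the order-compatibility condition by cases on whether the indices lie in $\{i,i+1\}$, with the incomparability hypothesis used precisely to make the case $\{a,b\}=\{i,i+1\}$ vacuous. All the case checks go through as you describe (the ``mixed'' cases reduce to the original linear-extension condition plus the fact that $i$ and $i+1$ are consecutive), so there is nothing to add.
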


We will also use the following folklore result:\footnote{Particular cases of
Proposition \ref{prop.linext.transitive} have a tendency to appear in various
parts of combinatorics; see \cite[Proposition 1.3]{DefKra21} for a few such
references.}

\begin{proposition}
\label{prop.linext.transitive}Let $\sim$ denote the equivalence relation on
$\mathcal{L}\left(  P\right)  $ generated by the following requirement: For
any linear extension $\left(  v_{1},v_{2},\ldots,v_{m}\right)  $ of $P$ and
any $i\in\left\{  1,2,\ldots,m-1\right\}  $ such that the elements $v_{i}$ and
$v_{i+1}$ of $P$ are incomparable, we set
\[
\left(  v_{1},v_{2},\ldots,v_{m}\right)  \sim\left(  v_{1},v_{2}%
,\ldots,v_{i-1},v_{i+1},v_{i},v_{i+2},v_{i+3},\ldots,v_{m}\right)  .
\]
Then any two elements of $\mathcal{L}\left(  P\right)  $ are equivalent under
the relation $\sim$.
\end{proposition}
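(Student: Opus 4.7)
The plan is to show that any two linear extensions $\sigma = (v_1, v_2, \ldots, v_m)$ and $\tau = (w_1, w_2, \ldots, w_m)$ of $P$ are $\sim$-equivalent by induction on $m$ minus the length of their longest common initial segment $v_1 = w_1,\ldots, v_{i-1} = w_{i-1}$. If that length is $m$, then $\sigma = \tau$ and there is nothing to prove; otherwise, I will transform $\sigma$ inside its $\sim$-class into a linear extension $\sigma'$ whose common initial segment with $\tau$ is strictly longer, and then invoke the inductive hypothesis on $\sigma'$ and $\tau$.

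For the induction step, let $i$ be the smallest index with $v_i \ne w_i$, set $b := w_i$, and let $j$ be the unique index with $v_j = b$; note $j > i$, because $v_1,\ldots,v_{i-1} = w_1,\ldots,w_{i-1}$ are all distinct from $b$ and $v_i \ne b$ by the choice of $i$. The key combinatorial claim is that every $v_k$ with $i \le k < j$ is incomparable to $b$ in $P$. If we had $v_k > b$ in $P$, then the linear extension property of $\sigma$ would force the position of $v_k$ in $\sigma$ to exceed that of $b$, i.e., $k > j$, contradicting $k < j$. If instead $v_k < b$ in $P$, then the same property applied to $\tau$ would force $v_k$ to precede $b$ in $\tau$; however, the position of $v_k$ in $\tau$ is strictly greater than $i$ (because positions $1, \ldots, i-1$ of $\tau$ agree with those of $\sigma$ and so omit $v_k$, while $w_i = b \ne v_k$), and $b$ sits at position $i$ in $\tau$, a contradiction.

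Granted this claim, I iteratively swap $b$ with its immediate left neighbor in the current linear extension, invoking Proposition \ref{prop.linext.switch} each time. The left neighbor of $b$ at every stage is one of the original elements $v_i, v_{i+1}, \ldots, v_{j-1}$ (since no element ever crosses $b$), each of which was shown to be incomparable to $b$, so every swap is legal and lands in the same $\sim$-class. After exactly $j - i$ swaps, $b$ sits at position $i$, yielding a linear extension $\sigma' \sim \sigma$ whose first $i$ entries agree with those of $\tau$. The main obstacle is the incomparability claim above; it is the only place where both linear-extension hypotheses are essentially used, and once it is in hand the rest of the argument is a bubble-sort with straightforward bookkeeping.
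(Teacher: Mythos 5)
Your proof is correct. The paper itself does not prove Proposition \ref{prop.linext.transitive} but instead cites external sources (e.g.\ \cite[Proposition 1.7]{bir-row-arxiv}, \cite[Lemma 1]{etienne}), and your argument is essentially the standard one found there: induct on the length of the longest common prefix, and bubble the element $b = w_i$ leftward to position $i$ using Proposition \ref{prop.linext.switch}. The one genuinely nontrivial point --- that every element strictly between positions $i$ and $j$ in $\sigma$ is incomparable to $b$, using the linear-extension property of $\sigma$ to rule out $v_k > b$ and that of $\tau$ to rule out $v_k < b$ --- is argued correctly and completely.
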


Proofs of Proposition \ref{prop.linext.transitive} can be found in
\cite[Proposition 1.7]{bir-row-arxiv}, in \cite[Proposition 4.1 (for the
$\pi^{\prime}=\pi\tau_{j}$ case)]{ayyer-klee-schilling}, in \cite[Lemma
1]{etienne} and in \cite[Lemma 4.2]{Gyoja86}\footnote{Note that the sources
\cite{ayyer-klee-schilling}, \cite{etienne} and \cite{Gyoja86} define linear
extensions of $P$ as bijections $\beta:\left\{  1,2,\ldots,n\right\}
\rightarrow P$ (where $n=\left\vert P\right\vert $) whose inverse map
$\beta^{-1}$ is order-preserving. This is equivalent to our definition
(indeed, if $\beta:\left\{  1,2,\ldots,n\right\}  \rightarrow P$ is a linear
extension of $P$ in their sense, then the list $\left(  \beta\left(  1\right)
,\ \beta\left(  2\right)  ,\ \ldots,\ \beta\left(  n\right)  \right)  $ is a
linear extension of $P$ in our sense).}. See also \cite[Proposition
2.2]{Naatz00} for a stronger claim (describing a shortest way to transform a
given linear extension into another by successively swapping adjacent
incomparable entries).

Another well-known fact says that any nonempty finite poset has a minimal
element and a maximal element. In other words:

\begin{proposition}
\label{prop.poset-minmax} Assume that $P \neq\varnothing$. Then:

\begin{enumerate}
\item[\textbf{(a)}] The poset $P$ has a minimal element.

\item[\textbf{(b)}] The poset $P$ has a maximal element.
\end{enumerate}
\end{proposition}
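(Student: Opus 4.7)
The plan is to deduce both parts of Proposition \ref{prop.poset-minmax} directly from the existence of a linear extension (Theorem \ref{thm.linext.ex}), which has already been established. Since $P$ is nonempty and finite, Theorem \ref{thm.linext.ex} hands us a linear extension $(v_1, v_2, \ldots, v_m)$ of $P$ with $m = |P| \geq 1$. I claim that $v_1$ serves as a minimal element of $P$ and $v_m$ as a maximal element.

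For part \textbf{(a)}, suppose for contradiction that $v_1$ is not minimal. Then there exists $w \in P$ with $w < v_1$. Since the list $(v_1, v_2, \ldots, v_m)$ contains every element of $P$ exactly once, we can write $w = v_j$ for some $j \in \{1, 2, \ldots, m\}$. The relation $v_j < v_1$ together with the defining property of a linear extension forces $j < 1$ in $\mathbb{Z}$, which is impossible. Hence no such $w$ exists, and $v_1$ is minimal. Part \textbf{(b)} is proved symmetrically: if $w > v_m$ with $w = v_j$, then $v_m < v_j$ forces $m < j$, contradicting $j \leq m$.

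There is no real obstacle here, since the heavy lifting has been done by Theorem \ref{thm.linext.ex}. An alternative, more self-contained approach would avoid linear extensions entirely: pick any $x_1 \in P$; if it is minimal, stop, else choose $x_2 < x_1$; iterate. Because $<$ is a strict partial order, the elements $x_1, x_2, x_3, \ldots$ generated this way are pairwise distinct, so finiteness of $P$ forces the process to terminate at a minimal element. However, routing the argument through linear extensions is shorter and reuses machinery the paper already has on hand.
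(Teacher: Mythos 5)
Your argument is correct, but it takes a different route from the paper's. The paper's proof sketch is an extremal counting argument: for each $p \in P$ it sets $n_p$ to be the number of $q \in P$ with $q < p$, observes that $a < b$ forces $n_a < n_b$, and concludes that any $p$ minimizing $n_p$ is a minimal element (part \textbf{(b)} analogously). You instead take a linear extension $(v_1, \ldots, v_m)$ from Theorem \ref{thm.linext.ex} and read off $v_1$ as minimal and $v_m$ as maximal, which is shorter and reuses existing machinery, exactly as you say. One caveat worth being aware of: the standard proof of Theorem \ref{thm.linext.ex} itself proceeds by repeatedly extracting minimal elements, so deducing Proposition \ref{prop.poset-minmax} from it inverts the usual logical order. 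Since the paper states Theorem \ref{thm.linext.ex} without proof as a well-known fact, your derivation is formally valid within the paper's framework, but it is not self-contained in the way the paper's counting argument is. Your alternative descending-chain argument ($x_1 > x_2 > \cdots$ must terminate by finiteness since the elements are pairwise distinct) avoids this issue entirely and is the more robust fallback; it too differs from the paper's counting argument but is equally elementary.
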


\begin{verlong}

\begin{proof}
[Proof sketch.]\textbf{(a)} For any $p \in P$, let $n_{p}$ denote the number
of all $q \in P$ satisfying $q < p$. Argue that if $a, b \in P$ are two
elements satisfying $a < b$, then $n_{a} < n_{b}$ (since any $q \in P$
satisfying $q < a$ must also satisfy $q < b$, and furthermore the element $a$
satisfies $a < b$ but not $a < a$). Use this to conclude that any element $p
\in P$ with minimum $n_{p}$ must be a minimal element of $P$.

\textbf{(b)} The proof is analogous to the proof of part \textbf{(a)}; just
replace some (not all!) ``$<$'' signs by ``$>$'' signs.
\end{proof}
\end{verlong}

\section{\label{sec.inverses}Inverses in rings}

\begin{convention}
From now on, for the rest of this paper, \textbf{we fix a ring} $\mathbb{K}$.
This ring is not required to be commutative, but must have a unity and be associative.
\end{convention}

For example, $\mathbb{K}$ can be $\mathbb{Z}$ or $\mathbb{Q}$ or $\mathbb{C}$
or a polynomial ring or a matrix ring over any of these. In almost all
previous work on birational rowmotion (with the exception of \cite{JosRob20}
and \cite{JosRob21}), only commutative rings (and, occasionally, semirings)
were considered; by removing the commutativity assumption, we are invalidating
many of the methods used in prior research. We suspect that the level of
generality can be increased even further, replacing our ring $\mathbb{K}$ by a
semiring (i.e., a \textquotedblleft ring without subtraction\textquotedblright%
); however, this poses new difficulties which we will not surmount in the
present work. (See Section \ref{sec.semiring} for more about this.)

Even as we do not assume our ring $\mathbb{K}$ to be a division ring, we will
nevertheless take multiplicative inverses of elements of $\mathbb{K}$ on many
occasions. These inverses do not always exist, but when they do exist, they
are unique; thus, we introduce a notation for them:

\begin{definition}
\label{def.inverses.inv}Let $a$ be an element of $\mathbb{K}$.

\begin{enumerate}
\item[\textbf{(a)}] An \emph{inverse} of $a$ means an element $b\in\mathbb{K}$
such that $ab=ba=1$. This inverse is unique when it exists, and will be
denoted by $\overline{a}$. (A more standard notation for it is $a^{-1}$, but
we prefer the notation $\overline{a}$ since it helps keep our formulas short.)

\item[\textbf{(b)}] We say that the element $a$ of $\mathbb{K}$ is
\emph{invertible} if it has an inverse.
\end{enumerate}
\end{definition}

The following well-known properties of inverses will often be used without mention:

\begin{proposition}
\label{prop.inverses.ab}\ \ 

\begin{enumerate}
\item[\textbf{(a)}] If $a$ is an invertible element of $\mathbb{K}$, then its
inverse $\overline{a}$ is invertible as well, and its inverse is
$\overline{\overline{a}}=a$.

\item[\textbf{(b)}] If $a$ and $b$ are two invertible elements of $\mathbb{K}%
$, then their product $ab$ is invertible as well, and its inverse is
$\overline{ab}=\overline{b}\cdot\overline{a}$.

\item[\textbf{(c)}] If $a_{1},a_{2},\ldots,a_{m}$ are several invertible
elements of $\mathbb{K}$, then their product $a_{1}a_{2}\cdots a_{m}$ is
invertible as well, and its inverse is $\overline{a_{1}a_{2}\cdots a_{m}%
}=\overline{a_{m}}\cdot\overline{a_{m-1}}\cdot\; \cdots\; \cdot\overline
{a_{1}}$.
\end{enumerate}
\end{proposition}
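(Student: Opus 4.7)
The plan is to prove the three parts in order, reducing each (except the base case) to the preceding part. Since these are basic ring-theoretic facts, the main work is just assembling the right associative computations; nothing deep will be required.

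For part \textbf{(a)}, I would start directly from Definition~\ref{def.inverses.inv}\textbf{(a)}. The definition of $\overline{a}$ says that $a \overline{a} = \overline{a} a = 1$. But this is exactly the condition (symmetric in its two arguments) that says $a$ is an inverse of $\overline{a}$. Hence $\overline{a}$ is invertible, and by the uniqueness clause in Definition~\ref{def.inverses.inv}\textbf{(a)}, the inverse $\overline{\overline{a}}$ must equal $a$.

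For part \textbf{(b)}, I would verify directly that $\overline{b}\cdot\overline{a}$ satisfies the defining condition of the inverse of $ab$. Using associativity in $\mathbb{K}$, compute
\[
(ab)(\overline{b}\cdot\overline{a}) = a(b\overline{b})\overline{a} = a\cdot 1\cdot \overline{a} = a\overline{a} = 1,
\]
and analogously $(\overline{b}\cdot\overline{a})(ab) = 1$. Then uniqueness of the inverse (again from Definition~\ref{def.inverses.inv}\textbf{(a)}) forces $\overline{ab} = \overline{b}\cdot\overline{a}$.

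For part \textbf{(c)}, I would argue by induction on $m$. The cases $m=0$ (with the convention that the empty product is $1$, whose inverse is $1$) and $m=1$ are immediate. For the inductive step, set $p = a_1 a_2 \cdots a_{m-1}$, so $p$ is invertible with $\overline{p} = \overline{a_{m-1}}\cdots\overline{a_1}$ by the inductive hypothesis. Then $a_1 a_2 \cdots a_m = p \cdot a_m$ is a product of two invertible elements, and part \textbf{(b)} gives $\overline{p \cdot a_m} = \overline{a_m} \cdot \overline{p} = \overline{a_m}\cdot \overline{a_{m-1}}\cdots \overline{a_1}$, completing the induction. There is no real obstacle here; the only subtlety worth flagging is that associativity (which we have assumed for $\mathbb{K}$) is used silently throughout the computation in part \textbf{(b)}, and the non-commutativity is precisely what forces the reversal of the order of factors in the formula for $\overline{ab}$.
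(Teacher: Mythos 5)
Your proof is correct. The paper states Proposition~\ref{prop.inverses.ab} as a well-known fact and gives no proof of it, and your argument — verifying the defining condition $xy = yx = 1$ directly for parts \textbf{(a)} and \textbf{(b)} and then inducting on $m$ via part \textbf{(b)} for part \textbf{(c)} — is exactly the standard argument the paper takes for granted.
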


The converse of Proposition \ref{prop.inverses.ab} \textbf{(b)} does not
necessarily hold: A product $ab$ of two elements $a$ and $b$ of $\mathbb{K}$
can be invertible even when neither $a$ nor $b$ is\footnote{See
\url{https://math.stackexchange.com/questions/627562} for examples of such
situations.}.

The next property of inverses is less well-known:\footnote{Proposition
\ref{prop.inverses.a+b} \textbf{(a)} will not be used in what follows, but its
proof provides a good warm-up exercise in manipulating inverses in a
noncommutative ring.}

\begin{proposition}
\label{prop.inverses.a+b}Let $a$ and $b$ be two elements of $\mathbb{K}$ such
that $a+b$ is invertible. Then:

\begin{enumerate}
\item[\textbf{(a)}] We have $a\cdot\overline{a+b}\cdot b=b\cdot\overline
{a+b}\cdot a$.

\item[\textbf{(b)}] If both $a$ and $b$ are invertible, then $\overline
{a}+\overline{b}$ is invertible as well and its inverse is%
\[
\overline{\overline{a}+\overline{b}}=a\cdot\overline{a+b}\cdot b.
\]

\end{enumerate}
\end{proposition}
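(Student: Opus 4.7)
The plan is to reduce both parts to short algebraic manipulations, exploiting only the identity $(a+b)\cdot \overline{a+b} = \overline{a+b}\cdot(a+b) = 1$ and elementary distributivity. For part \textbf{(a)}, I would set $s = a+b$ and use the rewrite $a = s - b$ on one side and $b = s - a$ on the other; concretely,
\[
a\cdot\overline{s}\cdot b = (s-b)\cdot\overline{s}\cdot b = b - b\cdot\overline{s}\cdot b,
\qquad
b\cdot\overline{s}\cdot a = b\cdot\overline{s}\cdot(s-a) = b - b\cdot\overline{s}\cdot b,
\]
so both sides agree. (Note that both rewrites give the same right-hand side without ever permuting factors, which is important in the noncommutative setting.) This is the only place where subtraction is used, but since we are working in a ring it is available.

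For part \textbf{(b)}, I would verify directly that $a\cdot\overline{a+b}\cdot b$ is a two-sided inverse of $\overline{a}+\overline{b}$. Multiplying on the left, I would pull $\overline{b}$ out as a common factor:
\[
(\overline{a}+\overline{b})\cdot a\cdot\overline{a+b}\cdot b
= (1+\overline{b}a)\cdot\overline{a+b}\cdot b
= \overline{b}\cdot(b+a)\cdot\overline{a+b}\cdot b
= \overline{b}\cdot b = 1.
\]
For the right-multiplication I would first apply part \textbf{(a)} to rewrite $a\cdot\overline{a+b}\cdot b$ as $b\cdot\overline{a+b}\cdot a$, and then perform the mirror image of the calculation above (pulling $\overline{b}$ out on the right via $1 + a\overline{b} = (b+a)\overline{b}$), which again collapses to $1$. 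This establishes invertibility of $\overline{a}+\overline{b}$ and simultaneously identifies its inverse.

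I do not expect a genuine obstacle here: everything reduces to a single trick (writing $a = (a+b) - b$ or factoring $1 + \overline{b}a = \overline{b}(b+a)$) together with cancellation against $\overline{a+b}$. The only subtlety is bookkeeping in the noncommutative setting, namely being careful never to swap the order of factors and noting that using part \textbf{(a)} is essentially forced for the right-multiplication in part \textbf{(b)}, since otherwise $\overline{b}$ and $\overline{a}$ cannot be factored out cleanly.
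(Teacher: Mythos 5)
Your argument is essentially the paper's own: part \textbf{(a)} by expanding via $a+b$ and cancelling $\left(a+b\right)\cdot\overline{a+b}=\overline{a+b}\cdot\left(a+b\right)=1$, and part \textbf{(b)} by directly checking that $a\cdot\overline{a+b}\cdot b$ is a two-sided inverse of $\overline{a}+\overline{b}$, with the key step $\left(\overline{a}+\overline{b}\right)\cdot a=\overline{b}\cdot\left(a+b\right)$. Two small corrections. First, in your display for part \textbf{(a)} the second chain should read $b\cdot\overline{s}\cdot a=b\cdot\overline{s}\cdot\left(s-b\right)=b-b\cdot\overline{s}\cdot b$ (i.e., you substitute $a=s-b$ for the trailing factor); as written, $b\cdot\overline{s}\cdot\left(s-a\right)$ equals $b\cdot\overline{s}\cdot b$, not $b-b\cdot\overline{s}\cdot b$. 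Second, invoking part \textbf{(a)} for the right-multiplication in part \textbf{(b)} is not forced: one can compute $a\cdot\overline{a+b}\cdot b\cdot\left(\overline{a}+\overline{b}\right)$ directly by factoring $\overline{a}$ out on the right, since $b\cdot\left(\overline{a}+\overline{b}\right)=b\overline{a}+1=\left(a+b\right)\cdot\overline{a}$, which is exactly what the paper does.
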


\begin{proof}
\textbf{(a)} Comparing
\[
a\cdot\overline{a+b}\cdot a+a\cdot\overline{a+b}\cdot b=a\cdot
\underbrace{\overline{a+b}\cdot\left(  a+b\right)  }_{=1}=a
\]
with%
\[
a\cdot\overline{a+b}\cdot a+b\cdot\overline{a+b}\cdot a=\underbrace{\left(
a+b\right)  \cdot\overline{a+b}}_{=1}\, \cdot\, a=a,
\]
we obtain%
\[
a\cdot\overline{a+b}\cdot a+a\cdot\overline{a+b}\cdot b=a\cdot\overline
{a+b}\cdot a+b\cdot\overline{a+b}\cdot a.
\]
Subtracting $a\cdot\overline{a+b}\cdot a$ from both sides of this equality, we
obtain $a\cdot\overline{a+b}\cdot b=b\cdot\overline{a+b}\cdot a$. This proves
Proposition \ref{prop.inverses.a+b} \textbf{(a)}. \medskip


\textbf{(b)} Assume that both $a$ and $b$ are invertible. Set $x:=\overline
{a}+\overline{b}$ and $y:=a\cdot\overline{a+b}\cdot b$.

From $x=\overline{a}+\overline{b}$, we obtain $x\cdot a=\left(  \overline
{a}+\overline{b}\right)  \cdot a=\underbrace{\overline{a}a}_{=1}%
+\,\overline{b}a=1+\overline{b}a$. Comparing this with%
\[
\overline{b}\cdot\left(  a+b\right)  =\overline{b}a+\underbrace{\overline{b}%
b}_{=1}=\overline{b}a+1=1+\overline{b}a,
\]
we obtain $x\cdot a=\overline{b}\cdot\left(  a+b\right)  $. Now, from
$y=a\cdot\overline{a+b}\cdot b$, we obtain%
\[
x\cdot y=\underbrace{x\cdot a}_{=\overline{b}\cdot\left(  a+b\right)  }\cdot\,
\overline{a+b}\cdot b=\overline{b}\cdot\underbrace{\left(  a+b\right)
\cdot\overline{a+b}}_{=1}\cdot\, b=\overline{b}\cdot b=1.
\]

\begin{vershort}
A similar argument (starting with $b \cdot x = b \overline{a} + 1 = \left(  a
+ b\right)  \cdot\overline{a}$) shows that $y\cdot x = 1$, so that $y$ is an
inverse of $x$. Hence, $x$ is invertible and its inverse is $\overline{x} =
y$. This is precisely the claim of Proposition \ref{prop.inverses.a+b}
\textbf{(b)}. \qedhere

\end{vershort}


\begin{verlong}
Furthermore, from $x=\overline{a}+\overline{b}$, we obtain $b\cdot
x=b\cdot\left(  \overline{a}+\overline{b}\right)  =b\overline{a}%
+\underbrace{b\overline{b}}_{=1}=b\overline{a}+1$. Comparing this with%
\[
\left(  a+b\right)  \cdot\overline{a}=\underbrace{a\overline{a}}_{=1}+\,
b\overline{a}=1+b\overline{a}=b\overline{a}+1,
\]
we obtain $b\cdot x=\left(  a+b\right)  \cdot\overline{a}$. Now, from
$y=a\cdot\overline{a+b}\cdot b$, we obtain%
\[
y\cdot x=a\cdot\overline{a+b}\cdot\underbrace{b\cdot x}_{=\left(  a+b\right)
\cdot\overline{a}}=a\cdot\underbrace{\overline{a+b}\cdot\left(  a+b\right)
}_{=1}\cdot\, \overline{a}=a\cdot\overline{a}=1.
\]

From $x\cdot y=1$ and $y\cdot x=1$, we conclude that $y$ is an inverse of $x$.
In other words, $a\cdot\overline{a+b}\cdot b$ is an inverse of $\overline
{a}+\overline{b}$ (since $x=\overline{a}+\overline{b}$ and $y=a\cdot
\overline{a+b}\cdot b$). Thus, $\overline{a}+\overline{b}$ is invertible and
its inverse is $\overline{\overline{a}+\overline{b}}=a\cdot\overline{a+b}\cdot
b$. This proves Proposition \ref{prop.inverses.a+b} \textbf{(b)}. \qedhere

\end{verlong}
\end{proof}

\section{\label{sec.ncbr}Noncommutative birational rowmotion}

In this section, we introduce the basic objects whose nature we will
investigate: labelings of a finite poset $P$ by elements of a ring, and a
partial map between them called \textquotedblleft birational
rowmotion\textquotedblright. These labelings generalize the field-valued
labelings studied in \cite{bir-row-arxiv}, which in turn generalize the
piecewise-linear labelings of \cite{einstein-propp}, which in turn generalize
the order ideals of $P$. Many of the definitions that follow will imitate
analogous definitions made (in somewhat lesser generality) in
\cite{bir-row-arxiv}.

\subsection{The extended poset $\protect\widehat{P}$}

\begin{definition}
\label{def.Phat}We define a poset $\widehat{P}$ as follows: As a set, let
$\widehat{P}$ be the disjoint union of the set $P$ with the two-element set
$\left\{  0,1\right\}  $. The smaller-or-equal relation $\leq$ on
$\widehat{P}$ will be given by%
\[
\left(  a\leq b\right)  \Longleftrightarrow\left(  \left(  a\in P\text{ and
}b\in P\text{ and }a\leq b\text{ in }P\right)  \text{ or }a=0\text{ or
}b=1\right)  .
\]
Here and in the following, we regard the canonical injection of the set $P$
into the disjoint union $\widehat{P}$ as an inclusion; thus, $P$ becomes a
subposet of $\widehat{P}$.
\end{definition}

In the terminology of Stanley's \cite[Section 3.2]{Stanley-EC1}, this poset
$\widehat{P}$ is the ordinal sum $\left\{  0\right\}  \oplus P\oplus\left\{
1\right\}  $.

\Needspace{5cm}

\begin{example}
Let us represent posets by their Hasse diagrams. Then:%
\[%
\xymatrixrowsep{1.2pc} \xymatrixcolsep{1.2pc} \xymatrix{
& & & & & & 1 \ar@{-}[d] & & \\
& \delta\ar@{-}[rd] & & & & & \delta\ar@{-}[rd] & & \\
\text{If } P= & & \gamma\ar@{-}[ld] \ar@{-}[rd] & & , \text{ then}
& \widehat{P} = & & \gamma\ar@{-}[ld] \ar@{-}[rd] & & & .\\
& \alpha& & \beta& & & \alpha\ar@{-}[rd] & & \beta\ar@{-}[ld] \\
& & & & & & & 0
}%
\]

\end{example}

\begin{verlong}

\begin{remark}
\label{rmk.Phat.covers} The following observations are easy to check and will
be used tacitly:

\begin{enumerate}
\item[\textbf{(a)}] An element $p\in P$ satisfies $0\lessdot p$ in
$\widehat{P}$ if and only if $p$ is a minimal element of $P$.

\item[\textbf{(b)}] An element $p\in P$ satisfies $1\gtrdot p$ in
$\widehat{P}$ if and only if $p$ is a maximal element of $P$.

\item[\textbf{(c)}] We have $0\lessdot1$ in $\widehat{P}$ if and only if
$P=\varnothing$.

\item[\textbf{(d)}] Two elements $p, q \in P$ satisfy $p \lessdot q$ in
$\widehat{P}$ if and only if they satisfy $p \lessdot q$ in $P$. An analogous
statement holds with the symbol ``$\lessdot$'' replaced by ``$\gtrdot$''.
\end{enumerate}
\end{remark}

\begin{convention}
Let $u$ and $v$ be two elements of $P$. Then $u$ and $v$ are also elements of
$\widehat{P}$ (since we are regarding $P$ as a subposet of $\widehat{P}$).
Thus, strictly speaking, statements like \textquotedblleft$u<v$%
\textquotedblright\ or \textquotedblleft$u\lessdot v$\textquotedblright\ are
ambiguous because it is not clear whether they are referring to the poset $P$
or to the poset $\widehat{P}$. However, this ambiguity is harmless, because it
is easily seen that the truth of each of the statements \textquotedblleft%
$u<v$\textquotedblright, \textquotedblleft$u\leq v$\textquotedblright,
\textquotedblleft$u>v$\textquotedblright, \textquotedblleft$u\geq
v$\textquotedblright, \textquotedblleft$u\lessdot v$\textquotedblright,
\textquotedblleft$u\gtrdot v$\textquotedblright\ and \textquotedblleft$u$ and
$v$ are incomparable\textquotedblright\ is independent on whether it refers to
the poset $P$ or to the poset $\widehat{P}$. We are going to therefore omit
mentioning the poset in these statements, unless there are other reasons for
us to do so.
\end{convention}
\end{verlong}

\subsection{$\mathbb{K}$-labelings}

Let us now define the type of object on which our maps will act:

\begin{definition}
\label{def.labeling}A $\mathbb{K}$\emph{-labeling of }$P$ will mean a map
$f:\widehat{P}\rightarrow\mathbb{K}$. Thus, $\mathbb{K}^{\widehat{P}}$ is the
set of all $\mathbb{K}$-labelings of $P$. If $f$ is a $\mathbb{K}$-labeling of
$P$ and $v$ is an element of $\widehat{P}$, then $f\left(  v\right)  $ will be
called the \emph{label of }$f$ \emph{at }$v$.
\end{definition}

\Needspace{20cm}

\begin{example}
\label{exa.2x2-rect}Assume that $P$ is the poset $\left\{  1,2\right\}
\times\left\{  1,2\right\}  $ with order relation defined by setting
\[
\left(  i,k\right)  \leq\left(  i^{\prime},k^{\prime}\right)
\ \ \ \ \ \ \ \ \ \ \text{if and only if }\left(  i\leq i^{\prime}\text{ and
}k\leq k^{\prime}\right)  .
\]
This poset will later be called the \textquotedblleft$2\times2$%
-rectangle\textquotedblright\ in Definition \ref{def.rect}. It has Hasse
diagram
\[
\xymatrixrowsep{0.9pc}\xymatrixcolsep{0.20pc}\xymatrix{
& \left(2,2\right) \ar@{-}[rd] \ar@{-}[ld] & \\
\left(2,1\right) \ar@{-}[rd] & & \left(1,2\right) \ar@{-}[ld] \\
& \left(1,1\right) & & & .
}
\]
The extended poset $\widehat{P}$ has Hasse diagram
\[
\xymatrixrowsep{0.9pc}\xymatrixcolsep{0.20pc}\xymatrix{
& 1 \ar@{-}[d] & \\
& \left(2,2\right) \ar@{-}[rd] \ar@{-}[ld] & \\
\left(2,1\right) \ar@{-}[rd] & & \left(1,2\right) \ar@{-}[ld] \\
& \left(1,1\right) \ar@{-}[d] & \\
& 0 &  & & .
}
\]

We recall that a $\mathbb{K}$-labeling of $P$ is a map $f:\widehat{P}%
\rightarrow\mathbb{K}$. We can visualize such a $\mathbb{K}$-labeling by
replacing, in the Hasse diagram of $\widehat{P}$, each element $v\in
\widehat{P}$ by the label $f\left(  v\right)  $. For example, the $\mathbb{Z}%
$-labeling of $P$ that sends $0$, $\left(  1,1\right)  $, $\left(  1,2\right)
$, $\left(  2,1\right)  $, $\left(  2,2\right)  $, and $1$ to $12$, $5$, $7$,
$-2$, $10$, and $14$, respectively can be visualized as follows:
\begin{equation}%
\begin{tabular}
[c]{c|cc}\cline{2-2}
& \multicolumn{1}{|c|}{$\xymatrixrowsep{0.9pc}\xymatrixcolsep{0.20pc}%
\xymatrix{
& 14 \ar@{-}[d] & \\
& 10 \ar@{-}[rd] \ar@{-}[ld] & \\
-2 \ar@{-}[rd] & & 7 \ar@{-}[ld] \\
& 5 \ar@{-}[d] & \\
& 12 &
}%
$} & \\\cline{2-2}%
\end{tabular}
\ \ \ . \label{eq.exa.2x2-rect.labeling}%
\end{equation}
For example, its label at $\left(  1,2\right)  $ is $7$.

\begin{verlong}
(The rectangular box around the drawing (\ref{eq.exa.2x2-rect.labeling}) is
meant to signal that it shows a $\mathbb{K}$-labeling, not a poset. We will
follow this convention throughout this paper.)
\end{verlong}
\end{example}

\subsection{Partial maps}

We will next define the notion of a partial map, to formalize the idea of an
operation whose result may be undefined, such as division on $\mathbb{Q}$
(since division by zero is undefined). We will use $\undf$ as a symbol for
such undefined values:

\begin{convention}
We fix an object called $\undf$. In the following, we tacitly assume that none
of the sets we will consider contains this object $\undf$ (unless otherwise specified).
\end{convention}

The reader can think of $\undf$ as a ``division-by-zero error'' (more
precisely, a ``division-by-a-non-invertible-element error'', since $0$ is
often not the only non-invertible element of $\mathbb{K}$).


\begin{definition}
Let $X$ and $Y$ be two sets. A \emph{partial map} from $X$ to $Y$ means a map
from $X$ to $Y\sqcup\left\{  \undf\right\}  $.

If $f$ is a partial map from $X$ to $Y$, then $f$ can be canonically extended
to a map from $X\sqcup\left\{  \undf\right\}  $ to $Y\sqcup\left\{
\undf\right\}  $ by setting $f\left(  \undf\right)  :=\undf$. We always
consider $f$ to be extended in this way.

If $f$ is a partial map from $X$ to $Y$, then the set $\left\{  x\in
X\ \mid\ f\left(  x\right)  \neq\undf\right\}  $ will be called the
\emph{domain of definition} of $f$.
\end{definition}

We view the element $\undf$ as an \textquotedblleft undefined
output\textquotedblright\ -- i.e., we think of a partial map $f$ from $X$ to
$Y$ as a \textquotedblleft map\textquotedblright\ from $X$ to $Y$ that is
defined only on some elements of $X$ (namely, on those whose image under this
map is not $\undf$). Thus, for example, in $\mathbb{Q}$, division is a partial
map because division by $0$ is undefined:

\begin{example}
The map%
\begin{align*}
\mathbb{Q}  &  \rightarrow\mathbb{Q}\sqcup\left\{  \undf\right\}  ,\\
x  &  \mapsto%
\begin{cases}
1/x, & \text{if }x\neq0;\\
\undf, & \text{if }x=0
\end{cases}
\end{align*}
is a partial map from $\mathbb{Q}$ to $\mathbb{Q}$.
\end{example}

Partial maps can be composed much like usual maps:

\begin{definition}
\ \ 

\begin{enumerate}
\item[\textbf{(a)}] Let $X$, $Y$ and $Z$ be three sets. Let $f$ be a partial
map from $Y$ to $Z$. Let $g$ be a partial map from $X$ to $Y$.

Then $f\circ g$ denotes the partial map from $X$ to $Z$ that sends
\[
\text{each }x\in X\text{ to }%
\begin{cases}
f\left(  g\left(  x\right)  \right)  , & \text{if }g\left(  x\right)
\neq\undf\ ;\\
\undf, & \text{if }g\left(  x\right)  =\undf.
\end{cases}
\]
(Following our convention that $f\left(  \undf\right)  $ is understood to be
$\undf$, we could simplify the right hand side to just $f\left(  g\left(
x\right)  \right)  $, but we nevertheless subdivided it into two cases just to
stress the different branches in our \textquotedblleft control
flow\textquotedblright.)

This partial map $f\circ g$ is called the \emph{composition} of $f$ and $g$.

\item[\textbf{(b)}] This notion of composition lets us define a category whose
objects are sets and whose morphisms are partial maps. (The identity maps in
this category are the obvious ones: i.e., the maps $\operatorname*{id}%
:X\rightarrow X\sqcup\left\{  \undf\right\}  $ that send each $x\in X$ to
$x\in X\subseteq X\sqcup\left\{  \undf\right\}  $.)

\item[\textbf{(c)}] Thus, if $X$ is any set, and if $f$ is any partial map
from $X$ to $X$, then we can define $f^{k}:=\underbrace{f\circ f\circ
\cdots\circ f}_{k\text{ times}}$ for any $k\in\mathbb{N}$.
\end{enumerate}
\end{definition}

\begin{convention}
Let $X$ and $Y$ be two sets. We will write \textquotedblleft%
$f:X\dashrightarrow Y$\textquotedblright\ for \textquotedblleft$f$ is a
partial map from $X$ to $Y$\textquotedblright\ (just as maps from $X$ to $Y$
are denoted \textquotedblleft$f:X\rightarrow Y$\textquotedblright).
\end{convention}


A warning is worth making: While we are using the symbol $\dashrightarrow$ for
partial maps here, the same symbol has been used for rational maps in
\cite{bir-row-arxiv}. The two uses serve similar purposes (they both model
\textquotedblleft maps defined only on those inputs for which the relevant
denominators are invertible\textquotedblright), but they have some technical
differences. Rational maps are defined only when $\mathbb{K}$ is an infinite
field\footnote{It stands to reason that a notion of \textquotedblleft rational
map\textquotedblright\ should exist for a sufficiently wide class of infinite
skew-fields as well, but we have not encountered a satisfactory theory of such
maps in the literature. See \url{https://mathoverflow.net/questions/362724/}
for a discussion of how this theory might start. It appears unlikely, however,
that such \textquotedblleft noncommutative rational maps\textquotedblright%
\ exist in the generality that we are working in (viz., arbitrary rings).},
but are well-behaved in many ways that partial maps are not. (For example, a
rational map is uniquely determined if its values on a Zariski-dense subset of
its domain are known, but no such claims can be made for partial maps.) Thus,
by working with partial maps instead of rational maps, we are freeing
ourselves from technical assumptions on $\mathbb{K}$, but at the same time
forcing ourselves to be explicit about the domains on which our partial maps
are defined.

\subsection{Toggles}

Recall that $\mathbb{K}^{\widehat{P}}$ denotes the set of $\mathbb{K}%
$-labelings of a poset $P$ (that is, the set of all maps $\widehat{P}%
\rightarrow\mathbb{K}$). Next, we define \emph{(noncommutative) toggles}:
certain (fairly simple) partial maps on this set.

\begin{definition}
\label{def.Tv}Let $v\in P$. We define a partial map $T_{v}:\mathbb{K}%
^{\widehat{P}}\dashrightarrow\mathbb{K}^{\widehat{P}}$ as follows: If
$f\in\mathbb{K}^{\widehat{P}}$ is any $\mathbb{K}$-labeling of $P$, then the
$\mathbb{K}$-labeling $T_{v} f \in\mathbb{K}^{\widehat{P}}$ is given by
\begin{align}
\left(  T_{v}f\right)  \left(  w\right)   &  =%
\begin{cases}
f\left(  w\right)  , & \text{if }w\neq v;\\
\left(  \sum\limits_{\substack{u\in\widehat{P};\\u\lessdot v}}f\left(
u\right)  \right)  \cdot\overline{f\left(  v\right)  }\cdot\overline
{\sum\limits_{\substack{u\in\widehat{P};\\u\gtrdot v}}\overline{f\left(
u\right)  }}, & \text{if }w=v
\end{cases}
\label{def.Tv.def}\\
&  \qquad\qquad\qquad\qquad\qquad\qquad\qquad\qquad\qquad\text{for all }%
w\in\widehat{P}.\nonumber
\end{align}
Here, we agree that if any part of the expression $\left(  \sum
\limits_{\substack{u\in\widehat{P};\\u\lessdot v}}f\left(  u\right)  \right)
\cdot\overline{f\left(  v\right)  }\cdot\overline{\sum\limits_{\substack{u\in
\widehat{P};\\u\gtrdot v}}\overline{f\left(  u\right)  }}$ is not well-defined
(i.e., if one of the values $f\left(  u\right)  $ and $f\left(  v\right)  $
appearing in it is undefined, or if $f\left(  v\right)  $ is not invertible,
or if $f\left(  u\right)  $ is not invertible for some $u\in\widehat{P}$
satisfying $u\gtrdot v$, or if the sum $\sum\limits_{\substack{u\in
\widehat{P};\\u\gtrdot v}}\overline{f\left(  u\right)  }$ is not invertible),
then $T_{v}f$ is understood to be $\undf$.

This partial map $T_{v}$ is called the $v$\emph{-toggle} or the \emph{toggle
at }$v$.
\end{definition}

Thus, the partial map $T_{v}$ is a \textquotedblleft local\textquotedblright%
\ transformation: it only changes the label at the element $v$ (unless its
result is $\undf$).

\begin{remark}
You are reading Definition \ref{def.Tv} right: We set $T_{v}f=\undf$ if any of
$\overline{f\left(  v\right)  }$ and $\overline{\sum\limits_{\substack{u\in
\widehat{P};\\u\gtrdot v}}\overline{f\left(  u\right)  }}$ fails to be
well-defined. Thus, in this case, none of the values $\left(  T_{v}f\right)
\left(  w\right)  $ exists. It may appear more natural to leave only the value
$\left(  T_{v}f\right)  \left(  v\right)  $ undefined, while letting all other
values $\left(  T_{v}f\right)  \left(  w\right)  $ equal the respective values
$f\left(  w\right)  $. Our choice to \textquotedblleft panic and
crash\textquotedblright, however, will be more convenient for some of our proofs.
\end{remark}

The $v$-toggle $T_{v}$ is called a \textquotedblleft\emph{noncommutative order
toggle}\textquotedblright\ in \cite[Definition 5.6]{JosRob20}. When the ring
$\mathbb{K}$ is commutative, this $v$-toggle $T_{v}$ is an \textquotedblleft
involution\textquotedblright\ in the sense that each $\mathbb{K}$-labeling
$f\in\mathbb{K}^{\widehat{P}}$ satisfying $T_{v}\left(  T_{v}f\right)
\neq\undf$ satisfies $T_{v}\left(  T_{v}f\right)  =f$. For noncommutative
$\mathbb{K}$, this is usually not the case; an \textquotedblleft
inverse\textquotedblright\ partial map\footnote{We are putting the word
\textquotedblleft inverse\textquotedblright\ in scare quotes since we are
talking about partial maps, but the two maps are as close to being mutually
inverse as partial maps can be.} can be obtained by flipping the order of the
factors on the right hand side of (\ref{def.Tv.def}). (This \textquotedblleft
inverse\textquotedblright\ appears in \cite{JosRob20} under the name
\textquotedblleft noncommutative order elggot\textquotedblright.)

The following proposition is trivially obtained by rewriting (\ref{def.Tv.def}%
); we are merely stating it for easier reference in proofs:

\begin{proposition}
\label{prop.Tv}Let $v\in P$. For every $f\in\mathbb{K}^{\widehat{P}}$
satisfying $T_{v}f\neq\undf$, we have the following:

\begin{enumerate}
\item[\textbf{(a)}] Every $w\in\widehat{P}$ such that $w\neq v$ satisfies
$\left(  T_{v}f\right)  \left(  w\right)  =f\left(  w\right)  $.

\item[\textbf{(b)}] We have%
\[
\left(  T_{v}f\right)  \left(  v\right)  =\left(  \sum\limits_{\substack{u\in
\widehat{P};\\u\lessdot v}}f\left(  u\right)  \right)  \cdot\overline{f\left(
v\right)  }\cdot\overline{\sum\limits_{\substack{u\in\widehat{P};\\u\gtrdot
v}}\overline{f\left(  u\right)  }}.
\]

\end{enumerate}
\end{proposition}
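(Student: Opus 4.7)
The plan is to prove the proposition by directly unpacking Definition \ref{def.Tv}, since the statement is essentially a rewording of the definition under the nondegeneracy hypothesis $T_{v}f\neq\undf$. There is really no conceptual content here, only a careful reading of the convention attached to (\ref{def.Tv.def}).

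First, I would interpret the hypothesis $T_{v}f\neq\undf$ precisely. The convention in Definition \ref{def.Tv} declares $T_{v}f$ to equal $\undf$ whenever any part of the expression $\left(\sum\limits_{\substack{u\in\widehat{P};\\u\lessdot v}}f\left(u\right)\right)\cdot\overline{f\left(v\right)}\cdot\overline{\sum\limits_{\substack{u\in\widehat{P};\\u\gtrdot v}}\overline{f\left(u\right)}}$ fails to be well-defined (i.e., $f(v)$ must be invertible, each $f(u)$ with $u\gtrdot v$ must be invertible, and the sum $\sum_{u\gtrdot v}\overline{f(u)}$ must itself be invertible). Thus, the hypothesis $T_{v}f\neq\undf$ guarantees that all these invertibilities hold, so the displayed expression defines a bona fide element of $\mathbb{K}$ and the piecewise formula (\ref{def.Tv.def}) defines a genuine map $T_{v}f:\widehat{P}\rightarrow\mathbb{K}$.

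With this in hand, for part \textbf{(a)} I would substitute an arbitrary $w\in\widehat{P}$ with $w\neq v$ into (\ref{def.Tv.def}); the first branch of the piecewise formula applies and immediately yields $\left(T_{v}f\right)\left(w\right)=f\left(w\right)$. For part \textbf{(b)}, I would substitute $w=v$ into (\ref{def.Tv.def}); the second branch applies and delivers precisely the claimed product.

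The only point requiring care is the convention that a single failure of well-definedness forces the \emph{entire} labeling $T_{v}f$ to collapse to $\undf$ (rather than merely the value at $v$), as highlighted in the Remark following Definition \ref{def.Tv}. Once one is comfortable with this ``panic and crash'' convention, the argument reduces to two one-line substitutions into the definition, and no genuine obstacle remains. Accordingly, I do not expect to need any lemma beyond the definition itself.
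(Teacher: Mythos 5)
Your proposal is correct and matches the paper, which gives no separate proof at all: it notes that the proposition "is trivially obtained by rewriting (\ref{def.Tv.def})" and is stated only for ease of reference. Your careful reading of the $\undf$ convention followed by the two one-line substitutions is exactly the intended argument.
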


Furthermore, the following \textquotedblleft locality
principle\textquotedblright\ (part of \cite[Proposition 5.8]{JosRob20}) is
easy to check:\footnote{In the following, equalities between partial maps are
understood in the strongest possible sense: Two partial maps
$F:X\dashrightarrow Y$ and $G:X\dashrightarrow Y$ satisfy $F=G$ if and only if
each $x\in X$ satisfies $F\left(  x\right)  =G\left(  x\right)  $. This
entails, in particular, that $F\left(  x\right)  =\undf$ holds if and only if
$G\left(  x\right)  =\undf$. Thus, $F=G$ is a stronger requirement than merely
saying that \textquotedblleft$F\left(  x\right)  =G\left(  x\right)  $
whenever neither $F\left(  x\right)  $ nor $G\left(  x\right)  $ is
$\undf$\textquotedblright.}

\begin{proposition}
\label{prop.Tv.commute}Let $v\in P$ and $w\in P$. Then $T_{v}\circ T_{w}%
=T_{w}\circ T_{v}$, unless we have either $v\lessdot w$ or $w\lessdot v$.
\end{proposition}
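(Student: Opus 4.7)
The plan is to verify the equality of partial maps $T_v \circ T_w = T_w \circ T_v$ pointwise, in the strong sense (including undefined outputs). The case $v = w$ is trivial, so I assume $v \neq w$, and the hypothesis becomes: $v, w$ are not in a cover relation (either $v \not\lessdot w$ and $w \not\lessdot v$, or equivalently, $w$ lies in neither $\{v\} \cup \{u \in \widehat{P} : u \lessdot v\} \cup \{u \in \widehat{P} : u \gtrdot v\}$, and symmetrically).

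Fix $f \in \mathbb{K}^{\widehat{P}}$. First I would show that $T_v(T_w f) \neq \undf$ if and only if both $T_v f \neq \undf$ and $T_w f \neq \undf$. Indeed, by Proposition \ref{prop.Tv}~\textbf{(a)}, the $\mathbb{K}$-labeling $T_w f$ differs from $f$ only at the element $w$; and under the hypothesis, $w$ is distinct from $v$ and from every $u \in \widehat{P}$ with $u \lessdot v$ or $u \gtrdot v$. Hence the values $(T_w f)(v)$ and $(T_w f)(u)$ (for $u \lessdot v$ or $u \gtrdot v$) coincide with $f(v)$ and $f(u)$, respectively. Consequently, the invertibility conditions defining ``$T_v(T_w f) \neq \undf$'' reduce to the invertibility conditions defining ``$T_v f \neq \undf$'' together with ``$T_w f \neq \undf$''. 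By symmetry, the same holds for $T_w(T_v f)$, so both compositions are undefined on exactly the same $f$'s.

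Next, on the common domain where both sides are defined, I would compare the two labelings pointwise using Proposition \ref{prop.Tv}. Let $g := T_v(T_w f)$ and $h := T_w(T_v f)$. For $u \in \widehat{P} \setminus \{v, w\}$, applying Proposition \ref{prop.Tv}~\textbf{(a)} twice gives $g(u) = (T_w f)(u) = f(u) = (T_v f)(u) = h(u)$. At $u = v$: we have $g(v) = (T_v(T_w f))(v)$, which by Proposition \ref{prop.Tv}~\textbf{(b)} is computed from $(T_w f)(v)$ and $(T_w f)(u)$ for $u$ covering or covered by $v$; but all of these equal the corresponding values of $f$, so $g(v) = (T_v f)(v) = h(v)$ (the last equality holding because $w \neq v$). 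A symmetric argument at $u = w$ gives $g(w) = (T_w f)(w) = h(w)$. Thus $g = h$ as labelings.

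The main content of the argument is just the locality of each toggle together with the observation that the hypothesis precisely ensures that $w$ does not appear in the ``neighborhood'' of $v$ used by $T_v$ (and vice versa). There is no serious obstacle here: the one point that requires mild vigilance is the bookkeeping for the $\undf$ case, since our convention is that a single failure of invertibility anywhere in the formula poisons the entire labeling to $\undf$; but the symmetry above shows both sides fail on the same inputs.
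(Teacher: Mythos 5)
Your proof is correct and follows essentially the same route as the paper's: the key point in both is the locality observation that, under the hypothesis, $w$ is distinct from $v$ and from every element covered by or covering $v$, so toggling at $w$ does not change any of the inputs to the toggle at $v$ (the paper packages this as ``$X_{v}\left(  T_{w}f\right)  =X_{v}\left(  f\right)  $''), followed by careful bookkeeping of the $\undf$ cases. Your organization --- first showing both compositions are undefined on exactly the same labelings, then comparing pointwise on the common domain --- is a slightly cleaner packaging of the paper's four-case analysis, but the substance is the same.
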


\begin{vershort}

\begin{proof}
[Proof of Proposition \ref{prop.Tv.commute}.]In the case when $\mathbb{K}$ is
commutative, this is essentially \cite[Proposition 14]{bir-row-1}, except that
we are now more careful about well-definedness (since only invertible elements
have inverses). Yet, the proof given in \cite{bir-row-1} can easily be adapted
to the general (noncommutative) case. The details can be found in the detailed
version of this paper (but the reader should have an easy time reconstructing them).
\end{proof}
\end{vershort}

\begin{verlong}

\begin{proof}
[Proof of Proposition \ref{prop.Tv.commute}.]In the case when $\mathbb{K}$ is
commutative, this is essentially \cite[Proposition 2.10]{bir-row-arxiv},
except that we are now working with partial maps instead of rational maps. The
proof below is an adaptation of the proof given in \cite{bir-row-arxiv} to the
general (noncommutative) case, but it is structured more carefully in order to
pay the requisite attention to cases when some values are $\undf$.

Let us first forget that we fixed $v$ and $w$. We now introduce a convenient
notation. Namely, if $f\in\mathbb{K}^{\widehat{P}}$ is a $\mathbb{K}%
$-labeling, and if $v\in P$, then we define the element $X_{v}\left(
f\right)  \in\mathbb{K}\sqcup\left\{  \undf\right\}  $ as follows: We set%
\[
X_{v}\left(  f\right)  :=\left(  \sum\limits_{\substack{u\in\widehat{P}%
;\\u\lessdot v}}f\left(  u\right)  \right)  \cdot\overline{f\left(  v\right)
}\cdot\overline{\sum\limits_{\substack{u\in\widehat{P};\\u\gtrdot v}%
}\overline{f\left(  u\right)  }}%
\]
if the right hand side of this equation is well-defined; otherwise, we set
$X_{v}\left(  f\right)  :=\undf$.

The equality (\ref{def.Tv.def}) from the definition of the $v$-toggle $T_{v}$
can thus be rewritten as follows:%
\begin{equation}
\left(  T_{v}f\right)  \left(  w\right)  =%
\begin{cases}
f\left(  w\right)  , & \text{if }w\neq v;\\
X_{v}\left(  f\right)  , & \text{if }w=v
\end{cases}
\ \ \ \ \ \ \ \ \ \ \text{for all }w\in\widehat{P}.
\label{pf.prop.Tv.commute.defTv}%
\end{equation}
This holds for any $\mathbb{K}$-labeling $f\in\mathbb{K}^{\widehat{P}}$ and
any $v\in P$, provided that the expression $\left(  \sum
\limits_{\substack{u\in\widehat{P};\\u\lessdot v}}f\left(  u\right)  \right)
\cdot\overline{f\left(  v\right)  }\cdot\overline{\sum\limits_{\substack{u\in
\widehat{P};\\u\gtrdot v}}\overline{f\left(  u\right)  }}$ is well-defined
(i.e., provided that $X_{v}\left(  f\right)  \neq\undf$).

Thus, the definition of the $v$-toggle $T_{v}$ can be restated as follows: For
any $\mathbb{K}$-labeling $f\in\mathbb{K}^{\widehat{P}}$ and any element $v\in
P$, we let $T_{v}f$ be the $\mathbb{K}$-labeling defined by
(\ref{pf.prop.Tv.commute.defTv}) if $X_{v}\left(  f\right)  \neq\undf$;
otherwise, we set $T_{v}f:=\undf$. In particular, for any $\mathbb{K}%
$-labeling $f\in\mathbb{K}^{\widehat{P}}$ and any element $v\in P$, we have%
\begin{equation}
T_{v}f=\undf\text{ if and only if }X_{v}\left(  f\right)  =\undf.
\label{pf.prop.Tv.commute.Tvfnone}%
\end{equation}

Now, let $v\in P$ and $w\in P$ be two elements that satisfy neither $v\lessdot
w$ nor $w\lessdot v$. We must show that $T_{v}\circ T_{w}=T_{w}\circ T_{v}$.

If $v=w$, then this is obvious. Thus, we WLOG assume that $v\neq w$.

We fix a $\mathbb{K}$-labeling $f\in\mathbb{K}^{\widehat{P}}$. We will show
that $\left(  T_{v}\circ T_{w}\right)  f=\left(  T_{w}\circ T_{v}\right)  f$.

First we prove the following observation:

\begin{statement}
\textit{Observation 1:} If $T_{w}f\neq\undf$, then $X_{v}\left(
T_{w}f\right)  =X_{v}\left(  f\right)  $.
\end{statement}

\begin{proof}
[Proof of Observation 1.]Assume that $T_{w}f\neq\undf$.

We have $v\neq w$ and thus
\begin{equation}
\left(  T_{w}f\right)  \left(  v\right)  =f\left(  v\right)
\label{pf.prop.Tv.commute.o1.pf.0}%
\end{equation}
(by Proposition \ref{prop.Tv} \textbf{(a)}, applied to $w$ and $v$ instead of
$v$ and $w$).

For each $u\in\widehat{P}$ satisfying $u\gtrdot v$, we have $u\neq w$ (because
if we had $u=w$, then we would have $w=u\gtrdot v$ and thus $v\lessdot w$,
which would contradict the fact that we don't have $v\lessdot w$) and
therefore
\[
\left(  T_{w}f\right)  \left(  u\right)  =f\left(  u\right)
\]
(by Proposition \ref{prop.Tv} \textbf{(a)}, applied to $w$ and $u$ instead of
$v$ and $w$). Hence,%
\begin{equation}
\sum\limits_{\substack{u\in\widehat{P};\\u\gtrdot v}}\overline{\left(
T_{w}f\right)  \left(  u\right)  }=\sum\limits_{\substack{u\in\widehat{P}%
;\\u\gtrdot v}}\overline{f\left(  u\right)  }.
\label{pf.prop.Tv.commute.o1.pf.2}%
\end{equation}

For each $u\in\widehat{P}$ satisfying $u\lessdot v$, we have $u\neq w$
(because if we had $u=w$, then we would have $w=u\lessdot v$, which would
contradict the fact that we don't have $w\lessdot v$) and therefore
\[
\left(  T_{w}f\right)  \left(  u\right)  =f\left(  u\right)
\]
(by Proposition \ref{prop.Tv} \textbf{(a)}, applied to $w$ and $u$ instead of
$v$ and $w$). Hence,%
\begin{equation}
\sum\limits_{\substack{u\in\widehat{P};\\u\lessdot v}}\left(  T_{w}f\right)
\left(  u\right)  =\sum\limits_{\substack{u\in\widehat{P};\\u\lessdot
v}}f\left(  u\right)  . \label{pf.prop.Tv.commute.o1.pf.1}%
\end{equation}

The definition of $X_{v}\left(  f\right)  $ yields%
\begin{equation}
X_{v}\left(  f\right)  =\left(  \sum\limits_{\substack{u\in\widehat{P}%
;\\u\lessdot v}}f\left(  u\right)  \right)  \cdot\overline{f\left(  v\right)
}\cdot\overline{\sum\limits_{\substack{u\in\widehat{P};\\u\gtrdot v}%
}\overline{f\left(  u\right)  }}, \label{pf.prop.Tv.commute.o1.pf.5}%
\end{equation}
with the understanding that the right hand side is understood to be $\undf$ if
any of its sub-expressions is not well-defined.

The definition of $X_{v}\left(  T_{w}f\right)  $ yields%
\begin{align*}
X_{v}\left(  T_{w}f\right)   &  =\left(  \sum\limits_{\substack{u\in
\widehat{P};\\u\lessdot v}}\left(  T_{w}f\right)  \left(  u\right)  \right)
\cdot\overline{\left(  T_{w}f\right)  \left(  v\right)  }\cdot\overline
{\sum\limits_{\substack{u\in\widehat{P};\\u\gtrdot v}}\overline{\left(
T_{w}f\right)  \left(  u\right)  }}\\
&  =\left(  \sum\limits_{\substack{u\in\widehat{P};\\u\lessdot v}}f\left(
u\right)  \right)  \cdot\overline{f\left(  v\right)  }\cdot\overline
{\sum\limits_{\substack{u\in\widehat{P};\\u\gtrdot v}}\overline{f\left(
u\right)  }} \ \ \ \ \ \ \ \ \ \ \left(  \text{by
(\ref{pf.prop.Tv.commute.o1.pf.1}), (\ref{pf.prop.Tv.commute.o1.pf.2}) and
(\ref{pf.prop.Tv.commute.o1.pf.0})}\right)  .
\end{align*}
Comparing this with (\ref{pf.prop.Tv.commute.o1.pf.5}), we obtain
$X_{v}\left(  T_{w}f\right)  =X_{v}\left(  f\right)  $. This proves
Observation 1.
\end{proof}

Now, we are in one of the following four cases:

\textit{Case 1:} We have $X_{v}\left(  f\right)  \neq\undf$ and $X_{w}\left(
f\right)  \neq\undf$.

\textit{Case 2:} We have $X_{v}\left(  f\right)  =\undf$ and $X_{w}\left(
f\right)  \neq\undf$.

\textit{Case 3:} We have $X_{v}\left(  f\right)  \neq\undf$ and $X_{w}\left(
f\right)  =\undf$.

\textit{Case 4:} We have $X_{v}\left(  f\right)  =\undf$ and $X_{w}\left(
f\right)  =\undf$.

Let us first consider Case 1. In this case, we have $X_{v}\left(  f\right)
\neq\undf$ and $X_{w}\left(  f\right)  \neq\undf$. From $X_{v}\left(
f\right)  \neq\undf$, we obtain $T_{v}f\neq\undf$ (by
(\ref{pf.prop.Tv.commute.Tvfnone})). Similarly, $T_{w}f\neq\undf$. Thus,
Observation 1 yields $X_{v}\left(  T_{w}f\right)  =X_{v}\left(  f\right)
\neq\undf$. From this, we obtain $T_{v}\left(  T_{w}f\right)  \neq\undf$
(since (\ref{pf.prop.Tv.commute.Tvfnone}) (applied to $T_{w}f$ instead of $f$)
shows that we have $T_{v}\left(  T_{w}f\right)  =\undf$ if and only if
$X_{v}\left(  T_{w}f\right)  =\undf$). Similarly, $T_{w}\left(  T_{v}f\right)
\neq\undf$.

Now, let $x\in\widehat{P}$. Then%
\begin{equation}
\left(  T_{w}f\right)  \left(  x\right)  =%
\begin{cases}
f\left(  x\right)  , & \text{if }x\neq w;\\
X_{w}\left(  f\right)  , & \text{if }x=w
\end{cases}
\label{pf.prop.Tv.commute.c1.1}%
\end{equation}
(by (\ref{pf.prop.Tv.commute.defTv}), applied to $x$ and $w$ instead of $w$
and $v$). Furthermore, (\ref{pf.prop.Tv.commute.defTv}) (applied to $T_{w}f$
and $x$ instead of $f$ and $w$) yields%
\begin{align}
\left(  T_{v}\left(  T_{w}f\right)  \right)  \left(  x\right)   &  =%
\begin{cases}
\left(  T_{w}f\right)  \left(  x\right)  , & \text{if }x\neq v;\\
X_{v}\left(  T_{w}f\right)  , & \text{if }x=v
\end{cases}
\nonumber\\
&  =%
\begin{cases}
\left(  T_{w}f\right)  \left(  x\right)  , & \text{if }x\neq v;\\
X_{v}\left(  f\right)  , & \text{if }x=v
\end{cases}
\ \ \ \ \ \ \ \ \ \ \left(  \text{since }X_{v}\left(  T_{w}f\right)
=X_{v}\left(  f\right)  \right) \nonumber\\
&  =%
\begin{cases}%
\begin{cases}
f\left(  x\right)  , & \text{if }x\neq w;\\
X_{w}\left(  f\right)  , & \text{if }x=w,
\end{cases}
& \text{if }x\neq v;\\
X_{v}\left(  f\right)  , & \text{if }x=v
\end{cases}
\ \ \ \ \ \ \ \ \ \ \left(  \text{by (\ref{pf.prop.Tv.commute.c1.1})}\right)
\nonumber\\
&  =%
\begin{cases}
f\left(  x\right)  , & \text{if }x\neq v\text{ and }x\neq w;\\
X_{w}\left(  f\right)  , & \text{if }x\neq v\text{ and }x=w;\\
X_{v}\left(  f\right)  , & \text{if }x=v
\end{cases}
\nonumber\\
&  =%
\begin{cases}
f\left(  x\right)  , & \text{if }x\neq v\text{ and }x\neq w;\\
X_{w}\left(  f\right)  , & \text{if }x=w;\\
X_{v}\left(  f\right)  , & \text{if }x=v
\end{cases}
\label{pf.prop.Tv.commute.c1.2}%
\end{align}
(since the condition \textquotedblleft$x\neq v$ and $x=w$\textquotedblright%
\ is equivalent to \textquotedblleft$x=w$\textquotedblright\ (because $v\neq
w$)). The same argument (but with the roles of $v$ and $w$ interchanged) shows
that%
\begin{align*}
\left(  T_{w}\left(  T_{v}f\right)  \right)  \left(  x\right)   &  =%
\begin{cases}
f\left(  x\right)  , & \text{if }x\neq w\text{ and }x\neq v;\\
X_{v}\left(  f\right)  , & \text{if }x=v;\\
X_{w}\left(  f\right)  , & \text{if }x=w
\end{cases}
\\
&  =%
\begin{cases}
f\left(  x\right)  , & \text{if }x\neq w\text{ and }x\neq v;\\
X_{w}\left(  f\right)  , & \text{if }x=w;\\
X_{v}\left(  f\right)  , & \text{if }x=v
\end{cases}
\\
&  =%
\begin{cases}
f\left(  x\right)  , & \text{if }x\neq v\text{ and }x\neq w;\\
X_{w}\left(  f\right)  , & \text{if }x=w;\\
X_{v}\left(  f\right)  , & \text{if }x=v.
\end{cases}
\end{align*}
Comparing this with (\ref{pf.prop.Tv.commute.c1.2}), we obtain $\left(
T_{v}\left(  T_{w}f\right)  \right)  \left(  x\right)  =\left(  T_{w}\left(
T_{v}f\right)  \right)  \left(  x\right)  $.

Forget that we fixed $x$. We thus have proved that $\left(  T_{v}\left(
T_{w}f\right)  \right)  \left(  x\right)  =\left(  T_{w}\left(  T_{v}f\right)
\right)  \left(  x\right)  $ for each $x\in\widehat{P}$. In other words,
$T_{v}\left(  T_{w}f\right)  =T_{w}\left(  T_{v}f\right)  $. Thus, $\left(
T_{v}\circ T_{w}\right)  f=T_{v}\left(  T_{w}f\right)  =T_{w}\left(
T_{v}f\right)  =\left(  T_{w}\circ T_{v}\right)  f$. We have therefore proved
$\left(  T_{v}\circ T_{w}\right)  f=\left(  T_{w}\circ T_{v}\right)  f$ in
Case 1.

Let us now consider Case 2. In this case, we have $X_{v}\left(  f\right)
=\undf$ and $X_{w}\left(  f\right)  \neq\undf$. From $X_{v}\left(  f\right)
=\undf$, we obtain $T_{v}f=\undf$ (by (\ref{pf.prop.Tv.commute.Tvfnone})) and
therefore $T_{w}\left(  T_{v}f\right)  =T_{w}\left(  \undf\right)  =\undf$. On
the other hand, (\ref{pf.prop.Tv.commute.Tvfnone}) (applied to $w$ instead of
$v$) shows that $T_{w}f=\undf$ if and only if $X_{w}\left(  f\right)  =\undf$.
Hence, we have $T_{w}f\neq\undf$ (since $X_{w}\left(  f\right)  \neq\undf$).
Therefore, Observation 1 yields $X_{v}\left(  T_{w}f\right)  =X_{v}\left(
f\right)  =\undf$. However, (\ref{pf.prop.Tv.commute.Tvfnone}) (applied to
$T_{w}f$ instead of $f$) shows that $T_{v}\left(  T_{w}f\right)  =\undf$ if
and only if $X_{v}\left(  T_{w}f\right)  =\undf$. Hence, we have $T_{v}\left(
T_{w}f\right)  =\undf$ (since $X_{v}\left(  T_{w}f\right)  =\undf$).
Altogether, we now obtain%
\[
\left(  T_{v}\circ T_{w}\right)  f=T_{v}\left(  T_{w}f\right)  =\undf=T_{w}%
\left(  T_{v}f\right)  =\left(  T_{w}\circ T_{v}\right)  f.
\]
Hence, we have proved $\left(  T_{v}\circ T_{w}\right)  f=\left(  T_{w}\circ
T_{v}\right)  f$ in Case 2.

The proof of $\left(  T_{v}\circ T_{w}\right)  f=\left(  T_{w}\circ
T_{v}\right)  f$ in Case 3 is analogous to the proof we just showed in Case 2;
we only need to interchange $v$ with $w$.

Finally, let us consider Case 4. In this case, we have $X_{v}\left(  f\right)
=\undf$ and $X_{w}\left(  f\right)  =\undf$. From $X_{v}\left(  f\right)
=\undf$, we obtain $T_{v}f=\undf$ (by (\ref{pf.prop.Tv.commute.Tvfnone})).
Similarly, $T_{w}f=\undf$. Now,%
\[
\left(  T_{v}\circ T_{w}\right)  f=T_{v}\left(  \underbrace{T_{w}f}%
_{=\undf}\right)  =T_{v}\left(  \undf\right)  =\undf.
\]
Similarly, $\left(  T_{w}\circ T_{v}\right)  f=\undf$. Comparing these two
equalities, we obtain $\left(  T_{v}\circ T_{w}\right)  f=\left(  T_{w}\circ
T_{v}\right)  f$. Hence, we have proved $\left(  T_{v}\circ T_{w}\right)
f=\left(  T_{w}\circ T_{v}\right)  f$ in Case 4.

We have now proved $\left(  T_{v}\circ T_{w}\right)  f=\left(  T_{w}\circ
T_{v}\right)  f$ in all four Cases 1, 2, 3 and 4. Thus, $\left(  T_{v}\circ
T_{w}\right)  f=\left(  T_{w}\circ T_{v}\right)  f$ always holds.

Forget that we fixed $f$. We thus have shown that $\left(  T_{v}\circ
T_{w}\right)  f=\left(  T_{w}\circ T_{v}\right)  f$ for each $f\in
\mathbb{K}^{\widehat{P}}$. In other words, $T_{v}\circ T_{w}=T_{w}\circ T_{v}%
$. This proves Proposition \ref{prop.Tv.commute}.
\end{proof}
\end{verlong}

As a particular case of Proposition \ref{prop.Tv.commute}, we have the following:

\begin{corollary}
\label{cor.Tv.commute}Let $v$ and $w$ be two elements of $P$ which are
incomparable. Then $T_{v}\circ T_{w}=T_{w}\circ T_{v}$.
\end{corollary}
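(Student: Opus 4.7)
The plan is to derive this immediately from Proposition \ref{prop.Tv.commute}. That proposition asserts $T_v \circ T_w = T_w \circ T_v$ whenever we have \emph{neither} $v \lessdot w$ \emph{nor} $w \lessdot v$. So the only thing I need to check is that incomparability of $v$ and $w$ rules out both of these cover relations.

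This is essentially by definition: if $v \lessdot w$, then $v < w$, and in particular $v \leq w$, contradicting the assumption that $v$ and $w$ are incomparable. Symmetrically, $w \lessdot v$ would imply $w < v$, again contradicting incomparability. Hence the hypothesis of Proposition \ref{prop.Tv.commute} is satisfied, and the conclusion $T_v \circ T_w = T_w \circ T_v$ follows directly.

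There is no real obstacle here — the corollary is a trivial specialization of the proposition. The only subtlety worth noting is the logical direction: Proposition \ref{prop.Tv.commute} is actually strictly stronger than the corollary, since it also gives commutation in cases where $v$ and $w$ are comparable but not related by a cover (e.g., $v < w$ with some element strictly between them). The corollary just picks out the cleanest subcase, which is the one needed in the sequel, where incomparability is the natural hypothesis (e.g., for swapping adjacent entries in a linear extension, as in Proposition \ref{prop.linext.switch}).
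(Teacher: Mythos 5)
Your proposal is correct and matches the paper's own (one-line) proof exactly: incomparability rules out both cover relations, so Proposition \ref{prop.Tv.commute} applies directly. Nothing further is needed.
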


\begin{verlong}

\begin{proof}
Since $v$ and $w$ are incomparable, we have neither $v\lessdot w$ nor
$w\lessdot v$. Thus, Proposition \ref{prop.Tv.commute} yields $T_{v}\circ
T_{w}=T_{w}\circ T_{v}$. This proves Corollary \ref{cor.Tv.commute}.
\end{proof}
\end{verlong}

\begin{corollary}
\label{cor.R.welldef}Let $\left(  v_{1},v_{2},\ldots,v_{m}\right)  $ be a
linear extension of $P$. Then the partial map $T_{v_{1}}\circ T_{v_{2}}%
\circ\cdots\circ T_{v_{m}}:\mathbb{K}^{\widehat{P}}\dashrightarrow
\mathbb{K}^{\widehat{P}}$ is independent of the choice of the linear extension
$\left(  v_{1},v_{2},\ldots,v_{m}\right)  $.
\end{corollary}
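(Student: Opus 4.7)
The plan is to prove Corollary \ref{cor.R.welldef} as a direct consequence of Proposition \ref{prop.linext.transitive} and Corollary \ref{cor.Tv.commute}. The strategy is to reduce the global independence statement to an ``adjacent swap'' invariance, which is precisely what the commutation of toggles at incomparable elements gives us.

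First, I would fix two linear extensions $\left(v_{1},v_{2},\ldots,v_{m}\right)$ and $\left(v_{1}',v_{2}',\ldots,v_{m}'\right)$ of $P$ and seek to show that
\[
T_{v_{1}}\circ T_{v_{2}}\circ\cdots\circ T_{v_{m}}=T_{v_{1}'}\circ T_{v_{2}'}\circ\cdots\circ T_{v_{m}'}.
\]
By Proposition \ref{prop.linext.transitive}, these two linear extensions are equivalent under the transitive closure $\sim$ of the relation that swaps two adjacent incomparable entries. Hence it suffices to show that the partial map associated to a linear extension is invariant under a single such adjacent swap; the general statement then follows by induction on the number of swaps in a $\sim$-chain connecting the two extensions.

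For the single swap, I would take $i\in\left\{1,2,\ldots,m-1\right\}$ such that $v_{i}$ and $v_{i+1}$ are incomparable, and compare
\[
T_{v_{1}}\circ\cdots\circ T_{v_{i-1}}\circ T_{v_{i}}\circ T_{v_{i+1}}\circ T_{v_{i+2}}\circ\cdots\circ T_{v_{m}}
\]
with
\[
T_{v_{1}}\circ\cdots\circ T_{v_{i-1}}\circ T_{v_{i+1}}\circ T_{v_{i}}\circ T_{v_{i+2}}\circ\cdots\circ T_{v_{m}}.
\]
By Corollary \ref{cor.Tv.commute}, applied to the incomparable pair $v_{i},v_{i+1}\in P$, we have $T_{v_{i}}\circ T_{v_{i+1}}=T_{v_{i+1}}\circ T_{v_{i}}$ as partial maps $\mathbb{K}^{\widehat{P}}\dashrightarrow\mathbb{K}^{\widehat{P}}$. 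Since composition of partial maps is associative (it is just composition in the category of sets with partial maps as morphisms), we may substitute the left-hand side by the right-hand side inside the longer composition, preserving equality of partial maps on the nose (in the strong sense, where both sides return $\undf$ on exactly the same inputs and agree elsewhere).

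I do not expect any real obstacle here; this corollary is essentially a formal consequence of what has been set up. The only subtlety is confirming that ``equality of partial maps'' is genuinely respected by composition (so that substituting equal subcompositions yields equal full compositions), but this is immediate from the categorical framework already in place, together with the convention $f(\undf)=\undf$ that ensures $\undf$ propagates predictably through composition.
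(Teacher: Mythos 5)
Your proposal is correct and takes essentially the same route as the paper's proof: reduce via Proposition \ref{prop.linext.transitive} to invariance under a single adjacent swap of incomparable entries, then apply Corollary \ref{cor.Tv.commute} inside the composition (using that equality of partial maps is preserved by composition). No gaps.
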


\begin{vershort}

\begin{proof}
Combine Corollary \ref{cor.Tv.commute} with Proposition
\ref{prop.linext.transitive}.
\end{proof}
\end{vershort}

\begin{verlong}

\begin{proof}
[Proof of Corollary \ref{cor.R.welldef}.]Forget that we fixed the linear
extension $\left(  v_{1},v_{2},\ldots,v_{m}\right)  $.

If $\mathbf{v}=\left(  v_{1},v_{2},\ldots,v_{m}\right)  $ is a linear
extension of $P$, then we denote the partial map $T_{v_{1}}\circ T_{v_{2}%
}\circ\cdots\circ T_{v_{m}}$ by $R_{\mathbf{v}}$. We must prove that this
partial map $R_{\mathbf{v}}$ is independent of the choice of the linear
extension $\mathbf{v}$. In other words, we must prove the following claim:

\begin{statement}
\textit{Claim 1:} If $\mathbf{v}$ and $\mathbf{w}$ are any two linear
extensions of $P$, then $R_{\mathbf{v}}=R_{\mathbf{w}}$.
\end{statement}

Our proof of Claim 1 will rely on Proposition \ref{prop.linext.transitive}.

Consider the equivalence relation $\sim$ on $\mathcal{L}\left(  P\right)  $
introduced in Proposition \ref{prop.linext.transitive}. According to
Proposition \ref{prop.linext.transitive}, any two elements of $\mathcal{L}%
\left(  P\right)  $ are equivalent under the relation $\sim$.

We say that two linear extensions $\mathbf{v}$ and $\mathbf{w}$ of $P$ are
\emph{adjacent} if and only if they can be written in the forms%
\begin{align*}
\mathbf{v}  &  =\left(  v_{1},v_{2},\ldots,v_{m}\right)
\ \ \ \ \ \ \ \ \ \ \text{and}\\
\mathbf{w}  &  =\left(  v_{1},v_{2},\ldots,v_{i-1},v_{i+1},v_{i}%
,v_{i+2},v_{i+3},\ldots,v_{m}\right)  ,
\end{align*}
where $i\in\left\{  1,2,\ldots,m-1\right\}  $ is such that the elements
$v_{i}$ and $v_{i+1}$ of $P$ are incomparable. In other words, we say that two
linear extensions $\mathbf{v}$ and $\mathbf{w}$ of $P$ are \emph{adjacent} if
and only if $\mathbf{w}$ can be obtained from $\mathbf{v}$ by swapping two
consecutive entries, provided that these two entries are incomparable. It is
clear that the relation \textquotedblleft adjacent\textquotedblright\ is
symmetric: i.e., if two linear extensions $\mathbf{v}$ and $\mathbf{w}$ are
adjacent, then $\mathbf{w}$ and $\mathbf{v}$ are adjacent as well (because if
we swap two entries of $\mathbf{v}$ and then swap them again, then they end up
back in their original positions).

Now, we notice the following fact:

\begin{statement}
\textit{Claim 2:} If $\mathbf{v}$ and $\mathbf{w}$ are two adjacent linear
extensions of $P$, then $R_{\mathbf{v}}=R_{\mathbf{w}}$.
\end{statement}

\begin{proof}
[Proof of Claim 2.]Let $\mathbf{v}$ and $\mathbf{w}$ be two adjacent linear
extensions of $P$. According to the definition of \textquotedblleft
adjacent\textquotedblright, we can thus write $\mathbf{v}$ and $\mathbf{w}$ in
the forms
\begin{align*}
\mathbf{v}  &  =\left(  v_{1},v_{2},\ldots,v_{m}\right)
\ \ \ \ \ \ \ \ \ \ \text{and}\\
\mathbf{w}  &  =\left(  v_{1},v_{2},\ldots,v_{i-1},v_{i+1},v_{i}%
,v_{i+2},v_{i+3},\ldots,v_{m}\right)  ,
\end{align*}
where $i\in\left\{  1,2,\ldots,m-1\right\}  $ is such that the elements
$v_{i}$ and $v_{i+1}$ of $P$ are incomparable. Write them in this form, and
consider this $i$.

Since $v_{i}$ and $v_{i+1}$ are incomparable, we have $T_{v_{i}}\circ
T_{v_{i+1}}=T_{v_{i+1}}\circ T_{v_{i}}$ (by Corollary \ref{cor.Tv.commute}).

The definition of $R_{\mathbf{v}}$ yields
\begin{align*}
R_{\mathbf{v}}  &  =T_{v_{1}}\circ T_{v_{2}}\circ\cdots\circ T_{v_{m}%
}\ \ \ \ \ \ \ \ \ \ \left(  \text{since }\mathbf{v}=\left(  v_{1}%
,v_{2},\ldots,v_{m}\right)  \right) \\
&  =T_{v_{1}}\circ T_{v_{2}}\circ\cdots\circ T_{v_{i-1}}\circ
\underbrace{T_{v_{i}}\circ T_{v_{i+1}}}_{=T_{v_{i+1}}\circ T_{v_{i}}}\circ
T_{v_{i+2}}\circ T_{v_{i+3}}\circ\cdots\circ T_{v_{m}}\\
&  =T_{v_{1}}\circ T_{v_{2}}\circ\cdots\circ T_{v_{i-1}}\circ T_{v_{i+1}}\circ
T_{v_{i}}\circ T_{v_{i+2}}\circ T_{v_{i+3}}\circ\cdots\circ T_{v_{m}}.
\end{align*}
On the other hand, the definition of $R_{\mathbf{w}}$ yields
\begin{align*}
R_{\mathbf{w}}  &  =T_{v_{1}}\circ T_{v_{2}}\circ\cdots\circ T_{v_{i-1}}\circ
T_{v_{i+1}}\circ T_{v_{i}}\circ T_{v_{i+2}}\circ T_{v_{i+3}}\circ\cdots\circ
T_{v_{m}}\\
&  \ \ \ \ \ \ \ \ \ \ \ \ \ \ \ \ \ \ \ \ \left(  \text{since }%
\mathbf{w}=\left(  v_{1},v_{2},\ldots,v_{i-1},v_{i+1},v_{i},v_{i+2}%
,v_{i+3},\ldots,v_{m}\right)  \right)  .
\end{align*}
Comparing these two equalities, we obtain $R_{\mathbf{v}}=R_{\mathbf{w}}$.
This proves Claim 2.
\end{proof}

Now, recall that the equivalence relation $\sim$ is generated by the
elementary relations
\[
\left(  v_{1},v_{2},\ldots,v_{m}\right)  \sim\left(  v_{1},v_{2}%
,\ldots,v_{i-1},v_{i+1},v_{i},v_{i+2},v_{i+3},\ldots,v_{m}\right)  ,
\]
where $\left(  v_{1},v_{2},\ldots,v_{m}\right)  $ is a linear extension of $P$
and where $i\in\left\{  1,2,\ldots,m-1\right\}  $ is chosen such that the
elements $v_{i}$ and $v_{i+1}$ of $P$ are incomparable. In other words, the
equivalence relation $\sim$ is generated by the elementary relations%
\[
\mathbf{v}\sim\mathbf{w},\ \ \ \ \ \ \ \ \ \ \text{where }\mathbf{v}\text{ and
}\mathbf{w}\text{ are adjacent linear extensions}%
\]
(by the definition of \textquotedblleft adjacent\textquotedblright). In other
words, the equivalence relation $\sim$ is the reflexive, transitive and
symmetric closure of the relation \textquotedblleft adjacent\textquotedblright%
. In other words, the following holds:

\begin{statement}
\textit{Claim 3:} Let $\mathbf{v}$ and $\mathbf{w}$ be two linear extensions
of $P$. Then we have $\mathbf{v}\sim\mathbf{w}$ if and only if there exists a
tuple $\left(  \mathbf{u}_{0},\mathbf{u}_{1},\ldots,\mathbf{u}_{k}\right)  $
of linear extensions of $P$ such that $\mathbf{u}_{0}=\mathbf{v}$ and
$\mathbf{u}_{k}=\mathbf{w}$ and such that for each $i\in\left\{
1,2,\ldots,k\right\}  $, we have%
\[
\left(  \mathbf{u}_{i-1}\text{ and }\mathbf{u}_{i}\text{ are adjacent}\right)
\text{ or }\left(  \mathbf{u}_{i}\text{ and }\mathbf{u}_{i-1}\text{ are
adjacent}\right)  .
\]

\end{statement}

We are now ready to prove Claim 1:

\begin{proof}
[Proof of Claim 1.]Let $\mathbf{v}$ and $\mathbf{w}$ be any two linear
extensions of $P$. Then $\mathbf{v}$ and $\mathbf{w}$ are two elements of
$\mathcal{L}\left(  P\right)  $. Hence, $\mathbf{v}\sim\mathbf{w}$ (since any
two elements of $\mathcal{L}\left(  P\right)  $ are equivalent under the
relation $\sim$). Thus, Claim 3 shows that there exists a tuple $\left(
\mathbf{u}_{0},\mathbf{u}_{1},\ldots,\mathbf{u}_{k}\right)  $ of linear
extensions of $P$ such that $\mathbf{u}_{0}=\mathbf{v}$ and $\mathbf{u}%
_{k}=\mathbf{w}$ and such that for each $i\in\left\{  1,2,\ldots,k\right\}  $,
we have%
\begin{equation}
\left(  \mathbf{u}_{i-1}\text{ and }\mathbf{u}_{i}\text{ are adjacent}\right)
\text{ or }\left(  \mathbf{u}_{i}\text{ and }\mathbf{u}_{i-1}\text{ are
adjacent}\right)  . \label{pf.cor.R.welldef.c1.pf.1}%
\end{equation}
Consider this tuple $\left(  \mathbf{u}_{0},\mathbf{u}_{1},\ldots
,\mathbf{u}_{k}\right)  $.

Let $i\in\left\{  1,2,\ldots,k\right\}  $. Then $\left(  \mathbf{u}%
_{i-1}\text{ and }\mathbf{u}_{i}\text{ are adjacent}\right)  $ or $\left(
\mathbf{u}_{i}\text{ and }\mathbf{u}_{i-1}\text{ are adjacent}\right)  $ (by
(\ref{pf.cor.R.welldef.c1.pf.1})). In either of these two cases, we conclude
that $\mathbf{u}_{i-1}$ and $\mathbf{u}_{i}$ are adjacent (since the relation
\textquotedblleft adjacent\textquotedblright\ is symmetric). Hence, Claim 2
(applied to $\mathbf{u}_{i-1}$ and $\mathbf{u}_{i}$ instead of $\mathbf{v}$
and $\mathbf{w}$) yields $R_{\mathbf{u}_{i-1}}=R_{\mathbf{u}_{i}}$.

Forget that we fixed $i$. We thus have proved the equality $R_{\mathbf{u}%
_{i-1}}=R_{\mathbf{u}_{i}}$ for each $i\in\left\{  1,2,\ldots,k\right\}  $.
Combining all these equalities, we obtain%
\[
R_{\mathbf{u}_{0}}=R_{\mathbf{u}_{1}}=R_{\mathbf{u}_{2}}=\cdots=R_{\mathbf{u}%
_{k}}.
\]
Hence, $R_{\mathbf{u}_{0}}=R_{\mathbf{u}_{k}}$. In view of $\mathbf{u}%
_{0}=\mathbf{v}$ and $\mathbf{u}_{k}=\mathbf{w}$, we can rewrite this as
$R_{\mathbf{v}}=R_{\mathbf{w}}$. This proves Claim 1.
\end{proof}

Thus, the proof of Corollary \ref{cor.R.welldef} is complete (since Claim 1 is proved).
\end{proof}
\end{verlong}

\subsection{Birational rowmotion}

Recall that $P$ is a finite poset. Corollary \ref{cor.R.welldef} lets us make
the following definition.

\begin{definition}
\label{def.rm}\emph{Birational rowmotion} (or, more precisely, the
\emph{birational rowmotion of }$P$) is defined as the partial map $T_{v_{1}%
}\circ T_{v_{2}}\circ\cdots\circ T_{v_{m}}:\mathbb{K}^{\widehat{P}%
}\dashrightarrow\mathbb{K}^{\widehat{P}}$, where $\left(  v_{1},v_{2}%
,\ldots,v_{m}\right)  $ is a linear extension of $P$. This partial map is
well-defined, because

\begin{itemize}
\item Theorem \ref{thm.linext.ex} shows that a linear extension of $P$ exists, and

\item Corollary \ref{cor.R.welldef} shows that the partial map $T_{v_{1}}\circ
T_{v_{2}}\circ\cdots\circ T_{v_{m}}$ is independent of the choice of the
linear extension $\left(  v_{1},v_{2},\ldots,v_{m}\right)  $.
\end{itemize}

This partial map will be denoted by $R$.
\end{definition}

Birational rowmotion is called \textquotedblleft birational
NOR-motion\textquotedblright\ (and denoted $\operatorname*{NOR}$) in the paper
\cite[Definition 5.9]{JosRob20}\footnote{To be more precise, \cite[Definition
5.9]{JosRob20} works in a slightly less general context, requiring
$\mathbb{K}$ to be a skew field and that $f\left(  0\right)  =1$ and $f\left(
1\right)  =C$ for some $C$ in the center of $\mathbb{K}$.}. When $\mathbb{K}$
is commutative, it agrees with the standard concept of birational rowmotion as
studied in \cite{einstein-propp} and \cite{bir-row-arxiv}.

\newpage

\begin{example}
\label{ex.rowmotion.2x2}Let us demonstrate the effect of birational toggles
and birational rowmotion. Namely, for this example, we let $P$ be the poset
$P=\left\{  1,2\right\}  \times\left\{  1,2\right\}  $ introduced in Example
\ref{exa.2x2-rect}.

In order to disencumber our formulas, we agree to write $g\left(  i,j\right)
$ for $g\left(  \left(  i,j\right)  \right)  $ when $g$ is a $\mathbb{K}%
$-labeling of $P$ and $\left(  i,j\right)  $ is an element of $P$.

As in Example \ref{exa.2x2-rect}, we visualize a $\mathbb{K}$-labeling $f$ of
$P$ by replacing, in the Hasse diagram of $\widehat{P}$, each element
$v\in\widehat{P}$ by the label $f\left(  v\right)  $. Let $f$ be a
$\mathbb{K}$-labeling sending $0$, $\left(  1,1\right)  $, $\left(
1,2\right)  $, $\left(  2,1\right)  $, $\left(  2,2\right)  $, and $1$ to $a$,
$w$, $y$, $x$, $z$, and $b$, respectively (for some elements $a$, $b$, $x$,
$y$, $z$, $w$ of $\mathbb{K}$); this $f$ is then visualized as follows:
\[%
\begin{tabular}
[c]{c|c|}\cline{2-2}%
$\xymatrixrowsep{0.9pc}\xymatrixcolsep{0.20pc}%
\xymatrix{
\vphantom{a}\\
\vphantom{x}\\
f = \\
\\
}%
$ & $\xymatrixrowsep{0.9pc}\xymatrixcolsep{0.20pc}%
\xymatrix{
& b \ar@{-}[d] & \\
& z \ar@{-}[rd] \ar@{-}[ld] & \\
x \ar@{-}[rd] & & y \ar@{-}[ld] \\
& w \ar@{-}[d] & \\
& a &
}%
$\\\cline{2-2}%
\end{tabular}
\ \ \ .
\]
(As before, we draw $\left(  2,1\right)  $ on the western corner and $\left(
1,2\right)  $ on the eastern corner.)

Now, recall the definition of birational rowmotion $R$ on our poset $P$. Since
the list $\left(  \left(  1,1\right)  ,\left(  1,2\right)  ,\left(
2,1\right)  ,\left(  2,2\right)  \right)  $ is a linear extension of $P$, we
have $R=T_{\left(  1,1\right)  }\circ T_{\left(  1,2\right)  }\circ T_{\left(
2,1\right)  }\circ T_{\left(  2,2\right)  }$. Let us track how this transforms
our labeling $f$:

We first apply $T_{\left(  2,2\right)  }$, obtaining the $\mathbb{K}$-labeling%
\[%
\begin{tabular}
[c]{c|c|}\cline{2-2}%
$\xymatrixrowsep{0.9pc}\xymatrixcolsep{0.20pc}%
\xymatrix{
\vphantom{b} \\
\vphantom{(x+y)\overline{z}b} \\
T_{\left(  2,2\right)  } f = \\
\\
}%
$ & $\xymatrixrowsep{0.9pc}\xymatrixcolsep{0.20pc}%
\xymatrix{
& b \ar@{-}[d] & \\
& \red{(x+y)\overline{z}b} \ar@{-}[rd] \ar@{-}[ld] & \\
x \ar@{-}[rd] & & y \ar@{-}[ld] \\
& w \ar@{-}[d] & \\
& a &
}%
$\\\cline{2-2}%
\end{tabular}
\
\]
(where we colored the label at $\left(  2,2\right)  $ red to signify that it
is the label at the element which got toggled). Indeed, the only label that
changes under $T_{\left(  2,2\right)  }$ is the one at $\left(  2,2\right)  $,
and this label becomes%
\begin{align*}
\left(  T_{\left(  2,2\right)  }f\right)  \left(  2,2\right)   &  =\left(
\sum\limits_{\substack{u\in\widehat{P};\\u\lessdot\left(  2,2\right)
}}f\left(  u\right)  \right)  \cdot\overline{f\left(  2,2\right)  }%
\cdot\overline{\sum\limits_{\substack{u\in\widehat{P};\\u\gtrdot\left(
2,2\right)  }}\overline{f\left(  u\right)  }}\\
&  =\left(  f\left(  1,2\right)  +f\left(  2,1\right)  \right)  \cdot
\overline{f\left(  2,2\right)  }\cdot\overline{\overline{f\left(  1\right)  }%
}\\
&  =\left(  y+x\right)  \cdot\overline{z}\cdot\overline{\overline{b}}=\left(
x+y\right)  \cdot\overline{z}\cdot b.
\end{align*}
(We assume that $z$ and $b$ are indeed invertible; otherwise, $T_{\left(
2,2\right)  }f$ would be $\undf$ and would remain $\undf$ after any further
toggles. Likewise, as we apply further toggles, we assume that everything else
we need to invert is invertible.)

Having applied $T_{\left(  2,2\right)  }$, we next apply $T_{\left(
2,1\right)  }$, obtaining%
\[%
\begin{tabular}
[c]{c|c|}\cline{2-2}%
$\xymatrixrowsep{0.9pc}\xymatrixcolsep{0.20pc}%
\xymatrix{
\vphantom{b} \\
\vphantom{(x+y)\overline{z}b} \\
T_{\left(2,1\right)} T_{\left(  2,2\right)  } f = \\
\\
}%
$ & $\xymatrixrowsep{0.9pc}\xymatrixcolsep{0.20pc}%
\xymatrix{
& b \ar@{-}[d] & \\
& (x+y)\overline{z}b \ar@{-}[rd] \ar@{-}[ld] & \\
\red{w\overline{x}(x+y)\overline{z}b} \ar@{-}[rd] & & \phantom{x}%
y\phantom{abcdef} \ar@{-}[ld] \\
& w \ar@{-}[d] & \\
& a &
}%
$\\\cline{2-2}%
\end{tabular}
\ \ \ .
\]
Next, we apply $T_{\left(  1,2\right)  }$, obtaining%
\[%
\begin{tabular}
[c]{c|c|}\cline{2-2}%
$\xymatrixrowsep{0.9pc}\xymatrixcolsep{0.20pc}%
\xymatrix{
\phantom{b} \\
\phantom{(x+y)\overline{z}b} \\
T_{\left(1,2\right)} T_{\left(2,1\right)} T_{\left(  2,2\right)  }
f = \\
\\
}%
$ & $\xymatrixrowsep{0.9pc}\xymatrixcolsep{0.20pc}%
\xymatrix{
& b \ar@{-}[d] & \\
& (x+y)\overline{z}b \ar@{-}[rd] \ar@{-}[ld] & \\
w\overline{x}(x+y)\overline{z}b \ar@{-}[rd] & & \red{w\overline{y}%
(x+y)\overline{z}b} \ar@{-}[ld] \\
& w \ar@{-}[d] & \\
& a &
}%
$\\\cline{2-2}%
\end{tabular}
\ \ \ .
\]
Finally, we apply $T_{\left(  1,1\right)  }$, resulting in
\[%
\begin{tabular}
[c]{c|c|}\cline{2-2}%
$\xymatrixrowsep{0.9pc}\xymatrixcolsep{0.20pc}%
\xymatrix{
\vphantom{b} \\
\vphantom{(x+y)\overline{z}b} \\
T_{\left(1,1\right)} T_{\left(1,2\right)} T_{\left(2,1\right)} T_{\left
(  2,2\right)  } f = \\
\\
}%
$ & $\xymatrixrowsep{0.9pc}\xymatrixcolsep{-0.20pc}%
\xymatrix{
& b \ar@{-}[d] & \\
& (x+y)\overline{z}b \ar@{-}[rd] \ar@{-}[ld] & \\
w\overline{x}(x+y)\overline{z}b \ar@{-}[rd] & & w\overline{y}(x+y)\overline
{z}b \ar@{-}[ld] \\
& \red{a \overline{w} \cdot\overline{\overline{w \overline{x}
(x+y) \overline{z} b} + \overline{w \overline{y} (x+y) \overline{z} b}}}
\ar@{-}[d] & \\
& a &
}%
$\\\cline{2-2}%
\end{tabular}
\ \ \ .
\]
The unwieldy expression $\overline{w}\cdot\overline{\overline{w\overline
{x}(x+y)\overline{z}b}+\overline{w\overline{y}(x+y)\overline{z}b}}$ in the
label at $\left(  1,1\right)  $ can be simplified to $\overline{z}b$ (using
standard laws such as $\overline{p}\cdot\overline{q}=\overline{qp}$ and
distributivity), so this rewrites as%
\[%
\begin{tabular}
[c]{c|c|}\cline{2-2}%
$\xymatrixrowsep{0.9pc}\xymatrixcolsep{0.20pc}%
\xymatrix{
\vphantom{b} \\
\vphantom{(x+y)\overline{z}b} \\
T_{\left(1,1\right)} T_{\left(1,2\right)} T_{\left(2,1\right)} T_{\left
(  2,2\right)  } f = \\
\\
}%
$ & $\xymatrixrowsep{0.9pc}\xymatrixcolsep{0.20pc}%
\xymatrix{
& b \ar@{-}[d] & \\
& (x+y)\overline{z}b \ar@{-}[rd] \ar@{-}[ld] & \\
w\overline{x}(x+y)\overline{z}b \ar@{-}[rd] & & w\overline{y}(x+y)\overline
{z}b \ar@{-}[ld] \\
& a \overline{z} b \ar@{-}[d] & \\
& a &
}%
$\\\cline{2-2}%
\end{tabular}
\ \ \ .
\]
We thus have computed $Rf$ (since $Rf=T_{\left(  1,1\right)  }T_{\left(
1,2\right)  }T_{\left(  2,1\right)  }T_{\left(  2,2\right)  }f$).

By repeating this procedure (or just substituting the labels of $Rf$ obtained
as variables), we can compute $R^{2}f$, $R^{3}f$ etc., obtaining \begingroup
\allowdisplaybreaks
\begin{align*}
&
\begin{tabular}
[c]{c|c|}\cline{2-2}%
$\xymatrixrowsep{0.9pc}\xymatrixcolsep{0.20pc}%
\xymatrix{
\vphantom{b} \\
\vphantom{(x+y)\overline{z}b} \\
R f = \\
\\
}%
$ & $\xymatrixrowsep{0.9pc}\xymatrixcolsep{0.20pc}%
\xymatrix{
& b \ar@{-}[d] & \\
& (x+y)\overline{z}b \ar@{-}[rd] \ar@{-}[ld] & \\
w\overline{x}(x+y)\overline{z}b \ar@{-}[rd] & & w\overline{y}(x+y)\overline
{z}b \ar@{-}[ld] \\
& a \overline{z} b \ar@{-}[d] & \\
& a &
}%
$\\\cline{2-2}%
\end{tabular}
\ \ ,\\
& \\
&
\begin{tabular}
[c]{c|c|}\cline{2-2}%
$\xymatrixrowsep{0.9pc}\xymatrixcolsep{0.20pc}%
\xymatrix{
\vphantom{b} \\
\vphantom{(x+y)\overline{z}b} \\
R^2 f = \\
\\
}%
$ & $\xymatrixrowsep{0.9pc}\xymatrixcolsep{0.20pc}%
\xymatrix{
& b \ar@{-}[d] & \\
& w\left(\overline{x}+\overline{y}\right)b \ar@{-}[rd] \ar@{-}[ld] & \\
a\cdot\overline{x+y}\cdot x\left(\overline{x}+\overline{y}\right)b \ar@
{-}[rd] & & a\cdot\overline{x+y}\cdot y\left(\overline{x}+\overline{y}%
\right)b \ar@{-}[ld] \\
& a\overline{b}z\cdot\overline{x+y}\cdot b \ar@{-}[d] & \\
& a &
}%
$\\\cline{2-2}%
\end{tabular}
\ \ ,\\
& \\
&
\begin{tabular}
[c]{c|c|}\cline{2-2}%
$\xymatrixrowsep{0.9pc}\xymatrixcolsep{0.20pc}%
\xymatrix{
\vphantom{b} \\
\vphantom{(x+y)\overline{z}b} \\
R^3 f = \\
\\
}%
$ & $\xymatrixrowsep{0.9pc}\xymatrixcolsep{0.20pc}%
\xymatrix{
& b \ar@{-}[d] & \\
& a\overline{w}b \ar@{-}[rd] \ar@{-}[ld] & \\
\cdots\ar@{-}[rd] & & a \overline{b}z\cdot\overline{x+y}\cdot\overline
{\overline{x}+\overline{y}}\cdot\overline{y}\cdot\left(x+y\right)\overline
{w}b \ar@{-}[ld] \\
& a\overline{b}\cdot\overline{\overline{x}+\overline{y}}\cdot\overline
{w}b \ar@{-}[d] & \\
& a &
}%
$\\\cline{2-2}%
\end{tabular}
\ \ ,\\
& \\
&
\begin{tabular}
[c]{c|c|}\cline{2-2}%
$\xymatrixrowsep{0.9pc}\xymatrixcolsep{0.20pc}%
\xymatrix{
\vphantom{b} \\
\vphantom{(x+y)\overline{z}b} \\
R^4 f = \\
\\
}%
$ & $\xymatrixrowsep{0.9pc}\xymatrixcolsep{0.20pc}%
\xymatrix{
& b \ar@{-}[d] & \\
& a\overline{b}z\overline{a}b \ar@{-}[rd] \ar@{-}[ld] & \\
\cdots\ar@{-}[rd] & & a\overline{b}\cdot\overline{\overline{x}+\overline{y}%
}\cdot\overline{x+y}\cdot y\left(\overline{x}+\overline{y}\right
)\left(x+y\right)\overline{a}b \ar@{-}[ld] \\
& a\overline{b}w\overline{a}b \ar@{-}[d] & \\
& a &
}%
$\\\cline{2-2}%
\end{tabular}
\ \ .
\end{align*}
\endgroup
Here, we have omitted the label at $\left(  2,1\right)  $ for both $R^{3}f$
and $R^{4}f$, since it can be obtained from the respective label at $\left(
1,2\right)  $ by interchanging $x$ with $y$ (thanks to an obvious symmetry
between $\left(  1,2\right)  $ and $\left(  2,1\right)  $).

The above might suggest that the labels get progressively more complicated as
we apply $R$ over and over. For a general poset $P$, this is indeed the case.
However, for our poset $P=\left\{  1,2\right\}  \times\left\{  1,2\right\}  $,
a surprising periodicity-like pattern emerges. Indeed, our above expressions
for $R^{2}f,\ R^{3}f,\ R^{4}f$ can be simplified as
follows:\begingroup\allowdisplaybreaks%
\begin{align*}
&
\begin{tabular}
[c]{c|c|}\cline{2-2}%
$\xymatrixrowsep{0.9pc}\xymatrixcolsep{0.20pc}%
\xymatrix{
\vphantom{b} \\
\vphantom{(x+y)\overline{z}b} \\
R^2 f = \\
\\
}%
$ & $\xymatrixrowsep{0.9pc}\xymatrixcolsep{0.20pc}%
\xymatrix{
& b \ar@{-}[d] & \\
& w\left(\overline{x}+\overline{y}\right)b \ar@{-}[rd] \ar@{-}[ld] & \\
a\overline{y}b \ar@{-}[rd] & & a\overline{x}b \ar@{-}[ld] \\
& a\overline{b}z\cdot\overline{x+y}\cdot b \ar@{-}[d] & \\
& a &
}%
$\\\cline{2-2}%
\end{tabular}
\ \ \ ,\\
& \\
&
\begin{tabular}
[c]{c|c|}\cline{2-2}%
$\xymatrixrowsep{0.9pc}\xymatrixcolsep{0.20pc}%
\xymatrix{
\vphantom{b} \\
\vphantom{(x+y)\overline{z}b} \\
R^3 f = \\
\\
}%
$ & $\xymatrixrowsep{0.9pc}\xymatrixcolsep{0.20pc}%
\xymatrix{
& b \ar@{-}[d] & \\
& a\overline{w}b \ar@{-}[rd] \ar@{-}[ld] & \\
a \overline{b}z\cdot\overline{x+y}\cdot y\overline{w}b \ar@{-}%
[rd] & & a \overline{b}z\cdot\overline{x+y}\cdot x\overline{w}b \ar@{-}[ld] \\
& a\overline{b}\cdot\overline{\overline{x}+\overline{y}}\cdot\overline
{w}b \ar@{-}[d] & \\
& a &
}%
$\\\cline{2-2}%
\end{tabular}
\ \ \ ,\\
& \\
&
\begin{tabular}
[c]{c|c|}\cline{2-2}%
$\xymatrixrowsep{0.9pc}\xymatrixcolsep{0.20pc}%
\xymatrix{
\vphantom{b} \\
\vphantom{(x+y)\overline{z}b} \\
R^4 f = \\
\\
}%
$ & $\xymatrixrowsep{0.9pc}\xymatrixcolsep{0.20pc}%
\xymatrix{
& b \ar@{-}[d] & \\
& a\overline{b}z\overline{a}b \ar@{-}[rd] \ar@{-}[ld] & \\
a\overline{b}x\overline{a}b \ar@{-}[rd] & & a\overline{b}y\overline{a}%
b \ar@{-}[ld] \\
& a\overline{b}w\overline{a}b \ar@{-}[d] & \\
& a &
}%
$\\\cline{2-2}%
\end{tabular}
\ \ \ .
\end{align*}
\endgroup Thus, the labels of $R^{4}f$ are closely related to those of $f$:
For each $v\in P$, we have
\[
\left(  R^{4}f\right)  \left(  v\right)  =a\overline{b}\cdot f\left(
v\right)  \cdot\overline{a}b.
\]
(This holds for $v=0$ and $v=1$ as well, as one can easily check.) Note that
if $ab=ba$, then this entails that $\left(  R^{4}f\right)  \left(  v\right)  $
is conjugate to $v$ in $\mathbb{K}$.

In Theorem \ref{thm.rect.ord}, we will generalize this phenomenon to arbitrary
\textquotedblleft rectangular\textquotedblright\ posets -- i.e., posets of the
form $\left\{  1,2,\ldots,p\right\}  \times\left\{  1,2,\ldots,q\right\}  $
with entrywise order. The \textquotedblleft period\textquotedblright\ in this
situation will be $p+q$.

Our $P=\left\{  1,2\right\}  \times\left\{  1,2\right\}  $ example also
exhibits a reciprocity-like phenomenon. Indeed, our above expressions for
$Rf,\ R^{2}f,\ R^{3}f$ reveal that%
\begin{align*}
\left(  Rf\right)  \left(  1,1\right)   &  =a\overline{z}b =a\cdot
\overline{f\left(  2,2\right)  }\cdot b;\\
\left(  R^{2}f\right)  \left(  1,2\right)   &  =a\overline{x}b =a\cdot
\overline{f\left(  2,1\right)  }\cdot b;\\
\left(  R^{2}f\right)  \left(  2,1\right)   &  =a\overline{y}b =a\cdot
\overline{f\left(  1,2\right)  }\cdot b;\\
\left(  R^{3}f\right)  \left(  2,2\right)   &  =a\overline{w}b =a\cdot
\overline{f\left(  1,1\right)  }\cdot b.
\end{align*}
These equalities relate the label of $R^{i+j-1}f$ at an element $\left(
i,j\right)  $ with the label of $f$ at the element $\left(  3-i,\ 3-j\right)
$ (which is, visually speaking, the \textquotedblleft
antipode\textquotedblright\ of the former element $\left(  i,j\right)  $ on
the Hasse diagram of $P$). To be specific, they say that%
\[
\left(  R^{i+j-1}f\right)  \left(  i,j\right)  =a\cdot\overline{f\left(
3-i,\ 3-j\right)  }\cdot b
\]
for any $\left(  i,j\right)  \in P$. This too can be generalized to arbitrary
rectangles (Theorem \ref{thm.rect.antip}).

In the above calculation, we used the linear extension $\left(  \left(
1,1\right)  ,\left(  1,2\right)  ,\left(  2,1\right)  ,\left(  2,2\right)
\right)  $ of $P$ to compute $R$ as $T_{\left(  1,1\right)  }\circ T_{\left(
1,2\right)  }\circ T_{\left(  2,1\right)  }\circ T_{\left(  2,2\right)  }$. We
could have just as well used the linear extension $\left(  \left(  1,1\right)
,\left(  2,1\right)  ,\left(  1,2\right)  ,\left(  2,2\right)  \right)  $,
obtaining the same result. But we could not have used the list $\left(
\left(  1,1\right)  ,\left(  1,2\right)  ,\left(  2,2\right)  ,\left(
2,1\right)  \right)  $ (for example), since it is not a linear extension (and
indeed, $T_{\left(  1,1\right)  }\circ T_{\left(  1,2\right)  }\circ
T_{\left(  2,2\right)  }\circ T_{\left(  2,1\right)  }$ would not give rise to
any similar phenomenon).

\end{example}

This example shows that birational rowmotion behaves unexpectedly well for
some posets. There are also some more serious motivations to study it:
Birational rowmotion for commutative $\mathbb{K}$ generalizes
Sch\"{u}tzenberger's classical \textquotedblleft promotion\textquotedblright%
\ map on semistandard tableaux (see \cite[Remark 11.6]{bir-row-arxiv}), and is
closely related to the Zamolodchikov periodicity conjecture in type AA (see
\cite[\S 4.4]{Roby15}). The case of a noncommutative ring $\mathbb{K}$ appears
more baroque, but we expect it to find a combinatorial meaning sooner or later.

Before we formalize and prove the above phenomena, we first consider some
general properties of $R$. We begin with an implicit description of birational
rowmotion that does not involve toggles (but is essentially a restatement of
Definition \ref{def.rm}):

\begin{proposition}
\label{prop.R.implicit}Let $v\in P$. Let $f\in\mathbb{K}^{\widehat{P}}$.
Assume that $Rf\neq\undf$. Then%
\begin{align*}
\left(  Rf\right)  \left(  v\right)  =\left(  \sum\limits_{\substack{u\in
\widehat{P};\\u\lessdot v}}f\left(  u\right)  \right)  \cdot\overline{f\left(
v\right)  }\cdot\overline{\sum\limits_{\substack{u\in\widehat{P};\\u\gtrdot
v}}\overline{\left(  Rf\right)  \left(  u\right)  }}.
\end{align*}

\end{proposition}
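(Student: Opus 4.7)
The plan is to exploit the freedom in the choice of linear extension granted by Corollary~\ref{cor.R.welldef}. Concretely, I would pick \emph{any} linear extension $(v_{1},v_{2},\ldots,v_{m})$ of $P$, and let $i\in\{1,2,\ldots,m\}$ be the (unique) index with $v_{i}=v$. Then $R=T_{v_{1}}\circ T_{v_{2}}\circ\cdots\circ T_{v_{m}}$, and I would factor this as
\[
R=\bigl(T_{v_{1}}\circ\cdots\circ T_{v_{i-1}}\bigr)\circ T_{v}\circ\bigl(T_{v_{i+1}}\circ\cdots\circ T_{v_{m}}\bigr),
\]
so that the computation of $(Rf)(v)$ is reduced to applying $T_{v}$ to a well-chosen intermediate labeling. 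Accordingly, I would introduce
\[
g:=\bigl(T_{v_{i+1}}\circ T_{v_{i+2}}\circ\cdots\circ T_{v_{m}}\bigr)(f),
\]
which is not $\undf$ because $Rf\neq\undf$ (an $\undf$ at any intermediate stage would propagate to $Rf$).

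The crucial step is to identify the relevant labels of $g$ in terms of $f$ and $Rf$. Using Proposition~\ref{prop.Tv}~\textbf{(a)} (a toggle only modifies one label) together with the defining property of a linear extension, I would establish the following three identities: first, $g(v)=f(v)$, since none of the toggles composing $g$ is at $v=v_{i}$ (they are at indices $>i$); second, $g(u)=f(u)$ for every $u\in\widehat{P}$ with $u\lessdot v$, since any such $u$ either equals $0$ (never toggled) or equals $v_{j}$ for some $j<i$ (by the linear extension property, since $u<v$), hence is again untouched by the toggles making up $g$; and third, $g(u)=(Rf)(u)$ for every $u\in\widehat{P}$ with $u\gtrdot v$, since any such $u$ either equals $1$ (again never toggled) or equals $v_{j}$ for some $j>i$, and in the latter case the label at $u=v_{j}$ is only changed by the single toggle $T_{v_{j}}$ appearing in the composition, after which neither the remaining toggles of $g$ nor the outer toggles $T_{v_{i}},T_{v_{i-1}},\ldots,T_{v_{1}}$ touch it.

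Having set this up, I would apply Proposition~\ref{prop.Tv}~\textbf{(b)} to compute
\[
(T_{v}g)(v)=\Bigl(\sum_{\substack{u\in\widehat{P};\\u\lessdot v}}g(u)\Bigr)\cdot\overline{g(v)}\cdot\overline{\sum_{\substack{u\in\widehat{P};\\u\gtrdot v}}\overline{g(u)}}\,,
\]
and then observe that the remaining toggles $T_{v_{i-1}},\ldots,T_{v_{1}}$ are at elements different from $v$, hence leave the label at $v$ unchanged; this gives $(Rf)(v)=(T_{v}g)(v)$. Substituting the three identities above into the right-hand side yields exactly the claimed formula.

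The only subtle point I anticipate is bookkeeping about well-definedness: one must justify that each intermediate labeling ($g$, $T_{v}g$, and every stage of the final application of $T_{v_{i-1}},\ldots,T_{v_{1}}$) is not $\undf$, and this follows from the hypothesis $Rf\neq\undf$ combined with the fact that composition of partial maps propagates $\undf$ from the inside out. There is no real combinatorial obstacle; the content of the proposition is simply that by choosing the linear extension cleverly, one can read off the defining formula of $T_{v}$ as an identity relating $Rf$ at $v$ to $f$ at $v$ and its covers, and to $Rf$ at the covers of $v$ above.
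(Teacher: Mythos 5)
Your proposal is correct and follows essentially the same route as the paper's own (detailed) proof: factor $R$ as $B\circ T_{v}\circ A$ along a linear extension with $v=v_{i}$, identify the labels of the intermediate labeling $Af$ at $v$, at the elements covered by $v$, and at the elements covering $v$, and then apply Proposition~\ref{prop.Tv}~\textbf{(b)}. The only cosmetic difference is that the paper names $T_{v}(Af)$ as its auxiliary labeling $g$ whereas you name $Af$ itself as $g$; the substance is identical.
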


\begin{vershort}
\begin{proof}
This is merely the noncommutative analogue of \cite[Proposition 19]%
{bir-row-1}, and the proof in \cite{bir-row-1} can be used with
straightforward modifications.
\end{proof}
\end{vershort}

\begin{verlong}
\begin{proof}
In the case when $\mathbb{K}$ is commutative, this is \cite[Proposition
2.16]{bir-row-arxiv}. The proof given in \cite{bir-row-arxiv} can be easily
modified to apply to the general case as well. Here are the details:

Let $\left(  v_{1},v_{2},\ldots,v_{m}\right)  $ be a linear extension of $P$.
(Such a linear extension exists, because of Theorem \ref{thm.linext.ex}.)

Let $i\in\left\{  1,2,\ldots,m\right\}  $ be the index satisfying $v_{i}=v$.
Thus, $T_{v_{i}}=T_{v}$.

By the definition of birational rowmotion $R$, we have $R=T_{v_{1}}\circ
T_{v_{2}}\circ\cdots\circ T_{v_{m}}$.

Define two partial maps%
\[
A:=T_{v_{i+1}}\circ T_{v_{i+2}}\circ\cdots\circ T_{v_{m}}%
\ \ \ \ \ \ \ \ \ \ \text{and}\ \ \ \ \ \ \ \ \ \ B:=T_{v_{1}}\circ T_{v_{2}%
}\circ\cdots\circ T_{v_{i-1}}%
\]
from $\mathbb{K}^{\widehat{P}}$ to $\mathbb{K}^{\widehat{P}}$. Then%
\begin{align}
R  &  =T_{v_{1}}\circ T_{v_{2}}\circ\cdots\circ T_{v_{m}}=\underbrace{T_{v_{1}%
}\circ T_{v_{2}}\circ\cdots\circ T_{v_{i-1}}}_{=B}\circ\underbrace{T_{v_{i}}%
}_{=T_{v}}\circ\underbrace{T_{v_{i+1}}\circ T_{v_{i+2}}\circ\cdots\circ
T_{v_{m}}}_{=A}\nonumber\\
&  =B\circ T_{v}\circ A. \label{pf.prop.R.implicit.BTA}%
\end{align}

Define the $\mathbb{K}$-labeling $g:=T_{v}\left(  Af\right)  $. Thus,
\[
\underbrace{R}_{\substack{=B\circ T_{v}\circ A\\\text{(by
(\ref{pf.prop.R.implicit.BTA}))}}}f=\left(  B\circ T_{v}\circ A\right)
f=B\left(  \underbrace{T_{v}\left(  Af\right)  }_{=g}\right)  =Bg.
\]
Hence, $Bg=Rf\neq\undf=B\left(  \undf\right)  $, and thus $g\neq\undf$. Hence,
$T_{v}\left(  Af\right)  =g\neq\undf=T_{v}\left(  \undf\right)  $, so that
$Af\neq\undf$. Now:

\begin{itemize}
\item Each of the maps $T_{v_{j}}$ with $j\neq i$ leaves the label at $v$
unchanged when acting on a $\mathbb{K}$-labeling (since $j\neq i$ entails
$v_{j}\neq v_{i}=v$). Hence, each of the maps $B$ and $A$ leaves the label at
$v$ unchanged (since $B$ and $A$ are compositions of maps $T_{v_{j}}$ with
$j\neq i$). Thus, $\left(  Bg\right)  \left(  v\right)  =g\left(  v\right)  $
and $\left(  Af\right)  \left(  v\right)  =f\left(  v\right)  $. Now,%
\begin{align}
\underbrace{\left(  Rf\right)  }_{=Bg}\left(  v\right)   &  =\left(
Bg\right)  \left(  v\right)  =\underbrace{g}_{=T_{v}\left(  Af\right)
}\left(  v\right) \nonumber\\
&  =\left(  T_{v}\left(  Af\right)  \right)  \left(  v\right) \nonumber\\
&  =\left(  \sum\limits_{\substack{u\in\widehat{P};\\u\lessdot v}}\left(
Af\right)  \left(  u\right)  \right)  \cdot\overline{\left(  Af\right)
\left(  v\right)  }\cdot\overline{\sum\limits_{\substack{u\in\widehat{P}%
;\\u\gtrdot v}}\overline{\left(  Af\right)  \left(  u\right)  }}\nonumber\\
&  \ \ \ \ \ \ \ \ \ \ \ \ \ \ \ \ \ \ \ \ \left(
\begin{array}
[c]{c}%
\text{by Proposition \ref{prop.Tv} \textbf{(b)},}\\
\text{applied to }Af\text{ instead of }f
\end{array}
\right) \nonumber\\
&  =\left(  \sum\limits_{\substack{u\in\widehat{P};\\u\lessdot v}}\left(
Af\right)  \left(  u\right)  \right)  \cdot\overline{f\left(  v\right)  }%
\cdot\overline{\sum\limits_{\substack{u\in\widehat{P};\\u\gtrdot v}%
}\overline{\left(  Af\right)  \left(  u\right)  }} \label{pf.R.implicit.1}%
\end{align}
(since $\left(  Af\right)  \left(  v\right)  =f\left(  v\right)  $).

\item Let $u\in\widehat{P}$ be such that $u\lessdot v$. Then $u<v=v_{i}$ in
$\widehat{P}$. Hence, $u$ is none of the elements $v_{i+1}$, $v_{i+2}$,
$\ldots$, $v_{m}$ (because $\left(  v_{1},v_{2},\ldots,v_{m}\right)  $ is a
linear extension of $P$). Thus, each of the maps $T_{v_{i+1}}$, $T_{v_{i+2}}$,
$\ldots$, $T_{v_{m}}$ leaves the label at $u$ invariant when acting on a
$\mathbb{K}$-labeling. Therefore, the map $A$ also leaves the label at $u$
invariant (since $A$ is a composition of these maps $T_{v_{i+1}}$,
$T_{v_{i+2}}$, $\ldots$, $T_{v_{m}}$). Hence, $\left(  Af\right)  \left(
u\right)  =f\left(  u\right)  $.

Forget that we fixed $u$. We have thus shown that%
\begin{equation}
\left(  Af\right)  \left(  u\right)  =f\left(  u\right)
\ \ \ \ \ \ \ \ \ \ \text{for every }u\in\widehat{P}\text{ such that
}u\lessdot v. \label{pf.R.implicit.2}%
\end{equation}

\item Let $u\in\widehat{P}$ be such that $u\gtrdot v$. Then $u>v=v_{i}$ in
$\widehat{P}$. Hence, $u$ is none of the elements $v_{1}$, $v_{2}$, $\ldots$,
$v_{i-1}$ (because $\left(  v_{1},v_{2},\ldots,v_{m}\right)  $ is a linear
extension of $P$). Thus, each of the maps $T_{v_{1}}$, $T_{v_{2}}$, $\ldots$,
$T_{v_{i-1}}$ leaves the label at $u$ invariant when acting on a $\mathbb{K}%
$-labeling. Therefore, $B$ also leaves the label at $u$ invariant (since $B$
is a composition of these maps $T_{v_{1}}$, $T_{v_{2}}$, $\ldots$,
$T_{v_{i-1}}$). Since $T_{v}$ also leaves the label at $u$ invariant (because
$u\neq v$ (since $u > v$)), this yields that the composition $B\circ T_{v}$
also leaves the label at $u$ invariant. Hence, $\left(  \left(  B\circ
T_{v}\right)  \left(  Af\right)  \right)  \left(  u\right)  =\left(
Af\right)  \left(  u\right)  $, so that%
\[
\left(  Af\right)  \left(  u\right)  =\left(  \left(  B\circ T_{v}\right)
\left(  Af\right)  \right)  \left(  u\right)  =\left(  \underbrace{\left(
B\circ T_{v}\circ A\right)  }_{\substack{=R\\\text{(by
(\ref{pf.prop.R.implicit.BTA}))}}}f\right)  \left(  u\right)  =\left(
Rf\right)  \left(  u\right)  .
\]

Forget that we fixed $u$. We thus have proven that%
\begin{equation}
\left(  Af\right)  \left(  u\right)  =\left(  Rf\right)  \left(  u\right)
\ \ \ \ \ \ \ \ \ \ \text{for every }u\in\widehat{P}\text{ such that }u\gtrdot
v. \label{pf.R.implicit.3}%
\end{equation}

\end{itemize}

Now, substituting (\ref{pf.R.implicit.2}) and (\ref{pf.R.implicit.3}) into
(\ref{pf.R.implicit.1}), we obtain%
\[
\left(  Rf\right)  \left(  v\right)  =\left(  \sum\limits_{\substack{u\in
\widehat{P};\\u\lessdot v}}f\left(  u\right)  \right)  \cdot\overline{f\left(
v\right)  }\cdot\overline{\sum\limits_{\substack{u\in\widehat{P};\\u\gtrdot
v}}\overline{\left(  Rf\right)  \left(  u\right)  }}.
\]
This proves Proposition \ref{prop.R.implicit}.
\end{proof}
\end{verlong}

The following near-trivial fact completes the picture:

\begin{proposition}
\label{prop.R.implicit.01}Let $f\in\mathbb{K}^{\widehat{P}}$. Assume that
$Rf\neq\undf$. Then $\left(  Rf\right)  \left(  0\right)  =f\left(  0\right)
$ and $\left(  Rf\right)  \left(  1\right)  =f\left(  1\right)  $.
\end{proposition}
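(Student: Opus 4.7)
The plan is to use the definition of $R$ via a linear extension, together with the observation that each toggle $T_v$ with $v \in P$ only alters the label at $v$, which is neither $0$ nor $1$ (since $0, 1 \notin P$).

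More precisely, I would fix any linear extension $(v_1, v_2, \ldots, v_m)$ of $P$ (which exists by Theorem \ref{thm.linext.ex}), so that $R = T_{v_1} \circ T_{v_2} \circ \cdots \circ T_{v_m}$ by Definition \ref{def.rm}. Define the intermediate $\mathbb{K}$-labelings
\[
f_m := f, \qquad f_{i-1} := T_{v_i} f_i \quad \text{for } i = m, m-1, \ldots, 1,
\]
so that $Rf = f_0$. Since $Rf \neq \undf$ and $\undf$ is propagated by every $T_{v_i}$, none of the $f_i$ can equal $\undf$; in particular, $T_{v_i} f_i \neq \undf$ for every $i$.

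Now for each $i \in \{1, 2, \ldots, m\}$, the element $v_i$ lies in $P$, so $v_i \neq 0$ and $v_i \neq 1$ in $\widehat{P}$ (by Definition \ref{def.Phat}, since $\widehat{P}$ is the disjoint union of $P$ with $\{0,1\}$). Hence Proposition \ref{prop.Tv}(a), applied to $f_i$ and the element $w = 0$ (resp.\ $w = 1$), gives
\[
f_{i-1}(0) = (T_{v_i} f_i)(0) = f_i(0) \qquad \text{and} \qquad f_{i-1}(1) = (T_{v_i} f_i)(1) = f_i(1).
\]
Chaining these equalities from $i = m$ down to $i = 1$ yields $f_0(0) = f_m(0)$ and $f_0(1) = f_m(1)$, i.e., $(Rf)(0) = f(0)$ and $(Rf)(1) = f(1)$, as desired.

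There is no real obstacle here; this is essentially a bookkeeping argument, and the only care needed is to justify that all intermediate labelings are defined (non-$\undf$) so that Proposition \ref{prop.Tv}(a) may be invoked at each step, which follows immediately from $Rf \neq \undf$.
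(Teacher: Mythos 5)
Your proof is correct and follows the same route as the paper's: decompose $R$ into toggles along a linear extension, note that each $T_{v_i}$ with $v_i \in P$ fixes the labels at $0$ and $1$ (via Proposition \ref{prop.Tv}(a)), and chain the equalities. The extra care about intermediate labelings being non-$\undf$ is sound and matches the paper's implicit reasoning.
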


\begin{vershort}
\begin{proof}
None of the toggles $T_{v}$, when applied to a $\mathbb{K}$-labeling, changes
the label of $0$ or the label of $1$. Hence, the same is true for the partial
map $R$ (since $R$ is a composition of such toggles $T_{v}$).
\end{proof}
\end{vershort}

\begin{verlong}
\begin{proof}
Let $\left(  v_{1},v_{2},\ldots,v_{m}\right)  $ be a linear extension of $P$.
(Such a linear extension exists, because of Theorem \ref{thm.linext.ex}.)

By the definition of birational rowmotion $R$, we have $R=T_{v_{1}}\circ
T_{v_{2}}\circ\cdots\circ T_{v_{m}}$.

Each of the maps $T_{v_{j}}$ with $j\in\left\{  1,2,\ldots,m\right\}  $ leaves
the label at $0$ unchanged when acting on a $\mathbb{K}$-labeling (since
$0\neq v_{j}$ (because $0\notin P$ whereas $v_{j}\in P$)). Therefore, the map
$R$ also leaves the label at $0$ unchanged (since $R$ is the composition
$T_{v_{1}}\circ T_{v_{2}}\circ\cdots\circ T_{v_{m}}$ of these maps $T_{v_{j}}%
$). Hence, $\left(  Rf\right)  \left(  0\right)  =f\left(  0\right)  $.
Similarly, $\left(  Rf\right)  \left(  1\right)  =f\left(  1\right)  $. This
proves Proposition \ref{prop.R.implicit.01}.
\end{proof}
\end{verlong}

A trivial corollary of Proposition \ref{prop.R.implicit.01} is:

\begin{corollary}
\label{cor.R.implicit.01}Let $f\in\mathbb{K}^{\widehat{P}}$ and $\ell
\in\mathbb{N}$. Assume that $R^{\ell}f\neq\undf$. Then $\left(  R^{\ell
}f\right)  \left(  0\right)  =f\left(  0\right)  $ and $\left(  R^{\ell
}f\right)  \left(  1\right)  =f\left(  1\right)  $.
\end{corollary}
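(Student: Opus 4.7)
The plan is to prove Corollary \ref{cor.R.implicit.01} by a straightforward induction on $\ell$, leveraging Proposition \ref{prop.R.implicit.01} as the one-step invariance property.

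For the base case $\ell = 0$, we have $R^{0}f = f$ (the identity map fixes $f$), so trivially $\left(R^{0}f\right)(0) = f(0)$ and $\left(R^{0}f\right)(1) = f(1)$, no matter whether $f = \undf$ or not.

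For the inductive step, I would fix $\ell \geq 0$ and assume the claim holds for $\ell$. Suppose $R^{\ell+1}f \neq \undf$. Since $R^{\ell+1}f = R\bigl(R^{\ell}f\bigr)$ by definition of composition, and since the extended convention $R(\undf) = \undf$ would force $R^{\ell+1}f = \undf$ if $R^{\ell}f$ were $\undf$, we must have $R^{\ell}f \neq \undf$. Then Proposition \ref{prop.R.implicit.01} (applied to the $\mathbb{K}$-labeling $R^{\ell}f$ in place of $f$) yields
\[
\bigl(R(R^{\ell}f)\bigr)(0) = \bigl(R^{\ell}f\bigr)(0) \qquad \text{and} \qquad \bigl(R(R^{\ell}f)\bigr)(1) = \bigl(R^{\ell}f\bigr)(1).
\]
Combining these with the inductive hypothesis $\bigl(R^{\ell}f\bigr)(0) = f(0)$ and $\bigl(R^{\ell}f\bigr)(1) = f(1)$, and using $R^{\ell+1}f = R\bigl(R^{\ell}f\bigr)$, gives $\bigl(R^{\ell+1}f\bigr)(0) = f(0)$ and $\bigl(R^{\ell+1}f\bigr)(1) = f(1)$, completing the induction.

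There is no real obstacle here; the only subtlety worth mentioning explicitly is the need to invoke the extension convention for partial maps (that $R(\undf) = \undf$) in order to pass from $R^{\ell+1}f \neq \undf$ to $R^{\ell}f \neq \undf$, which is required before Proposition \ref{prop.R.implicit.01} can be applied.
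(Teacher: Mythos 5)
Your proof is correct and is essentially the same argument as the paper's: the paper applies Proposition \ref{prop.R.implicit.01} to each intermediate labeling $R^{i-1}f$ (after checking $R^{i}f\neq\undf$ via the $R(\undf)=\undf$ convention) and chains the resulting equalities $\left(R^{i-1}f\right)(0)=\left(R^{i}f\right)(0)$, which is just your induction unrolled into a telescoping chain.
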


(Recall that $\mathbb{N}$ denotes the set $\left\{  0,1,2,\ldots\right\}  $ in
this paper.)

\begin{verlong}
\begin{proof}
[Proof of Corollary \ref{cor.R.implicit.01}.]Let $i\in\left\{  1,2,\ldots
,\ell\right\}  $. Then $R^{\ell-i}\left(  R^{i}f\right)  =R^{\ell}%
f\neq\undf=R^{\ell-i}\left(  \undf\right)  $, so that $R^{i}f\neq\undf$. In
other words, $R\left(  R^{i-1}f\right)  \neq\undf$ (since $R\left(
R^{i-1}f\right)  =R^{i}f$). Hence, we can apply Proposition
\ref{prop.R.implicit.01} to $R^{i-1}f$ instead of $f$. As a result, we obtain%
\[
\left(  R\left(  R^{i-1}f\right)  \right)  \left(  0\right)  =\left(
R^{i-1}f\right)  \left(  0 \right)  \ \ \ \ \ \ \ \ \ \ \text{and}%
\ \ \ \ \ \ \ \ \ \ \left(  R\left(  R^{i-1}f\right)  \right)  \left(
1\right)  =\left(  R^{i-1}f\right)  \left(  1\right)  .
\]
From $R^{i}f = R\left(  R^{i-1}f\right)  $, we now obtain $\left(
R^{i}f\right)  \left(  0\right)  =\left(  R\left(  R^{i-1}f\right)  \right)
\left(  0\right)  =\left(  R^{i-1}f\right)  \left(  0\right)  $. Hence,
$\left(  R^{i-1}f\right)  \left(  0\right)  =\left(  R^{i}f\right)  \left(
0\right)  $.

Forget that we fixed $i$. We thus have proved the equality $\left(
R^{i-1}f\right)  \left(  0\right)  =\left(  R^{i}f\right)  \left(  0\right)  $
for each $i\in\left\{  1,2,\ldots,\ell\right\}  $. Combining all of these
equalities, we obtain%
\[
\left(  R^{0}f\right)  \left(  0\right)  =\left(  R^{1}f\right)  \left(
0\right)  =\left(  R^{2}f\right)  \left(  0\right)  =\cdots=\left(  R^{\ell
}f\right)  \left(  0\right)  .
\]
Hence, $\left(  R^{\ell}f\right)  \left(  0\right)  =\underbrace{\left(
R^{0}f\right)  }_{=f}\left(  0\right)  =f\left(  0\right)  $. A similar
argument shows that $\left(  R^{\ell}f\right)  \left(  1\right)  =f\left(
1\right)  $. Thus, Corollary \ref{cor.R.implicit.01} is proven.
\end{proof}
\end{verlong}

\subsection{Well-definedness lemmas}

We next show some simple lemmas which say that certain inverses exist under
the assumption that $R^{\ell}f$ is well-defined for some values of $\ell$.
These lemmas are easy and unexciting, but are necessary in order to rigorously
prove the more substantial results that will follow. We recommend the reader
skip the proofs, at least on a first reading.

\begin{lemma}
\label{lem.R.wd-triv}Let $f\in\mathbb{K}^{\widehat{P}}$ and $k,\ell
\in\mathbb{N}$ satisfy $k\leq\ell$ and $R^{\ell}f\neq\undf$. Then, $R^{k}%
f\neq\undf$.
\end{lemma}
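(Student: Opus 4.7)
The plan is to derive this from the fact that partial maps propagate the ``undefined'' value $\undf$: by our convention, any partial map $g : X \dashrightarrow Y$ satisfies $g(\undf) = \undf$, and hence a composition of partial maps evaluated at $\undf$ always returns $\undf$. In particular, the partial map $R^{\ell - k} : \mathbb{K}^{\widehat{P}} \dashrightarrow \mathbb{K}^{\widehat{P}}$ satisfies $R^{\ell - k}(\undf) = \undf$.

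The argument itself is a one-line contrapositive. First I would observe the semigroup identity $R^\ell = R^{\ell - k} \circ R^k$ (valid since $k \leq \ell$, so $\ell - k \in \mathbb{N}$), and therefore
\[
R^\ell f \;=\; R^{\ell - k}\bigl(R^k f\bigr).
\]
Then I would assume for contradiction that $R^k f = \undf$. Substituting this into the previous equation gives $R^\ell f = R^{\ell - k}(\undf) = \undf$, contradicting the hypothesis $R^\ell f \neq \undf$. Hence $R^k f \neq \undf$, as desired.

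There is no genuine obstacle here; the only ``content'' is the bookkeeping convention that $\undf$ propagates through partial maps, which was fixed in the definition of the composition of partial maps earlier in the paper. The lemma is stated separately merely so that later arguments can cite it verbatim whenever they need to pass from well-definedness of a high iterate $R^\ell f$ to well-definedness of an intermediate iterate $R^k f$.
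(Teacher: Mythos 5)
Your proof is correct and is essentially identical to the paper's own argument: both decompose $R^{\ell}$ as $R^{\ell-k}\circ R^{k}$ and use the convention that partial maps send $\undf$ to $\undf$ to conclude that $R^{k}f=\undf$ would force $R^{\ell}f=\undf$. Nothing is missing.
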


\begin{vershort}
\begin{proof}
We have $R^{\ell-k}\left(  R^{k}f\right)  =R^{\ell}f\neq\undf=R^{\ell
-k}\left(  \undf\right)  $, so that $R^{k}f\neq\undf$.
\end{proof}
\end{vershort}

\begin{verlong}
\begin{proof}
From $k\leq\ell$, we obtain $R^{\ell}f=R^{\ell-k}\left(  R^{k}f\right)  $.
Thus, if we had $R^{k}f=\undf$, then we would obtain%
\[
R^{\ell}f=R^{\ell-k}\left(  \underbrace{R^{k}f}_{=\undf}\right)  =R^{\ell
-k}\left(  \undf\right)  =\undf,
\]
which would contradict $R^{\ell}f\neq\undf$. Hence, we must have $R^{k}%
f\neq\undf$. This proves Lemma \ref{lem.R.wd-triv}.
\end{proof}
\end{verlong}

\begin{lemma}
\label{lem.R.inv}Let $f\in\mathbb{K}^{\widehat{P}}$ satisfy $Rf\neq\undf$. Let
$v\in P$. Then, $f\left(  v\right)  $ is invertible.
\end{lemma}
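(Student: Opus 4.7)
The plan is to localize the non-$\undf$ condition on $Rf$ to extract, via one carefully chosen toggle, the invertibility of $f(v)$. By Theorem \ref{thm.linext.ex}, fix a linear extension $(v_1, v_2, \ldots, v_m)$ of $P$, and let $i \in \{1, 2, \ldots, m\}$ be the (unique) index with $v_i = v$. Then $R = T_{v_1} \circ T_{v_2} \circ \cdots \circ T_{v_m}$ by Definition \ref{def.rm}, so setting
\[
A := T_{v_{i+1}} \circ T_{v_{i+2}} \circ \cdots \circ T_{v_m}
\quad\text{and}\quad
B := T_{v_1} \circ T_{v_2} \circ \cdots \circ T_{v_{i-1}},
\]
we have $R = B \circ T_v \circ A$.

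The first key step is to propagate the assumption $Rf \neq \undf$ through $B$ and $T_v$. Since $(B \circ T_v \circ A)f = Rf \neq \undf$ and any partial map sends $\undf$ to $\undf$, we successively conclude $(T_v \circ A)f \neq \undf$, and then in particular $T_v(Af) \neq \undf$ (and $Af \neq \undf$). By Definition \ref{def.Tv}, the fact that $T_v(Af) \neq \undf$ forces every sub-expression in
\[
\left(\sum_{\substack{u \in \widehat{P};\\ u \lessdot v}} (Af)(u)\right) \cdot \overline{(Af)(v)} \cdot \overline{\sum_{\substack{u \in \widehat{P};\\ u \gtrdot v}} \overline{(Af)(u)}}
\]
to be well-defined; in particular, $\overline{(Af)(v)}$ exists, i.e., $(Af)(v)$ is invertible.

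The second key step is to observe that $A$ leaves the label at $v$ unchanged. Indeed, each index $j \in \{i+1, i+2, \ldots, m\}$ satisfies $v_j \neq v_i = v$ (since the entries of a linear extension are pairwise distinct), so by Proposition \ref{prop.Tv}(a), each toggle $T_{v_j}$ in the composition $A$ preserves the label at $v$ whenever it is defined. Since $Af \neq \undf$, this propagates through the composition and yields $(Af)(v) = f(v)$. Combining this with the previous step gives that $f(v)$ is invertible, as claimed. There is no real obstacle here; the only subtlety is being explicit that a composition of partial maps returning a non-$\undf$ value means every intermediate value is non-$\undf$, so that each intermediate toggle's defining expression is well-formed.
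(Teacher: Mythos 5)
Your proof is correct and follows essentially the same route as the paper: the paper simply cites Proposition \ref{prop.R.implicit}, whose statement already contains the factor $\overline{f(v)}$, and reads off the invertibility from there, while you have inlined the relevant part of that proposition's proof (the decomposition $R = B \circ T_v \circ A$ and the fact that $A$ preserves the label at $v$). Both arguments are the same in substance, and your handling of the $\undf$-propagation is careful and accurate.
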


\begin{proof}
Proposition \ref{prop.R.implicit} yields
\[
\left(  Rf\right)  \left(  v\right)  =\left(  \sum\limits_{\substack{u\in
\widehat{P};\\u\lessdot v}}f\left(  u\right)  \right)  \cdot\overline{f\left(
v\right)  }\cdot\overline{\sum\limits_{\substack{u\in\widehat{P};\\u\gtrdot
v}}\overline{\left(  Rf\right)  \left(  u\right)  }}.
\]
Thus, in particular, $\overline{f\left(  v\right)  }$ is well-defined. In
other words, $f\left(  v\right)  $ is invertible. This proves Lemma
\ref{lem.R.inv}.
\end{proof}

\begin{lemma}
\label{lem.R.1inv}Assume that $P\neq\varnothing$. Let $f\in\mathbb{K}%
^{\widehat{P}}$ satisfy $Rf\neq\undf$. Then, $f\left(  1\right)  $ is invertible.
\end{lemma}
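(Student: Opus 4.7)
The plan is to evaluate the birational rowmotion formula at a maximal element of $P$, using the fact that in the extended poset $\widehat{P}$ such an element is covered only by $1$.

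Since $P \neq \varnothing$, Proposition \ref{prop.poset-minmax} \textbf{(b)} supplies a maximal element $v \in P$. By the definition of $\widehat{P}$ (and the maximality of $v$ in $P$), the only element of $\widehat{P}$ satisfying $u \gtrdot v$ is $u = 1$. Hence the sum $\sum_{u \in \widehat{P};\ u \gtrdot v} \overline{(Rf)(u)}$ collapses to the single term $\overline{(Rf)(1)}$.

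Now I would apply Proposition \ref{prop.R.implicit} at this $v$, which is legitimate because $Rf \neq \undf$. This yields
\[
(Rf)(v) = \left(\sum_{\substack{u \in \widehat{P};\\ u \lessdot v}} f(u)\right) \cdot \overline{f(v)} \cdot \overline{\overline{(Rf)(1)}}.
\]
Since the left-hand side is an element of $\mathbb{K}$ (not $\undf$), every sub-expression on the right must be well-defined; in particular, the innermost inverse $\overline{(Rf)(1)}$ must exist, which forces $(Rf)(1)$ to be invertible. Finally, Proposition \ref{prop.R.implicit.01} gives $(Rf)(1) = f(1)$, so $f(1)$ is invertible as claimed.

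The only subtle point (and it is a minor one) is the appeal to the convention that an equality of partial-map values requires every sub-expression on either side to be a genuine element of $\mathbb{K}$: once this is accepted, the argument is essentially a one-liner. No real obstacle appears here.
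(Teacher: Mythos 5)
Your proof is correct and follows essentially the same route as the paper: pick a maximal element of $P$ (which is covered only by $1$ in $\widehat{P}$), apply Proposition~\ref{prop.R.implicit} there to conclude that $\overline{(Rf)(1)}$ must be well-defined, and finish with Proposition~\ref{prop.R.implicit.01}. The only cosmetic difference is that you note the sum over covers collapses to the single term $\overline{(Rf)(1)}$, whereas the paper only needs the weaker observation that $\overline{(Rf)(1)}$ is one of the sub-expressions required to be well-defined.
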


\begin{vershort}
\begin{proof}
We have $P\neq\varnothing$. Thus, the poset $P$ has a maximal element $y$ (by
Proposition \ref{prop.poset-minmax} \textbf{(b)}). This $y$ then satisfies
$1\gtrdot y$ in $\widehat{P}$.

We have $Rf\neq\undf$. Therefore, Proposition \ref{prop.R.implicit} (applied
to $v=y$) yields%
\[
\left(  Rf\right)  \left(  y\right)  =\left(  \sum\limits_{\substack{u\in
\widehat{P};\\u\lessdot y}}f\left(  u\right)  \right)  \cdot\overline{f\left(
y\right)  }\cdot\overline{\sum\limits_{\substack{u\in\widehat{P};\\u\gtrdot
y}}\overline{\left(  Rf\right)  \left(  u\right)  }}.
\]
Hence, in particular, $\overline{\left(  Rf\right)  \left(  u\right)  }$ is
well-defined for each $u\in\widehat{P}$ satisfying $u\gtrdot y$. We can apply
this to $u=1$ (since $1 \gtrdot y$), and thus conclude that $\overline{\left(
Rf\right)  \left(  1\right)  }$ is well-defined. In other words, $\left(
Rf\right)  \left(  1\right)  $ is invertible. However, Proposition
\ref{prop.R.implicit.01} yields $\left(  Rf\right)  \left(  1\right)
=f\left(  1\right)  $. Thus, $f\left(  1\right)  $ is invertible.
\end{proof}
\end{vershort}

\begin{verlong}
\begin{proof}
We have $P\neq\varnothing$. Thus, the poset $P$ has a maximal element $y$ (by
Proposition \ref{prop.poset-minmax} \textbf{(b)}). Consider this $y$. The
element $y$ of $P$ is maximal. Thus, in $\widehat{P}$, we have $1\gtrdot y$
(by Remark \ref{rmk.Phat.covers} \textbf{(b)}). In other words, $1$ is a
$u\in\widehat{P}$ satisfying $u\gtrdot y$.

We have $Rf\neq\undf$. Therefore, Proposition \ref{prop.R.implicit} (applied
to $v=y$) yields%
\[
\left(  Rf\right)  \left(  y\right)  =\left(  \sum\limits_{\substack{u\in
\widehat{P};\\u\lessdot y}}f\left(  u\right)  \right)  \cdot\overline{f\left(
y\right)  }\cdot\overline{\sum\limits_{\substack{u\in\widehat{P};\\u\gtrdot
y}}\overline{\left(  Rf\right)  \left(  u\right)  }}.
\]
Hence, in particular, $\overline{\left(  Rf\right)  \left(  u\right)  }$ is
well-defined for each $u\in\widehat{P}$ satisfying $u\gtrdot y$. We can apply
this to $u=1$ (since $1$ is a $u\in\widehat{P}$ satisfying $u\gtrdot y$), and
thus conclude that $\overline{\left(  Rf\right)  \left(  1\right)  }$ is
well-defined. In other words, $\left(  Rf\right)  \left(  1\right)  $ is
invertible. However, Proposition \ref{prop.R.implicit.01} yields $\left(
Rf\right)  \left(  1\right)  =f\left(  1\right)  $. Thus, $f\left(  1\right)
$ is invertible (since $\left(  Rf\right)  \left(  1\right)  $ is invertible).
This proves Lemma \ref{lem.R.1inv}.
\end{proof}
\end{verlong}

\begin{lemma}
\label{lem.R.01inv}Assume that $P\neq\varnothing$. Let $f\in\mathbb{K}%
^{\widehat{P}}$ satisfy $R^{2}f\neq\undf$. Then, $f\left(  0\right)  $ and
$f\left(  1\right)  $ are invertible.
\end{lemma}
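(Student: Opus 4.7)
The plan is to obtain both invertibility claims from lemmas already established, with $f\left(1\right)$ being immediate and $f\left(0\right)$ requiring one more application of the implicit formula for $R$. Since $R^{2}f\neq\undf$, Lemma~\ref{lem.R.wd-triv} (applied with $k=1$ and $\ell=2$) gives $Rf\neq\undf$; Lemma~\ref{lem.R.1inv} applied to $f$ then immediately yields that $f\left(1\right)$ is invertible, settling half of the claim.

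For $f\left(0\right)$, I would choose a minimal element $x\in P$ via Proposition~\ref{prop.poset-minmax}\textbf{(a)}; the minimality of $x$ in $P$ ensures that the only element of $\widehat{P}$ covered by $x$ is $0$. Applying Proposition~\ref{prop.R.implicit} with $v=x$ then yields
\[
\left(Rf\right)\left(x\right) \;=\; f\left(0\right) \cdot \overline{f\left(x\right)} \cdot \overline{\sum_{\substack{u\in\widehat{P};\\u\gtrdot x}}\overline{\left(Rf\right)\left(u\right)}}.
\]
Because the right-hand side is well-defined, both $\overline{f\left(x\right)}$ and the outer inverse $\overline{\sum_{u\gtrdot x}\overline{\left(Rf\right)\left(u\right)}}$ exist; their product $B := \overline{f\left(x\right)} \cdot \overline{\sum_{u\gtrdot x}\overline{\left(Rf\right)\left(u\right)}}$ is therefore invertible by Proposition~\ref{prop.inverses.ab}\textbf{(b)}. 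Right-multiplying the equation $\left(Rf\right)\left(x\right) = f\left(0\right)\cdot B$ by $\overline{B}$ then recovers $f\left(0\right) = \left(Rf\right)\left(x\right) \cdot \overline{B}$.

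To finish, I would invoke Lemma~\ref{lem.R.inv} with $Rf$ in place of $f$ — a legal move since $R\left(Rf\right) = R^{2}f \neq \undf$ — and apply it at $v=x\in P$ to conclude that $\left(Rf\right)\left(x\right)$ is invertible. The displayed equation above then exhibits $f\left(0\right)$ as a product of two invertible elements, so $f\left(0\right)$ is invertible. I do not anticipate a serious obstacle: the proof is a pure reduction to earlier lemmas, and the only subtle point is reading the invertibility of $\sum_{u\gtrdot x}\overline{\left(Rf\right)\left(u\right)}$ off the global well-definedness of the right-hand side of the equation above, rather than trying to establish it directly.
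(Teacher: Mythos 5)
Your proposal is correct and follows essentially the same route as the paper's proof: reduce to $Rf\neq\undf$ via Lemma~\ref{lem.R.wd-triv}, get $f\left(1\right)$ from Lemma~\ref{lem.R.1inv}, then pick a minimal element $x$, read invertibility of $f\left(x\right)$ and of $\sum_{u\gtrdot x}\overline{\left(Rf\right)\left(u\right)}$ off the well-definedness of the formula in Proposition~\ref{prop.R.implicit}, combine with the invertibility of $\left(Rf\right)\left(x\right)$ from Lemma~\ref{lem.R.inv}, and solve for $f\left(0\right)$ as a product of invertibles. Your grouping of the two rightmost factors into a single element $B$ is only a cosmetic repackaging of the paper's identity $f\left(0\right)=\left(Rf\right)\left(x\right)\cdot\left(\sum_{u\gtrdot x}\overline{\left(Rf\right)\left(u\right)}\right)\cdot f\left(x\right)$.
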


\begin{vershort}
\begin{proof}
The poset $P$ has a minimal element $x$ (by Proposition
\ref{prop.poset-minmax} \textbf{(a)}).

From $R^{2}f \neq\undf$, we obtain $Rf\neq\undf$ (by Lemma \ref{lem.R.wd-triv}%
); thus, $Rf\in\mathbb{K}^{\widehat{P}}$. Hence, Lemma \ref{lem.R.1inv} yields
that $f\left(  1\right)  $ is invertible. Furthermore, Lemma \ref{lem.R.inv}
(applied to $Rf$ and $x$ instead of $f$ and $v$) yields that $\left(
Rf\right)  \left(  x\right)  $ is invertible (since $R\left(  Rf\right)
\neq\undf$).

Recall again that $Rf\neq\undf$. Hence, Proposition \ref{prop.R.implicit}
(applied to $v=x$) yields%
\begin{equation}
\left(  Rf\right)  \left(  x\right)  =\left(  \sum\limits_{\substack{u\in
\widehat{P};\\u\lessdot x}}f\left(  u\right)  \right)  \cdot\overline{f\left(
x\right)  }\cdot\overline{\sum\limits_{\substack{u\in\widehat{P};\\u\gtrdot
x}}\overline{\left(  Rf\right)  \left(  u\right)  }}.
\label{pf.lem.R.01inv.short.1}%
\end{equation}

The only $u\in\widehat{P}$ satisfying $u\lessdot x$ is the element $0$ of
$\widehat{P}$ (since $x$ is a minimal element of $P$). Thus, $\sum
\limits_{\substack{u\in\widehat{P};\\u\lessdot x}}f\left(  u\right)  =f\left(
0\right)  $. Hence, (\ref{pf.lem.R.01inv.short.1}) rewrites as%
\[
\left(  Rf\right)  \left(  x\right)  =f\left(  0\right)  \cdot\overline
{f\left(  x\right)  }\cdot\overline{\sum\limits_{\substack{u\in\widehat{P}%
;\\u\gtrdot x}}\overline{\left(  Rf\right)  \left(  u\right)  }}.
\]
Solving this equality for $f\left(  0\right)  $, we obtain
\[
f\left(  0\right)  = \left(  Rf\right)  \left(  x\right)  \cdot\left(
\sum\limits_{\substack{u\in\widehat{P};\\u\gtrdot x}} \left(  Rf\right)
\left(  u\right)  \right)  \cdot f\left(  x\right)  .
\]
The right hand side of this equality is a product of three invertible elements
(indeed, the two factors $\sum\limits_{\substack{u\in\widehat{P};\\u\gtrdot
x}} \left(  Rf\right)  \left(  u\right)  $ and $f\left(  x\right)  $ are
invertible because their inverses appear in (\ref{pf.lem.R.01inv.short.1}),
and we already know that the factor $\left(  Rf\right)  \left(  x\right)  $ is
invertible), and thus itself invertible. Hence, the left hand side is
invertible. In other words, $f\left(  0\right)  $ is invertible.
\end{proof}
\end{vershort}

\begin{verlong}
\begin{proof}
We have $P\neq\varnothing$. Thus, the poset $P$ has a minimal element $x$ (by
Proposition \ref{prop.poset-minmax} \textbf{(a)}). Consider this $x$.

From $R\left(  Rf\right)  =R^{2}f\neq\undf=R\left(  \undf\right)  $, we obtain
$Rf\neq\undf$; thus, $Rf\in\mathbb{K}^{\widehat{P}}$. Hence, Lemma
\ref{lem.R.1inv} yields that $f\left(  1\right)  $ is invertible. Furthermore,
Lemma \ref{lem.R.inv} (applied to $Rf$ and $x$ instead of $f$ and $v$) yields
that $\left(  Rf\right)  \left(  x\right)  $ is invertible (since $R\left(
Rf\right)  \neq\undf$).

Recall again that $Rf\neq\undf$. Hence, Proposition \ref{prop.R.implicit}
(applied to $v=x$) yields%
\begin{equation}
\left(  Rf\right)  \left(  x\right)  =\left(  \sum\limits_{\substack{u\in
\widehat{P};\\u\lessdot x}}f\left(  u\right)  \right)  \cdot\overline{f\left(
x\right)  }\cdot\overline{\sum\limits_{\substack{u\in\widehat{P};\\u\gtrdot
x}}\overline{\left(  Rf\right)  \left(  u\right)  }}. \label{pf.lem.R.01inv.1}%
\end{equation}

The only $u\in\widehat{P}$ satisfying $u\lessdot x$ is the element $0$ of
$\widehat{P}$ (since $x$ is a minimal element of $P$). Thus, $\sum
\limits_{\substack{u\in\widehat{P};\\u\lessdot x}}f\left(  u\right)  =f\left(
0\right)  $. Hence, (\ref{pf.lem.R.01inv.1}) rewrites as%
\begin{equation}
\left(  Rf\right)  \left(  x\right)  =f\left(  0\right)  \cdot\overline
{f\left(  x\right)  }\cdot\overline{\sum\limits_{\substack{u\in\widehat{P}%
;\\u\gtrdot x}}\overline{\left(  Rf\right)  \left(  u\right)  }}.
\label{pf.lem.R.01inv.2}%
\end{equation}
This equality shows that $\overline{f\left(  x\right)  }$ and $\overline
{\sum\limits_{\substack{u\in\widehat{P};\\u\gtrdot x}}\overline{\left(
Rf\right)  \left(  u\right)  }}$ are well-defined. Thus, the elements
$f\left(  x\right)  $ and $\sum\limits_{\substack{u\in\widehat{P};\\u\gtrdot
x}}\overline{\left(  Rf\right)  \left(  u\right)  }$ are invertible.
Multiplying both sides of the equality (\ref{pf.lem.R.01inv.2}) by $\left(
\sum\limits_{\substack{u\in\widehat{P};\\u\gtrdot x}}\overline{\left(
Rf\right)  \left(  u\right)  }\right)  \cdot f\left(  x\right)  $ on the
right, we obtain%
\begin{align*}
\left(  Rf\right)  \left(  x\right)  \cdot\left(  \sum\limits_{\substack{u\in
\widehat{P};\\u\gtrdot x}}\overline{\left(  Rf\right)  \left(  u\right)
}\right)  \cdot f\left(  x\right)   &  =f\left(  0\right)  \cdot
\overline{f\left(  x\right)  }\cdot\underbrace{\overline{\sum
\limits_{\substack{u\in\widehat{P};\\u\gtrdot x}}\overline{\left(  Rf\right)
\left(  u\right)  }}\cdot\left(  \sum\limits_{\substack{u\in\widehat{P}%
;\\u\gtrdot x}}\overline{\left(  Rf\right)  \left(  u\right)  }\right)  }%
_{=1}\cdot\,f\left(  x\right) \\
&  =f\left(  0\right)  \cdot\underbrace{\overline{f\left(  x\right)  }\cdot
f\left(  x\right)  }_{=1}=f\left(  0\right)  .
\end{align*}
The left hand side of this equality is invertible (since it is the product of
the three invertible elements $\left(  Rf\right)  \left(  x\right)  $,
$\sum\limits_{\substack{u\in\widehat{P};\\u\gtrdot x}}\overline{\left(
Rf\right)  \left(  u\right)  }$ and $f\left(  x\right)  $). Thus, its right
hand side is invertible as well. In other words, $f\left(  0\right)  $ is
invertible. This completes the proof of Lemma \ref{lem.R.01inv}.
\end{proof}
\end{verlong}

\begin{lemma}
\label{lem.not-minimal.then-covered}Let $v\in P$. Assume that $v$ is not a
minimal element of $P$. Then, there exists at least one element $w\in P$
satisfying $v\gtrdot w$.
\end{lemma}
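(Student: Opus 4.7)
The plan is to exploit the fact that any nonempty finite poset has a maximal element (Proposition \ref{prop.poset-minmax} \textbf{(b)}), applied to the strict downset of $v$.

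First I would observe that the set $S := \left\{ u \in P \ \mid\ u < v \right\}$ is nonempty. Indeed, since $v$ is not minimal, by the definition of ``minimal'' there exists some $w \in P$ with $w < v$ (here I use that ``$v$ is minimal'' is equivalent to ``there is no $w \in P$ such that $w < v$''), and such a $w$ lies in $S$. Moreover, $S$ is finite (as a subset of the finite poset $P$), and $S$ inherits the partial order from $P$.

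Next I would apply Proposition \ref{prop.poset-minmax} \textbf{(b)} to the nonempty finite poset $S$, obtaining a maximal element $w$ of $S$. Then $w \in S$ gives $w < v$, so in particular $w \neq v$. I claim that $v \gtrdot w$. The only thing to check is that there is no $z \in P$ with $w < z < v$. Suppose for contradiction such a $z$ existed. Then $z < v$ would give $z \in S$, and $z > w$ would contradict the maximality of $w$ in $S$. Hence no such $z$ exists, proving $v \gtrdot w$.

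The main (and really only) obstacle is a pedantic one: making sure the definition of ``minimal'' is invoked correctly in the contrapositive form to produce some element strictly below $v$. Everything else is a direct application of Proposition \ref{prop.poset-minmax} \textbf{(b)} to the downset $S$. No calculation is required.
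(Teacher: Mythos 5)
Your proof is correct and is essentially identical to the paper's: both apply Proposition \ref{prop.poset-minmax} \textbf{(b)} to the nonempty finite subposet $P_{<v}=\left\{u\in P\ \mid\ u<v\right\}$ and check that a maximal element of this subposet is covered by $v$. No issues.
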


\begin{vershort}
\begin{proof}
Apply Proposition \ref{prop.poset-minmax} \textbf{(b)} to the subposet
$P_{<v}:=\left\{  u\in P\ \mid\ u<v\right\}  $ of $P$. Details are left as an exercise.
\end{proof}
\end{vershort}

\begin{verlong}
\begin{proof}
The element $v$ of $P$ is not minimal. Thus, there exists some $u\in P$
satisfying $u<v$. In other words, the set $P_{<v}:=\left\{  u\in
P\ \mid\ u<v\right\}  $ is nonempty. Consider this set $P_{<v}$ as a subposet
of $P$ (with its partial order inherited from $P$). Then, $P_{<v}$ is a finite
poset and thus has a maximal element (by Proposition \ref{prop.poset-minmax}
\textbf{(b)}, applied to $P_{<v}$ instead of $P$). Let $m$ be this maximal
element. Then, $m\in P_{<v}=\left\{  u\in P\ \mid\ u<v\right\}  $; in other
words, $m\in P$ and $m<v$. There exists no element of $P_{<v}$ that is larger
than $m$ (since $m$ is a \textbf{maximal} element of $P_{<v}$).

If there was some $w\in P$ satisfying $m<w<v$, then this $w$ would belong to
$P_{<v}$ (since $w<v$) but would be larger than $m$ in the poset $P_{<v}$
(since $m<w$); this would contradict the fact that there exists no element of
$P_{<v}$ that is larger than $m$. Hence, there is no $w\in P$ satisfying
$m<w<v$. In other words, we have $m\lessdot v$ in $P$ (since $m<v$). In other
words, we have $v\gtrdot m$ in $P$. Hence, there exists at least one element
$w\in P$ satisfying $v\gtrdot w$ (namely, $w=m$). This proves Lemma
\ref{lem.not-minimal.then-covered}.
\end{proof}
\end{verlong}

\begin{lemma}
\label{lem.R.inv-not-min}Let $f\in\mathbb{K}^{\widehat{P}}$ satisfy
$Rf\neq\undf$. Let $v\in P$. Assume that $v$ is not a minimal element of $P$.
Then, $\left(  Rf\right)  \left(  v\right)  $ is invertible.
\end{lemma}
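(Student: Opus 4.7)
The plan is to leverage Proposition~\ref{prop.R.implicit} at an element below $v$, thereby forcing the well-definedness of $\overline{(Rf)(v)}$ from the definedness of $Rf$.

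First, since $v$ is not minimal in $P$, I would invoke Lemma~\ref{lem.not-minimal.then-covered} to pick some $w \in P$ with $v \gtrdot w$. The idea is that such a $w$ exists precisely so that $v$ appears as one of the cover-successors of $w$, and the toggle formula at $w$ involves inverses of $(Rf)$-labels at such successors.

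Next, I would apply Proposition~\ref{prop.R.implicit} with $w$ in place of $v$, using the hypothesis $Rf \neq \undf$. This yields
\[
(Rf)(w) \;=\; \left(\sum_{\substack{u\in\widehat{P};\\u\lessdot w}} f(u)\right)\cdot \overline{f(w)} \cdot \overline{\sum_{\substack{u\in\widehat{P};\\u\gtrdot w}} \overline{(Rf)(u)}}.
\]
Because $Rf \neq \undf$, the left-hand side is a bona fide element of $\mathbb{K}$, so the entire right-hand side must be well-defined. In particular, each summand $\overline{(Rf)(u)}$ for $u \gtrdot w$ must exist, i.e., $(Rf)(u)$ must be invertible for every such $u$.

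Finally, since $v$ is itself one of the elements satisfying $v \gtrdot w$, I specialize the last observation to $u = v$ and conclude that $(Rf)(v)$ is invertible, as desired. The step that does most of the real work is really Proposition~\ref{prop.R.implicit} together with our convention that a compound expression involving $\overline{\cdot}$ is either entirely well-defined or evaluates to $\undf$; no obstacle of any substance should arise, since Lemma~\ref{lem.not-minimal.then-covered} supplies the needed $w$ and the rest is bookkeeping of what the implicit formula for $R$ forces to be invertible.
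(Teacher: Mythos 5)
Your proposal is correct and follows exactly the paper's own argument: pick $w\in P$ with $v\gtrdot w$ via Lemma~\ref{lem.not-minimal.then-covered}, apply Proposition~\ref{prop.R.implicit} at $w$, and read off the invertibility of $\left(Rf\right)\left(v\right)$ from the well-definedness of $\overline{\sum_{u\gtrdot w}\overline{\left(Rf\right)\left(u\right)}}$. No differences worth noting.
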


\begin{proof}
Lemma \ref{lem.not-minimal.then-covered} shows that there exists at least one
element $w\in P$ satisfying $v\gtrdot w$. Consider this $w$. Proposition
\ref{prop.R.implicit} (applied to $w$ instead of $v$) yields%
\[
\left(  Rf\right)  \left(  w\right)  =\left(  \sum\limits_{\substack{u\in
\widehat{P};\\u\lessdot w}}f\left(  u\right)  \right)  \cdot\overline{f\left(
w\right)  }\cdot\overline{\sum\limits_{\substack{u\in\widehat{P};\\u\gtrdot
w}}\overline{\left(  Rf\right)  \left(  u\right)  }}.
\]
In particular, $\overline{\left(  Rf\right)  \left(  u\right)  }$ is
well-defined for each $u\in\widehat{P}$ satisfying $u\gtrdot w$. Applying this
to $u=v$, we conclude that $\overline{\left(  Rf\right)  \left(  v\right)  }$
is well-defined (since $v\in P\subseteq\widehat{P}$ and $v\gtrdot w$). In
other words, $\left(  Rf\right)  \left(  v\right)  $ is invertible.
\end{proof}

\begin{lemma}
\label{lem.R.inv-2}Assume that $P\neq\varnothing$. Let $f\in\mathbb{K}%
^{\widehat{P}}$ satisfy $Rf\neq\undf$. Let $v\in\widehat{P}$. Assume that
$f\left(  0\right)  $ is invertible. Then, $\left(  Rf\right)  \left(
v\right)  $ is invertible.
\end{lemma}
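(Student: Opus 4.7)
The plan is to split into cases according to where $v$ lies in $\widehat{P}$, invoking previously established lemmas wherever possible and using the hypothesis that $f(0)$ is invertible only in the remaining case.

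First, if $v = 0$, then Proposition \ref{prop.R.implicit.01} gives $(Rf)(0) = f(0)$, which is invertible by assumption. If $v = 1$, then Proposition \ref{prop.R.implicit.01} gives $(Rf)(1) = f(1)$, and Lemma \ref{lem.R.1inv} (whose hypotheses $P \neq \varnothing$ and $Rf \neq \undf$ are exactly what we have) shows that $f(1)$ is invertible. If $v \in P$ is not a minimal element of $P$, then Lemma \ref{lem.R.inv-not-min} directly yields that $(Rf)(v)$ is invertible.

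The only remaining case is when $v \in P$ is a minimal element of $P$. Here I would invoke Proposition \ref{prop.R.implicit}, which gives
\[
(Rf)(v) = \left(\sum\limits_{\substack{u \in \widehat{P};\\ u \lessdot v}} f(u)\right) \cdot \overline{f(v)} \cdot \overline{\sum\limits_{\substack{u \in \widehat{P};\\ u \gtrdot v}} \overline{(Rf)(u)}}.
\]
Because $v$ is minimal in $P$, the only $u \in \widehat{P}$ with $u \lessdot v$ is $0$; thus the first factor equals $f(0)$, which is invertible by hypothesis. The middle factor $\overline{f(v)}$ is invertible as its inverse is $f(v)$ itself (and it is well-defined since the whole right hand side is, by $Rf \neq \undf$); likewise the third factor is the inverse of a well-defined element, hence invertible. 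The product of three invertible elements is invertible, completing this case.

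The only mild subtlety is making sure that all three factors on the right hand side of the displayed formula are individually well-defined and invertible, which is precisely the content of the assumption $Rf \neq \undf$ combined with Proposition \ref{prop.inverses.ab}. No real obstacle is expected; this lemma is essentially a bookkeeping consequence of the earlier invertibility lemmas together with the implicit description of $Rf$.
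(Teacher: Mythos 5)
Your proof is correct and follows essentially the same route as the paper's: the same four-case split ($v=0$, $v=1$, $v\in P$ non-minimal, $v\in P$ minimal), using Proposition \ref{prop.R.implicit.01}, Lemma \ref{lem.R.1inv}, Lemma \ref{lem.R.inv-not-min}, and finally Proposition \ref{prop.R.implicit} with the observation that the lower sum collapses to $f(0)$ for minimal $v$. No gaps.
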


\begin{vershort}
\begin{proof}
If $v=0$, then the claim follows from our assumption about $f\left(  0\right)
$ (since Proposition \ref{prop.R.implicit.01} yields $\left(  Rf\right)
\left(  0\right)  =f\left(  0\right)  $). If $v=1$, then it instead follows
from Lemma \ref{lem.R.1inv} (since Proposition \ref{prop.R.implicit.01} yields
$\left(  Rf\right)  \left(  1\right)  =f\left(  1\right)  $). Thus, we assume
from now on that $v$ is neither $0$ nor $1$. Hence, $v\in P$.

If $v$ is not a minimal element of $P$, then the claim follows from Lemma
\ref{lem.R.inv-not-min}. Hence, we assume from now on that $v$ is a minimal
element of $P$. Therefore, the only $u\in\widehat{P}$ satisfying $u\lessdot v$
is the element $0$. Thus, $\sum\limits_{\substack{u\in\widehat{P};\\u\lessdot
v}}f\left(  u\right)  =f\left(  0\right)  $. Now, Proposition
\ref{prop.R.implicit} yields%
\[
\left(  Rf\right)  \left(  v\right)  =\underbrace{\left(  \sum
\limits_{\substack{u\in\widehat{P};\\u\lessdot v}}f\left(  u\right)  \right)
}_{=f\left(  0\right)  }\cdot\overline{f\left(  v\right)  }\cdot\overline
{\sum\limits_{\substack{u\in\widehat{P};\\u\gtrdot v}}\overline{\left(
Rf\right)  \left(  u\right)  }}=f\left(  0\right)  \cdot\overline{f\left(
v\right)  }\cdot\overline{\sum\limits_{\substack{u\in\widehat{P};\\u\gtrdot
v}}\overline{\left(  Rf\right)  \left(  u\right)  }}.
\]
The right hand side of this equality is a product of three invertible elements
(since $f\left(  0\right)  $ is invertible, and since $\overline{f\left(
v\right)  }$ and $\overline{\sum\limits_{\substack{u\in\widehat{P};\\u\gtrdot
v}}\overline{\left(  Rf\right)  \left(  u\right)  }}$ are
invertible\footnote{because an inverse is always invertible}), and thus itself
is invertible. Thus, the left hand side is invertible as well. In other words,
$\left(  Rf\right)  \left(  v\right)  $ is invertible.
\end{proof}
\end{vershort}

\begin{verlong}
\begin{proof}
We are in one of the following four cases:

\textit{Case 1:} We have $v=0$.

\textit{Case 2:} We have $v=1$.

\textit{Case 3:} The element $v$ is a minimal element of $P$.

\textit{Case 4:} The element $v$ is neither $0$ nor $1$ nor a minimal element
of $P$.

Let us first consider Case 1. In this case, we have $v=0$. Hence, $\left(
Rf\right)  \left(  v\right)  =\left(  Rf\right)  \left(  0\right)  =f\left(
0\right)  $ (by Proposition \ref{prop.R.implicit.01}). Hence, $\left(
Rf\right)  \left(  v\right)  $ is invertible (since $f\left(  0\right)  $ is
invertible). This proves Lemma \ref{lem.R.inv-2} in Case 1.

Let us now consider Case 2. In this case, we have $v=1$. Hence, $\left(
Rf\right)  \left(  v\right)  =\left(  Rf\right)  \left(  1\right)  =f\left(
1\right)  $ (by Proposition \ref{prop.R.implicit.01}). However, $f\left(
1\right)  $ is invertible (by Lemma \ref{lem.R.1inv}). In other words,
$\left(  Rf\right)  \left(  v\right)  $ is invertible (since $\left(
Rf\right)  \left(  v\right)  =f\left(  1\right)  $). This proves Lemma
\ref{lem.R.inv-2} in Case 2.

Next, let us consider Case 3. In this case, the element $v$ is a minimal
element of $P$. Hence, the only $u\in\widehat{P}$ satisfying $u\lessdot v$ is
the element $0$. Therefore, $\sum\limits_{\substack{u\in\widehat{P}%
;\\u\lessdot v}}f\left(  u\right)  =f\left(  0\right)  $. Now, Proposition
\ref{prop.R.implicit} yields%
\[
\left(  Rf\right)  \left(  v\right)  =\underbrace{\left(  \sum
\limits_{\substack{u\in\widehat{P};\\u\lessdot v}}f\left(  u\right)  \right)
}_{=f\left(  0\right)  }\cdot\overline{f\left(  v\right)  }\cdot\overline
{\sum\limits_{\substack{u\in\widehat{P};\\u\gtrdot v}}\overline{\left(
Rf\right)  \left(  u\right)  }}=f\left(  0\right)  \cdot\overline{f\left(
v\right)  }\cdot\overline{\sum\limits_{\substack{u\in\widehat{P};\\u\gtrdot
v}}\overline{\left(  Rf\right)  \left(  u\right)  }}.
\]
The right hand side of this equality is a product of three invertible elements
(since $f\left(  0\right)  $ is invertible, and since $\overline{f\left(
v\right)  }$ and $\overline{\sum\limits_{\substack{u\in\widehat{P};\\u\gtrdot
v}}\overline{\left(  Rf\right)  \left(  u\right)  }}$ are
invertible\footnote{because an inverse is always invertible}), and thus itself
is invertible. Thus, the left hand side is invertible as well. In other words,
$\left(  Rf\right)  \left(  v\right)  $ is invertible. This proves Lemma
\ref{lem.R.inv-2} in Case 3.

Finally, let us consider Case 4. In this case, the element $v$ is neither $0$
nor $1$ nor a minimal element of $P$. However, $v\in P$ (since $v$ is neither
$0$ nor $1$). Thus, Lemma \ref{lem.R.inv-not-min} yields that $\left(
Rf\right)  \left(  v\right)  $ is invertible. This proves Lemma
\ref{lem.R.inv-2} in Case 4.

We have now proved Lemma \ref{lem.R.inv-2} in all four Cases 1, 2, 3 and 4.
\end{proof}
\end{verlong}

\section{\label{sec.rect}The rectangle: statements of the results}

\subsection{The $p\times q$-rectangle}

As promised, we now state the phenomena observed in Example
\ref{ex.rowmotion.2x2} in greater generality (and afterwards prove them).
First we define the posets on which these phenomena manifest:

\begin{definition}
For $p\in\mathbb{Z}$, we let $\left[  p\right]  $ denote the totally ordered
set $\left\{  1,2,\ldots,p\right\}  $ (with its usual total order:
$1<2<\cdots<p$). This set is empty if $p\leq0$.
\end{definition}

\begin{verlong}
We recall that the \emph{Cartesian product} $P\times Q$ of two posets $P$ and
$Q$ is defined to be the set $P\times Q$, equipped with the entrywise partial
order (i.e., the partial order in which two pairs $\left(  p_{1},q_{1}\right)
\in P\times Q$ and $\left(  p_{2},q_{2}\right)  \in P\times Q$ satisfy
$\left(  p_{1},q_{1}\right)  \leq\left(  p_{2},q_{2}\right)  $ if and only if
$p_{1}\leq p_{2}$ and $q_{1}\leq q_{2}$).
\end{verlong}

\begin{definition}
\label{def.rect}Let $p$ and $q$ be two positive integers. The $p\times
q$\emph{-rectangle} will mean the Cartesian product $\left[  p\right]
\times\left[  q\right]  $ of the two posets $\left[  p\right]  $ and $\left[
q\right]  $. Explicitly, this is the set $\left[  p\right]  \times\left[
q\right]  =\left\{  1,2,\ldots,p\right\}  \times\left\{  1,2,\ldots,q\right\}
$, equipped with the partial order defined as follows: For two elements
$\left(  i,j\right)  $ and $\left(  i^{\prime},j^{\prime}\right)  $ of
$\left[  p\right]  \times\left[  q\right]  $, we set $\left(  i,j\right)
\leq\left(  i^{\prime},j^{\prime}\right)  $ if and only if $\left(  i\leq
i^{\prime}\text{ and }j\leq j^{\prime}\right)  $.

Henceforth, if we speak of $\left[  p\right]  \times\left[  q\right]  $, we
implicitly assume that $p$ and $q$ are two positive integers.
\end{definition}

The $p\times q$-rectangle has been denoted by $\operatorname*{Rect}\left(
p,q\right)  $ in \cite{bir-row-arxiv}.

\Needspace{9\baselineskip}

\begin{example}
Here is the Hasse diagram of the $2\times3$-rectangle $\left[  2\right]
\times\left[  3\right]  $:%
\begin{equation}
\xymatrixrowsep{0.9pc}\xymatrixcolsep{0.20pc}\xymatrix{ & & \left(2,3\right) \ar@{-}[rd] \ar@{-}[ld] & \\ & \left(2,2\right) \ar@{-}[rd] \ar@{-}[ld] & & \left(1,3\right) \ar@{-}[ld]\\ \left(2,1\right) \ar@{-}[rd] & & \left(1,2\right) \ar@{-}[ld] & \\ & \left(1,1\right) & & & & . }
\label{exa.rect.2x3.eq}%
\end{equation}

\end{example}

\begin{convention}
\label{conv.rect.draw}In the following, the Hasse diagram of a $p\times
q$-rectangle will always be drawn as in (\ref{exa.rect.2x3.eq}). That is, the
elements $\left(  i,j\right)  $ of $\left[  p\right]  \times\left[  q\right]
$ will be aligned in a rectangular grid, with the $x$-axis going southeast to
northwest and the $y$-axis going southwest to northeast. Thus, for instance,
the northwestern neighbor of an element $\left(  i,j\right)  $ is always
$\left(  i+1,j\right)  $.

Two elements $s$ and $t$ of $\widehat{P}$ will be called \emph{adjacent} if
they satisfy $s\gtrdot t$ or $t\gtrdot s$.
\end{convention}

The poset $\left[  p\right]  \times\left[  q\right]  $ has a unique minimal
element, $\left(  1,1\right)  $, and a unique maximal element, $\left(
p,q\right)  $. Its covering relation can be characterized by the following
easy remark (which will be used without explicit mention):

\begin{remark}
\label{rmk.rect.cover}Let $\left(  i,j\right)  $ and $\left(  i^{\prime
},j^{\prime}\right)  $ be two elements of $\left[  p\right]  \times\left[
q\right]  $. Then, $\left(  i,j\right)  \lessdot\left(  i^{\prime},j^{\prime
}\right)  $ if and only if $\left(  i^{\prime},j^{\prime}\right)  $ is either
$\left(  i+1,j\right)  $ or $\left(  i,j+1\right)  $.
\end{remark}

\begin{convention}
Let $P=\left[  p\right]  \times\left[  q\right]  $. If $f$ is a function
defined on $P$ or on $\widehat{P}$, and if $\left(  i,j\right)  $ is any
element of $P$, then we will write $f\left(  i,j\right)  $ for $f\left(
\left(  i,j\right)  \right)  $.
\end{convention}

\subsection{Periodicity}

The following theorem (conjectured by the first author in 2014) generalizes
the periodicity-like phenomenon seen in Example \ref{ex.rowmotion.2x2}:

\begin{theorem}
[Periodicity theorem for the $p\times q$-rectangle]\label{thm.rect.ord} Let
$P=\left[  p\right]  \times\left[  q\right]  $, and let $f\in\mathbb{K}%
^{\widehat{P}}$ be a $\mathbb{K}$-labeling such that $R^{p+q}f\neq\undf$. Set
$a=f\left(  0\right)  $ and $b=f\left(  1\right)  $. Then, $a$ and $b$ are
invertible, and for any $x\in\widehat{P}$ we have
\begin{equation}
\left(  R^{p+q}f\right)  \left(  x\right)  =a\overline{b}\cdot f\left(
x\right)  \cdot\overline{a}b. \label{eq.thm.rect.ord.claim}%
\end{equation}

\end{theorem}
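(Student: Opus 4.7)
The plan is to reduce the theorem to a statement only about $x \in P$ (since for $x \in \{0,1\}$ both sides can be computed directly), and then to prove it jointly with the antipodal reciprocity formula (Theorem \ref{thm.rect.antip}), since the two statements reinforce one another.

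First I would dispatch invertibility and the trivial cases $x \in \{0,1\}$. Since $p+q \geq 2$, Lemma \ref{lem.R.wd-triv} gives $R^{2} f \neq \undf$, and then Lemma \ref{lem.R.01inv} yields the invertibility of $a = f(0)$ and $b = f(1)$. Iterated application of Proposition \ref{prop.R.implicit.01} shows $(R^{p+q} f)(0) = a$ and $(R^{p+q} f)(1) = b$, and a direct computation verifies
\begin{align*}
a\overline{b} \cdot a \cdot \overline{a} b &= a \overline{b} b = a, \\
a\overline{b} \cdot b \cdot \overline{a} b &= a \overline{a} b = b,
\end{align*}
so (\ref{eq.thm.rect.ord.claim}) holds automatically when $x \in \{0,1\}$. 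It remains to treat $x \in P$.

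Next I would set up the main technical apparatus. Using Proposition \ref{prop.R.implicit} and the covering relations in the rectangle (Remark \ref{rmk.rect.cover}), I would introduce auxiliary quantities $\upslack_{\ell}^{v}$ and $\downslack_{\ell}^{v}$ that record certain ratios between the label $(R^{\ell}f)(v)$ and labels at covering or covered neighbors. Products of these slack quantities along monotone lattice paths in $[p] \times [q]$ should telescope to recover the original label $f(v)$ up to boundary contributions from $a$, $b$, and intermediate iterate labels. The proof proceeds by induction on $\ell$ (or on $p+q$), with the inductive goal being to show that after $p+q$ iterations every such path product collapses to the twisted expression $a \overline{b} \cdot f(x) \cdot \overline{a} b$. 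A key simplification is the reduction that allows one to verify the main formula only on the \emph{lower boundary} $\{(i,1)\} \cup \{(1,j)\}$ and then propagate it inward along each antichain via Proposition \ref{prop.R.implicit}.

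The central ingredient is the conversion lemma (Lemma \ref{lem.rect.conv}), which asserts an identity between a sum of products of the $\upslack_{\ell}^{v}$ over certain paths and a sum of products of the $\downslack_{\ell}^{v}$ over complementary paths, for the same value of $\ell$. Crucially, the proof of this identity uses \emph{subtraction}, through a telescoping/pairing argument across adjacent paths (this is exactly why the proof works for rings but not semirings). Combined with the lower-boundary reduction, the conversion lemma converts the defining toggle identity for one iterate of $R$ into a direct relation between $R^{p+q} f$ and $f$, producing the conjugation-like form $a \overline{b} \cdot f(x) \cdot \overline{a} b$.

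The main obstacle I anticipate is managing the noncommutativity: path products are sensitive to the order of their factors, so unlike in the commutative proof of \cite{bir-row-arxiv}, one cannot reorder monomials to invoke Plücker-type identities. The conversion lemma must therefore be stated with a fixed consistent ordering of factors along each path, and the pairing/subtraction step must match up corresponding factors without disturbing that ordering. A secondary challenge is propagating well-definedness throughout the $p+q$ iterates: one must check at each $R^{\ell}f$ with $0 \leq \ell \leq p+q$ that all inverses arising in the intermediate path products exist, which will rely on systematic use of Lemmas \ref{lem.R.inv} through \ref{lem.R.inv-2}.
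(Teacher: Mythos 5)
Your overall architecture matches the paper's: invertibility of $a$ and $b$ via Lemma \ref{lem.R.01inv}, the direct check for $x\in\{0,1\}$, and the reduction of everything to the reciprocity formula (Theorem \ref{thm.rect.antip}), which is in turn proved via the slack quantities, path products, the conversion lemma, and the boundary-to-interior induction. However, for the theorem actually under review there is a concrete missing step: you never say how (\ref{eq.thm.rect.ord.claim}) for $x=(i,j)\in P$ follows from reciprocity, and ``prove them jointly, since they reinforce one another'' does not supply it. The paper's deduction is a \emph{double} application of Theorem \ref{thm.rect.antip}: first at $(i,j)$ with $\ell=p+q$, giving $(R^{p+q}f)(i,j)=a\cdot\overline{(R^{p+q-i-j+1}f)(p+1-i,\,q+1-j)}\cdot b$; then at the antipode $(i',j')=(p+1-i,\,q+1-j)$ with $\ell=i'+j'-1=p+q-i-j+1$, whose time shift lands exactly on $R^{0}f=f$ and gives $(R^{i'+j'-1}f)(i',j')=a\cdot\overline{f(i,j)}\cdot b$. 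Substituting the second identity into the first and using $\overline{a\cdot\overline{f(i,j)}\cdot b}=\overline{b}\cdot f(i,j)\cdot\overline{a}$ produces the twist $a\overline{b}\cdot f(i,j)\cdot\overline{a}b$. Without exhibiting this composition of the two antipodal inversions (and checking that the exponents are admissible, i.e.\ $i'+j'-1\le p+q$ so that Lemma \ref{lem.R.wd-triv} applies), the theorem is not actually derived.

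A secondary factual slip: you locate the use of subtraction inside the conversion lemma. In the paper the conversion lemma is subtraction-free (it is a chain of equalities between sums over path-jump-paths indexed by the position of the jump), and indeed the boundary cases $j=1$ and $i=1$ of reciprocity go through over any semiring. Subtraction enters only in the final induction of Section \ref{sec.gencase}, where one cancels the addend $b\cdot\overline{u_{\ell}}\cdot a=u^{\sim}_{\ell-k-1}$ from a two-term sum identity obtained by comparing the transition equation at $m$ with the one at $m^{\sim}$. This matters because it is exactly the point where the argument fails to extend to semirings.
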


If the ring $\mathbb{K}$ is commutative\footnote{or, more generally, if the
elements $a$ and $b$ commute with each other and with $f\left(  x\right)  $},
then (\ref{eq.thm.rect.ord.claim}) simplifies to $\left(  R^{p+q}f\right)
\left(  x\right)  =f\left(  x\right)  $ (since commutativity of $\mathbb{K}$
yields $a\overline{b}\cdot f\left(  x\right)  \cdot\overline{a}%
b=\underbrace{a\overline{a}}_{=1}\cdot\,f\left(  x\right)  \cdot
\underbrace{\overline{b}b}_{=1}=f\left(  x\right)  $). Thus, if $\mathbb{K}$
is commutative, then the claim of Theorem \ref{thm.rect.ord} can be rewritten
as $R^{p+q}f=f$, generalizing the main part of \cite[Theorem 11.5]%
{bir-row-arxiv} (which itself generalizes similar properties of rowmotion
operators on other levels). Unlike in \cite[Theorem 11.5]{bir-row-arxiv}, we
cannot honestly claim that $R^{p+q}=\operatorname*{id}$ even when $\mathbb{K}$
is commutative, since the partial map $R^{p+q}$ takes the value $\undf$ on
some $\mathbb{K}$-labelings $f$ (while $\operatorname*{id}$ does not).

The parallel result for birational antichain rowmotion~\cite[Conjecture
5.10]{JosRob20} follows from Theorem \ref{thm.rect.ord}.

\subsection{Reciprocity}

Theorem \ref{thm.rect.ord} shows that the \textquotedblleft periodicity
phenomenon\textquotedblright\ we have observed on $\left[  2\right]
\times\left[  2\right]  $ in Example \ref{ex.rowmotion.2x2} was not a
coincidence. The \textquotedblleft reciprocity phenomenon\textquotedblright%
\ is similarly the $p=q=2$ case of a general fact:

\begin{theorem}
[Reciprocity theorem for $p\times q$-rectangle]\label{thm.rect.antip} Let
$P=\left[  p\right]  \times\left[  q\right]  $. Fix $\ell\in\mathbb{N}$, and
let $f\in\mathbb{K}^{\widehat{P}}$ be a $\mathbb{K}$-labeling such that
$R^{\ell}f\neq\undf$. Set $a=f\left(  0\right)  $ and $b=f\left(  1\right)  $.
Let $\left(  i,j\right)  \in P$ satisfy $\ell-i-j+1\geq0$. Then,%
\begin{equation}
\left(  R^{\ell}f\right)  \left(  i,j\right)  =a\cdot\overline{\left(
R^{\ell-i-j+1}f\right)  \left(  p+1-i,\ q+1-j\right)  }\cdot b.
\label{eq.thm.rect.antip.claim}%
\end{equation}

\end{theorem}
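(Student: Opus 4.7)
My plan is to prove the reciprocity formula (\ref{eq.thm.rect.antip.claim}) in two stages: first, a reduction to the ``minimal'' case $\ell = i + j - 1$; and second, a proof of that case by means of a path-product identity, the conversion lemma.

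For the reduction, I set $g := R^{\ell - i - j + 1} f$, which is a well-defined $\mathbb{K}$-labeling by Lemma \ref{lem.R.wd-triv} (since $R^{\ell} f \neq \undf$ and $\ell \geq i + j - 1$). By Corollary \ref{cor.R.implicit.01}, we have $g(0) = f(0) = a$ and $g(1) = f(1) = b$. Moreover, $R^{\ell} f = R^{i+j-1}\bigl(R^{\ell - i - j + 1} f\bigr) = R^{i+j-1} g$, so (\ref{eq.thm.rect.antip.claim}) becomes
\[
\bigl(R^{i+j-1} g\bigr)(i, j) \;=\; g(0) \cdot \overline{g(p+1-i, \, q+1-j)} \cdot g(1).
\]
Thus the theorem reduces to proving this minimal reciprocity for every $\mathbb{K}$-labeling $g$ on which $R^{i+j-1}$ is defined.

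For the minimal case, I expect to use the auxiliary quantities $\upslack_{\ell}^{v}$ and $\downslack_{\ell}^{v}$ alluded to in the introduction. These should be associated to each iteration index $\ell$ and each $v \in P$, encoding ``slacks'' between successive iterates of $R$ at covering pairs so that they combine multiplicatively along monotone lattice paths in the rectangle. The strategy would be to expand $\bigl(R^{i+j-1} g\bigr)(i, j)$ as a sum of products of $\upslack$-values along paths from $(1,1)$ to $(i,j)$, and to expand the right-hand side as a sum of products of $\downslack$-values along antipodal paths ending at $(p+1-i, q+1-j)$. The conversion lemma (Lemma \ref{lem.rect.conv}) would then equate these two sums and thus establish the reciprocity formula.

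The main obstacle will be the conversion lemma itself. Because $\mathbb{K}$ is noncommutative, the factors in each path product are rigidly ordered, and both sides of the identity must be arranged to multiply compatibly; the introduction suggests that the proof of the conversion lemma proceeds by a reduction to the lower boundary of the rectangle (the elements $(i, 1)$ and $(1, j)$), where the labels admit a direct recursive computation via Proposition \ref{prop.R.implicit}, and that this reduction is precisely where subtraction in $\mathbb{K}$ is used. Once reciprocity is in hand, the periodicity Theorem \ref{thm.rect.ord} follows easily by applying reciprocity twice: at $\ell = p+q$ and at $\ell = p+q-i-j+1$ with $(i, j)$ replaced by $(p+1-i, q+1-j)$, the latter yielding $\overline{\bigl(R^{p+q-i-j+1} f\bigr)(p+1-i, q+1-j)} = \overline{b} \, f(i,j) \, \overline{a}$, whence $\bigl(R^{p+q} f\bigr)(i, j) = a \overline{b} \, f(i,j) \, \overline{a} b$.
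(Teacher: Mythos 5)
Your opening reduction to the minimal case $\ell = i+j-1$ (replacing $f$ with $g := R^{\ell-i-j+1}f$ and invoking Corollary \ref{cor.R.implicit.01}) is correct, and your closing derivation of periodicity from reciprocity matches Section \ref{sec.antip-to-ord}. But the core of the argument has a genuine gap: the conversion lemma cannot, by itself, prove reciprocity at an interior point $(i,j)$ with $i>1$ and $j>1$. Lemma \ref{lem.rect.conv} only equates $\downslack_{\ell}^{u\rightarrow d}$ with $\upslack_{\ell}^{u'\rightarrow d'}$ when $u,u'$ lie on the northeastern edge and $d,d'$ on the southwestern edge; its proof depends on the simplifications $\downslack_{\ell}^{d}=d_{\ell}\overline{d'_{\ell}}$ and $\upslack_{\ell}^{u'}=u_{\ell}\overline{u'_{\ell}}$, which hold only because $d$ and $u'$ have a unique lower (resp.\ upper) neighbor on those edges. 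The path formulas of Theorem \ref{thm.rect.path} recover $(i,j)_{\ell}$ from $\upslack_{\ell}^{(p,q)\rightarrow(i,j)}$ or $\downslack_{\ell}^{(i,j)\rightarrow(1,1)}$, that is, from path sums with one endpoint at $(i,j)$ itself; when $(i,j)$ is interior these are not of the form the conversion lemma addresses. Consequently your plan only reaches the cases $j=1$ and $i=1$ (the content of Lemmas \ref{lem.rect.i1} and \ref{lem.rect.1j}, obtained by alternately applying Lemma \ref{lem.rect.conv} and the transition identity $\upslack_{\ell}^{v}=\downslack_{\ell-1}^{v}$ to walk the quantity along the two boundary edges).

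The missing ingredient is the induction of Section \ref{sec.gencase}: reciprocity is pushed from the boundary into the interior by strong induction on $i+2j$, comparing the transition equation (\ref{eq.vl+1=}) at $m=(i,j-1)$ with the transition equation at its antipode $m^{\sim}$, and then cancelling the already-known term coming from $u=(i+1,j-1)$ out of the sum $u^{\sim}_{\ell-k-1}+v^{\sim}_{\ell-k-1}$ to isolate $v^{\sim}_{\ell-k-1}$. That cancellation is where subtraction in $\mathbb{K}$ is actually used --- not, as you suggest, inside the conversion lemma, whose proof is subtraction-free (it interpolates between the two path sums via ``path-jump-paths''). Without this second stage your argument does not establish the general case.
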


\begin{vershort}
Theorem \ref{thm.rect.antip} directly generalizes the analogous
theorem~\cite[Theorem 11.7]{bir-row-arxiv} in the commutative setting.
\end{vershort}

\begin{verlong}
Theorem \ref{thm.rect.antip} directly generalizes the analogous
theorem~\cite[Theorem 11.7]{bir-row-arxiv} in the commutative
setting\footnote{In fact, let us assume that $\mathbb{K}$ is a (commutative)
field. Keep using the notation of Theorem \ref{thm.rect.antip}. Then,
Theorem~\ref{thm.rect.antip} (applied to $\ell=i+j-1$) yields%
\begin{align*}
\left(  R^{i+j-1}f\right)  \left(  i,j\right)   &  =a\cdot\overline{\left(
R^{\left(  i+j-1\right)  -i-j+1}f\right)  \left(  p+1-i,\ q+1-j\right)  }\cdot
b\\
&  =a\cdot\overline{f\left(  p+1-i,\ q+1-j\right)  }\cdot
b\ \ \ \ \ \ \ \ \ \ \left(  \text{since }\underbrace{R^{\left(  i+j-1\right)
-i-j+1}}_{=R^{0}=\id}f=f\right) \\
&  =\dfrac{ab}{f\left(  p+1-i,\ q+1-j\right)  }.
\end{align*}
Solving this equation for $f\left(  p+1-i,\ q+1-j\right)  $, we find%
\[
f\left(  p+1-i,\ q+1-j\right)  =\dfrac{ab}{\left(  R^{i+j-1}f\right)  \left(
i,j\right)  }=\dfrac{f\left(  0\right)  \cdot f\left(  1\right)  }{\left(
R^{i+j-1}f\right)  \left(  i,j\right)  }\ \ \ \ \ \ \ \ \ \ \left(
\text{since $a=f\left(  0\right)  $ and $b=f\left(  1\right)  $}\right)  ,
\]
which is precisely the claim of \cite[Theorem 11.7]{bir-row-arxiv} (with $k$
renamed as $j$).}.
\end{verlong}

\subsection{The structure of the proofs}

Theorems \ref{thm.rect.antip} and \ref{thm.rect.ord} are the main results of
this paper, and most of it will be devoted to their proofs. We first summarize
the large-scale structure of these proofs:

\begin{enumerate}
\item In Section \ref{sec.antip-to-ord}, we show that twisted periodicity
(Theorem \ref{thm.rect.ord}) follows from reciprocity (Theorem
\ref{thm.rect.antip}). Thus, proving the latter will suffice.

\item In Section \ref{sec.proof-nots}, we introduce some notations. Some of
these notations ($a$, $b$ and $x_{\ell}$) are mere abbreviations for the
labels of $R^{\ell}f$, while others ($\downslack_{\ell}^{v}$, $\upslack_{\ell
}^{v}$, $\downslack_{\ell}^{\mathbf{p}}$, $\upslack_{\ell}^{\mathbf{p}}$,
$\downslack_{\ell}^{u\rightarrow v}$ and $\upslack_{\ell}^{u\rightarrow v}$)
stand for certain derived quantities and will play a more active role. We also
define \textquotedblleft paths\textquotedblright\ on the poset $P$, and
introduce a few of their basic features.

\item In Section \ref{sec.proof-lems}, we prove a few simple results. The most
important of these results are Proposition \ref{prop.rect.trans} (which
reveals how birational rowmotion transforms $\downslack_{\ell-1}^{v}$ into
$\upslack_{\ell}^{v}$) and Theorem \ref{thm.rect.path} (which allows us to
recover the original labels $x_{\ell}$ from either $\downslack_{\ell}^{v}$ or
$\upslack_{\ell}^{v}$).

\item In Section \ref{sec.ij=11}, we prove Theorem \ref{thm.rect.antip} in the
case when $\left(  i,j\right)  =\left(  1,1\right)  $. This proof warrants its
own section both because it is conceptually easier than the general case, and
because it requires some \textquotedblleft well-definedness\textquotedblright%
\ technicalities that are (surprisingly) not needed in any other cases.

\item In Section \ref{sec.conversion}, we saddle the main workhorse of our
proof: a lemma (Lemma \ref{lem.rect.conv}) that connects certain
$\downslack_{\ell}^{u\rightarrow v}$ quantities with certain $\upslack_{\ell
}^{u\rightarrow v}$ quantities with the same $\ell$. We prove this using a
variant of paths, which we call \textquotedblleft
path-jump-paths\textquotedblright\ and which allow us to interpolate between
$\downslack_{\ell}^{u\rightarrow v}$ and $\upslack_{\ell}^{u\rightarrow v}$.

\item In Section \ref{sec.Rfi1}, we combine the previous results with this
lemma to prove Theorem \ref{thm.rect.antip} in the case when $j=1$.

\item In Section \ref{sec.gencase}, we finally complete the proof of Theorem
\ref{thm.rect.antip} in the general case. This requires almost no new ideas,
just an induction that extends Theorem \ref{thm.rect.antip} from four
\textquotedblleft adjacent\textquotedblright\ elements of $P$ (labeled
$u,m,s,t$ in diagram (\ref{pf.thm.rect.antip.short.diagram1})) to the fifth
element $v$.
\end{enumerate}

\section{\label{sec.antip-to-ord} Twisted periodicity follows from
reciprocity}

Our first step towards the proofs of twisted periodicity (Theorem
\ref{thm.rect.ord}) and reciprocity (Theorem~\ref{thm.rect.antip}) is to show
that the latter implies the former.\footnote{This reduction is not new; it
appears already in \cite[proof of Corollary 2.12]{MusRob17} in the commutative
case.}

\begin{vershort}
\begin{proof}
[Proof of Theorem \ref{thm.rect.ord} using Theorem \ref{thm.rect.antip}%
.]Assume that Theorem \ref{thm.rect.antip} has been proved. Let $p$, $q$, $P$,
$f$, $a$ and $b$ be as in Theorem \ref{thm.rect.ord}. Let $x\in\widehat{P}$.
From $p\geq1$ and $q\geq1$, we obtain $p+q\geq2$. Hence, from $R^{p+q}%
f\neq\undf$, we obtain $R^{2}f\neq\undf$ (by Lemma \ref{lem.R.wd-triv}).
Therefore, Lemma \ref{lem.R.01inv} yields that $a$ and $b$ are invertible
(since $a=f\left(  0\right)  $ and $b=f\left(  1\right)  $).

It remains to prove (\ref{eq.thm.rect.ord.claim}). First, we note that%
\[
a\overline{b}\cdot\underbrace{f\left(  0\right)  }_{=a}\cdot\overline
{a}b=a\overline{b}\cdot\underbrace{a\cdot\overline{a}}_{=1}%
b=a\underbrace{\overline{b}\cdot b}_{=1}=a.
\]
However, Corollary \ref{cor.R.implicit.01} yields $\left(  R^{p+q}f\right)
\left(  0\right)  =f\left(  0\right)  =a$. Comparing these, we find that
$\left(  R^{p+q}f\right)  \left(  0\right)  =a\overline{b}\cdot f\left(
0\right)  \cdot\overline{a}b$. Thus, the equality (\ref{eq.thm.rect.ord.claim}%
) holds for $x=0$. Similarly, this equality also holds for $x=1$. So from now
on, we WLOG assume that $x$ is neither $0$ nor $1$. Hence, $x = (i,j) \in
P=\left[  p\right]  \times\left[  q\right]  $.

From $R^{p+q}f\neq\undf$, we obtain $Rf\neq\undf$ (by Lemma
\ref{lem.R.wd-triv}, since $1\leq2\leq p+q$). Thus, Lemma \ref{lem.R.inv}
(applied to $v=\left(  i,j\right)  $) yields that $f\left(  i,j\right)  $ is
invertible. Hence, $\overline{f\left(  i,j\right)  }$ is well-defined. The
element $\overline{f\left(  i,j\right)  }$ of $\mathbb{K}$ is invertible
(since it has inverse $f\left(  i,j\right)  $).

Set $i^{\prime}:=p+1-i\in\left[  p\right]  $ and $j^{\prime}:=q+1-j \in\left[
q\right]  $, so that $\left(  i^{\prime-},j^{\prime}\right)  \in\left[
p\right]  \times\left[  q\right]  =P$ and $i^{\prime}+j^{\prime}\geq1+1=2$.
Also, the definitions of $i^{\prime}$ and $j^{\prime}$ readily yield
$p+1-i^{\prime}=i$ and $q+1-j^{\prime}=j$ and $i^{\prime}+j^{\prime
}-1=p+q-i-j+1$.

Now $i^{\prime}+j^{\prime}\leq p+q$, so $i^{\prime}+j^{\prime}-1\leq
i^{\prime}+j^{\prime}\leq p+q$ and thus $R^{i^{\prime}+j^{\prime}-1}%
f\neq\undf$ (by Lemma \ref{lem.R.wd-triv}, since $R^{p+q}f\neq\undf$). Thus,
Theorem \ref{thm.rect.antip} (applied to $i^{\prime}+j^{\prime}-1$,
$i^{\prime}$ and $j^{\prime}$ instead of $\ell$, $i$ and $j$) yields%
\begin{align}
\left(  R^{i^{\prime}+j^{\prime}-1}f\right)  \left(  i^{\prime},j^{\prime
}\right)   &  =a\cdot\overline{\underbrace{\left(  R^{\left(  i^{\prime
}+j^{\prime}-1\right)  -i^{\prime}-j^{\prime}+1}f\right)  }%
_{\substack{=f\\\text{(since }\left(  i^{\prime}+j^{\prime}-1\right)
-i^{\prime}-j^{\prime}+1=0\text{)}}}\left(  \underbrace{p+1-i^{\prime}}%
_{=i},\ \underbrace{q+1-j^{\prime}}_{=j}\right)  }\cdot b\nonumber\\
&  =a\cdot\overline{f\left(  i,j\right)  }\cdot b.
\label{pf.thm.rect.ord.short.2}%
\end{align}

However, we also have $p+q-i-j+1=i^{\prime}+j^{\prime}-1\geq0$ (since
$i^{\prime}+j^{\prime}\geq2\geq1$). Thus, Theorem \ref{thm.rect.antip}
(applied to $\ell=p+q$) yields%
\begin{align*}
\left(  R^{p+q}f\right)  \left(  i,j\right)   &  =a\cdot\overline{\left(
R^{p+q-i-j+1}f\right)  \left(  p+1-i,\ q+1-j\right)  }\cdot b\\
&  =a\cdot\overline{\left(  R^{i^{\prime}+j^{\prime}-1}f\right)  \left(
i^{\prime},j^{\prime}\right)  }\cdot b\ \ \ \ \ \ \ \ \ \ \left(
\begin{array}
[c]{c}%
\text{since }p+q-i-j+1=i^{\prime}+j^{\prime}-1\\
\text{and }p+1-i=i^{\prime}\text{ and }q+1-j=j^{\prime}%
\end{array}
\right) \\
&  =a\cdot\underbrace{\overline{a\cdot\overline{f\left(  i,j\right)  }\cdot
b}}_{\substack{=\overline{b}\cdot f\left(  i,j\right)  \cdot\overline
{a}\\\text{(by Proposition \ref{prop.inverses.ab} \textbf{(c)},}\\\text{since
}a\text{ and }\overline{f\left(  i,j\right)  }\text{ and }b\text{ are
invertible)}}}\cdot\,b\ \ \ \ \ \ \ \ \ \ \left(  \text{by
(\ref{pf.thm.rect.ord.short.2})}\right) \\
&  =a\overline{b}\cdot f\left(  i,j\right)  \cdot\overline{a}b.
\end{align*}
Since $x=\left(  i,j\right)  $, we can rewrite this as%
\[
\left(  R^{p+q}f\right)  \left(  x\right)  =a\overline{b}\cdot f\left(
x\right)  \cdot\overline{a}b.
\]
Thus twisted periodicity (Theorem~\ref{thm.rect.ord}) is proved, assuming
reciprocity (Theorem~\ref{thm.rect.antip}) holds.
\end{proof}
\end{vershort}

\begin{verlong}
\begin{proof}
[Proof of Theorem \ref{thm.rect.ord} using Theorem \ref{thm.rect.antip}%
.]Assume that Theorem \ref{thm.rect.antip} has been proved. Now, let $p$, $q$,
$P$, $f$, $a$ and $b$ be as in Theorem \ref{thm.rect.ord}. From $p\geq1$ and
$q\geq1$, we obtain $p+q\geq1+1=2$, so that $2\leq p+q$. Hence, from
$R^{p+q}f\neq\undf$, we obtain $R^{2}f\neq\undf$ (by Lemma \ref{lem.R.wd-triv}%
). Therefore, Lemma \ref{lem.R.01inv} yields that $f\left(  0\right)  $ and
$f\left(  1\right)  $ are invertible. In other words, $a$ and $b$ are
invertible (since $a=f\left(  0\right)  $ and $b=f\left(  1\right)  $).

Let $x \in\widehat{P}$. It thus remains to prove the equality
(\ref{eq.thm.rect.ord.claim}). Let us first verify that this equality holds
for $x=0$. Indeed,%
\[
a\overline{b}\cdot\underbrace{f\left(  0\right)  }_{=a}\cdot\overline
{a}b=a\overline{b}\cdot\underbrace{a\cdot\overline{a}}_{=1}%
b=a\underbrace{\overline{b}\cdot b}_{=1}=a=\left(  R^{p+q}f\right)  \left(
0\right)
\]
(because Corollary \ref{cor.R.implicit.01} (applied to $\ell=p+q$) yields
$\left(  R^{p+q}f\right)  \left(  0\right)  =f\left(  0\right)  =a$). In other
words, $\left(  R^{p+q}f\right)  \left(  0\right)  =a\overline{b}\cdot
f\left(  0\right)  \cdot\overline{a}b$. In other words, the equality
(\ref{eq.thm.rect.ord.claim}) holds for $x=0$.

Similarly, this equality also holds for $x=1$. Thus, for the rest of this
proof, we WLOG assume that $x$ is neither $0$ nor $1$. Hence, $x\in P=\left[
p\right]  \times\left[  q\right]  $. In other words, $x=\left(  i,j\right)  $
for some $i\in\left[  p\right]  $ and $j\in\left[  q\right]  $. Consider these
$i$ and $j$.

From $R^{p+q}f\neq\undf$, we obtain $R^{1}f\neq\undf$ (by Lemma
\ref{lem.R.wd-triv}, since $1\leq2\leq p+q$). In other words, $Rf\neq\undf$.
Thus, Lemma \ref{lem.R.inv} (applied to $v=\left(  i,j\right)  $) yields that
$f\left(  i,j\right)  $ is invertible. Hence, $\overline{f\left(  i,j\right)
}$ is well-defined.

The element $\overline{f\left(  i,j\right)  }$ of $\mathbb{K}$ is invertible
(since it has inverse $f\left(  i,j\right)  $), and so are the elements $a$
and $b$ (as we have proved above). Thus, Proposition \ref{prop.inverses.ab}
\textbf{(c)} yields%
\begin{equation}
\overline{a\cdot\overline{f\left(  i,j\right)  }\cdot b}=\overline{b}%
\cdot\underbrace{\overline{\overline{f\left(  i,j\right)  }}}_{=f\left(
i,j\right)  }\cdot\overline{a}=\overline{b}\cdot f\left(  i,j\right)
\cdot\overline{a}. \label{pf.thm.rect.ord.inv-prod}%
\end{equation}

Set $i^{\prime}:=p+1-i$ and $j^{\prime}:=q+1-j$. Thus,
\begin{align*}
p+1-\underbrace{i^{\prime}}_{=p+1-i}  &  = p+1-\left(  p+1-i\right)  =
i\qquad\qquad\text{and}\\
q+1-\underbrace{j^{\prime}}_{=q+1-j}  &  = q+1-\left(  q+1-j\right)  = j
\end{align*}
and
\[
p+q-i-j+1=\underbrace{p+1-i}_{=i^{\prime}}+\underbrace{q+1-j}_{=j^{\prime}%
}-1=i^{\prime}+j^{\prime}-1.
\]
Therefore, $i^{\prime}+j^{\prime}-1=p+q-\underbrace{i}_{\leq p}-\underbrace{j}%
_{\leq q}+1\geq p+q-p-q+1=1$, so that $i^{\prime}+j^{\prime}-1\in\mathbb{N}$.
Moreover, $i^{\prime}=p+1-i\in\left[  p\right]  $ (since $i\in\left[
p\right]  $) and $j^{\prime}\in\left[  q\right]  $ (similarly). Hence,
$\left(  i^{\prime},j^{\prime}\right)  \in\left[  p\right]  \times\left[
q\right]  =P$.

We have $i^{\prime}\leq p$ (since $i^{\prime}\in\left[  p\right]  $) and
similarly $j^{\prime}\leq q$. Adding these two inequalities together, we find
$i^{\prime}+j^{\prime}\leq p+q$. Hence, $i^{\prime}+j^{\prime}-1\leq
i^{\prime}+j^{\prime}\leq p+q$ and thus $R^{i^{\prime}+j^{\prime}-1}%
f\neq\undf$ (by Lemma \ref{lem.R.wd-triv}, since $R^{p+q}f\neq\undf$). Also,
$\left(  i^{\prime}+j^{\prime}-1\right)  -i^{\prime}-j^{\prime}+1=0\geq0$.

Thus, Theorem \ref{thm.rect.antip} (applied to $i^{\prime}+j^{\prime}-1$,
$i^{\prime}$ and $j^{\prime}$ instead of $\ell$, $i$ and $j$) yields%
\begin{align}
\left(  R^{i^{\prime}+j^{\prime}-1}f\right)  \left(  i^{\prime},j^{\prime
}\right)   &  =a\cdot\overline{\left(  \underbrace{R^{\left(  i^{\prime
}+j^{\prime}-1\right)  -i^{\prime}-j^{\prime}+1}}_{\substack{=R^{0}%
\\\text{(since }\left(  i^{\prime}+j^{\prime}-1\right)  -i^{\prime}-j^{\prime
}+1=0\text{)}}}f\right)  \left(  \underbrace{p+1-i^{\prime}}_{=i}%
,\ \underbrace{q+1-j^{\prime}}_{=j}\right)  }\cdot b\nonumber\\
&  =a\cdot\overline{\underbrace{\left(  R^{0}f\right)  }%
_{\substack{=f\\\text{(since }R^{0}=\operatorname*{id}\text{)}}}\left(
i,j\right)  }\cdot b\nonumber\\
&  =a\cdot\overline{f\left(  i,j\right)  }\cdot b. \label{pf.thm.rect.ord.2}%
\end{align}

However, we also have $p+q-i-j+1=i^{\prime}+j^{\prime}-1\geq1\geq0$. Thus,
Theorem \ref{thm.rect.antip} (applied to $\ell=p+q$) yields%
\begin{align*}
\left(  R^{p+q}f\right)  \left(  i,j\right)   &  =a\cdot\overline{\left(
R^{p+q-i-j+1}f\right)  \left(  p+1-i,\ q+1-j\right)  }\cdot b\\
&  =a\cdot\overline{\left(  R^{i^{\prime}+j^{\prime}-1}f\right)  \left(
i^{\prime},j^{\prime}\right)  }\cdot b\\
&  \ \ \ \ \ \ \ \ \ \ \ \ \ \ \ \ \ \ \ \ \left(
\begin{array}
[c]{c}%
\text{since }p+q-i-j+1=i^{\prime}+j^{\prime}-1\\
\text{and }p+1-i=i^{\prime}\text{ and }q+1-j=j^{\prime}%
\end{array}
\right) \\
&  =a\cdot\underbrace{\overline{a\cdot\overline{f\left(  i,j\right)  }\cdot
b}}_{\substack{=\overline{b}\cdot f\left(  i,j\right)  \cdot\overline
{a}\\\text{(by (\ref{pf.thm.rect.ord.inv-prod}))}}}\cdot
b\ \ \ \ \ \ \ \ \ \ \left(  \text{by (\ref{pf.thm.rect.ord.2})}\right) \\
&  =a\overline{b}\cdot f\left(  i,j\right)  \cdot\overline{a}b.
\end{align*}
Since $x=\left(  i,j\right)  $, we can rewrite this as%
\[
\left(  R^{p+q}f\right)  \left(  x\right)  =a\overline{b}\cdot f\left(
x\right)  \cdot\overline{a}b.
\]
This proves the equality (\ref{eq.thm.rect.ord.claim}). Theorem
\ref{thm.rect.ord} is thus proved, assuming that Theorem \ref{thm.rect.antip} holds.
\end{proof}
\end{verlong}


\section{\label{sec.proof-nots}Proof of reciprocity: notations}

It now suffices to prove Theorem \ref{thm.rect.antip}, which will be the
ultimate goal of the next few sections. First we introduce some notations that
will be used throughout these sections.



Fix two positive integers $p$ and $q$. Assume that $P = \left[  p\right]
\times\left[  q\right]  $. Let $f\in\mathbb{K}^{\widehat{P}}$ be a
$\mathbb{K}$-labeling of $P$. Set%
\[
a:=f\left(  0\right)  \ \ \ \ \ \ \ \ \ \ \text{and}%
\ \ \ \ \ \ \ \ \ \ b:=f\left(  1\right)  .
\]


For any $x=\left(  i,j\right)  \in P$, we define an element $x^{\sim}\in P$
by
\[
x^{\sim} :=\left(  p+1-i,\ q+1-j\right)  .
\]
We call this element $x^{\sim}$ the \emph{antipode} of $x$. Thus, the desired
equality (\ref{eq.thm.rect.antip.claim}) can be rewritten as%
\begin{equation}
\left(  R^{\ell}f\right)  \left(  x\right)  =a\cdot\overline{\left(
R^{\ell-i-j+1}f\right)  \left(  x^{\sim}\right)  }\cdot b
\label{eq.thm.rect.antip.claim.rewr}%
\end{equation}
for $x=\left(  i,j\right)  $.

For any $x\in\widehat{P}$ and $\ell\in\mathbb{N}$, we write%
\begin{equation}
x_{\ell}:=\left(  R^{\ell}f\right)  \left(  x\right)  , \label{eq.xl=}%
\end{equation}
which is well-defined whenever $R^{\ell}f\neq\undf$. This compact notation
will make upcoming formulas more readable.

\begin{vershort}
In particular, for each $x\in\widehat{P}$, we have $x_{0}=\left(
R^{0}f\right)  \left(  x\right)  =f\left(  x\right)  $.
Moreover, for each $\ell\in\mathbb{N}$ satisfying $R^{\ell}f\neq\undf$, we
have
\begin{align}
0_{\ell}  &  =\left(  R^{\ell}f\right)  \left(  0\right)  =a
\ \ \ \ \ \ \ \ \ \ \left(  \text{via Corollary \ref{cor.R.implicit.01}%
}\right)  \label{eq.0l.a.short}%
\end{align}
and similarly $1_{\ell}=b$.
\end{vershort}

\begin{verlong}
In particular, for each $x\in\widehat{P}$, we have
\[
x_{0}=\underbrace{\left(  R^{0}f\right)  }_{=f}\left(  x\right)  =f\left(
x\right)  .
\]
Moreover, for each $\ell\in\mathbb{N}$ satisfying $R^{\ell}f\neq\undf$, we
have
\begin{align}
0_{\ell}  &  =\left(  R^{\ell}f\right)  \left(  0\right)  =f\left(  0\right)
\ \ \ \ \ \ \ \ \ \ \left(  \text{by Corollary \ref{cor.R.implicit.01}}\right)
\nonumber\\
&  =a \label{eq.0l.a}%
\end{align}
and
\begin{align}
1_{\ell}  &  =\left(  R^{\ell}f\right)  \left(  1\right)  =f\left(  1\right)
\ \ \ \ \ \ \ \ \ \ \left(  \text{by Corollary \ref{cor.R.implicit.01}}\right)
\nonumber\\
&  =b. \label{eq.1l.b}%
\end{align}

\end{verlong}

We can further rewrite the equality (\ref{eq.thm.rect.antip.claim.rewr}) as
$x_{\ell}=a\cdot\overline{x_{\ell-i-j+1}^{\sim}}\cdot b$ (since $x_{\ell
}=\left(  R^{\ell}f\right)  \left(  x\right)  $ and $x_{\ell-i-j+1}^{\sim
}=\left(  R^{\ell-i-j+1}f\right)  \left(  x^{\sim}\right)  $). Hence, our
desired Theorem \ref{thm.rect.antip} takes the following form:

\begin{statement}
\textbf{Theorem \ref{thm.rect.antip}, restated.} If $x=\left(  i,j\right)  \in
P$ and $\ell\in\mathbb{N}$ satisfy $\ell-i-j+1\geq0$ and $R^{\ell}f\neq\undf$,
then%
\begin{equation}
x_{\ell}=a\cdot\overline{x_{\ell-i-j+1}^{\sim}}\cdot b.
\label{pf.thm.rect.antip.restate1}%
\end{equation}

\end{statement}

\begin{vershort}
Proposition \ref{prop.R.implicit} yields that for each $v\in P$, we
have\footnote{assuming that $Rf\neq\undf$}%
\begin{equation}
\left(  Rf\right)  \left(  v\right)  =\left(  \sum\limits_{u\lessdot
v}f\left(  u\right)  \right)  \cdot\overline{f\left(  v\right)  }%
\cdot\overline{\sum\limits_{u\gtrdot v}\overline{\left(  Rf\right)  \left(
u\right)  }}. \label{pf.thm.rect.antip.Rfv=}%
\end{equation}
(In both sums, $u$ ranges over $\widehat{P}$; from now on, this will always be
understood if not otherwise specified.) Applying this equality
(\ref{pf.thm.rect.antip.Rfv=}) to $R^{\ell}f$ instead of $f$, we obtain%
\[
\left(  R^{\ell+1}f\right)  \left(  v\right)  =\left(  \sum\limits_{u\lessdot
v}\left(  R^{\ell}f\right)  \left(  u\right)  \right)  \cdot\overline{\left(
R^{\ell}f\right)  \left(  v\right)  }\cdot\overline{\sum\limits_{u\gtrdot
v}\overline{\left(  R^{\ell+1}f\right)  \left(  u\right)  }}%
\]
for each $v\in P$ and $\ell\in\mathbb{N}$ satisfying $R^{\ell+1}f\neq\undf$
(since $R\left(  R^{\ell}f\right)  =R^{\ell+1}f$).
\end{vershort}

\begin{verlong}
Proposition \ref{prop.R.implicit} yields that for each $v\in P$, we
have\footnote{assuming that $Rf\neq\undf$}%
\begin{equation}
\left(  Rf\right)  \left(  v\right)  =\left(  \sum\limits_{u\lessdot
v}f\left(  u\right)  \right)  \cdot\overline{f\left(  v\right)  }%
\cdot\overline{\sum\limits_{u\gtrdot v}\overline{\left(  Rf\right)  \left(
u\right)  }}.
\end{equation}
Here, the summation index $u$ under both sums is understood to range over
$\widehat{P}$; from now on, this will always be understood if not otherwise specified.

For each $v\in P$ and $\ell\in\mathbb{N}$ satisfying $R^{\ell+1}f\neq\undf$,
we have%
\begin{align*}
\left(  R^{\ell+1}f\right)  \left(  v\right)   &  =\left(  R\left(  R^{\ell
}f\right)  \right)  \left(  v\right)  \ \ \ \ \ \ \ \ \ \ \left(  \text{since
}R^{\ell+1}f=R\left(  R^{\ell}f\right)  \right) \\
&  =\left(  \sum\limits_{\substack{u\in\widehat{P};\\u\lessdot v}}\left(
R^{\ell}f\right)  \left(  u\right)  \right)  \cdot\overline{\left(  R^{\ell
}f\right)  \left(  v\right)  }\cdot\overline{\sum\limits_{\substack{u\in
\widehat{P};\\u\gtrdot v}}\overline{\left(  R\left(  R^{\ell}f\right)
\right)  \left(  u\right)  }}\\
&  \ \ \ \ \ \ \ \ \ \ \ \ \ \ \ \ \ \ \ \ \left(
\begin{array}
[c]{c}%
\text{by Proposition \ref{prop.R.implicit}, applied to }R^{\ell}f\text{
instead of }f\\
\text{(since }R\left(  R^{\ell}f\right)  =R^{\ell+1}f\neq\undf\text{)}%
\end{array}
\right) \\
&  =\left(  \sum\limits_{\substack{u\in\widehat{P};\\u\lessdot v}}\left(
R^{\ell}f\right)  \left(  u\right)  \right)  \cdot\overline{\left(  R^{\ell
}f\right)  \left(  v\right)  }\cdot\overline{\sum\limits_{\substack{u\in
\widehat{P};\\u\gtrdot v}}\overline{\left(  R^{\ell+1}f\right)  \left(
u\right)  }}\\
&  \ \ \ \ \ \ \ \ \ \ \ \ \ \ \ \ \ \ \ \ \left(  \text{since }R\left(
R^{\ell}f\right)  =R^{\ell+1}f\right) \\
&  =\left(  \sum\limits_{u\lessdot v}\left(  R^{\ell}f\right)  \left(
u\right)  \right)  \cdot\overline{\left(  R^{\ell}f\right)  \left(  v\right)
}\cdot\overline{\sum\limits_{u\gtrdot v}\overline{\left(  R^{\ell+1}f\right)
\left(  u\right)  }}%
\end{align*}
(here, we have rewritten the summation signs $\sum\limits_{\substack{u\in
\widehat{P};\\u\lessdot v}}$ and $\sum\limits_{\substack{u\in\widehat{P}%
;\\u\gtrdot v}}$ as $\sum\limits_{u\lessdot v}$ and $\sum\limits_{u\gtrdot v}%
$, since we understand the summation index $u$ to range over $\widehat{P}$ by default).
\end{verlong}

Using (\ref{eq.xl=}), we can rewrite this as follows:%
\begin{equation}
v_{\ell+1}=\left(  \sum\limits_{u\lessdot v}u_{\ell}\right)  \cdot
\overline{v_{\ell}}\cdot\overline{\sum\limits_{u\gtrdot v}\overline{u_{\ell
+1}}} \label{eq.vl+1=}%
\end{equation}
for each $v\in P$ and $\ell\in\mathbb{N}$ satisfying $R^{\ell+1}f\neq\undf$.

Next, we formally define the paths that will play a key role in the proof. A
\emph{path} means a sequence $\left(  v_{0},v_{1},\ldots,v_{k}\right)  $ of
elements of $\widehat{P}$ satisfying $v_{0}\gtrdot v_{1}\gtrdot\cdots\gtrdot
v_{k}$. We denote this path by $\left(  v_{0}\gtrdot v_{1}\gtrdot\cdots\gtrdot
v_{k}\right)  $, and we will call it a \emph{path from }$v_{0}$\emph{ to
}$v_{k}$ (or, for short, a \emph{path }$v_{0}\rightarrow v_{k}$). The
\emph{vertices} of this path are defined to be the elements $v_{0}%
,v_{1},\ldots,v_{k}$. We say that this path \emph{starts} at $v_{0}$ and
\emph{ends} at $v_{k}$.

For each $v\in P$ and $\ell\in\mathbb{N}$, we set\footnote{We recall that the
summation signs \textquotedblleft$\sum\limits_{u\lessdot v}$\textquotedblright%
\ and \textquotedblleft$\sum\limits_{u\gtrdot v}$\textquotedblright\ mean
\textquotedblleft$\sum\limits_{\substack{u\in\widehat{P};\\u\lessdot v}%
}$\textquotedblright\ and \textquotedblleft$\sum\limits_{\substack{u\in
\widehat{P};\\u\gtrdot v}}$\textquotedblright, respectively.}%
\[
\downslack_{\ell}^{v}:=v_{\ell}\cdot\overline{\sum\limits_{u\lessdot v}%
u_{\ell}}\ \ \ \ \ \ \ \ \ \ \text{and}\ \ \ \ \ \ \ \ \ \ \upslack_{\ell}%
^{v}:=\overline{\sum\limits_{u\gtrdot v}\overline{u_{\ell}}}\cdot
\overline{v_{\ell}}.
\]
\footnote{These elements $\downslack_{\ell}^{v}$ and $\upslack_{\ell}^{v}$ are
not always well-defined. For $\downslack_{\ell}^{v}$ to be well-defined, we
need to have $R^{\ell}f\neq\undf$, and we need the element $\sum
\limits_{u\lessdot v}u_{\ell}$ to be invertible. For $\upslack_{\ell}^{v}$ to
be well-defined, we need to have $R^{\ell}f\neq\undf$, and we need the
elements $\overline{u_{\ell}}$ (for $u\gtrdot v$) and $\sum\limits_{u\gtrdot
v}\overline{u_{\ell}}$ and $v_{\ell}$ to be invertible.} Furthermore, when
$v\in\left\{  0,1\right\}  $, we set
\begin{equation}
\downslack_{\ell}^{v}:=1\ \ \ \ \ \ \ \ \ \ \text{and}%
\ \ \ \ \ \ \ \ \ \ \upslack_{\ell}^{v}:=1 \label{eq.slack.1}%
\end{equation}
for all $\ell\in\mathbb{N}$.

For any path $\mathbf{p}=\left(  v_{0}\gtrdot v_{1}\gtrdot\cdots\gtrdot
v_{k}\right)  $ and any $\ell\in\mathbb{N}$, we set
\begin{align*}
\downslack_{\ell}^{\mathbf{p}}  &  :=\downslack_{\ell}^{v_{0}}\downslack_{\ell
}^{v_{1}}\cdots\downslack_{\ell}^{v_{k}}\ \ \ \ \ \ \ \ \ \ \text{and}\\
\upslack_{\ell}^{\mathbf{p}}  &  :=\upslack_{\ell}^{v_{0}}\upslack_{\ell
}^{v_{1}}\cdots\upslack_{\ell}^{v_{k}}%
\end{align*}
(assuming that the factors on the right hand sides are well-defined).

If $u$ and $v$ are elements of $\widehat{P}$, and if $\ell\in\mathbb{N}$, then
we set%
\begin{align}
\downslack_{\ell}^{u\rightarrow v}  &  :=\sum_{\mathbf{p}\text{ is a path from
}u\text{ to }v}\downslack_{\ell}^{\mathbf{p}}\ \ \ \ \ \ \ \ \ \ \text{and}%
\label{eq.slack.down-uv}\\
\upslack_{\ell}^{u\rightarrow v}  &  :=\sum_{\mathbf{p}\text{ is a path from
}u\text{ to }v}\upslack_{\ell}^{\mathbf{p}} \label{eq.slack.up-uv}%
\end{align}
(assuming that all addends on the right hand sides are well-defined).

\begin{example}
Let $P=\left[  2\right]  \times\left[  2\right]  $ and $f\in\mathbb{K}%
^{\widehat{P}}$ be as in Example \ref{ex.rowmotion.2x2}. Then,%
\begin{align*}
\left(  1,1\right)  ^{\sim}  &  =\left(  2,2\right)
,\ \ \ \ \ \ \ \ \ \ \left(  1,2\right)  ^{\sim}=\left(  2,1\right)
,\ \ \ \ \ \ \ \ \ \ \left(  2,1\right)  ^{\sim}=\left(  1,2\right)
,\ \ \ \ \ \ \ \ \ \ \left(  2,2\right)  ^{\sim}=\left(  1,1\right)  ,\\
\left(  1,1\right)  _{0}  &  =f\left(  1,1\right)
=w,\ \ \ \ \ \ \ \ \ \ \left(  1,1\right)  _{1}=\left(  Rf\right)  \left(
1,1\right)  =a\overline{z}b,\\
\left(  1,1\right)  _{2}  &  =\left(  R^{2}f\right)  \left(  1,1\right)
=a\overline{b}z\cdot\overline{x+y}\cdot b,\\
\left(  1,2\right)  _{2}  &  =\left(  R^{2}f\right)  \left(  1,2\right)
=a\cdot\overline{x+y}\cdot y\left(  \overline{x}+\overline{y}\right)  b.
\end{align*}

There are only two paths from $\left(  2,2\right)  $ to $\left(  1,1\right)
$: namely, the path $\left(  \left(  2,2\right)  \gtrdot\left(  1,2\right)
\gtrdot\left(  1,1\right)  \right)  $ and the path $\left(  \left(
2,2\right)  \gtrdot\left(  2,1\right)  \gtrdot\left(  1,1\right)  \right)  $.
Each of these two paths has three vertices. There are no paths from $\left(
1,1\right)  $ to $\left(  2,2\right)  $, since we don't have $\left(
1,1\right)  \geq\left(  2,2\right)  $. The only path from $0$ to $0$ is the
trivial path $\left(  0\right)  $.

We have
\begin{align*}
\downslack_{0}^{\left(  1,1\right)  }  &  =\left(  1,1\right)  _{0}%
\cdot\overline{\sum\limits_{u\lessdot\left(  1,1\right)  }u_{0}}=\left(
1,1\right)  _{0}\cdot\overline{0_{0}}=w\cdot\overline{a},\\
\downslack_{0}^{\left(  2,2\right)  }  &  =\left(  2,2\right)  _{0}%
\cdot\overline{\sum\limits_{u\lessdot\left(  2,2\right)  }u_{0}}=\left(
2,2\right)  _{0}\cdot\overline{\left(  1,2\right)  _{0}+\left(  2,1\right)
_{0}}=z\cdot\overline{y+x},\\
\downslack_{1}^{\left(  1,1\right)  }  &  =\left(  1,1\right)  _{1}%
\cdot\overline{\sum\limits_{u\lessdot\left(  1,1\right)  }u_{1}}=\left(
1,1\right)  _{1}\cdot\overline{0_{1}}=a\overline{z}b\cdot\overline{a},\\
\upslack_{0}^{\left(  1,1\right)  }  &  =\overline{\sum\limits_{u\gtrdot
\left(  1,1\right)  }\overline{u_{0}}}\cdot\overline{\left(  1,1\right)  _{0}%
}=\overline{\overline{\left(  1,2\right)  _{0}}+\overline{\left(  2,1\right)
_{0}}}\cdot\overline{\left(  1,1\right)  _{0}}=\overline{\overline
{y}+\overline{x}}\cdot\overline{w},\\
\upslack_{1}^{\left(  1,1\right)  }  &  =\overline{\sum\limits_{u\gtrdot
\left(  1,1\right)  }\overline{u_{1}}}\cdot\overline{\left(  1,1\right)  _{1}%
}=\overline{\overline{\left(  1,2\right)  _{1}}+\overline{\left(  2,1\right)
_{1}}}\cdot\overline{\left(  1,1\right)  _{1}}\\
&  =\overline{\overline{w\overline{y}\left(  x+y\right)  \overline{z}%
b}+\overline{w\overline{x}\left(  x+y\right)  \overline{z}b}}\cdot
\overline{a\overline{z}b}=w\cdot\overline{a}\ \ \ \ \ \ \ \ \ \ \left(
\text{after simplifications}\right)  .
\end{align*}

Furthermore, for any $\ell\in\mathbb{N}$, we have%
\begin{align*}
\downslack_{\ell}^{\left(  \left(  2,2\right)  \gtrdot\left(  1,2\right)
\gtrdot\left(  1,1\right)  \right)  }  &  =\downslack_{\ell}^{\left(
2,2\right)  }\downslack_{\ell}^{\left(  1,2\right)  }\downslack_{\ell
}^{\left(  1,1\right)  };\\
\downslack_{\ell}^{\left(  2,2\right)  \rightarrow\left(  1,1\right)  }  &
=\downslack_{\ell}^{\left(  \left(  2,2\right)  \gtrdot\left(  1,2\right)
\gtrdot\left(  1,1\right)  \right)  }+\downslack_{\ell}^{\left(  \left(
2,2\right)  \gtrdot\left(  2,1\right)  \gtrdot\left(  1,1\right)  \right)  }\\
&  =\downslack_{\ell}^{\left(  2,2\right)  }\downslack_{\ell}^{\left(
1,2\right)  }\downslack_{\ell}^{\left(  1,1\right)  }+\downslack_{\ell
}^{\left(  2,2\right)  }\downslack_{\ell}^{\left(  2,1\right)  }%
\downslack_{\ell}^{\left(  1,1\right)  }%
\end{align*}
(and similarly for $\upslack$ instead of $\downslack$).
\end{example}

The letter $\ell$ will always stand for a nonnegative integer (but will not be fixed).

\begin{remark}
The elements $\downslack_{\ell}^{v}$ and $\upslack_{\ell}^{v}$ (for $v\in P$
and $\ell\in\mathbb{N}$) are not entirely new. They are closely connected with
the down-transfer operator $\nabla$ and the up-transfer operator $\Delta$
studied in \cite[Definition 5.11]{JosRob20}; to be specific, we have
$\downslack_{\ell}^{v}=\left(  \nabla R^{\ell}f\right)  \left(  v\right)  $
and $\upslack_{\ell}^{v}=\left(  \Delta\Theta R^{\ell}f\right)  \left(
v\right)  $ using the notations of \cite[Definition 5.11]{JosRob20}. These
operators $\nabla$ and $\Delta$ have a long history, going back to Stanley's
\textquotedblleft transfer map\textquotedblright\ $\phi$ between the order
polytope and the chain polytope of a poset (see \cite[Definition
3.1]{stanley-polytopes}). The down-transfer operator $\nabla$ does indeed
restrict to $\phi$ when $\mathbb{K}$ is an appropriate tropical semiring. For
this reason, we have been informally referring to $\downslack_{\ell}^{v}$ and
$\upslack_{\ell}^{v}$ as the \emph{down-slack} and the \emph{up-slack} of $v$
at time $\ell$ (harkening back to the notion of slack from linear
optimization). Arguably, the behavior of these operators when $\mathbb{K}$ is
the tropical semiring is not very indicative of the general case.

When $\mathbb{K}$ is commutative, our $\downslack_{0}^{v}$ have also
implicitly appeared in \cite{MusRob17}: If $v=\left(  i,j\right)  \in P$, then
$\downslack_{0}^{v}=\overline{A_{ij}}$, where $A_{ij}$ is defined as in
\cite[(1)]{MusRob17}.
\end{remark}

\section{\label{sec.proof-lems}Proof of reciprocity: simple lemmas}

Throughout this section, we use the notations introduced in Section
\ref{sec.proof-nots}.


Let us prove some relations between the elements we have introduced. We begin
with a well-definedness result:

\begin{lemma}
\label{lem.slacks.wd}Let $\ell\in\mathbb{N}$ be such that $\ell\geq1$ and
$R^{\ell}f\neq\undf$. Assume furthermore that $a$ is invertible. Let
$v\in\widehat{P}$. Then:

\begin{enumerate}
\item[\textbf{(a)}] The element $v_{\ell}$ is well-defined and invertible.

\item[\textbf{(b)}] The element $v_{\ell-1}$ is well-defined and invertible.

\item[\textbf{(c)}] The element $\downslack_{\ell-1}^{v}$ is well-defined and invertible.

\item[\textbf{(d)}] The element $\upslack_{\ell}^{v}$ is well-defined and invertible.
\end{enumerate}
\end{lemma}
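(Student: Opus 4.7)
The strategy is to derive parts (a) and (b) directly from the well-definedness lemmas already established in Section~\ref{sec.ncbr}, after shifting the iterate index appropriately, and then to deduce parts (c) and (d) by invoking the implicit rowmotion identity~(\ref{eq.vl+1=}) to expose the invertibility of the relevant sums. There is no substantive obstacle; the only subtlety is to handle the two cases $v \in \{0,1\}$ separately, since the cited lemmas concern $v \in P$ while the two added extremal labels behave specially by being preserved under $R$.

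For parts (a) and (b), note first that $R^{\ell-1}f \neq \undf$ by Lemma~\ref{lem.R.wd-triv}, so both $v_\ell$ and $v_{\ell-1}$ are well-defined. If $v = 0$, then Corollary~\ref{cor.R.implicit.01} gives $v_\ell = v_{\ell-1} = a$, which is invertible by hypothesis. If $v = 1$, then Lemma~\ref{lem.R.1inv} applied to $R^{\ell-1}f$ (the requirement $P \neq \varnothing$ holds automatically since $p, q \geq 1$, and $R(R^{\ell-1}f) = R^\ell f \neq \undf$) shows that $(R^{\ell-1}f)(1) = b$ is invertible, and the equality $v_\ell = v_{\ell-1} = b$ again follows from Corollary~\ref{cor.R.implicit.01}. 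Finally, when $v \in P$, I would invoke Lemma~\ref{lem.R.inv} applied to $R^{\ell-1}f$ to obtain (b), and Lemma~\ref{lem.R.inv-2} applied to $R^{\ell-1}f$ (whose hypothesis that $(R^{\ell-1}f)(0) = a$ be invertible holds by Corollary~\ref{cor.R.implicit.01} combined with the standing assumption on $a$) to obtain (a).

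For parts (c) and (d), the case $v \in \{0,1\}$ is immediate from~(\ref{eq.slack.1}). For $v \in P$, apply the identity~(\ref{eq.vl+1=}) with $\ell$ replaced by $\ell-1$ (which is licit because $R^\ell f \neq \undf$) to get
\[
v_\ell = \Bigl(\sum_{u \lessdot v} u_{\ell-1}\Bigr) \cdot \overline{v_{\ell-1}} \cdot \overline{\sum_{u \gtrdot v} \overline{u_\ell}}.
\]
The bare well-definedness of the right-hand side already shows that $\sum_{u \gtrdot v} \overline{u_\ell}$ is invertible; combined with the invertibility of $v_\ell$ from~(a), this makes $\upslack_\ell^v$ a product of two invertibles and settles~(d). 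For~(c), solving the displayed identity yields $\sum_{u \lessdot v} u_{\ell-1} = v_\ell \cdot \bigl(\sum_{u \gtrdot v} \overline{u_\ell}\bigr) \cdot v_{\ell-1}$, a product of three invertibles by~(a), the invertibility just shown, and~(b); hence $\downslack_{\ell-1}^v = v_{\ell-1} \cdot \overline{\sum_{u \lessdot v} u_{\ell-1}}$ is itself a product of invertibles, and is therefore invertible.
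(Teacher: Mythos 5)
Your proof is correct and follows essentially the same route as the paper's: reduce to $R^{\ell-1}f\neq\undf$ and $(R^{\ell-1}f)(0)=a$, get (a) from Lemma \ref{lem.R.inv-2} and (b) from Lemma \ref{lem.R.inv}, then read off invertibility of $\sum_{u\gtrdot v}\overline{u_{\ell}}$ from the well-definedness of (\ref{eq.vl+1=}) at $\ell-1$ and solve that identity for $\sum_{u\lessdot v}u_{\ell-1}$ to get (c) and (d). The only (harmless) difference is that you split off the cases $v\in\{0,1\}$ in part (a) by hand, whereas the paper applies Lemma \ref{lem.R.inv-2} to all $v\in\widehat{P}$ at once.
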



\begin{vershort}

\begin{proof}
From $R^{\ell}f \neq\undf$, we obtain $R^{\ell- 1} f \neq\undf$. Hence,
Corollary \ref{cor.R.implicit.01} yields that $\left(  R^{\ell-1}f\right)
\left(  0\right)  =f\left(  0\right)  =a$, which is invertible by assumption.
\medskip

\textbf{(a)} Clearly, $v_{\ell}=\left(  R^{\ell}f\right)  \left(  v\right)  $
is well-defined, and we have $v_{\ell} = \left(  R\left(  R^{\ell-1}f\right)
\right)  \left(  v\right)  $. Hence, Lemma~\ref{lem.R.inv-2} (applied to
$R^{\ell-1}f$ instead of $f$) yields that $v_{\ell}$ is invertible. \medskip

\textbf{(b)} If $v=0$, then this follows from part \textbf{(a)}, because
(\ref{eq.0l.a.short}) yields that $v_{\ell-1}=a=v_{\ell}$ in this case. An
analogous argument works if $v=1$. Thus, we WLOG assume that $v\notin\left\{
0,1\right\}  $, so that $v\in P$. The element $v_{\ell-1}=\left(  R^{\ell
-1}f\right)  \left(  v\right)  $ is clearly well-defined, and is invertible by
Lemma \ref{lem.R.inv} (applied to $R^{\ell-1}f$ instead of $f$). \medskip

\textbf{(c)} If $v\in\left\{  0,1\right\}  $, then this follows from
(\ref{eq.slack.1}). Otherwise, $v\in P$. Applying (\ref{eq.vl+1=}) to $\ell-1$
instead of $\ell$, we obtain%
\begin{equation}
v_{\ell}=\left(  \sum\limits_{u\lessdot v}u_{\ell-1}\right)  \cdot
\overline{v_{\ell-1}}\cdot\overline{\sum\limits_{u\gtrdot v}\overline{u_{\ell
}}}. \label{pf.lem.slacks.wd.short.c.1}%
\end{equation}
This equality shows that $\overline{v_{\ell-1}}$ and $\overline{\sum
\limits_{u\gtrdot v}\overline{u_{\ell}}}$ are well-defined, i.e., the elements
$v_{\ell-1}$ and $\sum\limits_{u\gtrdot v}\overline{u_{\ell}}$ are invertible.
Also, $v_{\ell}$ is invertible (by Lemma \ref{lem.slacks.wd} \textbf{(a)}).

Solving the equality (\ref{pf.lem.slacks.wd.short.c.1}) for the first factor
on its right hand side, we obtain
\begin{align*}
\sum\limits_{u\lessdot v}u_{\ell-1} = v_{\ell}\cdot\left(  \sum
\limits_{u\gtrdot v}\overline{u_{\ell}}\right)  \cdot v_{\ell-1} .
\end{align*}
The right hand side of this equality is a product of three invertible
elements; thus, both sides are invertible. Therefore, the element
$\overline{\sum\limits_{u\lessdot v}u_{\ell-1}}$ is well-defined, hence
invertible (since an inverse is always invertible).

Finally, $\downslack_{\ell-1}^{v}$ is defined to be the product $v_{\ell
-1}\cdot\overline{\sum\limits_{u\lessdot v}u_{\ell-1}}$, and thus is
well-defined and invertible because both of its factors are. \medskip

\textbf{(d)} If $v\in\left\{  0,1\right\}  $, then this follows from
(\ref{eq.slack.1}). Otherwise, $v\in P$.

Lemma \ref{lem.slacks.wd} \textbf{(a)} shows that $v_{\ell}$ is invertible;
hence, $\overline{v_{\ell}}$ is well-defined, and invertible. Also, in the
proof of Lemma \ref{lem.slacks.wd} \textbf{(c)}, we have shown that
$\overline{\sum\limits_{u\gtrdot v}\overline{u_{\ell}}}$ is well-defined, so
it too is invertible. Finally, $\upslack_{\ell}^{v}$ is defined to be the
product $\overline{\sum\limits_{u\gtrdot v}\overline{u_{\ell}}}\cdot
\overline{v_{\ell}}$, and thus is also well-defined and invertible because
both of its factors are.
\end{proof}
\end{vershort}

\begin{verlong}

\begin{proof}
We know that $a$ is invertible. In other words, $f\left(  0\right)  $ is
invertible (since $a=f\left(  0\right)  $). Also, $\ell-1\in\mathbb{N}$ (since
$\ell\geq1$) and $R^{\ell-1}f\neq\undf$ (since $R\left(  R^{\ell-1}f\right)
=R^{\ell}f\neq\undf=R\left(  \undf\right)  $). Hence, Corollary
\ref{cor.R.implicit.01} (applied to $\ell-1$ instead of $\ell$) yields
$\left(  R^{\ell-1}f\right)  \left(  0\right)  =f\left(  0\right)  $ and
$\left(  R^{\ell-1}f\right)  \left(  1\right)  =f\left(  1\right)  $. Thus,
$\left(  R^{\ell-1}f\right)  \left(  0\right)  =f\left(  0\right)  =a$, so
that $\left(  R^{\ell-1}f\right)  \left(  0\right)  $ is invertible (since $a$
is invertible). \medskip

\textbf{(a)} Clearly, $v_{\ell}=\left(  R^{\ell}f\right)  \left(  v\right)  $
is well-defined (since $R^{\ell}f\neq\undf$). We have $R\left(  R^{\ell
-1}f\right)  =R^{\ell}f\neq\undf$. Hence, Lemma \ref{lem.R.inv-2} (applied to
$R^{\ell-1}f$ instead of $f$) yields that $\left(  R\left(  R^{\ell
-1}f\right)  \right)  \left(  v\right)  $ is invertible (since $\left(
R^{\ell-1}f\right)  \left(  0\right)  $ is invertible). In other words,
$v_{\ell}$ is invertible (since $v_{\ell}=\underbrace{\left(  R^{\ell
}f\right)  }_{=R\left(  R^{\ell-1}f\right)  }\left(  v\right)  =\left(
R\left(  R^{\ell-1}f\right)  \right)  \left(  v\right)  $). This proves Lemma
\ref{lem.slacks.wd} \textbf{(a)}. \medskip

\textbf{(b)} Clearly, $v_{\ell-1}=\left(  R^{\ell-1}f\right)  \left(
v\right)  $ is well-defined (since $R^{\ell-1}f\neq\undf$). It remains to
prove that $v_{\ell-1}$ is invertible. If $v$ is $0$ or $1$, then this follows
easily from part \textbf{(a)}\footnote{\textit{Proof.} Assume that $v$ is $0$
or $1$. We WLOG assume that $v=0$ (since the case $v=1$ is analogous).
\par
We must show that $v_{\ell-1}$ is invertible. However, (\ref{eq.0l.a}) yields
$0_{\ell}=a$. Similarly, $0_{\ell-1}=a$. Comparing these two equalities, we
obtain $0_{\ell}=0_{\ell-1}$. In other words, $v_{\ell}=v_{\ell-1}$ (since
$v=0$). But Lemma \ref{lem.slacks.wd} \textbf{(a)} shows that $v_{\ell}$ is
invertible. Thus, $v_{\ell-1}$ is invertible (since $v_{\ell}=v_{\ell-1}$).
Qed.}. Thus, for the rest of this proof, we WLOG assume that $v$ is neither
$0$ nor $1$. Hence, $v\in P$.

We have $R\left(  R^{\ell-1}f\right)  =R^{\ell}f\neq\undf$. Hence, Lemma
\ref{lem.R.inv} (applied to $R^{\ell-1}f$ instead of $f$) yields that $\left(
R^{\ell-1}f\right)  \left(  v\right)  $ is invertible. In other words,
$v_{\ell-1}$ is invertible (since $v_{\ell-1}=\left(  R^{\ell-1}f\right)
\left(  v\right)  $). This proves Lemma \ref{lem.slacks.wd} \textbf{(b)}.
\medskip

\textbf{(c)} If $v\in\left\{  0,1\right\}  $, then this follows from
(\ref{eq.slack.1}). Thus, we WLOG assume that $v\notin\left\{  0,1\right\}  $.
Hence, $v\in P$.

Thus, applying (\ref{eq.vl+1=}) to $\ell-1$ instead of $\ell$, we obtain%
\begin{equation}
v_{\ell}=\left(  \sum\limits_{u\lessdot v}u_{\ell-1}\right)  \cdot
\overline{v_{\ell-1}}\cdot\overline{\sum\limits_{u\gtrdot v}\overline{u_{\ell
}}} \label{pf.lem.slacks.wd.c.1}%
\end{equation}
(since $R^{\ell}f \neq\undf$). This equality shows that $\overline{v_{\ell-1}%
}$ and $\overline{\sum\limits_{u\gtrdot v}\overline{u_{\ell}}}$ are
well-defined. In other words, the elements $v_{\ell-1}$ and $\sum
\limits_{u\gtrdot v}\overline{u_{\ell}}$ are invertible. Also, $v_{\ell}$ is
invertible (by Lemma \ref{lem.slacks.wd} \textbf{(a)}).

Multiplying the equality (\ref{pf.lem.slacks.wd.c.1}) by $\left(
\sum\limits_{u\gtrdot v}\overline{u_{\ell}}\right)  \cdot v_{\ell-1}$ on the
right, we obtain%
\begin{align*}
v_{\ell}\cdot\left(  \sum\limits_{u\gtrdot v}\overline{u_{\ell}}\right)  \cdot
v_{\ell-1}  &  =\left(  \sum\limits_{u\lessdot v}u_{\ell-1}\right)
\cdot\overline{v_{\ell-1}}\cdot\underbrace{\overline{\sum\limits_{u\gtrdot
v}\overline{u_{\ell}}}\cdot\left(  \sum\limits_{u\gtrdot v}\overline{u_{\ell}%
}\right)  }_{=1}\cdot v_{\ell-1}\\
&  =\left(  \sum\limits_{u\lessdot v}u_{\ell-1}\right)  \cdot
\underbrace{\overline{v_{\ell-1}}\cdot v_{\ell-1}}_{=1}=\sum\limits_{u\lessdot
v}u_{\ell-1}.
\end{align*}
The left hand side of this equality is a product of three invertible elements
(since $v_{\ell}$ and $\sum\limits_{u\gtrdot v}\overline{u_{\ell}}$ and
$v_{\ell-1}$ are invertible), and thus must itself be invertible. Hence, the
right hand side is invertible. In other words, $\sum\limits_{u\lessdot
v}u_{\ell-1}$ is invertible. Thus, the element $\overline{\sum
\limits_{u\lessdot v}u_{\ell-1}}$ is well-defined. This element is furthermore
invertible (since an inverse is always invertible).

Now, the definition of $\downslack_{\ell-1}^{v}$ yields $\downslack_{\ell
-1}^{v}=v_{\ell-1}\cdot\overline{\sum\limits_{u\lessdot v}u_{\ell-1}}$. This
shows that $\downslack_{\ell-1}^{v}$ is well-defined (since $v_{\ell-1}$ and
$\overline{\sum\limits_{u\lessdot v}u_{\ell-1}}$ are well-defined) and
invertible (since $v_{\ell-1}$ and $\overline{\sum\limits_{u\lessdot v}%
u_{\ell-1}}$ are invertible). This proves Lemma \ref{lem.slacks.wd}
\textbf{(c)}. \medskip

\textbf{(d)} If $v\in\left\{  0,1\right\}  $, then this follows from
(\ref{eq.slack.1}). Thus, we WLOG assume that $v\notin\left\{  0,1\right\}  $.
Hence, $v\in P$.

Lemma \ref{lem.slacks.wd} \textbf{(a)} shows that $v_{\ell}$ is invertible.
Hence, $\overline{v_{\ell}}$ is well-defined, and invertible as well (since an
inverse is always invertible). Also, in the proof of Lemma \ref{lem.slacks.wd}
\textbf{(c)}, we have shown that $\overline{\sum\limits_{u\gtrdot v}%
\overline{u_{\ell}}}$ is well-defined. Thus, $\overline{\sum\limits_{u\gtrdot
v}\overline{u_{\ell}}}$ is invertible (since an inverse is always invertible).
Now, the definition of $\upslack_{\ell}^{v}$ yields $\upslack_{\ell}%
^{v}=\overline{\sum\limits_{u\gtrdot v}\overline{u_{\ell}}}\cdot
\overline{v_{\ell}}$. Hence, $\upslack_{\ell}^{v}$ is well-defined (since
$\overline{\sum\limits_{u\gtrdot v}\overline{u_{\ell}}}$ and $\overline
{v_{\ell}}$ are well-defined) and invertible (since $\overline{\sum
\limits_{u\gtrdot v}\overline{u_{\ell}}}$ and $\overline{v_{\ell}}$ are
invertible). This proves Lemma \ref{lem.slacks.wd} \textbf{(d)}.
\end{proof}
\end{verlong}

Next we show some simple recursions for $\downslack_{\ell}^{s\rightarrow t}$
and $\upslack_{\ell}^{s\rightarrow t}$:

\begin{proposition}
\label{prop.slacks.rec}Let $s$ and $t$ be two distinct elements of
$\widehat{P}$, and fix $\ell\in\mathbb{N}$. Then
\begin{align}
\downslack_{\ell}^{s\rightarrow t}  &  =\downslack_{\ell}^{s}\sum
_{\substack{u\in\widehat{P};\\s\gtrdot u}}\downslack_{\ell}^{u\rightarrow
t}\label{eq.prop.slacks.rec.down1}\\
&  =\sum_{\substack{u\in\widehat{P};\\u\gtrdot t}}\downslack_{\ell
}^{s\rightarrow u}\downslack_{\ell}^{t} \label{eq.prop.slacks.rec.down2}%
\end{align}
and
\begin{align}
\upslack_{\ell}^{s\rightarrow t}  &  =\upslack_{\ell}^{s}\sum_{\substack{u\in
\widehat{P};\\s\gtrdot u}}\upslack_{\ell}^{u\rightarrow t}%
\label{eq.prop.slacks.rec.up1}\\
&  =\sum_{\substack{u\in\widehat{P};\\u\gtrdot t}}\upslack_{\ell
}^{s\rightarrow u}\upslack_{\ell}^{t}. \label{eq.prop.slacks.rec.up2}%
\end{align}
Here, we assume that all the terms in the respective equalities are well-defined.
\end{proposition}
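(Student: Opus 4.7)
The plan is to prove all four equalities by a simple bijective decomposition of paths, splitting each path at either its first or last edge. Since the four equalities are completely analogous (swapping $\downslack$ for $\upslack$ changes nothing in the combinatorial argument, and splitting at the last vertex rather than the first is symmetric), I will focus on (\ref{eq.prop.slacks.rec.down1}) and indicate how the other three proceed.

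For (\ref{eq.prop.slacks.rec.down1}), let $\mathbf{p} = (s = v_0 \gtrdot v_1 \gtrdot \cdots \gtrdot v_k = t)$ be any path from $s$ to $t$. Since $s \neq t$, we have $k \geq 1$, so that $v_1$ is well-defined and satisfies $s \gtrdot v_1$. The truncation $\mathbf{p}' := (v_1 \gtrdot v_2 \gtrdot \cdots \gtrdot v_k)$ is then a path from $v_1$ to $t$, and the definition of $\downslack_{\ell}^{\mathbf{p}}$ factors as
\[
\downslack_{\ell}^{\mathbf{p}} = \downslack_{\ell}^{v_0} \downslack_{\ell}^{v_1} \cdots \downslack_{\ell}^{v_k} = \downslack_{\ell}^{s} \cdot \downslack_{\ell}^{\mathbf{p}'}.
\]
Conversely, any $u \in \widehat{P}$ with $s \gtrdot u$ together with any path $\mathbf{p}'$ from $u$ to $t$ yields, by prepending $s$, a unique path from $s$ to $t$ whose truncation recovers $\mathbf{p}'$. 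This is a bijection between paths from $s$ to $t$ and pairs $(u, \mathbf{p}')$ with $s \gtrdot u$ and $\mathbf{p}' : u \to t$. Summing the factored identity over this bijection and pulling the common factor $\downslack_{\ell}^{s}$ out on the left gives
\[
\downslack_{\ell}^{s \rightarrow t} = \sum_{\mathbf{p} : s \to t} \downslack_{\ell}^{\mathbf{p}} = \downslack_{\ell}^{s} \sum_{\substack{u \in \widehat{P}; \\ s \gtrdot u}} \sum_{\mathbf{p}' : u \to t} \downslack_{\ell}^{\mathbf{p}'} = \downslack_{\ell}^{s} \sum_{\substack{u \in \widehat{P}; \\ s \gtrdot u}} \downslack_{\ell}^{u \rightarrow t},
\]
which is (\ref{eq.prop.slacks.rec.down1}).

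Equation (\ref{eq.prop.slacks.rec.down2}) is obtained by the symmetric decomposition: split each path $(v_0 \gtrdot \cdots \gtrdot v_k)$ from $s$ to $t$ at its last edge, writing it as a path from $s$ to $u := v_{k-1}$ (which satisfies $u \gtrdot t$) followed by the final vertex $t$. This factors $\downslack_{\ell}^{\mathbf{p}}$ as $\downslack_{\ell}^{\mathbf{p}''} \cdot \downslack_{\ell}^{t}$, and summing over the bijection gives the claim (with $\downslack_{\ell}^{t}$ pulled out on the right). Equations (\ref{eq.prop.slacks.rec.up1}) and (\ref{eq.prop.slacks.rec.up2}) follow by the same two decompositions, word for word, with $\upslack$ in place of $\downslack$, since the definitions of $\upslack_{\ell}^{\mathbf{p}}$ and $\upslack_{\ell}^{u \to v}$ are formally identical to those of their $\downslack$-counterparts.

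There is no real obstacle here beyond being careful with the case $s = t$ (ruled out by hypothesis, which ensures every path has length at least one so the truncation makes sense) and trusting the well-definedness hypothesis in the statement (which guarantees we may freely rearrange the noncommutative products above, since all factors that appear are elements of $\mathbb{K}$ rather than $\undf$). The argument is entirely formal and does not use the hypothesis that $P$ is a rectangle.
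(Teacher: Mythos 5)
Your proof is correct and follows essentially the same route as the paper's: classify paths from $s$ to $t$ by their second vertex (for (\ref{eq.prop.slacks.rec.down1}) and (\ref{eq.prop.slacks.rec.up1})) or second-to-last vertex (for (\ref{eq.prop.slacks.rec.down2}) and (\ref{eq.prop.slacks.rec.up2})), use the evident bijection with pairs $(u,\mathbf{p}')$, factor the product, and pull out the common factor. No gaps.
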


\begin{vershort}

\begin{proof}
Since $s\neq t$, every path from $s$ to $t$ must contain an element covered by
$s$ as its second vertex.

Fix an element $u\in\widehat{P}$ satisfying $s\gtrdot u$. If $\left(
v_{0}\gtrdot v_{1}\gtrdot\cdots\gtrdot v_{k}\right)  $ is a path from $s$ to
$t$ satisfying $v_{1}=u$, then $\left(  v_{1}\gtrdot v_{2}\gtrdot\cdots\gtrdot
v_{k}\right)  $ is a path from $u$ to $t$. Hence, we have found a map
\begin{align*}
&  \text{from }\left\{  \text{paths }\left(  v_{0}\gtrdot v_{1}\gtrdot
\cdots\gtrdot v_{k}\right)  \text{ from }s\text{ to }t\text{ satisfying }%
v_{1}=u\right\} \\
&  \text{to }\left\{  \text{paths from }u\text{ to }t\right\}
\end{align*}
that sends each path $\left(  v_{0}\gtrdot v_{1}\gtrdot\cdots\gtrdot
v_{k}\right)  $ to $\left(  v_{1}\gtrdot v_{2}\gtrdot\cdots\gtrdot
v_{k}\right)  $. This map is a bijection (since any path from $u$ to $t$ can
be uniquely extended to a path from $s$ to $t$ by inserting the vertex $s$ at
the front). We can use this bijection to substitute $\left(  v_{1}\gtrdot
v_{2}\gtrdot\cdots\gtrdot v_{k}\right)  $ for $\mathbf{p}$ in a sum that
ranges over all paths $\mathbf{p}$ from $u$ to $t$. In particular,%
\begin{align}
&  \sum_{\mathbf{p}\text{ is a path from }u\text{ to }t}\downslack_{\ell}%
^{s}\downslack_{\ell}^{\mathbf{p}}\nonumber\\
&  =\sum_{\substack{\left(  v_{0}\gtrdot v_{1}\gtrdot\cdots\gtrdot
v_{k}\right)  \text{ is a path from }s\text{ to }t;\\v_{1}=u}%
}\underbrace{\downslack_{\ell}^{s}}_{\substack{=\downslack_{\ell}^{v_{0}%
}\\\text{(since }s=v_{0}\text{)}}}\underbrace{\downslack_{\ell}^{\left(
v_{1}\gtrdot v_{2}\gtrdot\cdots\gtrdot v_{k}\right)  }}%
_{\substack{=\downslack_{\ell}^{v_{1}}\downslack_{\ell}^{v_{2}}\cdots
\downslack_{\ell}^{v_{k}}\\\text{(by the definition of }\downslack_{\ell
}^{\left(  v_{1}\gtrdot v_{2}\gtrdot\cdots\gtrdot v_{k}\right)  }\text{)}%
}}\nonumber\\
&  =\sum_{\substack{\left(  v_{0}\gtrdot v_{1}\gtrdot\cdots\gtrdot
v_{k}\right)  \text{ is a path from }s\text{ to }t;\\v_{1}=u}%
}\underbrace{\downslack_{\ell}^{v_{0}}\downslack_{\ell}^{v_{1}}%
\downslack_{\ell}^{v_{2}}\cdots\downslack_{\ell}^{v_{k}}}%
_{\substack{=\downslack_{\ell}^{\left(  v_{0}\gtrdot v_{1}\gtrdot\cdots\gtrdot
v_{k}\right)  }\\\text{(by the definition of }\downslack_{\ell}^{\left(
v_{0}\gtrdot v_{1}\gtrdot\cdots\gtrdot v_{k}\right)  }\text{)}}}\nonumber\\
&  =\sum_{\substack{\left(  v_{0}\gtrdot v_{1}\gtrdot\cdots\gtrdot
v_{k}\right)  \text{ is a path from }s\text{ to }t;\\v_{1}=u}}\downslack_{\ell
}^{\left(  v_{0}\gtrdot v_{1}\gtrdot\cdots\gtrdot v_{k}\right)  }.
\label{pf.prop.slacks.rec.short.1}%
\end{align}

Now, forget that we fixed $u$. We thus have proved
(\ref{pf.prop.slacks.rec.short.1}) for each $u\in\widehat{P}$ satisfying
$s\gtrdot u$.

The definition of $\downslack_{\ell}^{s\rightarrow t}$ yields%
\begin{align*}
\downslack_{\ell}^{s\rightarrow t}  &  =\underbrace{\sum_{\left(  v_{0}\gtrdot
v_{1}\gtrdot\cdots\gtrdot v_{k}\right)  \text{ is a path from }s\text{ to }t}%
}_{\substack{=\sum_{\substack{u\in\widehat{P};\\s\gtrdot u}}\ \ \sum
_{\substack{\left(  v_{0}\gtrdot v_{1}\gtrdot\cdots\gtrdot v_{k}\right)
\text{ is a path from }s\text{ to }t;\\v_{1}=u}}\\\text{(because any path
}\left(  v_{0}\gtrdot v_{1}\gtrdot\cdots\gtrdot v_{k}\right)  \text{ from
}s\text{ to }t\text{ has a well-defined}\\\text{second vertex }v_{1}\text{,
and this second vertex }v_{1}\text{ satisfies }s\gtrdot v_{1}\text{)}%
}}\downslack_{\ell}^{\left(  v_{0}\gtrdot v_{1}\gtrdot\cdots\gtrdot
v_{k}\right)  }\\
&  =\sum_{\substack{u\in\widehat{P};\\s\gtrdot u}}\ \ \underbrace{\sum
_{\substack{\left(  v_{0}\gtrdot v_{1}\gtrdot\cdots\gtrdot v_{k}\right)
\text{ is a path from }s\text{ to }t;\\v_{1}=u}}\downslack_{\ell}^{\left(
v_{0}\gtrdot v_{1}\gtrdot\cdots\gtrdot v_{k}\right)  }}_{\substack{=\sum
_{\mathbf{p}\text{ is a path from }u\text{ to }t}\downslack_{\ell}%
^{s}\downslack_{\ell}^{\mathbf{p}}\\\text{(by
(\ref{pf.prop.slacks.rec.short.1}))}}}\\
&  =\sum_{\substack{u\in\widehat{P};\\s\gtrdot u}}\ \ \sum_{\mathbf{p}\text{
is a path from }u\text{ to }t}\downslack_{\ell}^{s}\downslack_{\ell
}^{\mathbf{p}}=\downslack_{\ell}^{s}\sum_{\substack{u\in\widehat{P};\\s\gtrdot
u}}\ \ \underbrace{\sum_{\mathbf{p}\text{ is a path from }u\text{ to }%
t}\downslack_{\ell}^{\mathbf{p}}}_{\substack{=\downslack_{\ell}^{u\rightarrow
t}\\\text{(by the definition of }\downslack_{\ell}^{u\rightarrow t}\text{)}%
}}\\
&  =\downslack_{\ell}^{s}\sum_{\substack{u\in\widehat{P};\\s\gtrdot
u}}\downslack_{\ell}^{u\rightarrow t}.
\end{align*}
This proves (\ref{eq.prop.slacks.rec.down1}). The same argument (but with each
$\downslack$ symbol replaced by an $\upslack$ symbol) proves
(\ref{eq.prop.slacks.rec.up1}). Moreover, a similar argument (but now
classifying paths from $s$ to $t$ according to their second-to-last vertex
instead of their second vertex) establishes (\ref{eq.prop.slacks.rec.down2})
and (\ref{eq.prop.slacks.rec.up2}). Thus, Proposition \ref{prop.slacks.rec} is proven.
\end{proof}
\end{vershort}

\begin{verlong}

\begin{proof}
Any path $\left(  v_{0}\gtrdot v_{1}\gtrdot\cdots\gtrdot v_{k}\right)  $ from
$s$ to $t$ has at least two vertices (since $s\neq t$), and thus has a
well-defined second vertex $v_{1}$. This second vertex $v_{1}$ satisfies
$s\gtrdot v_{1}$ (since $s=v_{0}\gtrdot v_{1}$). In other words, this second
vertex $v_{1}$ is an element $u\in\widehat{P}$ satisfying $s\gtrdot u$.

Fix an element $u\in\widehat{P}$ satisfying $s\gtrdot u$. If $\left(
v_{0}\gtrdot v_{1}\gtrdot\cdots\gtrdot v_{k}\right)  $ is a path from $s$ to
$t$ satisfying $v_{1}=u$, then $\left(  v_{1}\gtrdot v_{2}\gtrdot\cdots\gtrdot
v_{k}\right)  $ is a path from $u$ to $t$ (since $v_{1}=u$ and $v_{k}=t$).
Hence, we have found a map
\begin{align*}
&  \text{from }\left\{  \text{paths }\left(  v_{0}\gtrdot v_{1}\gtrdot
\cdots\gtrdot v_{k}\right)  \text{ from }s\text{ to }t\text{ satisfying }%
v_{1}=u\right\} \\
&  \text{to }\left\{  \text{paths from }u\text{ to }t\right\}
\end{align*}
that sends each path $\left(  v_{0}\gtrdot v_{1}\gtrdot\cdots\gtrdot
v_{k}\right)  $ to $\left(  v_{1}\gtrdot v_{2}\gtrdot\cdots\gtrdot
v_{k}\right)  $. This map is injective\footnote{because a path $\left(
v_{0}\gtrdot v_{1}\gtrdot\cdots\gtrdot v_{k}\right)  $ from $s$ to $t$ can be
reconstructed from its image $\left(  v_{1}\gtrdot v_{2}\gtrdot\cdots\gtrdot
v_{k}\right)  $ under this map (since its first vertex $v_{0}$ is forced to be
$s$)} and surjective\footnote{Indeed, if $\mathbf{p}=\left(  v_{1}\gtrdot
v_{2}\gtrdot\cdots\gtrdot v_{k}\right)  $ is a path from $u$ to $t$, then
$\left(  s\gtrdot v_{1}\gtrdot v_{2}\gtrdot\cdots\gtrdot v_{k}\right)  $ is a
path from $s$ to $t$ satisfying $v_{1}=u$ (since $s\gtrdot u=v_{1}$), and it
is clear that our map sends the latter path to $\mathbf{p}$.}; hence, it is a
bijection. We can use this bijection to substitute $\left(  v_{1}\gtrdot
v_{2}\gtrdot\cdots\gtrdot v_{k}\right)  $ for $\mathbf{p}$ in the sum
$\sum_{\mathbf{p}\text{ is a path from }u\text{ to }t}\downslack_{\ell}%
^{s}\downslack_{\ell}^{\mathbf{p}}$. We thus obtain
\begin{align}
&  \sum_{\mathbf{p}\text{ is a path from }u\text{ to }t}\downslack_{\ell}%
^{s}\downslack_{\ell}^{\mathbf{p}}\nonumber\\
&  =\sum_{\substack{\left(  v_{0}\gtrdot v_{1}\gtrdot\cdots\gtrdot
v_{k}\right)  \text{ is a path from }s\text{ to }t;\\v_{1}=u}%
}\underbrace{\downslack_{\ell}^{s}}_{\substack{=\downslack_{\ell}^{v_{0}%
}\\\text{(since }s=v_{0}\text{)}}}\underbrace{\downslack_{\ell}^{\left(
v_{1}\gtrdot v_{2}\gtrdot\cdots\gtrdot v_{k}\right)  }}%
_{\substack{=\downslack_{\ell}^{v_{1}}\downslack_{\ell}^{v_{2}}\cdots
\downslack_{\ell}^{v_{k}}\\\text{(by the definition of }\downslack_{\ell
}^{\left(  v_{1}\gtrdot v_{2}\gtrdot\cdots\gtrdot v_{k}\right)  }\text{)}%
}}\nonumber\\
&  =\sum_{\substack{\left(  v_{0}\gtrdot v_{1}\gtrdot\cdots\gtrdot
v_{k}\right)  \text{ is a path from }s\text{ to }t;\\v_{1}=u}%
}\underbrace{\downslack_{\ell}^{v_{0}}\downslack_{\ell}^{v_{1}}%
\downslack_{\ell}^{v_{2}}\cdots\downslack_{\ell}^{v_{k}}}%
_{\substack{=\downslack_{\ell}^{v_{0}}\downslack_{\ell}^{v_{1}}\cdots
\downslack_{\ell}^{v_{k}}\\=\downslack_{\ell}^{\left(  v_{0}\gtrdot
v_{1}\gtrdot\cdots\gtrdot v_{k}\right)  }\\\text{(by the definition of
}\downslack_{\ell}^{\left(  v_{0}\gtrdot v_{1}\gtrdot\cdots\gtrdot
v_{k}\right)  }\text{)}}}\nonumber\\
&  =\sum_{\substack{\left(  v_{0}\gtrdot v_{1}\gtrdot\cdots\gtrdot
v_{k}\right)  \text{ is a path from }s\text{ to }t;\\v_{1}=u}}\downslack_{\ell
}^{\left(  v_{0}\gtrdot v_{1}\gtrdot\cdots\gtrdot v_{k}\right)  }.
\label{pf.prop.slacks.rec.1}%
\end{align}

Now, forget that we fixed $u$. We thus have proved (\ref{pf.prop.slacks.rec.1}%
) for each $u\in\widehat{P}$ satisfying $s\gtrdot u$.

The definition of $\downslack_{\ell}^{s\rightarrow t}$ yields%
\begin{align*}
\downslack_{\ell}^{s\rightarrow t}  &  =\sum_{\mathbf{p}\text{ is a path from
}s\text{ to }t}\downslack_{\ell}^{\mathbf{p}}\\
&  =\underbrace{\sum_{\left(  v_{0}\gtrdot v_{1}\gtrdot\cdots\gtrdot
v_{k}\right)  \text{ is a path from }s\text{ to }t}}_{\substack{=\sum
_{\substack{u\in\widehat{P};\\s\gtrdot u}}\ \ \sum_{\substack{\left(
v_{0}\gtrdot v_{1}\gtrdot\cdots\gtrdot v_{k}\right)  \text{ is a path from
}s\text{ to }t;\\v_{1}=u}}\\\text{(because any path }\left(  v_{0}\gtrdot
v_{1}\gtrdot\cdots\gtrdot v_{k}\right)  \text{ from }s\text{ to }t\text{ has a
well-defined}\\\text{second vertex }v_{1}\text{, and this second vertex }%
v_{1}\text{ satisfies }s\gtrdot v_{1}\text{)}}}\downslack_{\ell}^{\left(
v_{0}\gtrdot v_{1}\gtrdot\cdots\gtrdot v_{k}\right)  }\\
&  \ \ \ \ \ \ \ \ \ \ \ \ \ \ \ \ \ \ \ \ \left(  \text{here we have renamed
the index }\mathbf{p}\text{ as }\left(  v_{0}\gtrdot v_{1}\gtrdot\cdots\gtrdot
v_{k}\right)  \right) \\
&  =\sum_{\substack{u\in\widehat{P};\\s\gtrdot u}}\ \ \underbrace{\sum
_{\substack{\left(  v_{0}\gtrdot v_{1}\gtrdot\cdots\gtrdot v_{k}\right)
\text{ is a path from }s\text{ to }t;\\v_{1}=u}}\downslack_{\ell}^{\left(
v_{0}\gtrdot v_{1}\gtrdot\cdots\gtrdot v_{k}\right)  }}_{\substack{=\sum
_{\mathbf{p}\text{ is a path from }u\text{ to }t}\downslack_{\ell}%
^{s}\downslack_{\ell}^{\mathbf{p}}\\\text{(by (\ref{pf.prop.slacks.rec.1}))}%
}}\\
&  =\sum_{\substack{u\in\widehat{P};\\s\gtrdot u}}\ \ \sum_{\mathbf{p}\text{
is a path from }u\text{ to }t}\downslack_{\ell}^{s}\downslack_{\ell
}^{\mathbf{p}}=\downslack_{\ell}^{s}\sum_{\substack{u\in\widehat{P};\\s\gtrdot
u}}\ \ \underbrace{\sum_{\mathbf{p}\text{ is a path from }u\text{ to }%
t}\downslack_{\ell}^{\mathbf{p}}}_{\substack{=\downslack_{\ell}^{u\rightarrow
t}\\\text{(by the definition of }\downslack_{\ell}^{u\rightarrow t}\text{)}%
}}\\
&  =\downslack_{\ell}^{s}\sum_{\substack{u\in\widehat{P};\\s\gtrdot
u}}\downslack_{\ell}^{u\rightarrow t}.
\end{align*}
This proves (\ref{eq.prop.slacks.rec.down1}). The same argument (but with each
$\downslack$ symbol replaced by an $\upslack$ symbol) proves
(\ref{eq.prop.slacks.rec.up1}).

Let us now prove (\ref{eq.prop.slacks.rec.down2}). This proof will be very
similar to the above proof of (\ref{eq.prop.slacks.rec.down1}), but we will
now classify paths from $s$ to $t$ according to their second-to-last vertex
instead of their second vertex. Here are the details:

Any path $\left(  v_{0}\gtrdot v_{1}\gtrdot\cdots\gtrdot v_{k}\right)  $ from
$s$ to $t$ has at least two vertices (since $s\neq t$), and thus has a
well-defined second-to-last vertex $v_{k-1}$. This second-to-last vertex
$v_{k-1}$ satisfies $v_{k-1}\gtrdot t$ (since $v_{k-1}\gtrdot v_{k}=t$). In
other words, this second-to-last vertex $v_{k-1}$ is an element $u\in
\widehat{P}$ satisfying $u\gtrdot t$.

Now, fix an element $u\in\widehat{P}$ satisfying $u\gtrdot t$. If $\left(
v_{0}\gtrdot v_{1}\gtrdot\cdots\gtrdot v_{k}\right)  $ is a path from $s$ to
$t$ satisfying $v_{k-1}=u$, then $\left(  v_{0}\gtrdot v_{1}\gtrdot
\cdots\gtrdot v_{k-1}\right)  $ is a path from $s$ to $u$ (since $v_{0}=s$ and
$v_{k-1}=u$). Hence, we have found a map
\begin{align*}
&  \text{from }\left\{  \text{paths }\left(  v_{0}\gtrdot v_{1}\gtrdot
\cdots\gtrdot v_{k}\right)  \text{ from }s\text{ to }t\text{ satisfying
}v_{k-1}=u\right\} \\
&  \text{to }\left\{  \text{paths from }s\text{ to }u\right\}
\end{align*}
that sends each path $\left(  v_{0}\gtrdot v_{1}\gtrdot\cdots\gtrdot
v_{k}\right)  $ to $\left(  v_{0}\gtrdot v_{1}\gtrdot\cdots\gtrdot
v_{k-1}\right)  $. This map is injective\footnote{because a path $\left(
v_{0}\gtrdot v_{1}\gtrdot\cdots\gtrdot v_{k}\right)  $ from $s$ to $t$ can be
reconstructed from its image $\left(  v_{0}\gtrdot v_{1}\gtrdot\cdots\gtrdot
v_{k-1}\right)  $ under this map (since its last vertex $v_{k}$ is forced to
be $t$)} and surjective\footnote{Indeed, if $\mathbf{p}=\left(  v_{0}\gtrdot
v_{1}\gtrdot\cdots\gtrdot v_{\ell}\right)  $ is a path from $s$ to $u$, then
$\left(  v_{0}\gtrdot v_{1}\gtrdot\cdots\gtrdot v_{\ell}\gtrdot t\right)  $ is
a path from $s$ to $t$ satisfying $v_{\ell}=u$ (since $v_{\ell}=u\gtrdot t$),
and it is clear that our map sends the latter path to $\mathbf{p}$.}; hence,
it is a bijection. We can use this bijection to substitute $\left(
v_{0}\gtrdot v_{1}\gtrdot\cdots\gtrdot v_{k-1}\right)  $ for $\mathbf{p}$ in
the sum $\sum_{\mathbf{p}\text{ is a path from }s\text{ to }u}\downslack_{\ell
}^{\mathbf{p}}\downslack_{\ell}^{t}$. We thus obtain
\begin{align}
&  \sum_{\mathbf{p}\text{ is a path from }s\text{ to }u}\downslack_{\ell
}^{\mathbf{p}}\downslack_{\ell}^{t}\nonumber\\
&  =\sum_{\substack{\left(  v_{0}\gtrdot v_{1}\gtrdot\cdots\gtrdot
v_{k}\right)  \text{ is a path from }s\text{ to }t;\\v_{k-1}=u}%
}\underbrace{\downslack_{\ell}^{\left(  v_{0}\gtrdot v_{1}\gtrdot\cdots\gtrdot
v_{k-1}\right)  }}_{\substack{=\downslack_{\ell}^{v_{0}}\downslack_{\ell
}^{v_{1}}\cdots\downslack_{\ell}^{v_{k-1}}\\\text{(by the definition of
}\downslack_{\ell}^{\left(  v_{0}\gtrdot v_{1}\gtrdot\cdots\gtrdot
v_{k-1}\right)  }\text{)}}}\underbrace{\downslack_{\ell}^{t}}%
_{\substack{=\downslack_{\ell}^{v_{k}}\\\text{(since }t=v_{k}\text{)}%
}}\nonumber\\
&  =\sum_{\substack{\left(  v_{0}\gtrdot v_{1}\gtrdot\cdots\gtrdot
v_{k}\right)  \text{ is a path from }s\text{ to }t;\\v_{k-1}=u}%
}\underbrace{\downslack_{\ell}^{v_{0}}\downslack_{\ell}^{v_{1}}\cdots
\downslack_{\ell}^{v_{k-1}}\downslack_{\ell}^{v_{k}}}%
_{\substack{=\downslack_{\ell}^{v_{0}}\downslack_{\ell}^{v_{1}}\cdots
\downslack_{\ell}^{v_{k}}\\=\downslack_{\ell}^{\left(  v_{0}\gtrdot
v_{1}\gtrdot\cdots\gtrdot v_{k}\right)  }\\\text{(by the definition of
}\downslack_{\ell}^{\left(  v_{0}\gtrdot v_{1}\gtrdot\cdots\gtrdot
v_{k}\right)  }\text{)}}}\nonumber\\
&  =\sum_{\substack{\left(  v_{0}\gtrdot v_{1}\gtrdot\cdots\gtrdot
v_{k}\right)  \text{ is a path from }s\text{ to }t;\\v_{k-1}=u}%
}\downslack_{\ell}^{\left(  v_{0}\gtrdot v_{1}\gtrdot\cdots\gtrdot
v_{k}\right)  }. \label{pf.prop.slacks.rec.-1}%
\end{align}

Now, forget that we fixed $u$. We thus have proved
(\ref{pf.prop.slacks.rec.-1}) for each $u\in\widehat{P}$ satisfying $u\gtrdot
t$.

The definition of $\downslack_{\ell}^{s\rightarrow t}$ yields%
\begin{align*}
\downslack_{\ell}^{s\rightarrow t}  &  =\sum_{\mathbf{p}\text{ is a path from
}s\text{ to }t}\downslack_{\ell}^{\mathbf{p}}\\
&  =\underbrace{\sum_{\left(  v_{0}\gtrdot v_{1}\gtrdot\cdots\gtrdot
v_{k}\right)  \text{ is a path from }s\text{ to }t}}_{\substack{=\sum
_{\substack{u\in\widehat{P};\\u\gtrdot t}}\ \ \sum_{\substack{\left(
v_{0}\gtrdot v_{1}\gtrdot\cdots\gtrdot v_{k}\right)  \text{ is a path from
}s\text{ to }t;\\v_{k-1}=u}}\\\text{(because any path }\left(  v_{0}\gtrdot
v_{1}\gtrdot\cdots\gtrdot v_{k}\right)  \text{ from }s\text{ to }t\text{ has a
well-defined}\\\text{second-to-last vertex }v_{k-1}\text{, and this vertex
}v_{k-1}\text{ satisfies }v_{k-1}\gtrdot t\text{)}}}\downslack_{\ell}^{\left(
v_{0}\gtrdot v_{1}\gtrdot\cdots\gtrdot v_{k}\right)  }\\
&  \ \ \ \ \ \ \ \ \ \ \ \ \ \ \ \ \ \ \ \ \left(  \text{here we have renamed
the index }\mathbf{p}\text{ as }\left(  v_{0}\gtrdot v_{1}\gtrdot\cdots\gtrdot
v_{k}\right)  \right) \\
&  =\sum_{\substack{u\in\widehat{P};\\u\gtrdot t}}\ \ \underbrace{\sum
_{\substack{\left(  v_{0}\gtrdot v_{1}\gtrdot\cdots\gtrdot v_{k}\right)
\text{ is a path from }s\text{ to }t;\\v_{k-1}=u}}\downslack_{\ell}^{\left(
v_{0}\gtrdot v_{1}\gtrdot\cdots\gtrdot v_{k}\right)  }}_{\substack{=\sum
_{\mathbf{p}\text{ is a path from }s\text{ to }u}\downslack_{\ell}%
^{\mathbf{p}}\downslack_{\ell}^{t}\\\text{(by (\ref{pf.prop.slacks.rec.-1}))}%
}}\\
&  =\sum_{\substack{u\in\widehat{P};\\u\gtrdot t}}\ \ \underbrace{\sum
_{\mathbf{p}\text{ is a path from }s\text{ to }u}\downslack_{\ell}%
^{\mathbf{p}}}_{\substack{=\downslack_{\ell}^{s\rightarrow u}\\\text{(by the
definition of }\downslack_{\ell}^{s\rightarrow u}\text{)}}}\downslack_{\ell
}^{t}=\sum_{\substack{u\in\widehat{P};\\u\gtrdot t}}\downslack_{\ell
}^{s\rightarrow u}\downslack_{\ell}^{t}.
\end{align*}
This establishes (\ref{eq.prop.slacks.rec.down2}). The same argument (but with
each $\downslack$ symbol replaced by an $\upslack$ symbol) yields
(\ref{eq.prop.slacks.rec.up2}). Thus, Proposition \ref{prop.slacks.rec} is proven.
\end{proof}
\end{verlong}

The next proposition uses the products $\upslack_{\ell}^{v}$ and
$\downslack_{\ell-1}^{v}$ to rewrite the equality (\ref{eq.vl+1=}) (which is
essentially the definition of birational rowmotion) in a slick way:

\begin{proposition}
[Transition equation in $\downslack$-$\upslack$-form]\label{prop.rect.trans}%
Let $v\in\widehat{P}$ and $\ell\geq1$ be such that $R^{\ell}f\neq\undf$.
Assume that $a$ is invertible. Then,%
\[
\upslack_{\ell}^{v}=\downslack_{\ell-1}^{v}.
\]

\end{proposition}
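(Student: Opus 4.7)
The plan is to reduce the claim to a simple rearrangement of the transition equation (\ref{eq.vl+1=}), followed by taking inverses. First, I would dispatch the trivial cases $v=0$ and $v=1$: in both cases the definitions (\ref{eq.slack.1}) give $\upslack_\ell^v = 1 = \downslack_{\ell-1}^v$, so the equality holds. So I may assume $v \in P$.

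Next, I would invoke Lemma \ref{lem.slacks.wd} (using the hypotheses $\ell \geq 1$, $R^\ell f \neq \undf$, and $a$ invertible) to conclude that $v_\ell$, $v_{\ell-1}$, $\downslack_{\ell-1}^v$, and $\upslack_\ell^v$ are all well-defined and invertible. The proof of part (c) of that lemma in fact also establishes that the two sums $\sum_{u \lessdot v} u_{\ell-1}$ and $\sum_{u \gtrdot v} \overline{u_\ell}$ are invertible, which I will need.

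The main computational step is to apply the transition equation (\ref{eq.vl+1=}) with $\ell$ replaced by $\ell-1$, giving
\[
v_\ell = \left(\sum_{u \lessdot v} u_{\ell-1}\right) \cdot \overline{v_{\ell-1}} \cdot \overline{\sum_{u \gtrdot v} \overline{u_\ell}}.
\]
Multiplying both sides on the right by $\sum_{u \gtrdot v} \overline{u_\ell}$ and on the left by nothing yields
\[
v_\ell \cdot \sum_{u \gtrdot v} \overline{u_\ell} \;=\; \left(\sum_{u \lessdot v} u_{\ell-1}\right) \cdot \overline{v_{\ell-1}}.
\]
Both sides are products of invertible elements, hence themselves invertible. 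Taking inverses of both sides and applying Proposition \ref{prop.inverses.ab}(b) (which reverses the order of factors), the left-hand side becomes $\overline{\sum_{u \gtrdot v} \overline{u_\ell}} \cdot \overline{v_\ell} = \upslack_\ell^v$, while the right-hand side becomes $\overline{\overline{v_{\ell-1}}} \cdot \overline{\sum_{u \lessdot v} u_{\ell-1}} = v_{\ell-1} \cdot \overline{\sum_{u \lessdot v} u_{\ell-1}} = \downslack_{\ell-1}^v$. This gives $\upslack_\ell^v = \downslack_{\ell-1}^v$, as required.

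There is no real obstacle here beyond being careful that every inverse and every sum whose inverse we take is genuinely well-defined; this is precisely why Lemma \ref{lem.slacks.wd} was stated separately. The content of the proposition is essentially that the noncommutative definition of rowmotion was cooked up so that one step of $R$ converts $\downslack$ into $\upslack$ (with a shift of index), and the proof simply makes this bookkeeping explicit.
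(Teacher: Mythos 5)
Your proof is correct and follows essentially the same route as the paper's: handle $v\in\{0,1\}$ via (\ref{eq.slack.1}), use Lemma \ref{lem.slacks.wd} for well-definedness and invertibility, and then rearrange the transition equation (\ref{eq.vl+1=}) at $\ell-1$ by multiplying and taking inverses (the paper packages this algebra into the small identity $\alpha=\beta\overline{\gamma}\delta\Rightarrow\delta\overline{\alpha}=\gamma\overline{\beta}$, which is the same computation you carry out directly). No gaps.
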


\begin{vershort}

\begin{proof}
If $v$ is $0$ or $1$, then the equality $\upslack_{\ell}^{v}=\downslack_{\ell
-1}^{v}$ holds because both of its sides are $1$ (by (\ref{eq.slack.1})).
Thus, we assume WLOG that $v\in P$.

Lemma \ref{lem.slacks.wd} \textbf{(a)} yields that $v_{\ell}$ is well-defined
and invertible, while Lemma \ref{lem.slacks.wd} \textbf{(c,d)} yield that
$\upslack_{\ell}^{v}$ and $\downslack_{\ell-1}^{v}$ are well-defined. Since
$\downslack_{\ell-1}^{v}$ is defined as $v_{\ell-1}\cdot\overline
{\sum\limits_{u\lessdot v}u_{\ell-1}}$, this entails that $\sum
\limits_{u\lessdot v}u_{\ell-1}$ is invertible.

If $\alpha,\beta,\gamma,\delta$ are four invertible elements of $\mathbb{K}$
satisfying $\alpha=\beta\overline{\gamma}\delta$, then%
\begin{equation}
\delta\overline{\alpha}=\delta\underbrace{\overline{\beta\overline{\gamma
}\delta}}_{\substack{=\overline{\delta}\gamma\overline{\beta}\\\text{}}} =
\underbrace{\delta\overline{\delta}}_{=1}\gamma\overline{\beta}=\gamma
\overline{\beta}. \label{pf.prop.rect.trans.short.abcd}%
\end{equation}

Applying (\ref{eq.vl+1=}) to $\ell-1$ instead of $\ell$, we find%
\[
v_{\ell}=\left(  \sum\limits_{u\lessdot v}u_{\ell-1}\right)  \cdot
\overline{v_{\ell-1}}\cdot\overline{\sum\limits_{u\gtrdot v}\overline{u_{\ell
}}}.
\]
Thus, (\ref{pf.prop.rect.trans.short.abcd}) (applied to $\alpha=v_{\ell}$,
$\beta=\sum\limits_{u\lessdot v}u_{\ell-1}$, $\gamma=v_{\ell-1}$ and
$\delta=\overline{\sum\limits_{u\gtrdot v}\overline{u_{\ell}}}$) yields%
\[
\overline{\sum\limits_{u\gtrdot v}\overline{u_{\ell}}}\cdot\overline{v_{\ell}%
}=v_{\ell-1}\cdot\overline{\sum\limits_{u\lessdot v}u_{\ell-1}}.
\]
But the left hand side of this equality is $\upslack_{\ell}^{v}$ (by the
definition of $\upslack_{\ell}^{v}$), whereas the right hand side is
$\downslack_{\ell-1}^{v}$. Hence, this equality simplifies to $\upslack_{\ell
}^{v}=\downslack_{\ell-1}^{v}$. This proves Proposition \ref{prop.rect.trans}.
\end{proof}
\end{vershort}

\begin{verlong}

\begin{proof}
If $v$ is $0$ or $1$, then the equality $\upslack_{\ell}^{v}=\downslack_{\ell
-1}^{v}$ holds because both of its sides are $1$ (by (\ref{eq.slack.1})).
Thus, we WLOG assume that $v$ is neither $0$ nor $1$. Hence, $v\in P$. Thus,
$P \neq\varnothing$.

Lemma \ref{lem.slacks.wd} \textbf{(a)} yields that $v_{\ell}$ is well-defined
and invertible. Lemma \ref{lem.slacks.wd} \textbf{(c)} yields that
$\downslack_{\ell-1}^{v}$ is well-defined. Lemma \ref{lem.slacks.wd}
\textbf{(d)} yields that $\upslack_{\ell}^{v}$ is well-defined.

We have $\ell-1\in\mathbb{N}$ (since $\ell\geq1$) and $R^{\ell}f\neq\undf$.
Hence, (\ref{eq.vl+1=}) (applied to $\ell-1$ instead of $\ell$) yields%
\begin{equation}
v_{\ell}=\left(  \sum\limits_{u\lessdot v}u_{\ell-1}\right)  \cdot
\overline{v_{\ell-1}}\cdot\overline{\sum\limits_{u\gtrdot v}\overline{u_{\ell
}}}. \label{pf.prop.rect.trans.1}%
\end{equation}
As in the proof of Lemma \ref{lem.slacks.wd} \textbf{(c)}, we can see that
$\sum\limits_{u\lessdot v}u_{\ell-1}$ is invertible. Taking reciprocals on
both sides of (\ref{pf.prop.rect.trans.1}), we obtain%
\[
\overline{v_{\ell}}=\overline{\left(  \sum\limits_{u\lessdot v}u_{\ell
-1}\right)  \cdot\overline{v_{\ell-1}}\cdot\overline{\sum\limits_{u\gtrdot
v}\overline{u_{\ell}}}}=\left(  \sum\limits_{u\gtrdot v}\overline{u_{\ell}%
}\right)  \cdot v_{\ell-1}\cdot\overline{\sum\limits_{u\lessdot v}u_{\ell-1}}%
\]
(by Proposition~\ref{prop.inverses.ab} \textbf{(c)}). Multiplying this
equality by $\overline{\sum\limits_{u\gtrdot v}\overline{u_{\ell}}}$ on the
left, we obtain%
\[
\overline{\sum\limits_{u\gtrdot v}\overline{u_{\ell}}}\cdot\overline{v_{\ell}%
}=\underbrace{\overline{\sum\limits_{u\gtrdot v}\overline{u_{\ell}}}%
\cdot\left(  \sum\limits_{u\gtrdot v}\overline{u_{\ell}}\right)  }_{=1}\cdot
v_{\ell-1}\cdot\overline{\sum\limits_{u\lessdot v}u_{\ell-1}}=v_{\ell-1}%
\cdot\overline{\sum\limits_{u\lessdot v}u_{\ell-1}}.
\]
But the left hand side of this equality is $\upslack_{\ell}^{v}$ (by the
definition of $\upslack_{\ell}^{v}$), whereas the right hand side is
$\downslack_{\ell-1}^{v}$. Hence, this equality simplifies to $\upslack_{\ell
}^{v}=\downslack_{\ell-1}^{v}$. This proves Proposition \ref{prop.rect.trans}.
\end{proof}
\end{verlong}

As a consequence of Proposition \ref{prop.rect.trans}, we have:

\begin{corollary}
\label{cor.rect.trans-path}Let $\mathbf{p}$ be a path. Let $\ell\geq1$ be such
that $R^{\ell}f\neq\undf$. Assume that $a$ is invertible. Then,
\[
\upslack_{\ell}^{\mathbf{p}}=\downslack_{\ell-1}^{\mathbf{p}}.
\]

\end{corollary}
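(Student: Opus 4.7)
The plan is to write the path $\mathbf{p}$ as $(v_0 \gtrdot v_1 \gtrdot \cdots \gtrdot v_k)$ and expand both $\upslack_\ell^{\mathbf{p}}$ and $\downslack_{\ell-1}^{\mathbf{p}}$ according to their definitions as products over the vertices. This reduces the claim to the vertex-by-vertex equalities $\upslack_\ell^{v_i} = \downslack_{\ell-1}^{v_i}$ for each $i \in \{0,1,\ldots,k\}$, which is precisely Proposition \ref{prop.rect.trans} applied to each $v_i \in \widehat{P}$ in turn. The hypotheses of Proposition \ref{prop.rect.trans} ($\ell \geq 1$, $R^\ell f \neq \undf$, and $a$ invertible) are carried over directly from those of the corollary, so the application is legal for every vertex.

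There is essentially no obstacle here: the corollary is a straightforward consequence of the preceding proposition, and the only thing to check is that the factors line up correctly. Concretely, the argument will be the one-line calculation
\[
\upslack_\ell^{\mathbf{p}} = \upslack_\ell^{v_0} \upslack_\ell^{v_1} \cdots \upslack_\ell^{v_k} = \downslack_{\ell-1}^{v_0} \downslack_{\ell-1}^{v_1} \cdots \downslack_{\ell-1}^{v_k} = \downslack_{\ell-1}^{\mathbf{p}},
\]
where the outer equalities are the definitions of $\upslack_\ell^{\mathbf{p}}$ and $\downslack_{\ell-1}^{\mathbf{p}}$, and the middle equality is Proposition \ref{prop.rect.trans} applied factor by factor. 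One small bookkeeping comment is warranted: Proposition \ref{prop.rect.trans} was stated for $v \in \widehat{P}$ (including the cases $v = 0$ and $v = 1$, where both sides are $1$ by convention (\ref{eq.slack.1})), so no special casing is needed even if the path $\mathbf{p}$ starts at $1$ or ends at $0$. Well-definedness of the intermediate products follows from Lemma \ref{lem.slacks.wd} \textbf{(c)}, \textbf{(d)} applied to each vertex, so there is no ambiguity in the meaning of the equation.
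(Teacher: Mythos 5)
Your proposal is correct and matches the paper's own proof exactly: both expand $\upslack_{\ell}^{\mathbf{p}}$ and $\downslack_{\ell-1}^{\mathbf{p}}$ as products over the vertices of $\mathbf{p}$ and apply Proposition \ref{prop.rect.trans} factor by factor. Your remarks on the endpoint cases $v\in\{0,1\}$ and on well-definedness are accurate and need no changes.
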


\begin{verlong}

\begin{proof}
Write the path $\mathbf{p}$ as $\mathbf{p}=\left(  v_{0}\gtrdot v_{1}%
\gtrdot\cdots\gtrdot v_{k}\right)  $. The definition of $\upslack_{\ell
}^{\mathbf{p}}$ thus yields%
\begin{align*}
\upslack_{\ell}^{\mathbf{p}}  &  =\underbrace{\upslack_{\ell}^{v_{0}}%
}_{\substack{=\downslack_{\ell-1}^{v_{0}}\\\text{(by Proposition
\ref{prop.rect.trans})}}}\ \ \underbrace{\upslack_{\ell}^{v_{1}}%
}_{\substack{=\downslack_{\ell-1}^{v_{1}}\\\text{(by Proposition
\ref{prop.rect.trans})}}}\cdots\underbrace{\upslack_{\ell}^{v_{k}}%
}_{\substack{=\downslack_{\ell-1}^{v_{k}}\\\text{(by Proposition
\ref{prop.rect.trans})}}}\\
&  =\downslack_{\ell-1}^{v_{0}}\downslack_{\ell-1}^{v_{1}}\cdots
\downslack_{\ell-1}^{v_{k}}.
\end{align*}
However, the definition of $\downslack_{\ell-1}^{\mathbf{p}}$ yields%
\[
\downslack_{\ell-1}^{\mathbf{p}}=\downslack_{\ell-1}^{v_{0}}\downslack_{\ell
-1}^{v_{1}}\cdots\downslack_{\ell-1}^{v_{k}}\ \ \ \ \ \ \ \ \ \ \left(
\text{since }\mathbf{p}=\left(  v_{0}\gtrdot v_{1}\gtrdot\cdots\gtrdot
v_{k}\right)  \right)  .
\]
Comparing these two equalities, we obtain $\upslack_{\ell}^{\mathbf{p}%
}=\downslack_{\ell-1}^{\mathbf{p}}$. This proves Corollary
\ref{cor.rect.trans-path}.
\end{proof}
\end{verlong}

\begin{corollary}
\label{cor.rect.trans-uv}Let $u,v\in\widehat{P}$. Let $\ell\in\mathbb{N}$ be
such that $\ell\geq1$ and $R^{\ell}f\neq\undf$. Assume that $a$ is invertible.
Then,%
\begin{equation}
\upslack_{\ell}^{u\rightarrow v}=\downslack_{\ell-1}^{u\rightarrow v}.
\label{eq.prop.rect.trans.uv}%
\end{equation}

\end{corollary}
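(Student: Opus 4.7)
The plan is to prove the equality by simply summing the path-wise identity from Corollary \ref{cor.rect.trans-path} over all paths from $u$ to $v$. Specifically, the definitions (\ref{eq.slack.down-uv}) and (\ref{eq.slack.up-uv}) tell us that both $\downslack_{\ell-1}^{u\rightarrow v}$ and $\upslack_{\ell}^{u\rightarrow v}$ are defined as sums indexed by the same set of paths from $u$ to $v$, with summands $\downslack_{\ell-1}^{\mathbf{p}}$ and $\upslack_{\ell}^{\mathbf{p}}$ respectively. So the desired equality will follow term-by-term.

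The key (and essentially only) step is to apply Corollary \ref{cor.rect.trans-path} to each path $\mathbf{p}$ from $u$ to $v$. The hypotheses of Corollary~\ref{cor.rect.trans-path} are already satisfied: we have $\ell \geq 1$, $R^{\ell}f \neq \undf$, and $a$ is invertible. This yields $\upslack_{\ell}^{\mathbf{p}} = \downslack_{\ell-1}^{\mathbf{p}}$ for every such path $\mathbf{p}$. Summing both sides over all paths $\mathbf{p}$ from $u$ to $v$ and using the definitions of $\upslack_{\ell}^{u\rightarrow v}$ and $\downslack_{\ell-1}^{u\rightarrow v}$ gives the claim immediately.

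There is no real obstacle here --- this is a direct corollary of Corollary~\ref{cor.rect.trans-path}. The only minor subtlety worth mentioning is the implicit well-definedness assumption: when we write $\upslack_{\ell}^{u\rightarrow v}$ and $\downslack_{\ell-1}^{u\rightarrow v}$, we are tacitly assuming all summands are well-defined, and Lemma~\ref{lem.slacks.wd} parts \textbf{(c)} and \textbf{(d)} (applied vertex-by-vertex along each path) guarantees this under our hypotheses. Thus the proof amounts to one line: $\upslack_{\ell}^{u\rightarrow v} = \sum_{\mathbf{p}} \upslack_{\ell}^{\mathbf{p}} = \sum_{\mathbf{p}} \downslack_{\ell-1}^{\mathbf{p}} = \downslack_{\ell-1}^{u\rightarrow v}$, where $\mathbf{p}$ ranges over paths from $u$ to $v$.
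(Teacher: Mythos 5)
Your proof is correct and is exactly the paper's argument: apply Corollary \ref{cor.rect.trans-path} to each path $\mathbf{p}$ from $u$ to $v$ and sum over all such paths, using the definitions (\ref{eq.slack.down-uv}) and (\ref{eq.slack.up-uv}). No further comment is needed.
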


\begin{verlong}

\begin{proof}
The definition of $\upslack_{\ell}^{u\rightarrow v}$ yields%
\[
\upslack_{\ell}^{u\rightarrow v}=\sum_{\mathbf{p}\text{ is a path from
}u\text{ to }v}\underbrace{\upslack_{\ell}^{\mathbf{p}}}%
_{\substack{=\downslack_{\ell-1}^{\mathbf{p}}\\\text{(by Corollary
\ref{cor.rect.trans-path})}}}=\sum_{\mathbf{p}\text{ is a path from }u\text{
to }v}\downslack_{\ell-1}^{\mathbf{p}}.
\]
On the other hand, the definition of $\downslack_{\ell-1}^{u\rightarrow v}$
yields%
\[
\downslack_{\ell-1}^{u\rightarrow v}=\sum_{\mathbf{p}\text{ is a path from
}u\text{ to }v}\downslack_{\ell-1}^{\mathbf{p}}.
\]
Comparing these two equalities, we obtain $\upslack_{\ell}^{u\rightarrow
v}=\downslack_{\ell-1}^{u\rightarrow v}$. This proves Corollary
\ref{cor.rect.trans-uv}.
\end{proof}
\end{verlong}

The next theorem gives ways to recover the labels $u_{\ell}=\left(  R^{\ell
}f\right)  \left(  u\right)  $ from some of the sums defined in
(\ref{eq.slack.down-uv}) and (\ref{eq.slack.up-uv}).\footnote{The condition
$\ell\geq1$ in Theorem \ref{thm.rect.path} \textbf{(a)} and \textbf{(c)} is
meant to ensure that $\upslack_{\ell}^{1\rightarrow u}$ and $\upslack_{\ell
}^{\left(  p,q\right)  \rightarrow u}$ are invertible. It can be replaced by
directly requiring the latter.}

\begin{theorem}
[path formulas for rectangle]\label{thm.rect.path}Let $\ell\in\mathbb{N}$.
Assume that $a$ is invertible. Then:

\begin{enumerate}
\item[\textbf{(a)}] If $R^{\ell}f\neq\undf$ and $\ell\geq1$, then each $u\in
P$ satisfies%
\[
u_{\ell}=\overline{\upslack_{\ell}^{1\rightarrow u}}\cdot b
\]
(and the inverse $\overline{\upslack_{\ell}^{1\rightarrow u}}$ is well-defined).

\item[\textbf{(b)}] If $R^{\ell+1}f\neq\undf$, then each $u\in P$ satisfies%
\[
u_{\ell}=\downslack_{\ell}^{u\rightarrow0}\cdot a.
\]

\item[\textbf{(c)}] If $R^{\ell}f\neq\undf$ and $\ell\geq1$, then each $u\in
P$ satisfies%
\[
u_{\ell}=\overline{\upslack_{\ell}^{\left(  p,q\right)  \rightarrow u}}\cdot
b
\]
(and the inverse $\overline{\upslack_{\ell}^{\left(  p,q\right)  \rightarrow
u}}$ is well-defined).

\item[\textbf{(d)}] If $R^{\ell+1}f\neq\undf$, then each $u\in P$ satisfies%
\[
u_{\ell}=\downslack_{\ell}^{u\rightarrow\left(  1,1\right)  }\cdot a.
\]

\end{enumerate}
\end{theorem}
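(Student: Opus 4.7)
The plan is to prove the four parts by induction on poset rank, using the recursions of Proposition \ref{prop.slacks.rec} as the engine, and to deduce (c) and (d) from (a) and (b) via a bijection argument specific to the rectangle. Throughout, well-definedness and invertibility of the relevant $u_\ell$, $\upslack_\ell^v$, and $\downslack_\ell^v$ are furnished by Lemma \ref{lem.slacks.wd} under the stated hypotheses, so I can freely manipulate inverses.

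For part (a), I would induct downward on the rank of $u$ in $P$, starting from the maximal elements. The base case is $u$ maximal in $P$: then $v=1$ is the only element of $\widehat{P}$ covering $u$, so $\upslack_\ell^u = \overline{\overline{1_\ell}}\cdot\overline{u_\ell} = b\,\overline{u_\ell}$ (using $1_\ell = b$ from Corollary \ref{cor.R.implicit.01}), and the unique path $(1\gtrdot u)$ gives $\upslack_\ell^{1\to u} = \upslack_\ell^1\,\upslack_\ell^u = b\,\overline{u_\ell}$; inverting and multiplying by $b$ recovers $u_\ell$. For the inductive step with $u$ non-maximal, equation \eqref{eq.prop.slacks.rec.up2} of Proposition \ref{prop.slacks.rec} yields $\upslack_\ell^{1\to u} = \bigl(\sum_{v\gtrdot u}\upslack_\ell^{1\to v}\bigr)\upslack_\ell^u$, and the inductive hypothesis $\upslack_\ell^{1\to v} = b\,\overline{v_\ell}$ (available for each $v\gtrdot u$, all of which lie in $P$ since $u$ is non-maximal) combines with the definition of $\upslack_\ell^u$ to make the middle sum and its inverse cancel, leaving $\upslack_\ell^{1\to u} = b\,\overline{u_\ell}$ as required.

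Part (b) is the symmetric statement, provable by the mirror induction going upward from the minimal elements of $P$, using equation \eqref{eq.prop.slacks.rec.down1} in place of \eqref{eq.prop.slacks.rec.up2} and the base-case identity $\downslack_\ell^u = u_\ell\,\overline{a}$ for $u$ minimal in $P$ (since the only element of $\widehat{P}$ covered by $u$ is $0$, and $0_\ell = a$). The telescoping is identical in structure. The only difference is in well-definedness: here we need $R^{\ell+1}f\neq\undf$ rather than $R^\ell f\neq\undf$ and $\ell\geq 1$, which is precisely what guarantees invertibility of $\downslack_\ell^v$ via Lemma \ref{lem.slacks.wd}(c) applied with $\ell$ shifted to $\ell+1$.

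For parts (c) and (d), I would exploit that $P = [p]\times[q]$ has a unique maximum $(p,q)$ and a unique minimum $(1,1)$. Consequently, every path from $1$ to $u\in P$ must begin with $1\gtrdot(p,q)$, and deleting this leading vertex defines a bijection between paths from $1$ to $u$ and paths from $(p,q)$ to $u$. Since $\upslack_\ell^1 = 1$ by convention \eqref{eq.slack.1}, this bijection preserves $\upslack$-weights, so $\upslack_\ell^{1\to u} = \upslack_\ell^{(p,q)\to u}$, and (c) follows immediately from (a). The dual bijection involving paths ending in $(1,1)\gtrdot 0$, together with $\downslack_\ell^0 = 1$, gives $\downslack_\ell^{u\to 0} = \downslack_\ell^{u\to(1,1)}$, and (d) follows from (b). The only point requiring care is well-definedness of the various inverses, but this is handled uniformly by Lemma \ref{lem.slacks.wd}; no conceptual obstacle arises beyond bookkeeping.
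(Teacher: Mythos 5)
Your proposal is correct and follows essentially the same route as the paper: a downward (resp. upward) induction establishing $\upslack_{\ell}^{1\rightarrow u}=b\overline{u_{\ell}}$ (resp. $\downslack_{\ell}^{u\rightarrow 0}=u_{\ell}\overline{a}$) via the recursions of Proposition \ref{prop.slacks.rec} with telescoping cancellation, followed by the unique-maximum/unique-minimum observation to reduce (c) and (d) to (a) and (b). The only cosmetic difference is that the paper folds your base case into the induction step by separately noting $\upslack_{\ell}^{1\rightarrow 1}=b\overline{1_{\ell}}$, and it derives the invertibility of $\upslack_{\ell}^{1\rightarrow u}$ from the proven identity $\upslack_{\ell}^{1\rightarrow u}=b\overline{u_{\ell}}$ rather than from Lemma \ref{lem.slacks.wd} directly (a sum of invertible elements need not be invertible), which is the reading your argument implicitly requires as well.
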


\begin{proof}
[Proof of Theorem \ref{thm.rect.path}.]\textbf{(a)} Assume that $R^{\ell}%
f\neq\undf$ and $\ell\geq1$. Then, Lemma \ref{lem.slacks.wd} \textbf{(d)}
yields that the element $\upslack_{\ell}^{v}$ is well-defined and invertible
for each $v\in\widehat{P}$. Hence, the element $\upslack_{\ell}^{\mathbf{p}}$
is well-defined for each path $\mathbf{p}$. Therefore, the element
$\upslack_{\ell}^{1\rightarrow u}$ is well-defined for each $u\in P$.

Next, we will prove the equality
\begin{equation}
\upslack_{\ell}^{1\rightarrow u}=b\overline{u_{\ell}}%
\ \ \ \ \ \ \ \ \ \ \text{for each }u\in P. \label{pf.thm.rect.path.a.1}%
\end{equation}
(The $\overline{u_{\ell}}$ on the right hand side here is well-defined, since
Lemma \ref{lem.slacks.wd} \textbf{(a)} (applied to $v=u$) shows that $u_{\ell
}$ is well-defined and invertible.)

\begin{proof}
[Proof of (\ref{pf.thm.rect.path.a.1}).]We utilize downwards induction on $u$.
This is a version of strong induction in which we fix an element $v\in P$ and
assume (as the induction hypothesis) that (\ref{pf.thm.rect.path.a.1}) holds
for all $u\in P$ satisfying $u>v$. We will then prove that
(\ref{pf.thm.rect.path.a.1}) also holds for $u=v$. Since the poset $P$ is
finite, this will entail that (\ref{pf.thm.rect.path.a.1}) holds for all $u\in
P$.

\begin{verlong}
So let us prove (\ref{pf.thm.rect.path.a.1}) by downwards induction on $u$:
\end{verlong}

Let $v\in P$. Assume (as the induction hypothesis) that
(\ref{pf.thm.rect.path.a.1}) holds for all $u\in P$ satisfying $u>v$. In other
words, we have $\upslack_{\ell}^{1\rightarrow u}=b\overline{u_{\ell}}$ for
each $u\in P$ satisfying $u>v$. Thus, in particular, we have%
\begin{equation}
\upslack_{\ell}^{1\rightarrow u}=b\overline{u_{\ell}}%
\ \ \ \ \ \ \ \ \ \ \text{for each }u\in P\text{ satisfying }u\gtrdot v.
\label{pf.thm.rect.path.a.1.pf.IH2}%
\end{equation}

Note also that the only path from $1$ to $1$ is the trivial path $\left(
1\right)  $. Hence,
\begin{equation}
\upslack_{\ell}^{1\rightarrow1}=\upslack_{\ell}^{\left(  1\right)
}=\upslack_{\ell}^{1}=1=b\overline{1_{\ell}}
\label{pf.thm.rect.path.a.1.pf.IH3}%
\end{equation}
(since $1_{\ell}=b$).

However, $1\neq v$ (since $1\notin P$ and $v\in P$). Thus,
(\ref{eq.prop.slacks.rec.up2}) (applied to $s=1$ and $t=v$) yields%
\begin{align*}
\upslack_{\ell}^{1\rightarrow v}  &  =\underbrace{\sum_{\substack{u\in
\widehat{P};\\u\gtrdot v}}}_{\substack{=\sum_{u\gtrdot v}\\\text{(since our
sums}\\\text{range over }\widehat{P}\text{ by}\\\text{default)}}%
}\underbrace{\upslack_{\ell}^{1\rightarrow u}}_{\substack{=b\overline{u_{\ell
}}\\\text{(indeed, this follows from (\ref{pf.thm.rect.path.a.1.pf.IH2}) when
}u\in P\text{,}\\\text{and follows from (\ref{pf.thm.rect.path.a.1.pf.IH3})
when }u=1\text{;}\\\text{and there are no other possibilities, since }u\gtrdot
v\text{ rules out }u=0\text{)}}}\upslack_{\ell}^{v}\\
&  =\sum_{u\gtrdot v}b\overline{u_{\ell}}\upslack_{\ell}^{v}=b\left(
\sum_{u\gtrdot v}\overline{u_{\ell}}\right)  \underbrace{\upslack_{\ell}^{v}%
}_{\substack{=\overline{\sum\limits_{u\gtrdot v}\overline{u_{\ell}}}%
\cdot\overline{v_{\ell}}\\\text{(by the definition of }\upslack_{\ell}%
^{v}\text{)}}}=b\underbrace{\left(  \sum_{u\gtrdot v}\overline{u_{\ell}%
}\right)  \overline{\sum\limits_{u\gtrdot v}\overline{u_{\ell}}}}_{=1}%
\cdot\overline{v_{\ell}}=b\overline{v_{\ell}}.
\end{align*}
In other words, (\ref{pf.thm.rect.path.a.1}) holds for $u=v$. This completes
the induction step. Thus, we have proved (\ref{pf.thm.rect.path.a.1}) by induction.
\end{proof}

Note that $1_{\ell}$ is invertible (by Lemma \ref{lem.slacks.wd} \textbf{(a)},
applied to $v=1$). In other words, $b$ is invertible (since $1_{\ell}=b$).

\begin{vershort}
Now, let $u\in P$. Then, $b\overline{u_{\ell}}$ is invertible (since $b$ and
$\overline{u_{\ell}}$ are). In view of (\ref{pf.thm.rect.path.a.1}), this
means that $\upslack_{\ell}^{1\rightarrow u}$ is invertible. Hence,
$\overline{\upslack_{\ell}^{1\rightarrow u}}$ is well-defined. Solving
(\ref{pf.thm.rect.path.a.1}) for $u_{\ell}$, we thus obtain $u_{\ell
}=\overline{\upslack_{\ell}^{1\rightarrow u}}\cdot b$. This proves Theorem
\ref{thm.rect.path} \textbf{(a)}. \medskip
\end{vershort}

\begin{verlong}
Now, let $u\in P$. Then, $b$ is invertible (as we just saw), and
$\overline{u_{\ell}}$ is invertible (since any inverse is invertible). Thus,
$b\overline{u_{\ell}}$ is invertible (since a product of two invertible
elements is invertible). In other words, $\upslack_{\ell}^{1\rightarrow u}$ is
invertible (since (\ref{pf.thm.rect.path.a.1}) says that $\upslack_{\ell
}^{1\rightarrow u}=b\overline{u_{\ell}}$). Hence, $\overline{\upslack_{\ell
}^{1\rightarrow u}}$ is well-defined. Furthermore, we have $u_{\ell}%
=\overline{\upslack_{\ell}^{1\rightarrow u}}\cdot b$, since%
\begin{align*}
\overline{\upslack_{\ell}^{1\rightarrow u}}\cdot b  &  =\underbrace{\overline
{b\overline{u_{\ell}}}}_{\substack{=u_{\ell}\overline{b}\\\text{(by
Proposition \ref{prop.inverses.ab} \textbf{(b)})}}}\cdot
b\ \ \ \ \ \ \ \ \ \ \left(  \text{by (\ref{pf.thm.rect.path.a.1})}\right) \\
&  =u_{\ell}\underbrace{\overline{b}b}_{=1}=u_{\ell}.
\end{align*}
This proves Theorem \ref{thm.rect.path} \textbf{(a)}. \medskip
\end{verlong}

\textbf{(b)} This proof is rather similar to that of part \textbf{(a)}, but
uses upwards induction instead of downwards induction (and applies
(\ref{eq.prop.slacks.rec.down1}) instead of (\ref{eq.prop.slacks.rec.up2})).

\begin{verlong}
Here are the details:

Assume that $R^{\ell+1}f\neq\undf$. Then, Lemma \ref{lem.slacks.wd}
\textbf{(c)} (applied to $\ell+1$ instead of $\ell$) yields that the element
$\downslack_{\ell}^{v}$ is well-defined and invertible for each $v\in
\widehat{P}$. Hence, the element $\downslack_{\ell}^{\mathbf{p}}$ is
well-defined for each path $\mathbf{p}$. Therefore, the element
$\downslack_{\ell}^{u\rightarrow0}$ is well-defined for each $u\in P$.

Next, we will prove the equality
\begin{equation}
\downslack_{\ell}^{u\rightarrow0}=u_{\ell}\overline{a}%
\ \ \ \ \ \ \ \ \ \ \text{for each }u\in P. \label{pf.thm.rect.path.b.1}%
\end{equation}
(The $\overline{a}$ on the right hand side here is well-defined, since we
assumed that $a$ is invertible.)

\begin{proof}
[Proof of (\ref{pf.thm.rect.path.b.1}).]We prove the equality
(\ref{pf.thm.rect.path.b.1}) by upwards induction on $u$. This is a version of
strong induction in which we fix an element $v\in P$ and assume (as the
induction hypothesis) that (\ref{pf.thm.rect.path.b.1}) holds for all $u\in P$
satisfying $u<v$. We will then prove that (\ref{pf.thm.rect.path.b.1}) also
holds for $u=v$. Since the poset $P$ is finite, this will entail that
(\ref{pf.thm.rect.path.b.1}) holds for all $u\in P$.

So let us prove (\ref{pf.thm.rect.path.b.1}) by upwards induction on $u$:

Let $v\in P$. Assume (as the induction hypothesis) that
(\ref{pf.thm.rect.path.b.1}) holds for all $u\in P$ satisfying $u<v$. In other
words, we have $\downslack_{\ell}^{u\rightarrow0}=u_{\ell}\overline{a}$ for
each $u\in P$ satisfying $u<v$. Thus, in particular, we have%
\begin{equation}
\downslack_{\ell}^{u\rightarrow0}=u_{\ell}\overline{a}%
\ \ \ \ \ \ \ \ \ \ \text{for each }u\in P\text{ satisfying }u\lessdot v.
\label{pf.thm.rect.path.b.1.pf.IH2}%
\end{equation}

Note also that the only path from $0$ to $0$ is the trivial path $\left(
0\right)  $. Hence,
\begin{equation}
\downslack_{\ell}^{0\rightarrow0}=\downslack_{\ell}^{\left(  0\right)
}=\downslack_{\ell}^{0}=1=0_{\ell}\overline{a}
\label{pf.thm.rect.path.b.1.pf.IH3}%
\end{equation}
(since $0_{\ell}=a$).

However, $v\neq0$ (since $v\in P$ and $0\notin P$). Thus,
(\ref{eq.prop.slacks.rec.down1}) (applied to $s=v$ and $t=0$) yields%
\begin{align*}
\downslack_{\ell}^{v\rightarrow0}  &  =\downslack_{\ell}^{v}\underbrace{\sum
_{\substack{u\in\widehat{P};\\v\gtrdot u}}}_{\substack{=\sum_{\substack{u\in
\widehat{P};\\u\lessdot v}}=\sum_{u\lessdot v}\\\text{(since our
sums}\\\text{range over }\widehat{P}\text{ by}\\\text{default)}}%
}\underbrace{\downslack_{\ell}^{u\rightarrow0}}_{\substack{=u_{\ell}%
\overline{a}\\\text{(indeed, this follows from
(\ref{pf.thm.rect.path.b.1.pf.IH2}) when }u\in P\text{,}\\\text{and follows
from (\ref{pf.thm.rect.path.b.1.pf.IH3}) when }u=0\text{;}\\\text{and there
are no other possibilities, since }v\gtrdot u\text{ rules out }u=1\text{)}}}\\
&  =\underbrace{\downslack_{\ell}^{v}}_{\substack{=v_{\ell}\cdot\overline
{\sum\limits_{u\lessdot v}u_{\ell}}\\\text{(by the definition of
}\downslack_{\ell}^{v}\text{)}}}\sum_{u\lessdot v}u_{\ell}\overline{a}%
=v_{\ell}\cdot\underbrace{\overline{\sum\limits_{u\lessdot v}u_{\ell}}%
\cdot\sum_{u\lessdot v}u_{\ell}}_{=1}\overline{a}=v_{\ell}\overline{a}.
\end{align*}
In other words, (\ref{pf.thm.rect.path.b.1}) holds for $u=v$. This completes
the induction step, and (\ref{pf.thm.rect.path.b.1}) is proven.
\end{proof}

Now, for each $u\in P$, we have $u_{\ell}=\downslack_{\ell}^{u\rightarrow
0}\cdot a$, since%
\begin{align*}
\downslack_{\ell}^{u\rightarrow0}\cdot a  &  =u_{\ell}\underbrace{\overline
{a}\cdot a}_{=1}\ \ \ \ \ \ \ \ \ \ \left(  \text{by
(\ref{pf.thm.rect.path.b.1})}\right) \\
&  =u_{\ell}.
\end{align*}
This proves Theorem \ref{thm.rect.path} \textbf{(b)}. \medskip
\end{verlong}

\begin{vershort}
\medskip\textbf{(c)} Let $u\in P$. Recall that $\left(  p,q\right)  $ is the
unique maximal element of $P$. Therefore, each path from $1$ to $u$ begins
with the step $1\gtrdot\left(  p,q\right)  $. Thus, $\upslack_{\ell
}^{1\rightarrow u}=\upslack_{\ell}^{\left(  p,q\right)  \rightarrow u}$ (since
$\upslack_{\ell}^{1}=1$). Hence, part \textbf{(c)} follows from \textbf{(a)}.
\medskip
\end{vershort}

\begin{verlong}
\textbf{(c)} Assume that $R^{\ell}f\neq\undf$ and $\ell\geq1$. Let $t\in P$.
Every element of $\widehat{P}$ distinct from $1$ is $\leq\left(  p,q\right)
$. Therefore, the only element $u\in\widehat{P}$ satisfying $1\gtrdot u$ is
the maximal element $\left(  p,q\right)  $ of $P$. Hence, $\sum
_{\substack{u\in\widehat{P};\\1\gtrdot u}}\upslack_{\ell}^{u\rightarrow
t}=\upslack_{\ell}^{\left(  p,q\right)  \rightarrow t}$. Now,
(\ref{eq.prop.slacks.rec.up1}) (applied to $s=1$) yields%
\[
\upslack_{\ell}^{1\rightarrow t}=\underbrace{\upslack_{\ell}^{1}%
}_{\substack{=1\\\text{(by (\ref{eq.slack.1}))}}}\underbrace{\sum
_{\substack{u\in\widehat{P};\\1\gtrdot u}}\upslack_{\ell}^{u\rightarrow t}%
}_{=\upslack_{\ell}^{\left(  p,q\right)  \rightarrow t}}=\upslack_{\ell
}^{\left(  p,q\right)  \rightarrow t}.
\]

Forget that we fixed $t$. We thus have proved that $\upslack_{\ell
}^{1\rightarrow t}=\upslack_{\ell}^{\left(  p,q\right)  \rightarrow t}$ for
each $t\in P$. Renaming the index $t$ as $u$ in this statement, we obtain the
following:%
\begin{equation}
\upslack_{\ell}^{1\rightarrow u}=\upslack_{\ell}^{\left(  p,q\right)
\rightarrow u}\ \ \ \ \ \ \ \ \ \ \text{for each }u\in P.
\label{pf.thm.rect.path.c.3}%
\end{equation}

Now, let $u\in P$. Then, Theorem \ref{thm.rect.path} \textbf{(a)} yields
\[
u_{\ell}=\overline{\upslack_{\ell}^{1\rightarrow u}}\cdot b=\overline
{\upslack_{\ell}^{\left(  p,q\right)  \rightarrow u}}\cdot
b\ \ \ \ \ \ \ \ \ \ \left(  \text{by (\ref{pf.thm.rect.path.c.3})}\right)  .
\]
This proves Theorem \ref{thm.rect.path} \textbf{(c)}. \medskip
\end{verlong}

\begin{vershort}
Similarly, part \textbf{(d)} follows from \textbf{(b)}. \qedhere

\end{vershort}

\begin{verlong}
\textbf{(d)} Assume that $R^{\ell+1}f\neq\undf$. Let $s\in P$. Every element
of $\widehat{P}$ distinct from $0$ is $\geq\left(  1,1\right)  $. Thus, the
only element $u\in\widehat{P}$ satisfying $u\gtrdot0$ is the minimal element
$\left(  1,1\right)  $ of $P$. Hence,
\[
\sum_{\substack{u\in\widehat{P};\\u\gtrdot0}}\downslack_{\ell}^{s\rightarrow
u}\downslack_{\ell}^{0}=\downslack_{\ell}^{s\rightarrow\left(  1,1\right)
}\underbrace{\downslack_{\ell}^{0}}_{\substack{=1\\\text{(by (\ref{eq.slack.1}%
))}}}=\downslack_{\ell}^{s\rightarrow\left(  1,1\right)  }.
\]
Now, (\ref{eq.prop.slacks.rec.down2}) (applied to $t=0$) yields%
\[
\downslack_{\ell}^{s\rightarrow0}=\sum_{\substack{u\in\widehat{P};\\u\gtrdot
0}}\downslack_{\ell}^{s\rightarrow u}\downslack_{\ell}^{0}=\downslack_{\ell
}^{s\rightarrow\left(  1,1\right)  }.
\]

Forget that we fixed $s$. We thus have proved that $\downslack_{\ell
}^{s\rightarrow0}=\downslack_{\ell}^{s\rightarrow\left(  1,1\right)  }$ for
each $s\in P$. Renaming the index $s$ as $u$ in this statement, we obtain the
following:%
\begin{equation}
\downslack_{\ell}^{u\rightarrow0}=\downslack_{\ell}^{u\rightarrow\left(
1,1\right)  }\ \ \ \ \ \ \ \ \ \ \text{for each }u\in P.
\label{pf.thm.rect.path.d.3}%
\end{equation}

Now, let $u\in P$. Then, Theorem \ref{thm.rect.path} \textbf{(b)} yields
\[
u_{\ell}=\downslack_{\ell}^{u\rightarrow0}\cdot a=\downslack_{\ell
}^{u\rightarrow\left(  1,1\right)  }\cdot a\ \ \ \ \ \ \ \ \ \ \left(
\text{by (\ref{pf.thm.rect.path.d.3})}\right)  .
\]
This proves Theorem \ref{thm.rect.path} \textbf{(d)}. \qedhere

\end{verlong}
\end{proof}

\begin{remark}
\label{rmk.slacks.gen} Corollary \ref{cor.rect.trans-uv}, Proposition
\ref{prop.slacks.rec} and parts \textbf{(a)} and \textbf{(b)} of Theorem
\ref{thm.rect.path} hold more generally if $P$ is replaced by any finite poset
(not necessarily a rectangle). The proofs we gave above work in that
generality. Parts \textbf{(c)} and \textbf{(d)} of Theorem \ref{thm.rect.path}
can be similarly generalized as long as the poset $P$ has a global maximum
(for part \textbf{(c)}) and a global minimum (for part \textbf{(d)}); all we
need to do is to replace $\left(  p,q\right)  $ by the global maximum and
$\left(  1,1\right)  $ by the global minimum. We will have no need for this
generality, though.
\end{remark}

\section{\label{sec.ij=11}Proof of reciprocity: the case $\left(  i,j\right)
=\left(  1,1\right)  $}

Now, we are mostly ready to prove that Theorem \ref{thm.rect.antip} holds in
the case when $\left(  i,j\right)  =\left(  1,1\right)  $. For reasons both
technical and pedagogical, it is useful for us to dispose of this case now in
order to have less work to do later. First, we prove Theorem
\ref{thm.rect.antip} for $\left(  i,j\right)  =\left(  1,1\right)  $ under the
extra assumption that $a$ is invertible:

\begin{lemma}
\label{lem.rect.antip.11inv}Assume that $P$ is the $p\times q$-rectangle
$\left[  p\right]  \times\left[  q\right]  $. Let $\ell\in\mathbb{N}$ be such
that $\ell\geq1$. Let $f\in\mathbb{K}^{\widehat{P}}$ be a $\mathbb{K}%
$-labeling such that $R^{\ell}f\neq\undf$. Let $a=f\left(  0\right)  $ and
$b=f\left(  1\right)  $. Assume that $a$ is invertible. Then,%
\[
\left(  R^{\ell}f\right)  \left(  1,1\right)  =a\cdot\overline{\left(
R^{\ell-1}f\right)  \left(  p,q\right)  }\cdot b.
\]

\end{lemma}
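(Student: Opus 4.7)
The plan is to chain together the path formulas from Theorem~\ref{thm.rect.path} with the transition identity from Corollary~\ref{cor.rect.trans-uv}, using $(p,q)$ and $(1,1)$ as the global maximum and minimum of $P$.

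First, I would apply Theorem~\ref{thm.rect.path}~\textbf{(c)} with $u=(1,1)$ (valid since $R^{\ell}f\neq\undf$, $\ell\geq 1$, and $a$ is invertible); this yields
\[
(1,1)_{\ell}=\overline{\upslack_{\ell}^{(p,q)\rightarrow (1,1)}}\cdot b,
\]
and in particular tells us that $\upslack_{\ell}^{(p,q)\rightarrow (1,1)}$ is invertible. Next, I would apply Corollary~\ref{cor.rect.trans-uv} with $u=(p,q)$ and $v=(1,1)$ to rewrite
\[
\upslack_{\ell}^{(p,q)\rightarrow (1,1)}=\downslack_{\ell-1}^{(p,q)\rightarrow (1,1)}.
\]

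The next step is to identify this $\downslack$-quantity. Since $R^{(\ell-1)+1}f=R^{\ell}f\neq\undf$, Theorem~\ref{thm.rect.path}~\textbf{(d)} (applied to $u=(p,q)$ and $\ell-1$ in place of $\ell$) gives
\[
(p,q)_{\ell-1}=\downslack_{\ell-1}^{(p,q)\rightarrow (1,1)}\cdot a.
\]
Because $a$ is invertible, I can solve this for $\downslack_{\ell-1}^{(p,q)\rightarrow (1,1)}=(p,q)_{\ell-1}\cdot \overline{a}$. Combining with the previous step,
\[
\upslack_{\ell}^{(p,q)\rightarrow (1,1)}=(p,q)_{\ell-1}\cdot \overline{a}.
\]

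Finally, I would substitute this back into the first equation and simplify the inverse. Lemma~\ref{lem.slacks.wd}~\textbf{(b)} (applied to $v=(p,q)$) ensures that $(p,q)_{\ell-1}$ is well-defined and invertible, so Proposition~\ref{prop.inverses.ab}~\textbf{(b)} yields
\[
\overline{(p,q)_{\ell-1}\cdot \overline{a}}=\overline{\overline{a}}\cdot\overline{(p,q)_{\ell-1}}=a\cdot \overline{(p,q)_{\ell-1}}.
\]
Plugging this into $(1,1)_{\ell}=\overline{\upslack_{\ell}^{(p,q)\rightarrow (1,1)}}\cdot b$ delivers the desired equality $(1,1)_{\ell}=a\cdot\overline{(p,q)_{\ell-1}}\cdot b$. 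There is no conceptual obstacle here; the only subtlety is verifying the well-definedness and invertibility hypotheses needed to invoke Theorem~\ref{thm.rect.path} and Corollary~\ref{cor.rect.trans-uv}, but these all follow directly from the assumptions $R^{\ell}f\neq\undf$, $\ell\geq 1$, and $a$ invertible via the lemmas of Section~\ref{sec.ncbr}.
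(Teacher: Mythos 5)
Your proposal is correct and follows essentially the same route as the paper's proof: both combine Theorem~\ref{thm.rect.path}~\textbf{(c)} and \textbf{(d)} with Corollary~\ref{cor.rect.trans-uv} and then invert the product $\left(  p,q\right)  _{\ell-1}\cdot\overline{a}$ via Proposition~\ref{prop.inverses.ab}. The only cosmetic difference is that the paper cites Lemma~\ref{lem.R.inv} rather than Lemma~\ref{lem.slacks.wd}~\textbf{(b)} for the invertibility of $\left(  R^{\ell-1}f\right)  \left(  p,q\right)  $, which is immaterial.
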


\begin{proof}
We use the notations from Section \ref{sec.proof-nots}. Thus, $\left(
R^{\ell}f\right)  \left(  1,1\right)  =\left(  1,1\right)  _{\ell}$ and%
\[
\left(  R^{\ell-1}f\right)  \left(  p,q\right)  =\left(  p,q\right)  _{\ell
-1}=\downslack_{\ell-1}^{\left(  p,q\right)  \rightarrow\left(  1,1\right)
}\cdot a
\]
(by Theorem \ref{thm.rect.path} \textbf{(d)}, applied to $\ell-1$ and $\left(
p,q\right)  $ instead of $\ell$ and $u$). Solving this equation for
$\downslack_{\ell-1}^{\left(  p,q\right)  \rightarrow\left(  1,1\right)  }$,
we obtain%
\begin{equation}
\downslack_{\ell-1}^{\left(  p,q\right)  \rightarrow\left(  1,1\right)
}=\left(  R^{\ell-1}f\right)  \left(  p,q\right)  \cdot\overline{a}
\label{pf.lem.rect.antip.11inv.1}%
\end{equation}
(since $a$ is invertible). Note also that $R\left(  R^{\ell-1}f\right)
=R^{\ell}f\neq\undf$, and thus $\left(  R^{\ell-1}f\right)  \left(
p,q\right)  $ is invertible (by Lemma \ref{lem.R.inv}, applied to $R^{\ell
-1}f$ and $\left(  p,q\right)  $ instead of $f$ and $v$).

Now,%
\begin{align*}
\left(  R^{\ell}f\right)  \left(  1,1\right)   &  =\left(  1,1\right)  _{\ell
}=\overline{\upslack_{\ell}^{\left(  p,q\right)  \rightarrow\left(
1,1\right)  }}\cdot b\ \ \ \ \ \ \ \ \ \ \left(  \text{by Theorem
\ref{thm.rect.path} \textbf{(c)}, applied to }u=\left(  1,1\right)  \right) \\
&  =\overline{\downslack_{\ell-1}^{\left(  p,q\right)  \rightarrow\left(
1,1\right)  }}\cdot b\ \ \ \ \ \ \ \ \ \ \left(  \text{since
(\ref{eq.prop.rect.trans.uv}) yields }\upslack_{\ell}^{\left(  p,q\right)
\rightarrow\left(  1,1\right)  }=\downslack_{\ell-1}^{\left(  p,q\right)
\rightarrow\left(  1,1\right)  }\right) \\
&  =\underbrace{\overline{\left(  R^{\ell-1}f\right)  \left(  p,q\right)
\cdot\overline{a}}}_{\substack{=a\cdot\overline{\left(  R^{\ell-1}f\right)
\left(  p,q\right)  }\\\text{(since }\left(  R^{\ell-1}f\right)  \left(
p,q\right)  \text{ and }\overline{a}\\\text{are invertible)}}}\cdot
\,b\ \ \ \ \ \ \ \ \ \ \left(  \text{by (\ref{pf.lem.rect.antip.11inv.1}%
)}\right) \\
&  =a\cdot\overline{\left(  R^{\ell-1}f\right)  \left(  p,q\right)  }\cdot b.
\end{align*}
This proves Lemma \ref{lem.rect.antip.11inv}.
\end{proof}

Unfortunately, our proof of Lemma \ref{lem.rect.antip.11inv} made use of the
requirement that $a$ be invertible, since $\upslack_{\ell}^{\left(
p,q\right)  \rightarrow\left(  1,1\right)  }$ and $\downslack_{\ell
-1}^{\left(  p,q\right)  \rightarrow\left(  1,1\right)  }$ would not be
well-defined otherwise. In order to remove this requirement, we make use of a
trick, in which we \textquotedblleft temporarily\textquotedblright\ set the
label $f\left(  0\right)  $ to $1$ and then argue that this has a predictable
effect on $\left(  Rf\right)  \left(  1,1\right)  $. This trick relies on the following:

\begin{lemma}
\label{lem.ato1}Let $P$ be an arbitrary finite poset (not necessarily $\left[
p\right]  \times\left[  q\right]  $). Let $f,g\in\mathbb{K}^{\widehat{P}}$ be
two $\mathbb{K}$-labelings such that $Rf\neq\undf$. Assume that%
\begin{equation}
g\left(  x\right)  =f\left(  x\right)  \ \ \ \ \ \ \ \ \ \ \text{for each
}x\in\widehat{P}\setminus\left\{  0\right\}  . \label{eq.lem.ato1.ass1}%
\end{equation}
Assume furthermore that $g\left(  0\right)  =1$. Set $a=f\left(  0\right)  $. Then:

\begin{enumerate}
\item[\textbf{(a)}] We have $Rg\neq\undf$.

\item[\textbf{(b)}] If $v\in P$ is not a minimal element of $P$, then $\left(
Rf\right)  \left(  v\right)  =\left(  Rg\right)  \left(  v\right)  $.

\item[\textbf{(c)}] If $v\in P$ is a minimal element of $P$, then $\left(
Rf\right)  \left(  v\right)  =a\cdot\left(  Rg\right)  \left(  v\right)  $.
\end{enumerate}
\end{lemma}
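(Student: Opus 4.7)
The plan is to exploit a particular choice of linear extension. I would begin by fixing a linear extension $(v_1, v_2, \ldots, v_m)$ of $P$ in which the minimal elements of $P$ appear first: say $v_1, \ldots, v_s$ are exactly the minimal elements of $P$, and $v_{s+1}, \ldots, v_m$ are the remaining elements. Such an ordering exists because minimal elements are mutually incomparable and are below every non-minimal element, so they can be placed at the start of any topological sort. By Corollary \ref{cor.R.welldef}, $R$ may be computed from this extension, so $R = A \circ B$ where
\[
B := T_{v_{s+1}} \circ \cdots \circ T_{v_m} \qquad \text{and} \qquad A := T_{v_1} \circ \cdots \circ T_{v_s}.
\]

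I would then analyze $B$ first. Each toggle $T_{v_j}$ with $v_j$ non-minimal uses only the label at $v_j$, the labels at covers of $v_j$ in $\widehat{P}$ (which are non-minimal elements of $P$ or the element $1$), and the labels at elements covered by $v_j$ in $\widehat{P}$ (which all lie in $P$, since $v_j$ is non-minimal). In particular, the label at $0$ is neither read nor written during the computation of $Bf$. Moreover, the labels at minimal elements of $P$ are also never modified by $B$. Since $f$ and $g$ agree off of $0$ by assumption (\ref{eq.lem.ato1.ass1}), this shows $Bf$ and $Bg$ are both well-defined (from $Rf \neq \undf$, which forces $Bf \neq \undf$) and agree at every $x \in \widehat{P} \setminus \{0\}$, while $(Bf)(0) = a$ and $(Bg)(0) = 1$. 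Hence in particular $Bg \neq \undf$.

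Next I would apply $A$. The minimal elements are pairwise incomparable, so the toggles $T_{v_1}, \ldots, T_{v_s}$ pairwise commute (Corollary \ref{cor.Tv.commute}), and each $T_{v_i}$ modifies only the label at $v_i$. For each minimal $v_i$, the only $u \in \widehat{P}$ with $u \lessdot v_i$ is $0$, so by Proposition \ref{prop.Tv} \textbf{(b)},
\[
(T_{v_i}(Bf))(v_i) = (Bf)(0) \cdot \overline{(Bf)(v_i)} \cdot \overline{\sum_{u \gtrdot v_i} \overline{(Bf)(u)}},
\]
and the analogous identity holds with $g$ in place of $f$. Since $Bf$ and $Bg$ agree everywhere except at $0$, the two inverted factors on the right are identical for $Bf$ and $Bg$; the only difference is the prefactor, which equals $a$ for $Bf$ and $1$ for $Bg$. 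It follows that every inverse needed to compute $T_{v_i}(Bg)$ is among those already known to exist for $T_{v_i}(Bf)$ (none of them involves the label at $0$), so $Rg \neq \undf$, establishing part \textbf{(a)}, and furthermore
\[
(T_{v_i}(Bf))(v_i) = a \cdot (T_{v_i}(Bg))(v_i).
\]
Since the remaining toggles in $A$ all act at distinct minimal elements and do not touch $v_i$, this equality survives to give $(Rf)(v_i) = a \cdot (Rg)(v_i)$, which is part \textbf{(c)}. For a non-minimal $v \in P$, the toggles in $A$ fix the label at $v$, so $(Rf)(v) = (Bf)(v) = (Bg)(v) = (Rg)(v)$, proving part \textbf{(b)}.

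The only real bookkeeping obstacle is tracking well-definedness of the various inverses through $B$ and $A$; but because $0$ lies strictly below every $v \in P$, no inverse ever involves the label at $0$, so the argument reduces to pushing symbols carefully through the composition.
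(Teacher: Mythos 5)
Your proof is correct and follows essentially the same route as the paper's own argument: pick a linear extension with the minimal elements first (so their toggles are applied last), observe that the label at $0$ is read only when toggling a minimal element $v$ (where $\sum_{u\lessdot v}$ collapses to the single label at $0$, giving $a$ for $f$ and $1$ for $g$), and track the resulting left factor of $a$ through the remaining toggles. One trivial wording issue: minimal elements need not lie below every non-minimal element, as you assert; they can be placed at the front of a linear extension simply because nothing in $P$ lies below them.
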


\begin{vershort}

\begin{proof}
[Proof of Lemma \ref{lem.ato1} (sketched).]Our assumption
(\ref{eq.lem.ato1.ass1}) shows that the labels of $f$ equal the corresponding
labels of $g$ at all elements of $\widehat{P}$ other than at $0$. Only the
labels at $0$ can differ.

Compute the labelings $Rf$ and $Rg$ recursively, as we did in Example
\ref{ex.rowmotion.2x2}, making sure to pick a linear extension of $P$ that
starts with all minimal elements of $P$ (so that the toggles at these minimal
elements all happen at the very end of our computation). The computation for
$Rf$ proceeds identically with the computation for $Rg$ until we
\textquotedblleft interact with\textquotedblright\ the different labels at $0$
-- that is, until the labels $f\left(  0\right)  $ and $g\left(  0\right)  $
make an appearance in the sums $\sum\limits_{\substack{u\in\widehat{P}%
;\\u\lessdot v}}f\left(  u\right)  $ and $\sum\limits_{\substack{u\in
\widehat{P};\\u\lessdot v}}g\left(  u\right)  $, respectively (because all
other labels of $f$ equal the corresponding labels of $g$). However, this
\textquotedblleft interaction\textquotedblright\ only happens when we toggle
at a minimal element of $P$ (since $v$ has to be minimal in order for
$f\left(  0\right)  $ to be an addend of the sum $\sum\limits_{\substack{u\in
\widehat{P};\\u\lessdot v}}f\left(  u\right)  $). Furthermore, when we do
toggle at a minimal element $v$ of $P$, the relevant sums $\sum
\limits_{\substack{u\in\widehat{P};\\u\lessdot v}}f\left(  u\right)  $ and
$\sum\limits_{\substack{u\in\widehat{P};\\u\lessdot v}}g\left(  u\right)  $
simplify to $f\left(  0\right)  =a$ and $g\left(  0\right)  =1$, respectively
(because $0$ is the \textbf{only} element $u\in\widehat{P}$ satisfying
$u\lessdot v$). Therefore, the labels of $Rf$ and $Rg$ at $v$ end up differing
by a factor of $a$ (more precisely, the value of $Rf$ at $v$ ends up being $a$
times the label of $Rg$ at $v$). This proves Lemma \ref{lem.ato1}.
\end{proof}
\end{vershort}

\begin{verlong}

\begin{proof}
[Proof of Lemma \ref{lem.ato1}.]Pick a linear extension $\left(  v_{1}%
,v_{2},\ldots,v_{m}\right)  $ of $P$. (We know from Theorem
\ref{thm.linext.ex} that such a linear extension exists.)

For each $i\in\left\{  0,1,\ldots,m\right\}  $, define a partial map%
\[
R_{i}:=T_{v_{i+1}}\circ T_{v_{i+2}}\circ\cdots\circ T_{v_{m}}:\mathbb{K}%
^{\widehat{P}}\dashrightarrow\mathbb{K}^{\widehat{P}}.
\]
Thus, in particular,%
\[
R_{m}=T_{v_{m+1}}\circ T_{v_{m+2}}\circ\cdots\circ T_{v_{m}}=\left(
\text{empty composition}\right)  =\operatorname*{id}%
\]
and
\[
R_{0}=T_{v_{0+1}}\circ T_{v_{0+2}}\circ\cdots\circ T_{v_{m}}=T_{v_{1}}\circ
T_{v_{2}}\circ\cdots\circ T_{v_{m}}=R
\]
(by the definition of $R$).

For each $i\in\left\{  0,1,\ldots,m\right\}  $, we set%
\[
f^{\left(  i\right)  }:=R_{m-i}f\ \ \ \ \ \ \ \ \ \ \text{and}%
\ \ \ \ \ \ \ \ \ \ g^{\left(  i\right)  }:=R_{m-i}g.
\]
Each of $f^{\left(  i\right)  }$ and $g^{\left(  i\right)  }$ is either a
$\mathbb{K}$-labeling in $\mathbb{K}^{\widehat{P}}$ or $\undf$; we will soon
see that it is a $\mathbb{K}$-labeling.

The tuple $\left(  v_{1},v_{2},\ldots,v_{m}\right)  $ is a linear extension of
$P$. Thus, for each $x\in P$, there exists a unique $i\in\left\{
1,2,\ldots,m\right\}  $ that satisfies $x=v_{i}$. Let us denote this $i$ by
$\rho\left(  x\right)  $. Thus, the map%
\begin{align*}
\rho:P  &  \rightarrow\left\{  1,2,\ldots,m\right\}  ,\\
x  &  \mapsto\rho\left(  x\right)
\end{align*}
is a bijection.

Let $M$ denote the set of all minimal elements of $P$.

We now shall prove the following:

\begin{statement}
\textit{Claim 1:} Let $i\in\left\{  0,1,\ldots,m\right\}  $. Then, $f^{\left(
i\right)  }\neq\undf$ and $g^{\left(  i\right)  }\neq\undf$. Moreover,
$g^{\left(  i\right)  }\left(  1\right)  =f^{\left(  i\right)  }\left(
1\right)  $ and $f^{\left(  i\right)  }\left(  0\right)  =a\cdot g^{\left(
i\right)  }\left(  0\right)  $. Furthermore, each $v\in P$ satisfies%
\[
f^{\left(  i\right)  }\left(  v\right)  =%
\begin{cases}
g^{\left(  i\right)  }\left(  v\right)  , & \text{if }v\notin M\text{ or }%
\rho\left(  v\right)  \leq m-i;\\
a\cdot g^{\left(  i\right)  }\left(  v\right)  , & \text{otherwise.}%
\end{cases}
\]

\end{statement}

\begin{proof}
[Proof of Claim 1.]We proceed by induction on $i$:

\textit{Base case:} The definition of $f^{\left(  0\right)  }$ yields
$f^{\left(  0\right)  }=\underbrace{R_{m-0}}_{=R_{m}=\operatorname*{id}%
}f=f\neq\undf$. The definition of $g^{\left(  0\right)  }$ yields $g^{\left(
0\right)  }=\underbrace{R_{m-0}}_{=R_{m}=\operatorname*{id}}g=g\neq\undf$.

From $g^{\left(  0\right)  }=g$, we obtain $g^{\left(  0\right)  }\left(
1\right)  =g\left(  1\right)  =f\left(  1\right)  $ (by
(\ref{eq.lem.ato1.ass1}), applied to $x=1$). In other words, $g^{\left(
0\right)  }\left(  1\right)  =f^{\left(  0\right)  }\left(  1\right)  $ (since
$f^{\left(  0\right)  }=f$).

From $g^{\left(  0\right)  }=g$, we obtain $g^{\left(  0\right)  }\left(
0\right)  =g\left(  0\right)  =1$, so that $a\cdot g^{\left(  0\right)
}\left(  0\right)  =a\cdot1=a=f\left(  0\right)  =f^{\left(  0\right)
}\left(  0\right)  $ (since $f=f^{\left(  0\right)  }$). In other words,
$f^{\left(  0\right)  }\left(  0\right)  =a\cdot g^{\left(  0\right)  }\left(
0\right)  $.

Now, let $v\in P$. Then, (\ref{eq.lem.ato1.ass1}) (applied to $x=v$) yields
$g\left(  v\right)  =f\left(  v\right)  $ (since $v\in P\subseteq
\widehat{P}\setminus\left\{  0\right\}  $). In view of $g^{\left(  0\right)
}=g$ and $f^{\left(  0\right)  }=f$, we can rewrite this as $g^{\left(
0\right)  }\left(  v\right)  =f^{\left(  0\right)  }\left(  v\right)  $.
However, we have $\rho\left(  v\right)  \in\left\{  1,2,\ldots,m\right\}  $
(by the definition of $\rho\left(  v\right)  $) and therefore $\rho\left(
v\right)  \leq m=m-0$. Thus,%
\begin{align*}
&
\begin{cases}
g^{\left(  0\right)  }\left(  v\right)  , & \text{if }v\notin M\text{ or }%
\rho\left(  v\right)  \leq m-0;\\
a\cdot g^{\left(  0\right)  }\left(  v\right)  , & \text{otherwise}%
\end{cases}
\\
&  =g^{\left(  0\right)  }\left(  v\right)  \ \ \ \ \ \ \ \ \ \ \left(
\text{since }v\notin M\text{ or }\rho\left(  v\right)  \leq m-0\text{ (because
}\rho\left(  v\right)  \leq m-0\text{)}\right) \\
&  =f^{\left(  0\right)  }\left(  v\right)  .
\end{align*}
Hence,
\[
f^{\left(  0\right)  }\left(  v\right)  =%
\begin{cases}
g^{\left(  0\right)  }\left(  v\right)  , & \text{if }v\notin M\text{ or }%
\rho\left(  v\right)  \leq m-0;\\
a\cdot g^{\left(  0\right)  }\left(  v\right)  , & \text{otherwise.}%
\end{cases}
\]
Forget that we fixed $v$. We thus have shown that each $v\in P$ satisfies%
\[
f^{\left(  0\right)  }\left(  v\right)  =%
\begin{cases}
g^{\left(  0\right)  }\left(  v\right)  , & \text{if }v\notin M\text{ or }%
\rho\left(  v\right)  \leq m-0;\\
a\cdot g^{\left(  0\right)  }\left(  v\right)  , & \text{otherwise.}%
\end{cases}
\]
Since we also know that $f^{\left(  0\right)  }\neq\undf$ and $g^{\left(
0\right)  }\neq\undf$ and $g^{\left(  0\right)  }\left(  1\right)  =f^{\left(
0\right)  }\left(  1\right)  $ and $f^{\left(  0\right)  }\left(  0\right)
=a\cdot g^{\left(  0\right)  }\left(  0\right)  $, we have thus finished
proving that Claim 1 holds for $i=0$.

\textit{Induction step:} Let $j\in\left\{  0,1,\ldots,m-1\right\}  $. Assume
(as the induction hypothesis) that Claim 1 holds for $i=j$. We must prove that
Claim 1 holds for $i=j+1$. In other words, we must prove that $f^{\left(
j+1\right)  }\neq\undf$ and $g^{\left(  j+1\right)  }\neq\undf$ and
$g^{\left(  j+1\right)  }\left(  1\right)  =f^{\left(  j+1\right)  }\left(
1\right)  $ and $f^{\left(  j+1\right)  }\left(  0\right)  =a\cdot g^{\left(
j+1\right)  }\left(  0\right)  $ and that each $v\in P$ satisfies%
\begin{equation}
f^{\left(  j+1\right)  }\left(  v\right)  =%
\begin{cases}
g^{\left(  j+1\right)  }\left(  v\right)  , & \text{if }v\notin M\text{ or
}\rho\left(  v\right)  \leq m-\left(  j+1\right)  ;\\
a\cdot g^{\left(  j+1\right)  }\left(  v\right)  , & \text{otherwise.}%
\end{cases}
\label{pf.lem.ato1.c1.IG}%
\end{equation}

Our induction hypothesis tells us that Claim 1 holds for $i=j$. In other
words, we have $f^{\left(  j\right)  }\neq\undf$ and $g^{\left(  j\right)
}\neq\undf$ and $g^{\left(  j\right)  }\left(  1\right)  =f^{\left(  j\right)
}\left(  1\right)  $ and $f^{\left(  j\right)  }\left(  0\right)  =a\cdot
g^{\left(  j\right)  }\left(  0\right)  $, and each $v\in P$ satisfies%
\begin{equation}
f^{\left(  j\right)  }\left(  v\right)  =%
\begin{cases}
g^{\left(  j\right)  }\left(  v\right)  , & \text{if }v\notin M\text{ or }%
\rho\left(  v\right)  \leq m-j;\\
a\cdot g^{\left(  j\right)  }\left(  v\right)  , & \text{otherwise.}%
\end{cases}
\label{pf.lem.ato1.c1.IH}%
\end{equation}

Let $y:=v_{m-j}$. Thus, $y$ is an element of $P$. The definition of
$\rho\left(  y\right)  $ yields that $\rho\left(  y\right)  $ is the unique
$i\in\left\{  1,2,\ldots,m\right\}  $ that satisfies $y=v_{i}$. Thus,
$\rho\left(  y\right)  =m-j$ (since $y=v_{m-j}$).

It is easy to see that%
\begin{equation}
R_{m-\left(  j+1\right)  }=T_{y}\circ R_{m-j} \label{pf.lem.ato1.c1.Rj+1}%
\end{equation}
\footnote{\textit{Proof.} The definition of $R_{m-j}$ yields $R_{m-j}%
=T_{v_{m-j+1}}\circ T_{v_{m-j+2}}\circ\cdots\circ T_{v_{m}}$. The definition
of $R_{m-\left(  j+1\right)  }$ yields%
\begin{align*}
R_{m-\left(  j+1\right)  }  &  =\underbrace{T_{v_{m-\left(  j+1\right)  +1}}%
}_{=T_{v_{m-j}}}\circ\underbrace{T_{v_{m-\left(  j+1\right)  +2}}%
}_{=T_{v_{m-j+1}}}\circ\cdots\circ T_{v_{m}}=T_{v_{m-j}}\circ T_{v_{m-j+1}%
}\circ\cdots\circ T_{v_{m}}\\
&  =\underbrace{T_{v_{m-j}}}_{\substack{=T_{y}\\\text{(since }v_{m-j}%
=y\text{)}}}\circ\underbrace{T_{v_{m-j+1}}\circ T_{v_{m-j+2}}\circ\cdots\circ
T_{v_{m}}}_{=R_{m-j}}=T_{y}\circ R_{m-j}.
\end{align*}
This proves (\ref{pf.lem.ato1.c1.Rj+1}).}.

It is easy to see that $f^{\left(  j+1\right)  }\neq\undf$%
\ \ \ \ \footnote{\textit{Proof.} The definition of $R_{m-\left(  j+1\right)
}$ yields%
\[
R_{m-\left(  j+1\right)  }=\underbrace{T_{v_{m-\left(  j+1\right)  +1}}%
}_{=T_{v_{m-j}}}\circ\underbrace{T_{v_{m-\left(  j+1\right)  +2}}%
}_{=T_{v_{m-j+1}}}\circ\cdots\circ T_{v_{m}}=T_{v_{m-j}}\circ T_{v_{m-j+1}%
}\circ\cdots\circ T_{v_{m}}.
\]
However, the definition of $R$ yields%
\begin{align*}
R  &  =T_{v_{1}}\circ T_{v_{2}}\circ\cdots\circ T_{v_{m}}=\left(  T_{v_{1}%
}\circ T_{v_{2}}\circ\cdots\circ T_{v_{m-j-1}}\right)  \circ
\underbrace{\left(  T_{v_{m-j}}\circ T_{v_{m-j+1}}\circ\cdots\circ T_{v_{m}%
}\right)  }_{=R_{m-\left(  j+1\right)  }}\\
&  =\left(  T_{v_{1}}\circ T_{v_{2}}\circ\cdots\circ T_{v_{m-j-1}}\right)
\circ R_{m-\left(  j+1\right)  }.
\end{align*}
Thus, if we had $R_{m-\left(  j+1\right)  }f=\undf$, then we would have%
\begin{align*}
\underbrace{R}_{=\left(  T_{v_{1}}\circ T_{v_{2}}\circ\cdots\circ
T_{v_{m-j-1}}\right)  \circ R_{m-\left(  j+1\right)  }}f  &  =\left(  \left(
T_{v_{1}}\circ T_{v_{2}}\circ\cdots\circ T_{v_{m-j-1}}\right)  \circ
R_{m-\left(  j+1\right)  }\right)  f\\
&  =\left(  T_{v_{1}}\circ T_{v_{2}}\circ\cdots\circ T_{v_{m-j-1}}\right)
\left(  \underbrace{R_{m-\left(  j+1\right)  }f}_{=\undf}\right) \\
&  =\left(  T_{v_{1}}\circ T_{v_{2}}\circ\cdots\circ T_{v_{m-j-1}}\right)
\left(  \undf\right)  =\undf,
\end{align*}
which would contradict $Rf\neq\undf$. Hence, we cannot have $R_{m-\left(
j+1\right)  }f=\undf$. Thus, $R_{m-\left(  j+1\right)  }f\neq\undf$. However,
the definition of $f^{\left(  j+1\right)  }$ yields $f^{\left(  j+1\right)
}=R_{m-\left(  j+1\right)  }f\neq\undf$.}. Furthermore, we have
\[
f^{\left(  j+1\right)  }=T_{y}f^{\left(  j\right)  }%
\]
\footnote{\textit{Proof.} The definition of $f^{\left(  j\right)  }$ yields
$f^{\left(  j\right)  }=R_{m-j}f$. Hence, $R_{m-j}f=f^{\left(  j\right)  }$.
The definition of $f^{\left(  j+1\right)  }$ yields%
\[
f^{\left(  j+1\right)  }=\underbrace{R_{m-\left(  j+1\right)  }}%
_{\substack{=T_{y}\circ R_{m-j}\\\text{(by (\ref{pf.lem.ato1.c1.Rj+1}))}%
}}f=\left(  T_{y}\circ R_{m-j}\right)  f=T_{y}\underbrace{\left(
R_{m-j}f\right)  }_{=f^{\left(  j\right)  }}=T_{y}f^{\left(  j\right)  }.
\]
}. Similarly,%
\[
g^{\left(  j+1\right)  }=T_{y}g^{\left(  j\right)  }.
\]

Next, we observe that for each $u\in\widehat{P}$ satisfying $u\gtrdot y$, we
have%
\begin{equation}
g^{\left(  j\right)  }\left(  u\right)  =f^{\left(  j\right)  }\left(
u\right)  \label{pf.lem.ato1.c1.s3}%
\end{equation}
\footnote{\textit{Proof of (\ref{pf.lem.ato1.c1.s3}):} Let $u\in\widehat{P}$
be such that $u\gtrdot y$. Then, $u\gtrdot y$, so that $u>y$. In other words,
$y<u$. Thus, there exists an element of $P$ that is smaller than $u$ (namely,
$y$). Hence, $u$ cannot be a minimal element of $P$. In other words, $u\notin
M$ (since $M$ is the set of all minimal elements of $P$).
\par
We must prove that $g^{\left(  j\right)  }\left(  u\right)  =f^{\left(
j\right)  }\left(  u\right)  $. If $u=1$, then this follows directly from
$g^{\left(  j\right)  }\left(  1\right)  =f^{\left(  j\right)  }\left(
1\right)  $. Thus, we WLOG assume that $u\neq1$. Moreover, $u\neq0$ (because
if we had $u=0$, then we would have $0=u>y$, which would contradict the fact
that $0$ is not larger than any element of $\widehat{P}$). Combining
$u\in\widehat{P}$ with $u\neq0$ and $u\neq1$, we obtain $u\in\widehat{P}%
\setminus\left\{  0,1\right\}  =P$. Hence, $\rho\left(  u\right)  $ is
well-defined.
\par
We have $u\notin M$ or $\rho\left(  u\right)  \leq m-j$ (since $u\notin M$).
Now, (\ref{pf.lem.ato1.c1.IH}) (applied to $v=u$) yields
\begin{align*}
f^{\left(  j\right)  }\left(  u\right)   &  =%
\begin{cases}
g^{\left(  j\right)  }\left(  u\right)  , & \text{if }u\notin M\text{ or }%
\rho\left(  u\right)  \leq m-j;\\
a\cdot g^{\left(  j\right)  }\left(  u\right)  , & \text{otherwise}%
\end{cases}
\\
&  =g^{\left(  j\right)  }\left(  u\right)  \ \ \ \ \ \ \ \ \ \ \left(
\text{since }u\notin M\text{ or }\rho\left(  u\right)  \leq m-j\right)  .
\end{align*}
In other words, $g^{\left(  j\right)  }\left(  u\right)  =f^{\left(  j\right)
}\left(  u\right)  $. This proves (\ref{pf.lem.ato1.c1.s3}).}. Furthermore, we
have%
\begin{equation}
g^{\left(  j\right)  }\left(  y\right)  =f^{\left(  j\right)  }\left(
y\right)  \label{pf.lem.ato1.c1.s4}%
\end{equation}
\footnote{\textit{Proof of (\ref{pf.lem.ato1.c1.s4}):} We have $\rho\left(
y\right)  =m-j\leq m-j$. Therefore, $y\notin M$ or $\rho\left(  y\right)  \leq
m-j$. Now, (\ref{pf.lem.ato1.c1.IH}) (applied to $v=y$) yields
\begin{align*}
f^{\left(  j\right)  }\left(  y\right)   &  =%
\begin{cases}
g^{\left(  j\right)  }\left(  y\right)  , & \text{if }y\notin M\text{ or }%
\rho\left(  y\right)  \leq m-j;\\
a\cdot g^{\left(  j\right)  }\left(  y\right)  , & \text{otherwise}%
\end{cases}
\\
&  =g^{\left(  j\right)  }\left(  y\right)  \ \ \ \ \ \ \ \ \ \ \left(
\text{since }y\notin M\text{ or }\rho\left(  y\right)  \leq m-j\right)  .
\end{align*}
In other words, $g^{\left(  j\right)  }\left(  y\right)  =f^{\left(  j\right)
}\left(  y\right)  $. This proves (\ref{pf.lem.ato1.c1.s4}).}.

However, recall that $f^{\left(  j+1\right)  }=T_{y}f^{\left(  j\right)  }$,
so that $T_{y}f^{\left(  j\right)  }=f^{\left(  j+1\right)  }\neq\undf$. Thus,
the expression
\begin{equation}
\left(  \sum\limits_{\substack{u\in\widehat{P};\\u\lessdot y}}f^{\left(
j\right)  }\left(  u\right)  \right)  \cdot\overline{f^{\left(  j\right)
}\left(  y\right)  }\cdot\overline{\sum\limits_{\substack{u\in\widehat{P}%
;\\u\gtrdot y}}\overline{f^{\left(  j\right)  }\left(  u\right)  }}
\label{pf.lem.ato1.c1.wd-expr-f}%
\end{equation}
is well-defined (because if this expression was not well-defined, then the
definition of the $y$-toggle $T_{y}$ (Definition \ref{def.Tv}) would dictate
that $T_{y}f^{\left(  j\right)  }=\undf$; but this would contradict
$T_{y}f^{\left(  j\right)  }\neq\undf$). As a consequence, the expressions
$\overline{f^{\left(  j\right)  }\left(  y\right)  }$ and $\overline
{\sum\limits_{\substack{u\in\widehat{P};\\u\gtrdot y}}\overline{f^{\left(
j\right)  }\left(  u\right)  }}$ are also well-defined (since they are parts
of the well-defined expression (\ref{pf.lem.ato1.c1.wd-expr-f})). In view of
(\ref{pf.lem.ato1.c1.s3}) and (\ref{pf.lem.ato1.c1.s4}), we can rewrite this
as follows: The expressions $\overline{g^{\left(  j\right)  }\left(  y\right)
}$ and $\overline{\sum\limits_{\substack{u\in\widehat{P};\\u\gtrdot
y}}\overline{g^{\left(  j\right)  }\left(  u\right)  }}$ are well-defined.
Thus, the expression%
\begin{equation}
\left(  \sum\limits_{\substack{u\in\widehat{P};\\u\lessdot y}}g^{\left(
j\right)  }\left(  u\right)  \right)  \cdot\overline{g^{\left(  j\right)
}\left(  y\right)  }\cdot\overline{\sum\limits_{\substack{u\in\widehat{P}%
;\\u\gtrdot y}}\overline{g^{\left(  j\right)  }\left(  u\right)  }}
\label{pf.lem.ato1.c1.wd-expr-g}%
\end{equation}
is well-defined as well (since the expression $\sum\limits_{\substack{u\in
\widehat{P};\\u\lessdot y}}g^{\left(  j\right)  }\left(  u\right)  $ is
clearly well-defined\footnote{because $g^{\left(  j\right)  }\neq\undf$}).
Consequently, the definition of the $y$-toggle $T_{y}$ (Definition
\ref{def.Tv}) yields $T_{y}g^{\left(  j\right)  }\neq\undf$. In other words,
$g^{\left(  j+1\right)  }\neq\undf$ (since $g^{\left(  j+1\right)  }%
=T_{y}g^{\left(  j\right)  }$).

Next, it is easy to see that $g^{\left(  j+1\right)  }\left(  1\right)
=f^{\left(  j+1\right)  }\left(  1\right)  $\ \ \ \ \footnote{\textit{Proof.}
Recall that $g^{\left(  j\right)  }\left(  1\right)  =f^{\left(  j\right)
}\left(  1\right)  $. However, $1\neq y$ (since $1\notin P$ but $y\in P$).
Thus, Proposition \ref{prop.Tv} \textbf{(a)} (applied to $y$, $g^{\left(
j\right)  }$ and $1$ instead of $v$, $f$ and $w$) yields $\left(
T_{y}g^{\left(  j\right)  }\right)  \left(  1\right)  =g^{\left(  j\right)
}\left(  1\right)  $ (since $T_{y}g^{\left(  j\right)  }\neq\undf$). In view
of $g^{\left(  j+1\right)  }=T_{y}g^{\left(  j\right)  }$, we can rewrite this
as $g^{\left(  j+1\right)  }\left(  1\right)  =g^{\left(  j\right)  }\left(
1\right)  $. Also, Proposition \ref{prop.Tv} \textbf{(a)} (applied to $y$,
$f^{\left(  j\right)  }$ and $1$ instead of $v$, $f$ and $w$) yields $\left(
T_{y}f^{\left(  j\right)  }\right)  \left(  1\right)  =f^{\left(  j\right)
}\left(  1\right)  $ (since $T_{y}f^{\left(  j\right)  }=f^{\left(
j+1\right)  }\neq\undf$). In view of $f^{\left(  j+1\right)  }=T_{y}f^{\left(
j\right)  }$, we can rewrite this as $f^{\left(  j+1\right)  }\left(
1\right)  =f^{\left(  j\right)  }\left(  1\right)  $. Hence, $f^{\left(
j\right)  }\left(  1\right)  =f^{\left(  j+1\right)  }\left(  1\right)  $.
Combining what we have shown so far, we obtain%
\[
g^{\left(  j+1\right)  }\left(  1\right)  =g^{\left(  j\right)  }\left(
1\right)  =f^{\left(  j\right)  }\left(  1\right)  =f^{\left(  j+1\right)
}\left(  1\right)  .
\]
} and $f^{\left(  j+1\right)  }\left(  0\right)  =a\cdot g^{\left(
j+1\right)  }\left(  0\right)  $\ \ \ \ \footnote{\textit{Proof.} Recall that
$f^{\left(  j\right)  }\left(  0\right)  =a\cdot g^{\left(  j\right)  }\left(
0\right)  $. However, $0\neq y$ (since $0\notin P$ but $y\in P$). Thus,
Proposition \ref{prop.Tv} \textbf{(a)} (applied to $y$, $g^{\left(  j\right)
}$ and $0$ instead of $v$, $f$ and $w$) yields $\left(  T_{y}g^{\left(
j\right)  }\right)  \left(  0\right)  =g^{\left(  j\right)  }\left(  0\right)
$ (since $T_{y}g^{\left(  j\right)  }\neq\undf$). In view of $g^{\left(
j+1\right)  }=T_{y}g^{\left(  j\right)  }$, we can rewrite this as $g^{\left(
j+1\right)  }\left(  0\right)  =g^{\left(  j\right)  }\left(  0\right)  $.
Also, Proposition \ref{prop.Tv} \textbf{(a)} (applied to $y$, $f^{\left(
j\right)  }$ and $0$ instead of $v$, $f$ and $w$) yields $\left(
T_{y}f^{\left(  j\right)  }\right)  \left(  0\right)  =f^{\left(  j\right)
}\left(  0\right)  $ (since $T_{y}f^{\left(  j\right)  }=f^{\left(
j+1\right)  }\neq\undf$). In view of $f^{\left(  j+1\right)  }=T_{y}f^{\left(
j\right)  }$, we can rewrite this as $f^{\left(  j+1\right)  }\left(
0\right)  =f^{\left(  j\right)  }\left(  0\right)  $. Hence,%
\[
f^{\left(  j+1\right)  }\left(  0\right)  =f^{\left(  j\right)  }\left(
0\right)  =a\cdot\underbrace{g^{\left(  j\right)  }\left(  0\right)
}_{\substack{=g^{\left(  j+1\right)  }\left(  0\right)  \\\text{(since
}g^{\left(  j+1\right)  }\left(  0\right)  =g^{\left(  j\right)  }\left(
0\right)  \text{)}}}=a\cdot g^{\left(  j+1\right)  }\left(  0\right)  .
\]
}.

We now prove that each $v\in P$ satisfies (\ref{pf.lem.ato1.c1.IG}).

\begin{proof}
[Proof of (\ref{pf.lem.ato1.c1.IG}).]Let $v\in P$. We must prove
(\ref{pf.lem.ato1.c1.IG}). We are in one of the following two cases:

\textit{Case 1:} We have $v\neq y$.

\textit{Case 2:} We have $v=y$.

Let us first consider Case 1. In this case, we have $v\neq y$. Hence, the
statement \textquotedblleft$\rho\left(  v\right)  \leq m-j$\textquotedblright%
\ is equivalent to \textquotedblleft$\rho\left(  v\right)  \leq m-\left(
j+1\right)  $\textquotedblright\ \ \ \ \footnote{\textit{Proof.} Recall that
$\rho\left(  v\right)  $ is the unique $i\in\left\{  1,2,\ldots,m\right\}  $
that satisfies $v=v_{i}$ (by the definition of $\rho\left(  v\right)  $).
Hence, $v=v_{\rho\left(  v\right)  }$. Therefore, $v_{\rho\left(  v\right)
}=v\neq y=v_{m-j}$, so that $\rho\left(  v\right)  \neq m-j$ (because if we
had $\rho\left(  v\right)  =m-j$, then we would have $v_{\rho\left(  v\right)
}=v_{m-j}$, which would contradict $v_{\rho\left(  v\right)  }\neq v_{m-j}$).
In other words, we don't have $\rho\left(  v\right)  =m-j$.
\par
Now, we have the following chain of equivalences:%
\begin{align*}
\left(  \rho\left(  v\right)  \leq m-j\right)  \  &  \Longleftrightarrow
\ \left(  \rho\left(  v\right)  <m-j\text{ or }\rho\left(  v\right)
=m-j\right) \\
&  \Longleftrightarrow\ \left(  \rho\left(  v\right)  <m-j\right)
\ \ \ \ \ \ \ \ \ \ \left(  \text{since we don't have }\rho\left(  v\right)
=m-j\right) \\
&  \Longleftrightarrow\ \left(  \rho\left(  v\right)  \leq\left(  m-j\right)
-1\right)  \ \ \ \ \ \ \ \ \ \ \left(  \text{since }\rho\left(  v\right)
\text{ and }m-j\text{ are integers}\right) \\
&  \Longleftrightarrow\ \left(  \rho\left(  v\right)  \leq m-\left(
j+1\right)  \right)  \ \ \ \ \ \ \ \ \ \ \left(  \text{since }\left(
m-j\right)  -1=m-\left(  j+1\right)  \right)  .
\end{align*}
In other words, the statement \textquotedblleft$\rho\left(  v\right)  \leq
m-j$\textquotedblright\ is equivalent to \textquotedblleft$\rho\left(
v\right)  \leq m-\left(  j+1\right)  $\textquotedblright.}.

However, Proposition \ref{prop.Tv} \textbf{(a)} (applied to $y$, $g^{\left(
j\right)  }$ and $v$ instead of $v$, $f$ and $w$) yields $\left(
T_{y}g^{\left(  j\right)  }\right)  \left(  v\right)  =g^{\left(  j\right)
}\left(  v\right)  $ (since $v\neq y$ and $T_{y}g^{\left(  j\right)  }%
\neq\undf$). In view of $g^{\left(  j+1\right)  }=T_{y}g^{\left(  j\right)  }%
$, we can rewrite this as $g^{\left(  j+1\right)  }\left(  v\right)
=g^{\left(  j\right)  }\left(  v\right)  $. In other words, $g^{\left(
j\right)  }\left(  v\right)  =g^{\left(  j+1\right)  }\left(  v\right)  $.
Also, Proposition \ref{prop.Tv} \textbf{(a)} (applied to $y$, $f^{\left(
j\right)  }$ and $v$ instead of $v$, $f$ and $w$) yields $\left(
T_{y}f^{\left(  j\right)  }\right)  \left(  v\right)  =f^{\left(  j\right)
}\left(  v\right)  $ (since $v\neq y$ and $T_{y}f^{\left(  j\right)  }%
\neq\undf$). In view of $f^{\left(  j+1\right)  }=T_{y}f^{\left(  j\right)  }%
$, we can rewrite this as $f^{\left(  j+1\right)  }\left(  v\right)
=f^{\left(  j\right)  }\left(  v\right)  $. Hence,%
\begin{align*}
f^{\left(  j+1\right)  }\left(  v\right)   &  =f^{\left(  j\right)  }\left(
v\right)  =%
\begin{cases}
g^{\left(  j\right)  }\left(  v\right)  , & \text{if }v\notin M\text{ or }%
\rho\left(  v\right)  \leq m-j;\\
a\cdot g^{\left(  j\right)  }\left(  v\right)  , & \text{otherwise}%
\end{cases}
\ \ \ \ \ \ \ \ \ \ \left(  \text{by (\ref{pf.lem.ato1.c1.IH})}\right) \\
&  =%
\begin{cases}
g^{\left(  j+1\right)  }\left(  v\right)  , & \text{if }v\notin M\text{ or
}\rho\left(  v\right)  \leq m-j;\\
a\cdot g^{\left(  j+1\right)  }\left(  v\right)  , & \text{otherwise}%
\end{cases}
\ \ \ \ \ \ \ \ \ \ \left(  \text{since }g^{\left(  j\right)  }\left(
v\right)  =g^{\left(  j+1\right)  }\left(  v\right)  \right) \\
&  =%
\begin{cases}
g^{\left(  j+1\right)  }\left(  v\right)  , & \text{if }v\notin M\text{ or
}\rho\left(  v\right)  \leq m-\left(  j+1\right)  ;\\
a\cdot g^{\left(  j+1\right)  }\left(  v\right)  , & \text{otherwise}%
\end{cases}
\end{align*}
(since the statement \textquotedblleft$\rho\left(  v\right)  \leq
m-j$\textquotedblright\ is equivalent to \textquotedblleft$\rho\left(
v\right)  \leq m-\left(  j+1\right)  $\textquotedblright). In other words,
(\ref{pf.lem.ato1.c1.IG}) holds. Thus, (\ref{pf.lem.ato1.c1.IG}) is proved in
Case 1.

Let us now consider Case 2. In this case, we have $v=y$. Hence, $\rho\left(
v\right)  =\rho\left(  y\right)  =m-j>m-j-1=m-\left(  j+1\right)  $. Thus, we
do not have $\rho\left(  v\right)  \leq m-\left(  j+1\right)  $. Hence, the
statement \textquotedblleft$v\notin M$ or $\rho\left(  v\right)  \leq
m-\left(  j+1\right)  $\textquotedblright\ is equivalent to \textquotedblleft%
$v\notin M$\textquotedblright.

Recall that $T_{y}f^{\left(  j\right)  }\neq\undf$ and $T_{y}g^{\left(
j\right)  }\neq\undf$. Thus, Proposition \ref{prop.Tv} \textbf{(b)} (applied
to $f^{\left(  j\right)  }$ and $y$ instead of $f$ and $v$) yields%
\[
\left(  T_{y}f^{\left(  j\right)  }\right)  \left(  y\right)  =\left(
\sum\limits_{\substack{u\in\widehat{P};\\u\lessdot y}}f^{\left(  j\right)
}\left(  u\right)  \right)  \cdot\overline{f^{\left(  j\right)  }\left(
y\right)  }\cdot\overline{\sum\limits_{\substack{u\in\widehat{P};\\u\gtrdot
y}}\overline{f^{\left(  j\right)  }\left(  u\right)  }}.
\]
In view of $f^{\left(  j+1\right)  }=T_{y}f^{\left(  j\right)  }$, we can
rewrite this as%
\begin{equation}
f^{\left(  j+1\right)  }\left(  y\right)  =\left(  \sum\limits_{\substack{u\in
\widehat{P};\\u\lessdot y}}f^{\left(  j\right)  }\left(  u\right)  \right)
\cdot\overline{f^{\left(  j\right)  }\left(  y\right)  }\cdot\overline
{\sum\limits_{\substack{u\in\widehat{P};\\u\gtrdot y}}\overline{f^{\left(
j\right)  }\left(  u\right)  }}. \label{pf.lem.ato1.c1.IG.pf.3f}%
\end{equation}

Also, recall that $T_{y}g^{\left(  j\right)  }\neq\undf$. Hence, Proposition
\ref{prop.Tv} \textbf{(b)} (applied to $g^{\left(  j\right)  }$ and $y$
instead of $f$ and $v$) yields%
\begin{align*}
\left(  T_{y}g^{\left(  j\right)  }\right)  \left(  y\right)   &  =\left(
\sum\limits_{\substack{u\in\widehat{P};\\u\lessdot y}}g^{\left(  j\right)
}\left(  u\right)  \right)  \cdot\underbrace{\overline{g^{\left(  j\right)
}\left(  y\right)  }}_{\substack{=\overline{f^{\left(  j\right)  }\left(
y\right)  }\\\text{(by (\ref{pf.lem.ato1.c1.s4}))}}}\cdot\overline
{\sum\limits_{\substack{u\in\widehat{P};\\u\gtrdot y}}\underbrace{\overline
{g^{\left(  j\right)  }\left(  u\right)  }}_{\substack{=\overline{f^{\left(
j\right)  }\left(  u\right)  }\\\text{(by (\ref{pf.lem.ato1.c1.s3}))}}}}\\
&  =\left(  \sum\limits_{\substack{u\in\widehat{P};\\u\lessdot y}}g^{\left(
j\right)  }\left(  u\right)  \right)  \cdot\overline{f^{\left(  j\right)
}\left(  y\right)  }\cdot\overline{\sum\limits_{\substack{u\in\widehat{P}%
;\\u\gtrdot y}}\overline{f^{\left(  j\right)  }\left(  u\right)  }}.
\end{align*}
In view of $g^{\left(  j+1\right)  }=T_{y}g^{\left(  j\right)  }$, we can
rewrite this as
\begin{equation}
g^{\left(  j+1\right)  }\left(  y\right)  =\left(  \sum\limits_{\substack{u\in
\widehat{P};\\u\lessdot y}}g^{\left(  j\right)  }\left(  u\right)  \right)
\cdot\overline{f^{\left(  j\right)  }\left(  y\right)  }\cdot\overline
{\sum\limits_{\substack{u\in\widehat{P};\\u\gtrdot y}}\overline{f^{\left(
j\right)  }\left(  u\right)  }}. \label{pf.lem.ato1.c1.IG.pf.3g}%
\end{equation}

Now, we are in one of the following two subcases:

\textit{Subcase 2.1:} We have $v\in M$.

\textit{Subcase 2.2:} We have $v\notin M$.

Let us first consider Subcase 2.1. In this subcase, we have $v\in M$. In other
words, $y\in M$ (since $v=y$). In other words, $y$ is a minimal element of $P$
(since $M$ is the set of all minimal elements of $P$). Hence, the only
$u\in\widehat{P}$ that satisfies $u\lessdot y$ is the element $0$ of
$\widehat{P}$. Thus,
\[
\sum\limits_{\substack{u\in\widehat{P};\\u\lessdot y}}g^{\left(  j\right)
}\left(  u\right)  =g^{\left(  j\right)  }\left(  0\right)
\ \ \ \ \ \ \ \ \ \ \text{and}\ \ \ \ \ \ \ \ \ \ \sum\limits_{\substack{u\in
\widehat{P};\\u\lessdot y}}f^{\left(  j\right)  }\left(  u\right)  =f^{\left(
j\right)  }\left(  0\right)  .
\]
Now, from $v=y$, we obtain%
\begin{align}
f^{\left(  j+1\right)  }\left(  v\right)   &  =f^{\left(  j+1\right)  }\left(
y\right) \nonumber\\
&  =\underbrace{\left(  \sum\limits_{\substack{u\in\widehat{P};\\u\lessdot
y}}f^{\left(  j\right)  }\left(  u\right)  \right)  }_{\substack{=f^{\left(
j\right)  }\left(  0\right)  \\=a\cdot g^{\left(  j\right)  }\left(  0\right)
}}\cdot\overline{f^{\left(  j\right)  }\left(  y\right)  }\cdot\overline
{\sum\limits_{\substack{u\in\widehat{P};\\u\gtrdot y}}\overline{f^{\left(
j\right)  }\left(  u\right)  }}\ \ \ \ \ \ \ \ \ \ \left(  \text{by
(\ref{pf.lem.ato1.c1.IG.pf.3f})}\right) \nonumber\\
&  =a\cdot g^{\left(  j\right)  }\left(  0\right)  \cdot\overline{f^{\left(
j\right)  }\left(  y\right)  }\cdot\overline{\sum\limits_{\substack{u\in
\widehat{P};\\u\gtrdot y}}\overline{f^{\left(  j\right)  }\left(  u\right)  }%
}. \label{pf.lem.ato1.c1.IG.pf.5}%
\end{align}

On the other hand, we don't have $v\notin M$ (since $v\in M$). Now, recall
that the statement \textquotedblleft$v\notin M$ or $\rho\left(  v\right)  \leq
m-\left(  j+1\right)  $\textquotedblright\ is equivalent to \textquotedblleft%
$v\notin M$\textquotedblright. Hence, we don't have \textquotedblleft$v\notin
M$ or $\rho\left(  v\right)  \leq m-\left(  j+1\right)  $\textquotedblright%
\ (since we don't have $v\notin M$). Therefore,%
\begin{align*}
&
\begin{cases}
g^{\left(  j+1\right)  }\left(  v\right)  , & \text{if }v\notin M\text{ or
}\rho\left(  v\right)  \leq m-\left(  j+1\right)  ;\\
a\cdot g^{\left(  j+1\right)  }\left(  v\right)  , & \text{otherwise}%
\end{cases}
\\
&  =a\cdot g^{\left(  j+1\right)  }\left(  v\right)  =a\cdot g^{\left(
j+1\right)  }\left(  y\right)  \ \ \ \ \ \ \ \ \ \ \left(  \text{since
}v=y\right) \\
&  =a\cdot\underbrace{\left(  \sum\limits_{\substack{u\in\widehat{P}%
;\\u\lessdot y}}g^{\left(  j\right)  }\left(  u\right)  \right)
}_{=g^{\left(  j\right)  }\left(  0\right)  }\cdot\overline{f^{\left(
j\right)  }\left(  y\right)  }\cdot\overline{\sum\limits_{\substack{u\in
\widehat{P};\\u\gtrdot y}}\overline{f^{\left(  j\right)  }\left(  u\right)  }%
}\ \ \ \ \ \ \ \ \ \ \left(  \text{by (\ref{pf.lem.ato1.c1.IG.pf.3g})}\right)
\\
&  =a\cdot g^{\left(  j\right)  }\left(  0\right)  \cdot\overline{f^{\left(
j\right)  }\left(  y\right)  }\cdot\overline{\sum\limits_{\substack{u\in
\widehat{P};\\u\gtrdot y}}\overline{f^{\left(  j\right)  }\left(  u\right)  }%
}.
\end{align*}
Comparing this with (\ref{pf.lem.ato1.c1.IG.pf.5}), we obtain%
\[
f^{\left(  j+1\right)  }\left(  v\right)  =%
\begin{cases}
g^{\left(  j+1\right)  }\left(  v\right)  , & \text{if }v\notin M\text{ or
}\rho\left(  v\right)  \leq m-\left(  j+1\right)  ;\\
a\cdot g^{\left(  j+1\right)  }\left(  v\right)  , & \text{otherwise.}%
\end{cases}
\]
In other words, (\ref{pf.lem.ato1.c1.IG}) holds. Thus, we have proved
(\ref{pf.lem.ato1.c1.IG}) in Subcase 2.1.

Now, let us consider Subcase 2.2. In this subcase, we have $v\notin M$. In
other words, $v$ is not a minimal element of $P$ (since $M$ is the set of all
minimal elements of $P$). Thus, we don't have $0\lessdot v$ in $\widehat{P}$.
In other words, we don't have $0\lessdot y$ in $\widehat{P}$ (since $v=y$).

For each $u\in\widehat{P}$ satisfying $u\lessdot y$, we have%
\begin{equation}
g^{\left(  j\right)  }\left(  u\right)  =f^{\left(  j\right)  }\left(
u\right)  \label{pf.lem.ato1.c1.s5c22}%
\end{equation}
\footnote{\textit{Proof of (\ref{pf.lem.ato1.c1.s5c22}):} Let $u\in
\widehat{P}$ be such that $u\lessdot y$. Then, $u\lessdot y$, so that $u<y$.
Thus, $u\neq1$ (because if we had $u=1$, then we would have $1=u<y$, which
would contradict the fact that $1$ is not smaller than any element of
$\widehat{P}$). Moreover, $u\neq0$ (because if we had $u=0$, then we would
have $0=u\lessdot y$, which would contradict the fact that we don't have
$0\lessdot y$ in $\widehat{P}$). Combining $u\in\widehat{P}$ with $u\neq0$ and
$u\neq1$, we obtain $u\in\widehat{P}\setminus\left\{  0,1\right\}  =P$. Hence,
$\rho\left(  u\right)  $ is well-defined.
\par
The definition of $\rho\left(  u\right)  $ shows that $\rho\left(  u\right)  $
is the unique $i\in\left\{  1,2,\ldots,m\right\}  $ that satisfies $u=v_{i}$.
Hence, $u=v_{\rho\left(  u\right)  }$.
\par
Recall that $\left(  v_{1},v_{2},\ldots,v_{m}\right)  $ is a linear extension
of $P$. Thus, any $k\in\left\{  1,2,\ldots,m\right\}  $ and $\ell\in\left\{
1,2,\ldots,m\right\}  $ satisfying $v_{k}<v_{\ell}$ must satisfy $k<\ell$ (by
the definition of a linear extension). We can apply this to $k=\rho\left(
u\right)  $ and $\ell=m-j$ (since $v_{\rho\left(  u\right)  }=u<y=v_{m-j}$),
and thus obtain $\rho\left(  u\right)  <m-j$. Hence, $\rho\left(  u\right)
\leq m-j$. Therefore, $u\notin M$ or $\rho\left(  u\right)  \leq m-j$. Now,
(\ref{pf.lem.ato1.c1.IH}) (applied to $u$ instead of $v$) yields
\begin{align*}
f^{\left(  j\right)  }\left(  u\right)   &  =%
\begin{cases}
g^{\left(  j\right)  }\left(  u\right)  , & \text{if }u\notin M\text{ or }%
\rho\left(  u\right)  \leq m-j;\\
a\cdot g^{\left(  j\right)  }\left(  u\right)  , & \text{otherwise}%
\end{cases}
\\
&  =g^{\left(  j\right)  }\left(  u\right)  \ \ \ \ \ \ \ \ \ \ \left(
\text{since }u\notin M\text{ or }\rho\left(  u\right)  \leq m-j\right)  .
\end{align*}
In other words, $g^{\left(  j\right)  }\left(  u\right)  =f^{\left(  j\right)
}\left(  u\right)  $. This proves (\ref{pf.lem.ato1.c1.s5c22}).}. Now, from
$v=y$, we obtain%
\begin{align}
f^{\left(  j+1\right)  }\left(  v\right)   &  =f^{\left(  j+1\right)  }\left(
y\right) \nonumber\\
&  =\left(  \sum\limits_{\substack{u\in\widehat{P};\\u\lessdot y}}f^{\left(
j\right)  }\left(  u\right)  \right)  \cdot\overline{f^{\left(  j\right)
}\left(  y\right)  }\cdot\overline{\sum\limits_{\substack{u\in\widehat{P}%
;\\u\gtrdot y}}\overline{f^{\left(  j\right)  }\left(  u\right)  }%
}\ \ \ \ \ \ \ \ \ \ \left(  \text{by (\ref{pf.lem.ato1.c1.IG.pf.3f})}\right)
.\nonumber
\end{align}
Comparing this with%
\begin{align*}
&
\begin{cases}
g^{\left(  j+1\right)  }\left(  v\right)  , & \text{if }v\notin M\text{ or
}\rho\left(  v\right)  \leq m-\left(  j+1\right)  ;\\
a\cdot g^{\left(  j+1\right)  }\left(  v\right)  , & \text{otherwise}%
\end{cases}
\\
&  =g^{\left(  j+1\right)  }\left(  v\right)  \ \ \ \ \ \ \ \ \ \ \left(
\text{since }v\notin M\text{ or }\rho\left(  v\right)  \leq m-\left(
j+1\right)  \text{ (because }v\notin M\text{)}\right) \\
&  =g^{\left(  j+1\right)  }\left(  y\right)  \ \ \ \ \ \ \ \ \ \ \left(
\text{since }v=y\right) \\
&  =\left(  \sum\limits_{\substack{u\in\widehat{P};\\u\lessdot y}%
}\underbrace{g^{\left(  j\right)  }\left(  u\right)  }_{\substack{=f^{\left(
j\right)  }\left(  u\right)  \\\text{(by (\ref{pf.lem.ato1.c1.s5c22}))}%
}}\right)  \cdot\overline{f^{\left(  j\right)  }\left(  y\right)  }%
\cdot\overline{\sum\limits_{\substack{u\in\widehat{P};\\u\gtrdot y}%
}\overline{f^{\left(  j\right)  }\left(  u\right)  }}%
\ \ \ \ \ \ \ \ \ \ \left(  \text{by (\ref{pf.lem.ato1.c1.IG.pf.3g})}\right)
\\
&  =\left(  \sum\limits_{\substack{u\in\widehat{P};\\u\lessdot y}}f^{\left(
j\right)  }\left(  u\right)  \right)  \cdot\overline{f^{\left(  j\right)
}\left(  y\right)  }\cdot\overline{\sum\limits_{\substack{u\in\widehat{P}%
;\\u\gtrdot y}}\overline{f^{\left(  j\right)  }\left(  u\right)  }},
\end{align*}
we obtain%
\[
f^{\left(  j+1\right)  }\left(  v\right)  =%
\begin{cases}
g^{\left(  j+1\right)  }\left(  v\right)  , & \text{if }v\notin M\text{ or
}\rho\left(  v\right)  \leq m-\left(  j+1\right)  ;\\
a\cdot g^{\left(  j+1\right)  }\left(  v\right)  , & \text{otherwise.}%
\end{cases}
\]
Thus, we have proved (\ref{pf.lem.ato1.c1.IG}) in Subcase 2.2.

We have now proved (\ref{pf.lem.ato1.c1.IG}) in both Subcases 2.1 and 2.2.
Since these two Subcases cover all of Case 2, we thus have proved
(\ref{pf.lem.ato1.c1.IG}) in Case 2.

We have now proved (\ref{pf.lem.ato1.c1.IG}) in both Cases 1 and 2. Therefore,
(\ref{pf.lem.ato1.c1.IG}) always holds. This completes the proof of
(\ref{pf.lem.ato1.c1.IG}).
\end{proof}

Altogether, we have now proved that $f^{\left(  j+1\right)  }\neq\undf$ and
$g^{\left(  j+1\right)  }\neq\undf$ and $g^{\left(  j+1\right)  }\left(
1\right)  =f^{\left(  j+1\right)  }\left(  1\right)  $ and $f^{\left(
j+1\right)  }\left(  0\right)  =a\cdot g^{\left(  j+1\right)  }\left(
0\right)  $ and that each $v\in P$ satisfies (\ref{pf.lem.ato1.c1.IG}). In
other words, Claim 1 holds for $i=j+1$. This completes the induction step.
Thus, Claim 1 is proven.
\end{proof}

In order to finish our proof of Lemma \ref{lem.ato1}, we now apply Claim 1 to
$i=m$:

Claim 1 (applied to $i=m$) shows that $f^{\left(  m\right)  }\neq\undf$ and
$g^{\left(  m\right)  }\neq\undf$ and $g^{\left(  m\right)  }\left(  1\right)
=f^{\left(  m\right)  }\left(  1\right)  $ and $f^{\left(  m\right)  }\left(
0\right)  =a\cdot g^{\left(  m\right)  }\left(  0\right)  $, and that each
$v\in P$ satisfies%
\begin{equation}
f^{\left(  m\right)  }\left(  v\right)  =%
\begin{cases}
g^{\left(  m\right)  }\left(  v\right)  , & \text{if }v\notin M\text{ or }%
\rho\left(  v\right)  \leq m-m;\\
a\cdot g^{\left(  m\right)  }\left(  v\right)  , & \text{otherwise.}%
\end{cases}
\label{pf.lem.ato1.fmv=}%
\end{equation}
The definition of $g^{\left(  m\right)  }$ yields $g^{\left(  m\right)
}=\underbrace{R_{m-m}}_{=R_{0}=R}g=Rg$. The definition of $f^{\left(
m\right)  }$ yields $f^{\left(  m\right)  }=\underbrace{R_{m-m}}_{=R_{0}%
=R}f=Rf$.

Now, the three parts of Lemma \ref{lem.ato1} easily follow: \medskip

\textbf{(a)} From $g^{\left(  m\right)  }=Rg$, we obtain $Rg=g^{\left(
m\right)  }\neq\undf$. This proves Lemma \ref{lem.ato1} \textbf{(a)}. \medskip

\textbf{(b)} Let $v\in P$ be not a minimal element of $P$. Thus, $v\notin M$
(since $M$ is the set of all minimal elements of $P$). Moreover, from
$f^{\left(  m\right)  }=Rf$, we obtain $Rf=f^{\left(  m\right)  }$, and thus%
\begin{align*}
\left(  Rf\right)  \left(  v\right)   &  =f^{\left(  m\right)  }\left(
v\right)  =%
\begin{cases}
g^{\left(  m\right)  }\left(  v\right)  , & \text{if }v\notin M\text{ or }%
\rho\left(  v\right)  \leq m-m;\\
a\cdot g^{\left(  m\right)  }\left(  v\right)  , & \text{otherwise}%
\end{cases}
\ \ \ \ \ \ \ \ \ \ \left(  \text{by (\ref{pf.lem.ato1.fmv=})}\right) \\
&  =\underbrace{g^{\left(  m\right)  }}_{=Rg}\left(  v\right)
\ \ \ \ \ \ \ \ \ \ \left(  \text{since }v\notin M\text{ or }\rho\left(
v\right)  \leq m-m\text{ (because }v\notin M\text{)}\right) \\
&  =\left(  Rg\right)  \left(  v\right)  .
\end{align*}
This proves Lemma \ref{lem.ato1} \textbf{(b)}. \medskip

\textbf{(c)} Let $v\in P$ be a minimal element of $P$. Thus, $v\in M$ (since
$M$ is the set of all minimal elements of $P$). Hence, we don't have $v\notin
M$.

Moreover, $\rho\left(  v\right)  \in\left\{  1,2,\ldots,m\right\}  $ (by the
definition of $\rho\left(  v\right)  $) and therefore $\rho\left(  v\right)
\geq1>0=m-m$. Thus, we don't have $\rho\left(  v\right)  \leq m-m$.

From $f^{\left(  m\right)  }=Rf$, we obtain $Rf=f^{\left(  m\right)  }$, and
thus%
\begin{align*}
\left(  Rf\right)  \left(  v\right)   &  =f^{\left(  m\right)  }\left(
v\right)  =%
\begin{cases}
g^{\left(  m\right)  }\left(  v\right)  , & \text{if }v\notin M\text{ or }%
\rho\left(  v\right)  \leq m-m;\\
a\cdot g^{\left(  m\right)  }\left(  v\right)  , & \text{otherwise}%
\end{cases}
\ \ \ \ \ \ \ \ \ \ \left(  \text{by (\ref{pf.lem.ato1.fmv=})}\right) \\
&  =a\cdot\underbrace{g^{\left(  m\right)  }}_{=Rg}\left(  v\right)
\ \ \ \ \ \ \ \ \ \ \left(
\begin{array}
[c]{c}%
\text{since we don't have \textquotedblleft}v\notin M\text{ or }\rho\left(
v\right)  \leq m-m\text{\textquotedblright}\\
\text{(because we don't have }v\notin M\text{,}\\
\text{and we don't have }\rho\left(  v\right)  \leq m-m\text{)}%
\end{array}
\right) \\
&  =a\cdot\left(  Rg\right)  \left(  v\right)  .
\end{align*}
This proves Lemma \ref{lem.ato1} \textbf{(c)}.
\end{proof}
\end{verlong}

Let us now get rid of the \textquotedblleft$a$ is invertible\textquotedblright%
\ requirement in Lemma \ref{lem.rect.antip.11inv}:

\begin{lemma}
\label{lem.rect.antip.11full}Assume that $P$ is the $p\times q$-rectangle
$\left[  p\right]  \times\left[  q\right]  $. Let $\ell\in\mathbb{N}$ be such
that $\ell\geq1$. Let $f\in\mathbb{K}^{\widehat{P}}$ be a $\mathbb{K}%
$-labeling such that $R^{\ell}f\neq\undf$. Let $a=f\left(  0\right)  $ and
$b=f\left(  1\right)  $. Then,%
\[
\left(  R^{\ell}f\right)  \left(  1,1\right)  =a\cdot\overline{\left(
R^{\ell-1}f\right)  \left(  p,q\right)  }\cdot b.
\]

\end{lemma}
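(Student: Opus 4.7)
The plan is to split into two cases according to whether $\ell \geq 2$ or $\ell = 1$, and in each case reduce the claim to Lemma~\ref{lem.rect.antip.11inv}. The only reason we need to work is that Lemma~\ref{lem.rect.antip.11inv} requires $a$ to be invertible, which the hypothesis of Lemma~\ref{lem.rect.antip.11full} does not furnish.

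In the first case, $\ell \geq 2$, I would apply Lemma~\ref{lem.R.wd-triv} with $k = 2$ to deduce $R^{2}f \neq \undf$ from $R^{\ell}f \neq \undf$. Since $P = [p]\times[q]$ is nonempty (as $p,q \geq 1$), Lemma~\ref{lem.R.01inv} applies and shows that $a = f(0)$ is invertible. Lemma~\ref{lem.rect.antip.11inv} then applies directly and yields the claim.

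The harder case is $\ell = 1$, where we only know $Rf \neq \undf$, which is not enough to conclude that $a$ is invertible. To handle it, I would use the ``normalization trick'' captured by Lemma~\ref{lem.ato1}: define $g \in \mathbb{K}^{\widehat{P}}$ by $g(0) = 1$ and $g(x) = f(x)$ for all $x \in \widehat{P} \setminus \{0\}$. Then $g(0) = 1$ is trivially invertible, and Lemma~\ref{lem.ato1}\textbf{(a)} gives $Rg \neq \undf$. Hence Lemma~\ref{lem.rect.antip.11inv} applies to $g$ (with the same $\ell = 1$), giving
\[
(Rg)(1,1) \;=\; g(0) \cdot \overline{(R^{0}g)(p,q)} \cdot g(1) \;=\; \overline{f(p,q)} \cdot b,
\]
since $g(p,q) = f(p,q)$ and $g(1) = f(1) = b$. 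Finally, $(1,1)$ is the unique minimal element of the rectangle $P = [p] \times [q]$, so Lemma~\ref{lem.ato1}\textbf{(c)} transfers back: $(Rf)(1,1) = a \cdot (Rg)(1,1) = a \cdot \overline{f(p,q)} \cdot b = a \cdot \overline{(R^{\ell-1}f)(p,q)} \cdot b$, which is the desired identity. The case $p = q = 1$ presents no extra difficulty, since then $(1,1) = (p,q)$ is still the unique minimal element of $P$ and the same argument works verbatim.

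Overall, Lemma~\ref{lem.rect.antip.11inv} does all the substantive work; the only obstacle is ensuring that its invertibility hypothesis is met, and the two cases above cover every $\ell \geq 1$. For $\ell \geq 2$ invertibility comes for free from Lemma~\ref{lem.R.01inv}; for $\ell = 1$ we detour through the normalized labeling $g$ and use Lemma~\ref{lem.ato1} to convert the result back to one about $f$.
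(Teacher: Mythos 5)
Your proposal is correct and follows essentially the same route as the paper's own proof: reduce to Lemma~\ref{lem.rect.antip.11inv} via Lemma~\ref{lem.R.wd-triv} and Lemma~\ref{lem.R.01inv} when $\ell\geq 2$, and handle $\ell=1$ by the normalization trick of Lemma~\ref{lem.ato1}. The only (immaterial) difference is that the paper cases on whether $R^{2}f\neq\undf$ rather than directly on the value of $\ell$.
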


\begin{vershort}

\begin{proof}
If $R^{2}f\neq\undf$, then Lemma \ref{lem.R.01inv} yields that $a$ and $b$ are
invertible (since $a=f\left(  0\right)  $ and $b=f\left(  1\right)  $), and
therefore our claim follows directly from Lemma \ref{lem.rect.antip.11inv}.
For this reason, we WLOG assume that $R^{2}f=\undf$. If we had $\ell\geq2$,
then we would thus conclude that $R^{\ell}f=\undf$ as well, which would
contradict $R^{\ell}f\neq\undf$. Hence, we must have $\ell<2$, so that
$\ell=1$. Therefore, $R^{\ell-1}=R^{1-1}=R^{0}=\operatorname*{id}$ and
consequently $\left(  R^{\ell-1}f\right)  \left(  p,q\right)  =f\left(
p,q\right)  $. Also, $R^{\ell}=R$ (since $\ell=1$). Hence, $R=R^{\ell}$, so
that $Rf=R^{\ell}f\neq\undf$.

Now, let $g\in\mathbb{K}^{\widehat{P}}$ be the $\mathbb{K}$-labeling that is
obtained from $f$ by replacing the label $f\left(  0\right)  $ by $1$. Thus,
we have
\begin{equation}
g\left(  x\right)  =f\left(  x\right)  \ \ \ \ \ \ \ \ \ \ \text{for each
}x\in\widehat{P}\setminus\left\{  0\right\}  ,
\label{pf.lem.rect.antip.11full.short.4}%
\end{equation}
and we have $g\left(  0\right)  =1$. Then, Lemma \ref{lem.ato1} \textbf{(a)}
yields $Rg\neq\undf$. In other words, $R^{1}g\neq\undf$.

We have $\left(  p,q\right)  \in P\subseteq\widehat{P}\setminus\left\{
0\right\}  $. Hence, (\ref{pf.lem.rect.antip.11full.short.4}) yields $g\left(
p,q\right)  =f\left(  p,q\right)  $.

We also have $1\in\widehat{P}\setminus\left\{  0\right\}  $. Thus,
(\ref{pf.lem.rect.antip.11full.short.4}) yields $g\left(  1\right)  =f\left(
1\right)  =b$, so that $b=g\left(  1\right)  $. Also, $1=g\left(  0\right)  $,
and clearly $1$ is invertible. Hence, Lemma \ref{lem.rect.antip.11inv}
(applied to $1$, $g$ and $1$ instead of $\ell$, $f$ and $a$) yields
\[
\left(  R^{1}g\right)  \left(  1,1\right)  =1\cdot\overline{\left(
R^{1-1}g\right)  \left(  p,q\right)  }\cdot b=\overline{\left(  R^{1-1}%
g\right)  \left(  p,q\right)  }\cdot b.
\]
In view of $R^{1}=R$ and $R^{1-1}=\operatorname*{id}$, we can rewrite this as
\[
\left(  Rg\right)  \left(  1,1\right)  =\overline{g\left(  p,q\right)  }\cdot
b.
\]

However, $\left(  1,1\right)  $ is a minimal element of $P$. Thus, Lemma
\ref{lem.ato1} \textbf{(c)} (applied to $v=\left(  1,1\right)  $) yields
\[
\left(  Rf\right)  \left(  1,1\right)  =a\cdot\underbrace{\left(  Rg\right)
\left(  1,1\right)  }_{=\overline{g\left(  p,q\right)  }\cdot b}%
=a\cdot\overline{g\left(  p,q\right)  }\cdot b=a\cdot\overline{f\left(
p,q\right)  }\cdot b\ \ \ \ \ \ \ \ \ \ \left(  \text{since }g\left(
p,q\right)  =f\left(  p,q\right)  \right)  .
\]
In view of $R^{\ell}=R$ and $\left(  R^{\ell-1}f\right)  \left(  p,q\right)
=f\left(  p,q\right)  $, we can rewrite this as%
\[
\left(  R^{\ell}f\right)  \left(  1,1\right)  =a\cdot\overline{\left(
R^{\ell-1}f\right)  \left(  p,q\right)  }\cdot b.
\]
Thus, Lemma \ref{lem.rect.antip.11full} is proven.
\end{proof}
\end{vershort}

\begin{verlong}

\begin{proof}
If $\ell\geq2$, then we can easily see that $a$ is
invertible\footnote{\textit{Proof.} Assume that $\ell\geq2$. Thus, $2\leq\ell
$. Hence, from $R^{\ell}f\neq\undf$, we obtain $R^{2}f\neq\undf$ (by Lemma
\ref{lem.R.wd-triv}). Hence, Lemma \ref{lem.R.01inv} yields that $f\left(
0\right)  $ and $f\left(  1\right)  $ are invertible. In other words, $a$ and
$b$ are invertible (since $a=f\left(  0\right)  $ and $b=f\left(  1\right)
$). This proves that $a$ is invertible.}. Hence, if $\ell\geq2$, then Lemma
\ref{lem.rect.antip.11full} follows immediately from Lemma
\ref{lem.rect.antip.11inv}. Thus, for the rest of this proof, we WLOG assume
that we \textbf{don't} have $\ell\geq2$. Hence, $\ell=1$ (since $\ell\geq1$).
Therefore, $R^{\ell-1}=R^{1-1}=R^{0}=\operatorname*{id}$, so that $R^{\ell
-1}f=\operatorname*{id}f=f$ and therefore $\left(  R^{\ell-1}f\right)  \left(
p,q\right)  =f\left(  p,q\right)  $. Also, $R^{\ell}=R$ (since $\ell=1$).
Hence, $R=R^{\ell}$, so that $Rf=R^{\ell}f\neq\undf$.

Now, let $g\in\mathbb{K}^{\widehat{P}}$ be the $\mathbb{K}$-labeling that is
obtained from $f$ by replacing the label $f\left(  0\right)  $ by $1$. Thus,
we have
\begin{equation}
g\left(  x\right)  =f\left(  x\right)  \ \ \ \ \ \ \ \ \ \ \text{for each
}x\in\widehat{P}\setminus\left\{  0\right\}  ,
\label{pf.lem.rect.antip.11full.4}%
\end{equation}
and we have $g\left(  0\right)  =1$. Then, Lemma \ref{lem.ato1} \textbf{(a)}
yields $Rg\neq\undf$. In other words, $R^{1}g\neq\undf$.

Note that $\left(  p,q\right)  \in P\subseteq\widehat{P}\setminus\left\{
0\right\}  $. Hence, (\ref{pf.lem.rect.antip.11full.4}) (applied to $x=\left(
p,q\right)  $) yields $g\left(  p,q\right)  =f\left(  p,q\right)  $.

We have $1\in\widehat{P}\setminus\left\{  0\right\}  $. Thus, applying
(\ref{pf.lem.rect.antip.11full.4}) to $x=1$, we obtain $g\left(  1\right)
=f\left(  1\right)  =b$, so that $b=g\left(  1\right)  $. Also, $1=g\left(
0\right)  $, and clearly $1$ is invertible. Hence, Lemma
\ref{lem.rect.antip.11inv} (applied to $1$, $g$ and $1$ instead of $\ell$, $f$
and $a$) yields
\[
\left(  R^{1}g\right)  \left(  1,1\right)  =1\cdot\overline{\left(
R^{1-1}g\right)  \left(  p,q\right)  }\cdot b=\overline{\left(  R^{1-1}%
g\right)  \left(  p,q\right)  }\cdot b.
\]
In view of $R^{1}=R$ and $\underbrace{R^{1-1}}_{=R^{0}=\operatorname*{id}%
}g=\operatorname*{id}g=g$, we can rewrite this as
\[
\left(  Rg\right)  \left(  1,1\right)  =\overline{g\left(  p,q\right)  }\cdot
b.
\]

However, $\left(  1,1\right)  $ is a minimal element of $P$. Thus, Lemma
\ref{lem.ato1} \textbf{(c)} (applied to $v=\left(  1,1\right)  $) yields
\[
\left(  Rf\right)  \left(  1,1\right)  =a\cdot\underbrace{\left(  Rg\right)
\left(  1,1\right)  }_{=\overline{g\left(  p,q\right)  }\cdot b}%
=a\cdot\overline{g\left(  p,q\right)  }\cdot b=a\cdot\overline{f\left(
p,q\right)  }\cdot b\ \ \ \ \ \ \ \ \ \ \left(  \text{since }g\left(
p,q\right)  =f\left(  p,q\right)  \right)  .
\]
In view of $R^{\ell}=R$ and $\left(  R^{\ell-1}f\right)  \left(  p,q\right)
=f\left(  p,q\right)  $, we can rewrite this as%
\[
\left(  R^{\ell}f\right)  \left(  1,1\right)  =a\cdot\overline{\left(
R^{\ell-1}f\right)  \left(  p,q\right)  }\cdot b.
\]
Thus, Lemma \ref{lem.rect.antip.11full} is proven.
\end{proof}
\end{verlong}

This settles the easiest case of Theorem \ref{thm.rect.antip} -- namely, the
case $\left(  i,j\right)  =\left(  1,1\right)  $. To get a grip on the general
case, we need more lemmas.

\section{\label{sec.conversion}The conversion lemma}

We continue using the notations from Section \ref{sec.proof-nots}.


\begin{lemma}
[Four neighbors lemma]\label{lem.rect.four}Let $u,v,w,d$ be four adjacent
elements of $P$ that are arranged as follows on the Hasse diagram of $P$:
\[%
\xymatrix@R=0.6pc@C=0.6pc{
& u \are[dl] \are[dr] \\
v \are[dr] & & w \are[dl] \\
& d
}%
\]
(i.e., we have $d=\left(  i,j\right)  $, $v=\left(  i+1,j\right)  $,
$w=\left(  i,j+1\right)  $ and $u=\left(  i+1,j+1\right)  $ for some
$i\in\left[  p-1\right]  $ and some $j\in\left[  q-1\right]  $).

Assume that $a$ is invertible. Let $\ell\geq1$ be such that $R^{\ell+1}%
f\neq\undf$. Then:

\begin{enumerate}
\item[\textbf{(a)}] We have%
\[
\overline{v_{\ell}}\cdot\upslack_{\ell}^{d}\cdot d_{\ell}=\overline{u_{\ell}%
}\cdot\downslack_{\ell}^{u}\cdot w_{\ell}.
\]

\item[\textbf{(b)}] We have%
\[
\overline{w_{\ell}}\cdot\upslack_{\ell}^{d}\cdot d_{\ell}=\overline{u_{\ell}%
}\cdot\downslack_{\ell}^{u}\cdot v_{\ell}.
\]

\end{enumerate}


\end{lemma}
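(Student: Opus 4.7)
The plan is to reduce both equalities to a single algebraic identity about inverses of sums, via direct expansion of the definitions of $\upslack_\ell^d$ and $\downslack_\ell^u$, and then to invoke Proposition~\ref{prop.inverses.a+b} to close the gap.

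First, I would identify the covers. Since $d = (i,j)$ with $i \in [p-1]$ and $j \in [q-1]$, the only elements of $\widehat{P}$ covering $d$ are $v = (i+1,j)$ and $w = (i,j+1)$; similarly, the only elements covered by $u = (i+1,j+1)$ in $\widehat{P}$ are $v$ and $w$. By the definitions in Section~\ref{sec.proof-nots}, this yields
\[
\upslack_\ell^d \;=\; \overline{\,\overline{v_\ell} + \overline{w_\ell}\,} \cdot \overline{d_\ell}
\qquad\text{and}\qquad
\downslack_\ell^u \;=\; u_\ell \cdot \overline{v_\ell + w_\ell}.
\]

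Next I would verify that every quantity appearing is well-defined and invertible. Since $\ell \geq 1$ and $R^{\ell+1}f \neq \undf$, Lemma~\ref{lem.R.wd-triv} gives $R^\ell f \neq \undf$. Because $a$ is invertible, Lemma~\ref{lem.slacks.wd}(a) applied at level $\ell$ (using $\ell \geq 1$) shows $v_\ell, w_\ell, d_\ell, u_\ell$ are invertible. Lemma~\ref{lem.slacks.wd}(d) applied at $\ell$ shows $\upslack_\ell^d$ is well-defined and invertible, which in particular forces $\overline{v_\ell} + \overline{w_\ell}$ to be invertible. Lemma~\ref{lem.slacks.wd}(c) applied at level $\ell+1$ (using $\ell+1 \geq 1$ and $R^{\ell+1}f \neq \undf$) gives that $\downslack_\ell^u$ is well-defined and invertible, so in particular $v_\ell + w_\ell$ is invertible.

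With all invertibility in hand, I would apply Proposition~\ref{prop.inverses.a+b}(b) to $(a,b) = (v_\ell, w_\ell)$, obtaining
\[
\overline{\,\overline{v_\ell} + \overline{w_\ell}\,} \;=\; v_\ell \cdot \overline{v_\ell + w_\ell} \cdot w_\ell.
\]
By Proposition~\ref{prop.inverses.a+b}(a), this equals $w_\ell \cdot \overline{v_\ell + w_\ell} \cdot v_\ell$ as well. Now the two claims become direct calculations. For~(a):
\[
\overline{v_\ell}\cdot\upslack_\ell^d\cdot d_\ell
= \overline{v_\ell} \cdot v_\ell \cdot \overline{v_\ell + w_\ell} \cdot w_\ell \cdot \overline{d_\ell}\cdot d_\ell
= \overline{v_\ell + w_\ell}\cdot w_\ell,
\]
while
\[
\overline{u_\ell}\cdot\downslack_\ell^u\cdot w_\ell
= \overline{u_\ell}\cdot u_\ell \cdot \overline{v_\ell + w_\ell}\cdot w_\ell
= \overline{v_\ell + w_\ell}\cdot w_\ell,
\]
so both sides match. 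For~(b) I would use the symmetric form $\overline{\,\overline{v_\ell}+\overline{w_\ell}\,} = w_\ell \cdot \overline{v_\ell + w_\ell} \cdot v_\ell$, and the same collapse $\overline{w_\ell}\cdot w_\ell = 1$ and $\overline{u_\ell}\cdot u_\ell = 1$ gives both sides equal to $\overline{v_\ell + w_\ell}\cdot v_\ell$.

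The only real obstacle is the invertibility bookkeeping; the algebraic core is simply the noncommutative harmonic-sum identity of Proposition~\ref{prop.inverses.a+b}. No new constructions are needed.
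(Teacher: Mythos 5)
Your proposal is correct and follows essentially the same route as the paper's proof: identify the covers to get the explicit formulas for $\upslack_{\ell}^{d}$ and $\downslack_{\ell}^{u}$, secure invertibility via Lemma \ref{lem.slacks.wd}, and apply Proposition \ref{prop.inverses.a+b} \textbf{(b)} to collapse both sides to $\overline{v_{\ell}+w_{\ell}}\cdot w_{\ell}$ (resp.\ $\overline{v_{\ell}+w_{\ell}}\cdot v_{\ell}$). The paper handles part \textbf{(b)} by symmetry in $v$ and $w$, which is exactly what your explicit use of Proposition \ref{prop.inverses.a+b} \textbf{(a)} amounts to.
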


\begin{proof}
\textbf{(a)} We have $R\left(  R^{\ell}f\right)  =R^{\ell+1}f\neq
\undf=R\left(  \undf\right)  $ and thus $R^{\ell}f\neq\undf$. Hence, Lemma
\ref{lem.slacks.wd} \textbf{(a)} yields that $v_{\ell}$ is invertible.
Similarly, $w_{\ell}$ and $u_{\ell}$ and $d_{\ell}$ are invertible. Also,
Lemma \ref{lem.slacks.wd} \textbf{(d)} (applied to $d$ instead of $v$) yields
that the element $\upslack_{\ell}^{d}$ is well-defined and invertible.
Moreover, Lemma \ref{lem.slacks.wd} \textbf{(c)} (applied to $u$ and $\ell+1$
instead of $v$ and $\ell$) yields that the element $\downslack_{\ell}^{u}$ is
well-defined and invertible.

The elements $s\in\widehat{P}$ that satisfy $s\gtrdot d$ are $v$ and $w$.
Hence, $\sum\limits_{s\gtrdot d}\overline{s_{\ell}}=\overline{v_{\ell}%
}+\overline{w_{\ell}}$ (where, of course, the sum ranges over $s\in
\widehat{P}$). Now, the definition of $\upslack_{\ell}^{d}$ yields%
\begin{equation}
\upslack_{\ell}^{d}=\overline{\sum\limits_{s\gtrdot d}\overline{s_{\ell}}%
}\cdot\overline{d_{\ell}}=\overline{\overline{v_{\ell}}+\overline{w_{\ell}}%
}\cdot\overline{d_{\ell}} \label{pf.lem.rect.four.1}%
\end{equation}
(since $\sum\limits_{s\gtrdot d}\overline{s_{\ell}}=\overline{v_{\ell}%
}+\overline{w_{\ell}}$).

The elements $s\in\widehat{P}$ that satisfy $s\lessdot u$ are $v$ and $w$.
Hence, $\sum\limits_{s\lessdot u}s_{\ell}=v_{\ell}+w_{\ell}$. Now, the
definition of $\downslack_{\ell}^{u}$ yields%
\begin{equation}
\downslack_{\ell}^{u}=u_{\ell}\cdot\overline{\sum\limits_{s\lessdot u}s_{\ell
}}=u_{\ell}\cdot\overline{v_{\ell}+w_{\ell}} \label{pf.lem.rect.four.2}%
\end{equation}
(since $\sum\limits_{s\lessdot u}s_{\ell}=v_{\ell}+w_{\ell}$). Since this is
well-defined, the element $v_{\ell}+w_{\ell}$ of $\mathbb{K}$ must be
invertible. Also, we already know that $v_{\ell}$ and $w_{\ell}$ are
invertible. Hence, Proposition \ref{prop.inverses.a+b} \textbf{(b)} (applied
to $v_{\ell}$ and $w_{\ell}$ instead of $a$ and $b$) yields that
$\overline{v_{\ell}}+\overline{w_{\ell}}$ is invertible as well and its
inverse is%
\[
\overline{\overline{v_{\ell}}+\overline{w_{\ell}}}=v_{\ell}\cdot
\overline{v_{\ell}+w_{\ell}}\cdot w_{\ell}.
\]

Now,%
\[
\overline{v_{\ell}}\cdot\underbrace{\upslack_{\ell}^{d}}_{\substack{=\overline
{\overline{v_{\ell}}+\overline{w_{\ell}}}\cdot\overline{d_{\ell}}\\\text{(by
(\ref{pf.lem.rect.four.1}))}}}\cdot\,d_{\ell}=\overline{v_{\ell}}%
\cdot\underbrace{\overline{\overline{v_{\ell}}+\overline{w_{\ell}}}}%
_{=v_{\ell}\cdot\overline{v_{\ell}+w_{\ell}}\cdot w_{\ell}}\cdot
\,\underbrace{\overline{d_{\ell}}\cdot d_{\ell}}_{=1}=\underbrace{\overline
{v_{\ell}}\cdot v_{\ell}}_{=1}\cdot\,\overline{v_{\ell}+w_{\ell}}\cdot
w_{\ell}=\overline{v_{\ell}+w_{\ell}}\cdot w_{\ell}.
\]
Comparing this with%
\[
\overline{u_{\ell}}\cdot\underbrace{\downslack_{\ell}^{u}}_{\substack{=u_{\ell
}\cdot\overline{v_{\ell}+w_{\ell}}\\\text{(by (\ref{pf.lem.rect.four.2}))}%
}}\cdot\,w_{\ell}=\underbrace{\overline{u_{\ell}}\cdot u_{\ell}}_{=1}%
\cdot\,\overline{v_{\ell}+w_{\ell}}\cdot w_{\ell}=\overline{v_{\ell}+w_{\ell}%
}\cdot w_{\ell},
\]
we obtain $\overline{v_{\ell}}\cdot\upslack_{\ell}^{d}\cdot d_{\ell}%
=\overline{u_{\ell}}\cdot\downslack_{\ell}^{u}\cdot w_{\ell}$. Thus, Lemma
\ref{lem.rect.four} \textbf{(a)} is proved. \medskip

\textbf{(b)} This can be proved by the same argument that we used to prove
part \textbf{(a)} (with the roles of $v$ and $w$ interchanged).
\end{proof}

We recall our conventions for drawing the $p\times q$-rectangle $P=\left[
p\right]  \times\left[  q\right]  $. In light of these conventions, we shall
refer to the set $\left\{  \left(  k,q\right)  \ \mid\ k\in\left[  p\right]
\right\}  $ as the \emph{northeastern edge} of $P$, and to the set $\left\{
\left(  i,1\right)  \ \mid\ i\in\left[  p\right]  \right\}  $ as the
\emph{southwestern edge} of $P$.

The next lemma is crucial, as it allows us to \textquotedblleft
convert\textquotedblright\ between $\downslack$'s and $\upslack$'s without
changing the subscript.

\begin{lemma}
[Conversion lemma]\label{lem.rect.conv}Let $u$ and $u^{\prime}$ be two
elements of the northeastern edge of $P$ satisfying $u\gtrdot u^{\prime}$
(that is, let $u=\left(  k,q\right)  $ and $u^{\prime}=\left(  k-1,q\right)  $
for some $k\in\left\{  2,3,\ldots,p\right\}  $). Let $d$ and $d^{\prime}$ be
two elements of the southwestern edge of $P$ satisfying $d\gtrdot d^{\prime}$
(that is, let $d=\left(  i,1\right)  $ and $d^{\prime}=\left(  i-1,1\right)  $
for some $i\in\left\{  2,3,\ldots,p\right\}  $).

Assume that $a$ is invertible. Let $\ell\geq1$ be such that $R^{\ell+1}%
f\neq\undf$. Then we have:
\[
\downslack_{\ell}^{u\rightarrow d}=\upslack_{\ell}^{u^{\prime}\rightarrow
d^{\prime}}.
\]

\end{lemma}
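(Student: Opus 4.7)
The plan is to prove Lemma~\ref{lem.rect.conv} by an interpolation argument using auxiliary combinatorial objects called \emph{path-jump-paths} (PJPs). Roughly, a PJP from $u$ to $d'$ consists of a path in the rectangle that first descends from $u$ under $\downslack$-weights, then at some vertex $v = (r,s)$ performs a single ``jump'' $v \mapsto v^\flat := (r-1,s)$ that shifts its first coordinate down by one, and thereafter continues to $d'$ under $\upslack$-weights. Formally I take a PJP to be a triple $(\mathbf{p}, v, \mathbf{q})$ consisting of a path $\mathbf{p} : u \to v$ in $P$, a ``jump vertex'' $v = (r,s) \in P$ with $r \geq 2$, and a path $\mathbf{q} : v^\flat \to d'$ in $P$. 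To it I attach the weight
\[
\mathcal{W}(\mathbf{p}, v, \mathbf{q}) := \downslack_\ell^{\mathbf{p}} \cdot X(v) \cdot \upslack_\ell^{\mathbf{q}},
\]
where $X(v)$ is a correction scalar at the jump chosen so that the Four Neighbors Lemma converts weights cleanly (conceptually, $X(v)$ has the shape $\overline{v_\ell} \cdot \text{(boundary labels)} \cdot \overline{v^\flat_\ell}$ extracted from Lemma~\ref{lem.rect.four}).

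The heart of the proof is the claim that the total weight $S_v := \sum \mathcal{W}(\mathbf{p}, v, \mathbf{q})$ of PJPs with jump at $v$, summed over the two paths, is independent of where the jump lives. To verify this I will move the jump one cover step at a time within the rectangle. At each such move, the incident square $\{u_1, v_1, w_1, d_1\}$ sits inside the rectangle in the configuration of Lemma~\ref{lem.rect.four}, and the identity $\overline{v_\ell}\cdot\upslack_\ell^{d_1}\cdot d_{1,\ell} = \overline{u_{1,\ell}}\cdot\downslack_\ell^{u_1}\cdot w_\ell$ (and its sibling (b)) supplies exactly the algebraic rewrite that swaps a $\downslack^{u_1}$ in the upper part for an $\upslack^{d_1}$ in the lower part, redistributing the boundary labels through $X(v)$. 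After combining all paths on either side of the jump, the boundary labels telescope and one obtains $S_v = S_{v'}$ for every cover $v \gtrdot v'$ inside the rectangle. Iterating yields $S_v = S_{v''}$ for all admissible jump vertices.

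Finally, I evaluate the two extreme cases. When the jump is placed at $v = u$, the $\mathbf{p}$-piece is the trivial path $(u)$ and the jump immediately sends $u \to u^\flat = u'$, so the PJP reduces to an $\upslack$-path from $u'$ to $d'$, and summing over all of these gives $\upslack_\ell^{u' \to d'}$ (up to the constant $X(u)$, which I pick so that it equals $1$ after cancellation). Symmetrically, when the jump is placed at $v = d$, the $\mathbf{q}$-piece is the trivial path $(d^\flat) = (d')$, and the PJP collapses to a $\downslack$-path from $u$ to $d$, giving $\downslack_\ell^{u \to d}$. Combining these with the invariance of $S_v$ proves the desired equality.

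The main obstacle is the choice of the correction scalar $X(v)$ and the bookkeeping in the invariance step: the Four Neighbors Lemma only supplies a local identity for a single diamond, and to make it yield a global telescoping one must track which $\downslack^{u_1}\cdot w_\ell$ factor is absorbed into which adjacent path summand. A subsidiary but non-trivial issue is well-definedness: all invocations of $\downslack$, $\upslack$, and of inverses of side labels $v_\ell, w_\ell$ must be justified, and for this I will lean on Lemma~\ref{lem.slacks.wd} (which applies because $R^{\ell+1}f \neq \undf$ together with the invertibility of $a$ forces every $v_\ell$ on the paths, as well as every $\downslack_\ell^v$ and $\upslack_\ell^v$, to be invertible).
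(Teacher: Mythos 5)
Your overall strategy --- interpolating between $\downslack_{\ell}^{u\rightarrow d}$ and $\upslack_{\ell}^{u^{\prime}\rightarrow d^{\prime}}$ through paths carrying a single marked ``jump'', with the Four Neighbors Lemma as the local engine and the two extreme jump positions giving the two sides of the identity --- is exactly the strategy of the paper's proof, and your reading of the two boundary cases is correct. The gap is in your jump relation: you only allow $v=(r,s)\mapsto v^{\flat}=(r-1,s)$, a jump to the \emph{unique} southeastern neighbor. The jump relation that actually makes the interpolation work is $(r,s)\mapsto(r-k,\ s+k-1)$ for \emph{every} $k\geq 1$ (rank drops by one, first coordinate drops by an arbitrary positive amount). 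To see why the restricted version fails, note that moving the marked step from level $j$ to level $j+1$ requires, for each fixed upper path ending at some $s$ and each fixed lower path starting at some $t$ with $\rank t=\rank s-2$, an identity equating a sum over intermediate vertices $x$ with ``$s\,\text{jumps to}\,x\gtrdot t$'' against a sum over $x$ with ``$s\gtrdot x\,\text{jumps to}\,t$''. Take $s=(i,j_{0})$ and $t=(i-2,j_{0})$ (two southeast steps below $s$; this already occurs in $\left[3\right]\times\left[2\right]$). With your jump, each side has exactly one term, $x_{0}=(i-1,j_{0})$, and (in the paper's normalization, where the jump contributes $s_{\ell}\overline{x_{\ell}}$) the two sides reduce via Proposition \ref{prop.inverses.a+b} to
\[
s_{\ell}\cdot\overline{\left(x_{0}\right)_{\ell}+y_{\ell}}\cdot y_{\ell}\cdot\overline{t_{\ell}}
\qquad\text{versus}\qquad
s_{\ell}\cdot\overline{\left(x_{0}\right)_{\ell}+z_{\ell}}\cdot\left(x_{0}\right)_{\ell}\cdot\overline{t_{\ell}},
\]
where $y=(i-2,j_{0}+1)$ is the \emph{other} cover of $t$ and $z=(i,j_{0}-1)$ is the \emph{other} element covered by $s$. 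These are not equal, and no correction scalar $X(v)$ attached to a single jump vertex can reconcile them, since the discrepancy involves neighbors of two vertices that are two ranks apart. With the unrestricted jump, \emph{both} covers of $t$ appear on the left, so $\sum_{x\gtrdot t}\overline{x_{\ell}}$ cancels the inverse sum hidden inside $\upslack_{\ell}^{t}$ and the left side collapses to $s_{\ell}\overline{t_{\ell}}$; dually both elements covered by $s$ appear on the right. The Four Neighbors Lemma is then needed only for the one configuration where $s$ sits directly above $t$, and the remaining configuration (where $t$ lies strictly west of $s$) contributes empty sums on both sides.

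A secondary problem is your invariance claim ``$S_{v}=S_{v^{\prime}}$ for every cover $v\gtrdot v^{\prime}$'', where $S_{v}$ sums over path-jump-paths whose jump sits at the specific vertex $v$. This is false even in $\left[2\right]\times\left[2\right]$: there $S_{(1,2)}=0$ because $(1,2)^{\flat}\notin P$, while $S_{(2,1)}=\downslack_{\ell}^{u\rightarrow d}\neq 0$. The quantity that is actually invariant is the sum over all path-jump-paths whose jump occurs at a fixed \emph{position} along the sequence (equivalently, departs from a fixed rank), and it is this rank-indexed sum that the per-pair identity above lets you slide from the top of the path (yielding $\upslack_{\ell}^{u^{\prime}\rightarrow d^{\prime}}$) to the bottom (yielding $\downslack_{\ell}^{u\rightarrow d}$). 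Your well-definedness plan via Lemma \ref{lem.slacks.wd} is fine.
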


Here is an illustration for this lemma:%
\[
\begin{tikzpicture}
\coordinate (W) at (-3, 0);
\coordinate (N) at (0, 3);
\coordinate (E) at (6, -3);
\coordinate (S) at (3, -6);
\draw[thick] (W) -- (N) -- (E) -- (S) -- (W);
\coordinate [label=80:{$u$}] (u) at (1, 2);
\coordinate [label=80:{$u'$}] (u') at (1.5, 1.5);
\coordinate [label=190:{$d$}] (d) at (0.5, -3.5);
\coordinate [label=190:{$d'$}] (d') at (1, -4);
\fill (u) circle [radius=0.1];
\fill (u') circle [radius=0.1];
\fill (d) circle [radius=0.1];
\fill (d') circle [radius=0.1];
\draw[red, very thick] (u) -- (0.5, 1.5) -- (1, 1) -- (-0.5, -0.5) -- (0, -1) -- (-1, -2) -- (d);
\draw[blue, very thick] (u') -- (2.5, 0.5) -- (2, 0) -- (2.5, -0.5) -- (2, -1) -- (2.5, -1.5) -- (2, -2) -- (2.5, -2.5) -- (2, -3) -- (d');
\end{tikzpicture}
\]
(the red path indexes one addend in the sum $\downslack_{\ell}^{u\rightarrow
d}=\sum_{\mathbf{p}\text{ is a path from }u\text{ to }d}\downslack_{\ell
}^{\mathbf{p}}$, while the blue path contributes to the sum $\upslack_{\ell
}^{u^{\prime}\rightarrow d^{\prime}}=\sum_{\mathbf{p}\text{ is a path from
}u^{\prime}\text{ to }d^{\prime}}\upslack_{\ell}^{\mathbf{p}}$).

In the case when $\mathbb{K}$ is commutative, Lemma \ref{lem.rect.conv} was
independently discovered by Johnson and Liu \cite{JohLiu22}. More precisely,
\cite[Lemma 4.1]{JohLiu22} extends it from sums over paths (such as
$\downslack_{\ell}^{u\rightarrow d}$ and $\upslack_{\ell}^{u^{\prime
}\rightarrow d^{\prime}}$) to sums over $k$-tuples of non-intersecting paths.
It is unclear whether this extension can still be made when $\mathbb{K}$ is
not commutative (what order should the $\downslack_{\ell}^{v}$'s along
different paths be multiplied in?), but the use of determinants likely
precludes any noncommutative generalization of the proof in \cite{JohLiu22}.

\begin{proof}
[Proof of Lemma \ref{lem.rect.conv}.]Let $\ell\in\mathbb{N}$. We
\textquotedblleft interpolate\textquotedblright\ between the paths from $u$ to
$d$ and the paths from $u^{\prime}$ to $d^{\prime}$ using what we call
\textquotedblleft path-jump-paths\textquotedblright. To define these formally,
we introduce some more basic notations.

The first coordinate of any $x\in P$ will be denoted by $\first x$. Thus,
$\first \left(  i,j\right)  =i$ for any $\left(  i,j\right)  \in P$.

Furthermore, for any $x=\left(  i,j\right)  \in P$, we define the \emph{rank}
of $x$ to be the positive integer $i+j-1$. This rank will be denoted by
$\rank x$.

We define a new binary relation $\blacktriangleright$ on the set $P$ as
follows: If $x$ and $y$ are two elements of $P$, then the relation
$x\blacktriangleright y$ means \textquotedblleft$\rank x=\rank y+1$ and
$\first x>\first y$\textquotedblright. In other words, the relation
$x\blacktriangleright y$ means that%
\[
\text{if }x=\left(  i,j\right)  \text{, then }y=\left(  i-k,j+k-1\right)
\text{ for some }k>0.
\]
Visually speaking, it means that $y$ is one step southeast and a (nonnegative)
amount of steps east of $x$ (on the Hasse diagram).

We define a \emph{path-jump-path} to be a tuple $\mathbf{p}=\left(
v_{0},v_{1},\ldots,v_{k}\right)  $ of elements of $P$ along with a chosen
number $i\in\left\{  0,1,\ldots,k-1\right\}  $ such that the chain of
relations%
\[
v_{0}\gtrdot v_{1}\gtrdot\cdots\gtrdot v_{i}\blacktriangleright v_{i+1}\gtrdot
v_{i+2}\gtrdot\cdots\gtrdot v_{k}%
\]
holds. We denote this path-jump-path simply by%
\begin{equation}
\mathbf{p}=\left(  v_{0}\gtrdot v_{1}\gtrdot\cdots\gtrdot v_{i}%
\blacktriangleright v_{i+1}\gtrdot v_{i+2}\gtrdot\cdots\gtrdot v_{k}\right)  ,
\label{eq.lem.rect.conv.pjp}%
\end{equation}
and we say that this path-jump-path $\mathbf{p}$ has \emph{jump at }$i$. The
elements $v_{0},v_{1},\ldots,v_{k}$ are called the \emph{vertices} of this
path-jump-path. The pairs $\left(  v_{j},v_{j+1}\right)  $ of consecutive
vertices are called the \emph{steps} of this path-jump-path. Such a step
$\left(  v_{j},v_{j+1}\right)  $ is said to be a $\gtrdot$\emph{-step} if
$j\neq i$, and it is said to be a $\blacktriangleright$\emph{-step} if $j=i$.

Here is an example of a path-jump-path, where the red edge is the
$\blacktriangleright$-step:
\[
\begin{tikzpicture}
\fill (0, 5) circle [radius=0.1];
\fill (1, 4) circle [radius=0.1];
\fill (2, 3) circle [radius=0.1];
\fill (1, 2) circle [radius=0.1];
\fill (2, 1) circle [radius=0.1];
\fill (1, 0) circle [radius=0.1];
\fill (4, -1) circle [radius=0.1];
\fill (5, -2) circle [radius=0.1];
\fill (4, -3) circle [radius=0.1];
\fill (3, -4) circle [radius=0.1];
\draw[very thick] (0, 5) -- (1, 4) -- (2, 3) -- (1, 2) -- (2, 1) -- (1, 0);
\draw[red, very thick] (1, 0) -- (4, -1);
\draw[very thick] (4, -1) -- (5, -2) -- (4, -3) -- (3, -4);
\end{tikzpicture}
\]

(Note that two vertices $x$ and $y$ can satisfy $x\blacktriangleright y$ and
$x\gtrdot y$ simultaneously. Thus, it can happen that several path-jump-paths
with jumps at different $i$'s contain the same vertices. We nevertheless do
not consider these path-jump-paths to be identical, because we understand a
path-jump-path like (\ref{eq.lem.rect.conv.pjp}) to \textquotedblleft
remember\textquotedblright\ not only its vertices $v_{0},v_{1},\ldots,v_{k}$
but also the value of $i$.)

A \emph{path-jump-path from }$u$\emph{ to }$d^{\prime}$ will mean a
path-jump-path \newline$\left(  v_{0}\gtrdot v_{1}\gtrdot\cdots\gtrdot
v_{i}\blacktriangleright v_{i+1}\gtrdot v_{i+2}\gtrdot\cdots\gtrdot
v_{k}\right)  $ such that $v_{0}=u$ and $v_{k}=d^{\prime}$.

We note that if two elements $x$ and $y$ of $P$ satisfy $x\gtrdot y$ or
$x\blacktriangleright y$, then
\begin{equation}
\rank y=\rank x-1. \label{eq.pf.lem.rect.conv.rank-1}%
\end{equation}

As a consequence of this fact, successive entries $v_{j-1}$ and $v_{j}$ in a
path-jump-path $\left(  v_{0}\gtrdot v_{1}\gtrdot\cdots\gtrdot v_{i}%
\blacktriangleright v_{i+1}\gtrdot v_{i+2}\gtrdot\cdots\gtrdot v_{k}\right)  $
always satisfy $\rank \left(  v_{j}\right)  =\rank \left(  v_{j-1}\right)  -1$
for each $j\in\left[  k\right]  $. In other words, the ranks of the vertices
of a path-jump-path decrease by $1$ at each step.

Hence, the difference in ranks between the first and final entries of a
path-jump-path $\left(  v_{0}\gtrdot v_{1}\gtrdot\cdots\gtrdot v_{i}%
\blacktriangleright v_{i+1}\gtrdot v_{i+2}\gtrdot\cdots\gtrdot v_{k}\right)  $
is one less than its number of entries:
\begin{equation}
\rank \left(  v_{0}\right)  -\rank \left(  v_{k}\right)  =k.
\label{pf.lem.rect.conv.1st.rank-diff}%
\end{equation}

\begin{verlong}
[\textit{Proof of (\ref{pf.lem.rect.conv.1st.rank-diff}):} Let $\left(
v_{0}\gtrdot v_{1}\gtrdot\cdots\gtrdot v_{i}\blacktriangleright v_{i+1}\gtrdot
v_{i+2}\gtrdot\cdots\gtrdot v_{k}\right)  $ be a path-jump-path. Then, each
$j\in\left[  k\right]  $ satisfies $v_{j-1}\gtrdot v_{j}$ or $v_{j-1}%
\blacktriangleright v_{j}$ (by the definition of a \textquotedblleft
path-jump-path\textquotedblright). Hence, each $j\in\left[  k\right]  $
satisfies $\rank \left(  v_{j}\right)  =\rank \left(  v_{j-1}\right)  -1$ (by
(\ref{eq.pf.lem.rect.conv.rank-1}), applied to $x=v_{j-1}$ and $y=v_{j}$). In
other words, each $j\in\left[  k\right]  $ satisfies $1=\rank \left(
v_{j-1}\right)  -\rank \left(  v_{j}\right)  $. Summing this equality over all
$j\in\left[  k\right]  $, we obtain%
\begin{align*}
\sum_{j\in\left[  k\right]  }1  &  =\sum_{j\in\left[  k\right]  }\left(
\rank \left(  v_{j-1}\right)  -\rank \left(  v_{j}\right)  \right) \\
&  =\left(  \rank \left(  v_{0}\right)  -\rank \left(  v_{1}\right)  \right)
+\left(  \rank \left(  v_{1}\right)  -\rank \left(  v_{2}\right)  \right)
+\cdots+\left(  \rank \left(  v_{k-1}\right)  -\rank \left(  v_{k}\right)
\right) \\
&  =\rank \left(  v_{0}\right)  -\rank \left(  v_{k}\right)
\ \ \ \ \ \ \ \ \ \ \left(  \text{by the telescope principle}\right)  .
\end{align*}
Hence, $\rank \left(  v_{0}\right)  -\rank \left(  v_{k}\right)  =\sum
_{j\in\left[  k\right]  }1=\left\vert \left[  k\right]  \right\vert
\cdot1=\left\vert \left[  k\right]  \right\vert =k$. This proves
(\ref{pf.lem.rect.conv.1st.rank-diff}).]
\end{verlong}

\begin{vershort}
Let $r:=\rank u-\rank \left(  d^{\prime}\right)  $. Thus, any path-jump-path
from $u$ to $d^{\prime}$ must contain exactly $r+1$ vertices (by
(\ref{pf.lem.rect.conv.1st.rank-diff})). In other words, any path-jump-path
from $u$ to $d^{\prime}$ must have the form $\left(  v_{0}\gtrdot v_{1}%
\gtrdot\cdots\gtrdot v_{i}\blacktriangleright v_{i+1}\gtrdot v_{i+2}%
\gtrdot\cdots\gtrdot v_{r}\right)  $.
\end{vershort}

\begin{verlong}
Let $r:=\rank u-\rank \left(  d^{\prime}\right)  $. Thus, any path-jump-path
from $u$ to $d^{\prime}$ must contain exactly $r+1$
vertices\footnote{\textit{Proof.} Let $\mathbf{p}$ be a path-jump-path from
$u$ to $d^{\prime}$. We must prove that $\mathbf{p}$ contains exactly $r+1$
vertices.
\par
Write $\mathbf{p}$ in the form $\mathbf{p}=\left(  v_{0}\gtrdot v_{1}%
\gtrdot\cdots\gtrdot v_{i}\blacktriangleright v_{i+1}\gtrdot v_{i+2}%
\gtrdot\cdots\gtrdot v_{k}\right)  $. Since $\mathbf{p}$ is a path-jump-path
from $u$ to $d^{\prime}$, we thus have $v_{0}=u$ and $v_{k}=d^{\prime}$.
However, (\ref{pf.lem.rect.conv.1st.rank-diff}) yields $\rank \left(
v_{0}\right)  -\rank \left(  v_{k}\right)  =k$. Hence,%
\[
k=\rank \left(  \underbrace{v_{0}}_{=u}\right)  -\rank \left(
\underbrace{v_{k}}_{=d^{\prime}}\right)  =\rank u-\rank \left(  d^{\prime
}\right)  =r.
\]
\par
However, $\mathbf{p}=\left(  v_{0}\gtrdot v_{1}\gtrdot\cdots\gtrdot
v_{i}\blacktriangleright v_{i+1}\gtrdot v_{i+2}\gtrdot\cdots\gtrdot
v_{k}\right)  $ shows that $\mathbf{p}$ contains exactly $k+1$ vertices. In
other words, $\mathbf{p}$ contains exactly $r+1$ vertices (since $k=r$),
qed.}. In other words, any path-jump-path from $u$ to $d^{\prime}$ must have
the form $\left(  v_{0}\gtrdot v_{1}\gtrdot\cdots\gtrdot v_{i}%
\blacktriangleright v_{i+1}\gtrdot v_{i+2}\gtrdot\cdots\gtrdot v_{r}\right)  $.
\end{verlong}

We have $R\left(  R^{\ell}f\right)  =R^{\ell+1}f\neq\undf=R\left(
\undf\right)  $ and thus $R^{\ell}f\neq\undf$. Hence, Lemma
\ref{lem.slacks.wd} \textbf{(a)} yields that $v_{\ell}$ is well-defined and
invertible for each $v\in P$. Also, Lemma \ref{lem.slacks.wd} \textbf{(d)}
yields that $\upslack_{\ell}^{v}$ is well-defined and invertible for each
$v\in P$. Moreover, Lemma \ref{lem.slacks.wd} \textbf{(c)} (applied to
$\ell+1$ instead of $\ell$) yields that $\downslack_{\ell}^{v}$ is
well-defined and invertible for each $v\in P$.

In this proof, we will not consider any $\mathbb{K}$-labelings other than
$R^{\ell}f$. Thus, the only labels we will be using are the labels $v_{\ell
}=\left(  R^{\ell}f\right)  \left(  v\right)  $ for $v\in\widehat{P}$. Thus,
we agree to use the following shorthand notation: If $v\in\widehat{P}$, then
the elements $v_{\ell}$, $\upslack_{\ell}^{v}$ and $\downslack_{\ell}^{v}$ of
$\mathbb{K}$ will be denoted simply by $v$, $\upslack^{v}$ and $\downslack^{v}%
$, respectively. In other words, \textbf{we shall omit subscripts when these
subscripts are $\ell$.} For instance, the product $\downslack_{\ell}%
^{u}u_{\ell}\overline{u_{\ell}^{\prime}}$ will thus be abbreviated as
$\downslack^{u}u\overline{u^{\prime}}$.

For any path-jump-path
\[
\mathbf{p}=\left(  v_{0}\gtrdot v_{1}\gtrdot\cdots\gtrdot v_{i}%
\blacktriangleright v_{i+1}\gtrdot v_{i+2}\gtrdot\cdots\gtrdot v_{r}\right)
\]
that contains $r+1$ vertices, we set%
\[
E_{\mathbf{p}}:=\downslack^{v_{0}}\downslack^{v_{1}}\cdots\downslack^{v_{i-1}%
}v_{i}\overline{v_{i+1}}\upslack^{v_{i+2}}\upslack^{v_{i+3}}\cdots
\upslack^{v_{r}}\in\mathbb{K}.
\]

\begin{verlong}
\noindent(Here, as we have already announced, we are omitting a subscript
under each symbol. All the omitted subscripts are $\ell$ -- for example,
\textquotedblleft$v_{i}$\textquotedblright\ means $\left(  v_{i}\right)
_{\ell}$, and \textquotedblleft$v_{i+1}$\textquotedblright\ means $\left(
v_{i+1}\right)  _{\ell}$, and \textquotedblleft$\downslack^{v_{0}}%
$\textquotedblright\ means $\downslack_{\ell}^{v_{0}}$, and so on. We will do
the same with all expressions that follow.)
\end{verlong}

Now we claim the following (again omitting subscripts that are $\ell$):

\begin{statement}
\textit{Claim 1:} We have
\[
\downslack^{u\rightarrow d}=\sum_{\substack{\mathbf{p}\text{ is a
path-jump-path}\\\text{from }u\text{ to }d^{\prime}\\\text{with jump at }%
r-1}}E_{\mathbf{p}}.
\]

\end{statement}

\begin{statement}
\textit{Claim 2:} We have%
\[
\upslack^{u^{\prime}\rightarrow d^{\prime}}=\sum_{\substack{\mathbf{p}\text{
is a path-jump-path}\\\text{from }u\text{ to }d^{\prime}\\\text{with jump at
}0}}E_{\mathbf{p}}.
\]

\end{statement}

\begin{statement}
\textit{Claim 3:} For each $j\in\left\{  0,1,\ldots,r-2\right\}  $, we have%
\[
\sum_{\substack{\mathbf{p}\text{ is a path-jump-path}\\\text{from }u\text{ to
}d^{\prime}\\\text{with jump at }j}}E_{\mathbf{p}}=\sum_{\substack{\mathbf{p}%
\text{ is a path-jump-path}\\\text{from }u\text{ to }d^{\prime}\\\text{with
jump at }j+1}}E_{\mathbf{p}}.
\]

\end{statement}

Before we prove these three claims, let us explain how Lemma
\ref{lem.rect.conv} will follow from them:%
\begin{align*}
\upslack_{\ell}^{u^{\prime}\rightarrow d^{\prime}}  &  =\upslack^{u^{\prime
}\rightarrow d^{\prime}}\\
&  =\sum_{\substack{\mathbf{p}\text{ is a path-jump-path}\\\text{from }u\text{
to }d^{\prime}\\\text{with jump at }0}}E_{\mathbf{p}}%
\ \ \ \ \ \ \ \ \ \ \left(  \text{by Claim 2}\right) \\
&  =\sum_{\substack{\mathbf{p}\text{ is a path-jump-path}\\\text{from }u\text{
to }d^{\prime}\\\text{with jump at }1}}E_{\mathbf{p}}%
\ \ \ \ \ \ \ \ \ \ \left(  \text{by Claim 3, applied to }j=0\right) \\
&  =\sum_{\substack{\mathbf{p}\text{ is a path-jump-path}\\\text{from }u\text{
to }d^{\prime}\\\text{with jump at }2}}E_{\mathbf{p}}%
\ \ \ \ \ \ \ \ \ \ \left(  \text{by Claim 3, applied to }j=1\right) \\
&  =\cdots\\
&  =\sum_{\substack{\mathbf{p}\text{ is a path-jump-path}\\\text{from }u\text{
to }d^{\prime}\\\text{with jump at }r-1}}E_{\mathbf{p}}%
\ \ \ \ \ \ \ \ \ \ \left(  \text{by Claim 3, applied to }j=r-2\right) \\
&  =\downslack^{u\rightarrow d}\ \ \ \ \ \ \ \ \ \ \left(  \text{by Claim
1}\right) \\
&  =\downslack_{\ell}^{u\rightarrow d}.
\end{align*}
Hence, Lemma \ref{lem.rect.conv} will follow once Claims 1, 2 and 3 have been
proved. Let us now prove these three claims: \medskip

\begin{proof}
[Proof of Claim 1.]We know that $d$ lies on the southwestern edge of $P$.
Hence, the only $s\in\widehat{P}$ satisfying $s\lessdot d$ is $d^{\prime}$
(since $d\gtrdot d^{\prime}$). Therefore, $\sum_{\substack{s\in\widehat{P}%
;\\s\lessdot d}}s_{\ell}=d_{\ell}^{\prime}$. However, the definition of
$\downslack_{\ell}^{d}$ shows that $\downslack_{\ell}^{d}=d_{\ell}%
\cdot\overline{\sum_{\substack{s\in\widehat{P};\\s\lessdot d}}s_{\ell}%
}=d_{\ell}\overline{d_{\ell}^{\prime}}$ (since $\sum_{\substack{s\in
\widehat{P};\\s\lessdot d}}s_{\ell}=d_{\ell}^{\prime}$). Since we omit
subscripts (when these subscripts are $\ell$), we can rewrite this as
\begin{equation}
\downslack^{d}=d\overline{d^{\prime}}. \label{pf.lem.rect.conv.c1.pf.1}%
\end{equation}

\begin{vershort}
We know that any path-jump-path from $u$ to $d^{\prime}$ must have the form
\newline$\left(  v_{0}\gtrdot v_{1}\gtrdot\cdots\gtrdot v_{i}%
\blacktriangleright v_{i+1}\gtrdot v_{i+2}\gtrdot\cdots\gtrdot v_{r}\right)
$. If such a path-jump-path has jump at $r-1$, then it must have the form
$\left(  v_{0}\gtrdot v_{1}\gtrdot\cdots\gtrdot v_{r-1}\blacktriangleright
v_{r}\right)  $; that is, its last step $\left(  v_{r-1},v_{r}\right)  $ is an
$\blacktriangleright$-step. However, since it ends at $d^{\prime}$, we must
have $v_{r}=d^{\prime}$ and thus $v_{r-1}\blacktriangleright v_{r}=d^{\prime}%
$. This entails $v_{r-1}=d$ (since the only $g\in P$ satisfying
$g\blacktriangleright d^{\prime}$ is $d$\ \ \ \ \footnote{This follows easily
from the geographical positions of $d$ and $d^{\prime}$ on the southwestern
edge of $P$.}), and therefore $\left(  v_{r-1},v_{r}\right)  =\left(
d,d^{\prime}\right)  $ (since $v_{r}=d^{\prime}$). In other words, the last
step of this path-jump-path is $\left(  d,d^{\prime}\right)  $.
\end{vershort}

\begin{verlong}
We know that any path-jump-path from $u$ to $d^{\prime}$ must have the form
\newline$\left(  v_{0}\gtrdot v_{1}\gtrdot\cdots\gtrdot v_{i}%
\blacktriangleright v_{i+1}\gtrdot v_{i+2}\gtrdot\cdots\gtrdot v_{r}\right)
$. If such a path-jump-path has jump at $r-1$, then it must have the form
$\left(  v_{0}\gtrdot v_{1}\gtrdot\cdots\gtrdot v_{r-1}\blacktriangleright
v_{r}\right)  $; that is, its last step $\left(  v_{r-1},v_{r}\right)  $ is an
$\blacktriangleright$-step. However, since it ends at $d^{\prime}$, we must
have $v_{r}=d^{\prime}$ and thus $v_{r-1}\blacktriangleright v_{r}=d^{\prime}%
$. This entails $v_{r-1}=d$ (since the only $g\in P$ satisfying
$g\blacktriangleright d^{\prime}$ is $d$\ \ \ \ \footnote{\textit{Proof.}
Recall that $d^{\prime}$ lies on the southwestern edge of $P$. In other words,
$d^{\prime}=\left(  i,1\right)  $ for some $i\in\left[  p\right]  $. Consider
this $i$. Therefore, $d=\left(  i+1,1\right)  $ (since $d$ also lies on the
southwestern edge of $P$ and satisfies $d\gtrdot d^{\prime}$).
\par
From $d^{\prime}=\left(  i,1\right)  $, we obtain $\first \left(  d^{\prime
}\right)  =i$ and $\rank \left(  d^{\prime}\right)  =i+1-1=i$.
\par
Now, let $g\in P$ be such that $g\blacktriangleright d^{\prime}$. By the
definition of the relation $\blacktriangleright$, we thus have
$\rank g=\rank \left(  d^{\prime}\right)  +1$ and $\first g>\first \left(
d^{\prime}\right)  $. Hence, $\first g>\first \left(  d^{\prime}\right)  =i$
and $\rank g=\underbrace{\rank \left(  d^{\prime}\right)  }_{=i}+1=i+1$.
\par
Write $g$ in the form $g=\left(  i^{\prime},j^{\prime}\right)  $ for some
$i^{\prime}\in\left[  p\right]  $ and some $j^{\prime}\in\left[  q\right]  $.
Thus, $\first g=i^{\prime}$ and $\rank g=i^{\prime}+j^{\prime}-1$. Hence,
$i^{\prime}=\first g>i$ and $i+1=\rank g=\underbrace{i^{\prime}}%
_{>i}+j^{\prime}-1>i+j^{\prime}-1$. Subtracting $i$ from both sides of the
latter inequality, we obtain $1>j^{\prime}-1$. Thus, $j^{\prime}<1+1=2$, so
that $j^{\prime}=1$ (since $j^{\prime}\in\left[  q\right]  $). Now, from
$i+1=i^{\prime}+\underbrace{j^{\prime}}_{=1}-1=i^{\prime}+1-1=i^{\prime}$, we
obtain $i^{\prime}=i+1$. Hence, $g=\left(  i^{\prime},j^{\prime}\right)
=\left(  i+1,1\right)  $ (since $i^{\prime}=i+1$ and $j^{\prime}=1$).
Comparing this with $d=\left(  i+1,1\right)  $, we find $g=d$.
\par
Forget that we fixed $g$. We thus have shown that if $g\in P$ satisfies
$g\blacktriangleright d^{\prime}$, then $g=d$. In other words, the only $g\in
P$ satisfying $g\blacktriangleright d^{\prime}$ is $d$ (since it is easy to
see that $d$ does indeed satisfy $d\blacktriangleright d^{\prime}$).}), and
therefore $\left(  v_{r-1},v_{r}\right)  =\left(  d,d^{\prime}\right)  $
(since $v_{r}=d^{\prime}$). In other words, the last step of this
path-jump-path is $\left(  d,d^{\prime}\right)  $.
\end{verlong}

\begin{vershort}
We have thus shown that if a path-jump-path from $u$ to $d^{\prime}$ has jump
at $r-1$, then its last step is $\left(  d,d^{\prime}\right)  $. Hence, any
path-jump-path from $u$ to $d^{\prime}$ with jump at $r-1$ must have the form%
\[
\left(  v_{0}\gtrdot v_{1}\gtrdot\cdots\gtrdot v_{r-1}\blacktriangleright
d^{\prime}\right)  ,
\]
where $\left(  v_{0}\gtrdot v_{1}\gtrdot\cdots\gtrdot v_{r-1}\right)  $ is a
path from $u$ to $d$. Conversely, any tuple of the latter form is a
path-jump-path from $u$ to $d^{\prime}$ with jump at $r-1$ (since
$d\blacktriangleright d^{\prime}$). Therefore,%
\begin{align*}
\sum_{\substack{\mathbf{p}\text{ is a path-jump-path}\\\text{from }u\text{ to
}d^{\prime}\\\text{with jump at }r-1}}E_{\mathbf{p}}  &  =\sum
_{\substack{\left(  v_{0}\gtrdot v_{1}\gtrdot\cdots\gtrdot v_{r-1}\right)
\\\text{is a path from }u\text{ to }d}}\underbrace{E_{\left(  v_{0}\gtrdot
v_{1}\gtrdot\cdots\gtrdot v_{r-1}\blacktriangleright d^{\prime}\right)  }%
}_{\substack{=\downslack^{v_{0}}\downslack^{v_{1}}\cdots\downslack^{v_{r-2}%
}v_{r-1}\overline{d^{\prime}}\\\text{(by the definition of }E_{\left(
v_{0}\gtrdot v_{1}\gtrdot\cdots\gtrdot v_{r-1}\blacktriangleright d^{\prime
}\right)  }\text{)}}}\\
&  =\sum_{\substack{\left(  v_{0}\gtrdot v_{1}\gtrdot\cdots\gtrdot
v_{r-1}\right)  \\\text{is a path from }u\text{ to }d}}\downslack^{v_{0}%
}\downslack^{v_{1}}\cdots\downslack^{v_{r-2}}\underbrace{v_{r-1}%
}_{\substack{=d}}\overline{d^{\prime}}\\
&  =\sum_{\substack{\left(  v_{0}\gtrdot v_{1}\gtrdot\cdots\gtrdot
v_{r-1}\right)  \\\text{is a path from }u\text{ to }d}}\downslack^{v_{0}%
}\downslack^{v_{1}}\cdots\downslack^{v_{r-2}}\underbrace{d\overline{d^{\prime
}}}_{\substack{=\downslack^{d}\\\text{(by (\ref{pf.lem.rect.conv.c1.pf.1}))}%
}}\\
&  =\sum_{\substack{\left(  v_{0}\gtrdot v_{1}\gtrdot\cdots\gtrdot
v_{r-1}\right)  \\\text{is a path from }u\text{ to }d}}\downslack^{v_{0}%
}\downslack^{v_{1}}\cdots\downslack^{v_{r-2}}\underbrace{\downslack^{d}%
}_{\substack{=\downslack^{v_{r-1}}\\\text{(because }d=v_{r-1}\text{)}}}\\
&  =\sum_{\substack{\left(  v_{0}\gtrdot v_{1}\gtrdot\cdots\gtrdot
v_{r-1}\right)  \\\text{is a path from }u\text{ to }d}%
}\underbrace{\downslack^{v_{0}}\downslack^{v_{1}}\cdots\downslack^{v_{r-2}%
}\downslack^{v_{r-1}}}_{\substack{=\downslack^{\left(  v_{0}\gtrdot
v_{1}\gtrdot\cdots\gtrdot v_{r-1}\right)  }\\\text{(by the definition of
}\downslack^{\left(  v_{0}\gtrdot v_{1}\gtrdot\cdots\gtrdot v_{r-1}\right)
}\text{)}}}\\
&  =\sum_{\substack{\left(  v_{0}\gtrdot v_{1}\gtrdot\cdots\gtrdot
v_{r-1}\right)  \\\text{is a path from }u\text{ to }d}}\downslack^{\left(
v_{0}\gtrdot v_{1}\gtrdot\cdots\gtrdot v_{r-1}\right)  }=\sum_{\mathbf{p}%
\text{ is a path from }u\text{ to }d}\downslack^{\mathbf{p}}\\
&  =\downslack^{u\rightarrow d}\ \ \ \ \ \ \ \ \ \ \left(  \text{by the
definition of }\downslack^{u\rightarrow d}\right)  .
\end{align*}

\end{vershort}

\begin{verlong}
We have thus shown that if a path-jump-path from $u$ to $d^{\prime}$ has jump
at $r-1$, then its last step is $\left(  d,d^{\prime}\right)  $. Hence, any
path-jump-path from $u$ to $d^{\prime}$ with jump at $r-1$ must have the form%
\[
\left(  v_{0}\gtrdot v_{1}\gtrdot\cdots\gtrdot v_{r-1}\blacktriangleright
d^{\prime}\right)  ,
\]
where $\left(  v_{0}\gtrdot v_{1}\gtrdot\cdots\gtrdot v_{r-1}\right)  $ is a
path from $u$ to $d$. Conversely, any tuple of the latter form is a
path-jump-path from $u$ to $d^{\prime}$ with jump at $r-1$ (since
$d\blacktriangleright d^{\prime}$). Therefore, we can substitute $\left(
v_{0}\gtrdot v_{1}\gtrdot\cdots\gtrdot v_{r-1}\blacktriangleright d^{\prime
}\right)  $ for $\mathbf{p}$ in the sum $\sum_{\substack{\mathbf{p}\text{ is a
path-jump-path}\\\text{from }u\text{ to }d^{\prime}\\\text{with jump at }%
r-1}}E_{\mathbf{p}}$. We thus obtain%
\begin{align*}
\sum_{\substack{\mathbf{p}\text{ is a path-jump-path}\\\text{from }u\text{ to
}d^{\prime}\\\text{with jump at }r-1}}E_{\mathbf{p}}  &  =\sum
_{\substack{\left(  v_{0}\gtrdot v_{1}\gtrdot\cdots\gtrdot v_{r-1}\right)
\\\text{is a path from }u\text{ to }d}}\underbrace{E_{\left(  v_{0}\gtrdot
v_{1}\gtrdot\cdots\gtrdot v_{r-1}\blacktriangleright d^{\prime}\right)  }%
}_{\substack{=\downslack^{v_{0}}\downslack^{v_{1}}\cdots\downslack^{v_{r-2}%
}v_{r-1}\overline{d^{\prime}}\\\text{(by the definition of }E_{\left(
v_{0}\gtrdot v_{1}\gtrdot\cdots\gtrdot v_{r-1}\blacktriangleright d^{\prime
}\right)  }\text{)}}}\\
&  =\sum_{\substack{\left(  v_{0}\gtrdot v_{1}\gtrdot\cdots\gtrdot
v_{r-1}\right)  \\\text{is a path from }u\text{ to }d}}\downslack^{v_{0}%
}\downslack^{v_{1}}\cdots\downslack^{v_{r-2}}\underbrace{v_{r-1}%
}_{\substack{=d\\\text{(since }\left(  v_{0}\gtrdot v_{1}\gtrdot\cdots\gtrdot
v_{r-1}\right)  \\\text{is a path from }u\text{ to }d\text{)}}}\overline
{d^{\prime}}\\
&  =\sum_{\substack{\left(  v_{0}\gtrdot v_{1}\gtrdot\cdots\gtrdot
v_{r-1}\right)  \\\text{is a path from }u\text{ to }d}}\downslack^{v_{0}%
}\downslack^{v_{1}}\cdots\downslack^{v_{r-2}}\underbrace{d\overline{d^{\prime
}}}_{\substack{=\downslack^{d}\\\text{(by (\ref{pf.lem.rect.conv.c1.pf.1}))}%
}}\\
&  =\sum_{\substack{\left(  v_{0}\gtrdot v_{1}\gtrdot\cdots\gtrdot
v_{r-1}\right)  \\\text{is a path from }u\text{ to }d}}\downslack^{v_{0}%
}\downslack^{v_{1}}\cdots\downslack^{v_{r-2}}\underbrace{\downslack^{d}%
}_{\substack{=\downslack^{v_{r-1}}\\\text{(because }d=v_{r-1}\\\text{(again
since }\left(  v_{0}\gtrdot v_{1}\gtrdot\cdots\gtrdot v_{r-1}\right)
\\\text{is a path from }u\text{ to }d\text{))}}}\\
&  =\sum_{\substack{\left(  v_{0}\gtrdot v_{1}\gtrdot\cdots\gtrdot
v_{r-1}\right)  \\\text{is a path from }u\text{ to }d}%
}\underbrace{\downslack^{v_{0}}\downslack^{v_{1}}\cdots\downslack^{v_{r-2}%
}\downslack^{v_{r-1}}}_{\substack{=\downslack^{v_{0}}\downslack^{v_{1}}%
\cdots\downslack^{v_{r-1}}\\=\downslack^{\left(  v_{0}\gtrdot v_{1}%
\gtrdot\cdots\gtrdot v_{r-1}\right)  }\\\text{(by the definition of
}\downslack^{\left(  v_{0}\gtrdot v_{1}\gtrdot\cdots\gtrdot v_{r-1}\right)
}\text{)}}}\\
&  =\sum_{\substack{\left(  v_{0}\gtrdot v_{1}\gtrdot\cdots\gtrdot
v_{r-1}\right)  \\\text{is a path from }u\text{ to }d}}\downslack^{\left(
v_{0}\gtrdot v_{1}\gtrdot\cdots\gtrdot v_{r-1}\right)  }\\
&  =\sum_{\mathbf{p}\text{ is a path from }u\text{ to }d}%
\underbrace{\downslack^{\mathbf{p}}}_{=\downslack_{\ell}^{\mathbf{p}}}\\
&  \ \ \ \ \ \ \ \ \ \ \ \ \ \ \ \ \ \ \ \ \left(
\begin{array}
[c]{c}%
\text{here we have renamed the}\\
\text{summation index }\left(  v_{0}\gtrdot v_{1}\gtrdot\cdots\gtrdot
v_{r-1}\right)  \text{ as }\mathbf{p}%
\end{array}
\right) \\
&  =\sum_{\mathbf{p}\text{ is a path from }u\text{ to }d}\downslack_{\ell
}^{\mathbf{p}}\\
&  =\downslack_{\ell}^{u\rightarrow d}\ \ \ \ \ \ \ \ \ \ \left(  \text{by the
definition of }\downslack_{\ell}^{u\rightarrow d}\right) \\
&  =\downslack^{u\rightarrow d}.
\end{align*}

\end{verlong}

\noindent This proves Claim 1.
\end{proof}

\begin{vershort}

\begin{proof}
[Proof of Claim 2.]This is analogous to the proof of Claim 1. This time, we
need to argue that if a path-jump-path from $u$ to $d^{\prime}$ has jump at
$0$, then its first step is $\left(  u,u^{\prime}\right)  $ (since the only
$g\in P$ satisfying $u\blacktriangleright g$ is $u^{\prime}$).
\end{proof}
\end{vershort}

\begin{verlong}

\begin{proof}
[Proof of Claim 2.]This is mostly analogous to the above proof of Claim 1, but
we nevertheless present the full argument for the sake of completeness.

We know that $u^{\prime}$ lies on the northeastern edge of $P$. Hence, the
only $s\in\widehat{P}$ satisfying $s\gtrdot u^{\prime}$ is $u$ (since
$u\gtrdot u^{\prime}$). Therefore, $\sum_{\substack{s\in\widehat{P};\\s\gtrdot
u^{\prime}}}\overline{s_{\ell}}=\overline{u_{\ell}}$. Therefore,%
\[
\overline{\sum_{\substack{s\in\widehat{P};\\s\gtrdot u^{\prime}}%
}\overline{s_{\ell}}}=\overline{\overline{u_{\ell}}}=u_{\ell}.
\]
However, the definition of $\upslack_{\ell}^{u^{\prime}}$ shows that
$\upslack_{\ell}^{u^{\prime}}=\overline{\sum_{\substack{s\in\widehat{P}%
;\\s\gtrdot u^{\prime}}}\overline{s_{\ell}}}\cdot\overline{u_{\ell}^{\prime}%
}=u_{\ell}\overline{u_{\ell}^{\prime}}$ (since $\overline{\sum_{\substack{s\in
\widehat{P};\\s\gtrdot u^{\prime}}}\overline{s_{\ell}}}=u_{\ell}$). Since we
omit subscripts (when these subscripts are $\ell$), we can rewrite this as
\begin{equation}
\upslack^{u^{\prime}}=u\overline{u^{\prime}}. \label{pf.lem.rect.conv.c2.pf.1}%
\end{equation}

We know that any path-jump-path from $u$ to $d^{\prime}$ must have the form
\newline$\left(  v_{0}\gtrdot v_{1}\gtrdot\cdots\gtrdot v_{i}%
\blacktriangleright v_{i+1}\gtrdot v_{i+2}\gtrdot\cdots\gtrdot v_{r}\right)
$. If such a path-jump-path has jump at $0$, then it must have the form
$\left(  v_{0}\blacktriangleright v_{1}\gtrdot v_{2}\gtrdot\cdots\gtrdot
v_{r}\right)  $; that is, its first step $\left(  v_{0},v_{1}\right)  $ is an
$\blacktriangleright$-step. However, since it starts at $u$, we must have
$v_{0}=u$ and thus $u=v_{0}\blacktriangleright v_{1}$. This entails
$v_{1}=u^{\prime}$ (since the only $g\in P$ satisfying $u\blacktriangleright
g$ is $u^{\prime}$\ \ \ \ \footnote{\textit{Proof.} Recall that $u$ lies on
the northeastern edge of $P$. In other words, $u=\left(  i,q\right)  $ for
some $i\in\left[  p\right]  $. Consider this $i$. Therefore, $u^{\prime
}=\left(  i-1,q\right)  $ (since $u^{\prime}$ also lies on the northeastern
edge of $P$ and satisfies $u\gtrdot u^{\prime}$).
\par
From $u=\left(  i,q\right)  $, we obtain $\first u=i$ and $\rank u=i+q-1$.
\par
Now, let $g\in P$ be such that $u\blacktriangleright g$. By the definition of
the relation $\blacktriangleright$, we thus have $\rank u=\rank g+1$ and
$\first u>\first g$. Hence, $\first g<\first u=i$.
\par
Write $g$ in the form $g=\left(  i^{\prime},j^{\prime}\right)  $ for some
$i^{\prime}\in\left[  p\right]  $ and some $j^{\prime}\in\left[  q\right]  $.
Thus, $\first g=i^{\prime}$ and $\rank g=i^{\prime}+j^{\prime}-1$. Hence,
$i^{\prime}=\first g<i$ and%
\[
i+q-1=\rank u=\underbrace{\rank g}_{=i^{\prime}+j^{\prime}-1}+1=i^{\prime
}+j^{\prime}-1+1=\underbrace{i^{\prime}}_{<i}+j^{\prime}<i+j^{\prime}.
\]
Subtracting $i$ from both sides of the latter inequality, we obtain
$q-1<j^{\prime}$. Thus, $j^{\prime}>q-1$, so that $j^{\prime}=q$ (since
$j^{\prime}\in\left[  q\right]  $). Now, subtracting $q$ from both sides of
the equality $i+q-1=i^{\prime}+\underbrace{j^{\prime}}_{=q}=i^{\prime}+q$, we
obtain $i-1=i^{\prime}$. In other words, $i^{\prime}=i-1$. Hence, $g=\left(
i^{\prime},j^{\prime}\right)  =\left(  i-1,q\right)  $ (since $i^{\prime}=i-1$
and $j^{\prime}=q$). Comparing this with $u^{\prime}=\left(  i-1,q\right)  $,
we find $g=u^{\prime}$.
\par
Forget that we fixed $g$. We thus have shown that if $g\in P$ satisfies
$u\blacktriangleright g$, then $g=u^{\prime}$. In other words, the only $g\in
P$ satisfying $u\blacktriangleright g$ is $u^{\prime}$ (since it is easy to
see that $u^{\prime}$ does indeed satisfy $u\blacktriangleright u^{\prime}$%
).}), and therefore $\left(  v_{0},v_{1}\right)  =\left(  u,u^{\prime}\right)
$ (since $v_{0}=u$). In other words, the first step of this path-jump-path is
$\left(  u,u^{\prime}\right)  $.

We have thus shown that if a path-jump-path from $u$ to $d^{\prime}$ has jump
at $0$, then its first step is $\left(  u,u^{\prime}\right)  $. Hence, any
path-jump-path from $u$ to $d^{\prime}$ with jump at $0$ must have the form%
\[
\left(  u\blacktriangleright v_{1}\gtrdot v_{2}\gtrdot\cdots\gtrdot
v_{r}\right)  ,
\]
where $\left(  v_{1}\gtrdot v_{2}\gtrdot\cdots\gtrdot v_{r}\right)  $ is a
path from $u^{\prime}$ to $d^{\prime}$. Conversely, any tuple of the latter
form is a path-jump-path from $u$ to $d^{\prime}$ with jump at $0$ (since
$u\blacktriangleright u^{\prime}$). Therefore, we can substitute $\left(
u\blacktriangleright v_{1}\gtrdot v_{2}\gtrdot\cdots\gtrdot v_{r}\right)  $
for $\mathbf{p}$ in the sum $\sum_{\substack{\mathbf{p}\text{ is a
path-jump-path}\\\text{from }u\text{ to }d^{\prime}\\\text{with jump at }%
0}}E_{\mathbf{p}}$. We thus obtain%
\begin{align*}
\sum_{\substack{\mathbf{p}\text{ is a path-jump-path}\\\text{from }u\text{ to
}d^{\prime}\\\text{with jump at }0}}E_{\mathbf{p}}  &  =\sum
_{\substack{\left(  v_{1}\gtrdot v_{2}\gtrdot\cdots\gtrdot v_{r}\right)
\\\text{is a path from }u^{\prime}\text{ to }d^{\prime}}%
}\underbrace{E_{\left(  u\blacktriangleright v_{1}\gtrdot v_{2}\gtrdot
\cdots\gtrdot v_{r}\right)  }}_{\substack{=u\overline{v_{1}}\upslack^{v_{2}%
}\upslack^{v_{3}}\cdots\upslack^{v_{r}}\\\text{(by the definition of
}E_{\left(  u\blacktriangleright v_{1}\gtrdot v_{2}\gtrdot\cdots\gtrdot
v_{r}\right)  }\text{)}}}\\
&  =\sum_{\substack{\left(  v_{1}\gtrdot v_{2}\gtrdot\cdots\gtrdot
v_{r}\right)  \\\text{is a path from }u^{\prime}\text{ to }d^{\prime}%
}}u\underbrace{\overline{v_{1}}}_{\substack{=\overline{u^{\prime}%
}\\\text{(because }v_{1}=u^{\prime}\\\text{(since }\left(  v_{1}\gtrdot
v_{2}\gtrdot\cdots\gtrdot v_{r}\right)  \\\text{is a path from }u^{\prime
}\text{ to }d^{\prime}\text{))}}}\upslack^{v_{2}}\upslack^{v_{3}}%
\cdots\upslack^{v_{r}}\\
&  =\sum_{\substack{\left(  v_{1}\gtrdot v_{2}\gtrdot\cdots\gtrdot
v_{r}\right)  \\\text{is a path from }u^{\prime}\text{ to }d^{\prime}%
}}\underbrace{u\overline{u^{\prime}}}_{\substack{=\upslack^{u^{\prime}%
}\\\text{(by (\ref{pf.lem.rect.conv.c2.pf.1}))}}}\upslack^{v_{2}%
}\upslack^{v_{3}}\cdots\upslack^{v_{r}}\\
&  =\sum_{\substack{\left(  v_{1}\gtrdot v_{2}\gtrdot\cdots\gtrdot
v_{r}\right)  \\\text{is a path from }u^{\prime}\text{ to }d^{\prime}%
}}\underbrace{\upslack^{u^{\prime}}}_{\substack{=\upslack^{v_{1}%
}\\\text{(because }u^{\prime}=v_{1}\\\text{(since }\left(  v_{1}\gtrdot
v_{2}\gtrdot\cdots\gtrdot v_{r}\right)  \\\text{is a path from }u^{\prime
}\text{ to }d^{\prime}\text{))}}}\upslack^{v_{2}}\upslack^{v_{3}}%
\cdots\upslack^{v_{r}}\\
&  =\sum_{\substack{\left(  v_{1}\gtrdot v_{2}\gtrdot\cdots\gtrdot
v_{r}\right)  \\\text{is a path from }u^{\prime}\text{ to }d^{\prime}%
}}\underbrace{\upslack^{v_{1}}\upslack^{v_{2}}\upslack^{v_{3}}\cdots
\upslack^{v_{r}}}_{\substack{=\upslack^{v_{1}}\upslack^{v_{2}}\cdots
\upslack^{v_{r}}\\=\upslack^{\left(  v_{1}\gtrdot v_{2}\gtrdot\cdots\gtrdot
v_{r}\right)  }\\\text{(by the definition of }\upslack^{\left(  v_{1}\gtrdot
v_{2}\gtrdot\cdots\gtrdot v_{r}\right)  }\text{)}}}\\
&  =\sum_{\substack{\left(  v_{1}\gtrdot v_{2}\gtrdot\cdots\gtrdot
v_{r}\right)  \\\text{is a path from }u^{\prime}\text{ to }d^{\prime}%
}}\upslack^{\left(  v_{1}\gtrdot v_{2}\gtrdot\cdots\gtrdot v_{r}\right)  }\\
&  =\sum_{\substack{\left(  v_{1}\gtrdot v_{2}\gtrdot\cdots\gtrdot
v_{r}\right)  \\\text{is a path from }u^{\prime}\text{ to }d^{\prime}%
}}\underbrace{\upslack^{\mathbf{p}}}_{=\upslack_{\ell}^{\mathbf{p}}}\\
&  \ \ \ \ \ \ \ \ \ \ \ \ \ \ \ \ \ \ \ \ \left(
\begin{array}
[c]{c}%
\text{here we have renamed the}\\
\text{summation index }\left(  v_{1}\gtrdot v_{2}\gtrdot\cdots\gtrdot
v_{r}\right)  \text{ as }\mathbf{p}%
\end{array}
\right) \\
&  =\sum_{\mathbf{p}\text{ is a path from }u^{\prime}\text{ to }d^{\prime}%
}\upslack_{\ell}^{\mathbf{p}}\\
&  =\upslack_{\ell}^{u^{\prime}\rightarrow d^{\prime}}%
\ \ \ \ \ \ \ \ \ \ \left(  \text{by the definition of }\upslack_{\ell
}^{u^{\prime}\rightarrow d^{\prime}}\right) \\
&  =\upslack^{u^{\prime}\rightarrow d^{\prime}}.
\end{align*}
This proves Claim 2.
\end{proof}
\end{verlong}

Proving Claim 3 is a bit trickier. As an auxiliary result, we first show the following:

\begin{statement}
\textit{Claim 4:} Let $s$ and $t$ be two elements of $P$. Then,%
\begin{equation}
\sum_{\substack{x\in P;\\s\blacktriangleright x\gtrdot t}}s\overline
{x}\upslack^{t}=\sum_{\substack{x\in P;\\s\gtrdot x\blacktriangleright
t}}\downslack^{s}x\overline{t}. \label{pf.lem.rect.conv.c4}%
\end{equation}

\end{statement}

\begin{proof}
[Proof of Claim 4.]

\begin{vershort}
First, we observe that an $x\in P$ satisfying $s\blacktriangleright x\gtrdot
t$ cannot exist unless $\rank t=\rank s-2$ (because
(\ref{eq.pf.lem.rect.conv.rank-1}) yields that such an $x$ must satisfy
$\rank
x=\rank s-1$ and $\rank t=\rank x-1$, whence $\rank t=\rank x-1=\left(
\rank
s-1\right)  -1=\rank s-2$). Hence, the left hand side of the desired equality
(\ref{pf.lem.rect.conv.c4}) is an empty sum unless $\rank t=\rank s-2$.
Similarly, the same can be said about the right hand side. Thus,
(\ref{pf.lem.rect.conv.c4}) boils down to $0=0$ unless $\rank t=\rank s-2$. We
therefore assume WLOG that $\rank t=\rank s-2$. In other words,
$\rank s-\rank
t=2$. In terms of the way that we draw our poset $P$, this means that the
point $s$ lies two rows above the point $t$.
\end{vershort}

\begin{verlong}
We first observe that Claim 4 trivially holds if $\rank s-\rank t\neq
2$\ \ \ \ \footnote{\textit{Proof.} Assume that $\rank s-\rank t\neq2$.
\par
We claim that there exists no $x\in P$ such that $s\blacktriangleright
x\gtrdot t$. Indeed, assume the contrary. Thus, such an $x$ does exist.
Consider this $x$. Then, (\ref{eq.pf.lem.rect.conv.rank-1}) (applied to $s$
and $x$ instead of $x$ and $y$) yields $\rank x=\rank s-1$ (since
$s\blacktriangleright x$). Also, (\ref{eq.pf.lem.rect.conv.rank-1}) (applied
to $t$ instead of $y$) yields $\rank t=\rank x-1$ (since $x\gtrdot t$). Thus,%
\[
\rank t=\underbrace{\rank x}_{=\rank s-1}-1=\rank s-1-1=\rank s-2,
\]
so that $\rank s-\rank t=2$; but this contradicts $\rank s-\rank t\neq2$. This
contradiction shows that our assumption was false. Hence, we have shown that
there exists no $x\in P$ such that $s\blacktriangleright x\gtrdot t$. Thus,
the sum $\sum_{\substack{x\in P;\\s\blacktriangleright x\gtrdot t}%
}s\overline{x}\upslack^{t}$ is empty. Similarly, we can see that the sum
$\sum_{\substack{x\in P;\\s\gtrdot x\blacktriangleright t}}\downslack^{s}%
x\overline{t}$ is empty. Thus, these two sums are both empty and therefore
both equal $0$. Hence, $\sum_{\substack{x\in P;\\s\blacktriangleright x\gtrdot
t}}s\overline{x}\upslack^{t}=\sum_{\substack{x\in P;\\s\gtrdot
x\blacktriangleright t}}\downslack^{s}x\overline{t}$. We have thus proved
Claim 4 in the case when $\rank s-\rank t\neq2$.}. Thus, for the rest of this
proof, we WLOG assume that $\rank s-\rank t=2$. In terms of the way that we
draw our poset $P$, this means that the point $s$ lies two rows above the
point $t$.
\end{verlong}

The definition of $\upslack_{\ell}^{t}$ yields $\upslack_{\ell}^{t}%
=\overline{\sum\limits_{x\gtrdot t}\overline{x_{\ell}}}\cdot\overline{t_{\ell
}}$. Omitting the subscripts, we can rewrite this as%
\begin{equation}
\upslack^{t}=\overline{\sum\limits_{x\gtrdot t}\overline{x}}\cdot\overline{t}.
\label{pf.lem.rect.conv.c4.pf.1}%
\end{equation}

The definition of $\downslack_{\ell}^{s}$ yields $\downslack_{\ell}%
^{s}=s_{\ell}\cdot\overline{\sum\limits_{x\lessdot s}x_{\ell}}$. Omitting the
subscripts, we can rewrite this as%
\begin{equation}
\downslack^{s}=s\cdot\overline{\sum\limits_{x\lessdot s}x}.
\label{pf.lem.rect.conv.c4.pf.2}%
\end{equation}

\begin{vershort}
Write the elements $s,t\in P$ in the forms $s=\left(  i,j\right)  $ and
$t=\left(  i^{\prime},j^{\prime}\right)  $. Then, $\rank s=i+j-1$ and $\rank
t=i^{\prime}+j^{\prime}-1$. Hence, $\rank s-\rank t=i+j-i^{\prime}-j^{\prime}%
$, so that $i+j-i^{\prime}-j^{\prime}=\rank s-\rank t=2$. Thus, $j^{\prime
}=i+j-i^{\prime}-2$.
\end{vershort}

\begin{verlong}
Write $s\in P$ in the form $s=\left(  i,j\right)  $. Write $t\in P$ in the
form $t=\left(  i^{\prime},j^{\prime}\right)  $. From $s=\left(  i,j\right)
$, we obtain $\rank s=i+j-1$. Likewise, $\rank t=i^{\prime}+j^{\prime}-1$.
Hence,%
\[
\rank s-\rank t=\left(  i+j-1\right)  -\left(  i^{\prime}+j^{\prime}-1\right)
=i+j-i^{\prime}-j^{\prime},
\]
so that $i+j-i^{\prime}-j^{\prime}=\rank s-\rank t=2$. Thus, $j^{\prime
}=i+j-i^{\prime}-2$.
\end{verlong}

We are in one of the following three cases:

\textit{Case 1:} We have $i^{\prime}<i-1$.

\textit{Case 2:} We have $i^{\prime}=i-1$.

\textit{Case 3:} We have $i^{\prime}>i-1$.

Representative examples for these three cases are illustrated in the following
pictures:%
\[%
\begin{tabular}
[c]{|c|c|c|}\hline
Case 1 & \multicolumn{1}{|c|}{Case 2} & Case 3\\\hline
$%
\xymatrix{
& s \are[dl] \are[dr] \\
\bullet& & \bullet& \bullet& & \bullet\\
& & & & t \are[ur] \are[ul]
}%
$ & \multicolumn{1}{|c|}{$%
\xymatrix{
& s \are[dl] \are[dr] \\
\bullet& & \bullet\\
& t \are[ur] \are[ul]
}%
$} & $%
\xymatrix{
& & & s \are[dl] \are[dr] \\
\bullet& & \bullet& & \bullet\\
& t \are[ur] \are[ul]
}%
$\\\hline
\end{tabular}
\ \
\]
(the bullets signify the positions of potential neighbors of $s$ and $t$; some
of these positions may fall outside of $P$, but this does not disturb our
argument). In terms of the way we draw our poset $P$, the three cases can be
reformulated as \textquotedblleft the point $s$ lies further west than
$t$\textquotedblright\ (Case 1), \textquotedblleft the point $s$ lies due
north of $t$\textquotedblright\ (Case 2) and \textquotedblleft the point $s$
lies further east than $t$\textquotedblright\ (Case 3). Note that two elements
$x,y\in P$ satisfy $x\blacktriangleright y$ if and only if $y$ lies one step
south and some arbitrary distance east of $x$ in our pictures.

\begin{vershort}
Let us first consider Case 1. In this case, the point $s$ lies further west
than $t$. Thus, $s$ lies further west than any neighbor of $t$ as
well\footnote{This becomes fairly clear if you draw the configuration and
recall that $s$ lies two rows above $t$ (so that $P$ has points further east
than $s$ but further west than $t$). A rigorous version of this argument
(without reference to pictures) can be found in the detailed version of the
present paper.}. Hence, each element $x$ of $P$ that satisfies $x\gtrdot t$
must satisfy $s\blacktriangleright x$ automatically. Therefore, the summation
sign $\sum_{\substack{x\in P;\\s\blacktriangleright x\gtrdot t}}$ can be
simplified to $\sum_{\substack{x\in P;\\x\gtrdot t}}$, and even further to
$\sum_{x\gtrdot t}$ (because any $x\in\widehat{P}$ that satisfies $x\gtrdot t$
must belong to $P$ automatically\footnote{Indeed, the rank of any such $x$
must lie between the ranks of $s$ and $t$, and thus $x$ cannot be $0$ or $1$%
.}). Hence,%
\begin{align}
\sum_{\substack{x\in P;\\s\blacktriangleright x\gtrdot t}}s\overline
{x}\upslack^{t}  &  =\sum_{x\gtrdot t}s\overline{x}\upslack^{t}=s\left(
\sum_{x\gtrdot t}\overline{x}\right)  \underbrace{\upslack^{t}}%
_{\substack{=\overline{\sum\limits_{x\gtrdot t}\overline{x}}\cdot\overline
{t}\\\text{(by (\ref{pf.lem.rect.conv.c4.pf.1}))}}}=s\underbrace{\left(
\sum_{x\gtrdot t}\overline{x}\right)  \overline{\sum\limits_{x\gtrdot
t}\overline{x}}}_{=1}\cdot\,\overline{t}\nonumber\\
&  =s\overline{t}. \label{pf.lem.rect.conv.c4.c1.pf.short.1}%
\end{align}

Recall again that the point $s$ lies further west than $t$. Thus, any neighbor
of $s$ lies further west than $t$ as well (since $s$ lies two rows above $t$).
Hence, each element $x$ of $P$ that satisfies $s\gtrdot x$ must satisfy
$x\blacktriangleright t$ automatically. Therefore, the summation sign
$\sum_{\substack{x\in P;\\s\gtrdot x\blacktriangleright t}}$ can be simplified
to $\sum_{\substack{x\in P;\\s\gtrdot x}}=\sum_{\substack{x\in P;\\x\lessdot
s}}$, and even further to $\sum_{x\lessdot s}$ (because any $x\in\widehat{P}$
that satisfies $x\lessdot s$ must belong to $P$ automatically\footnote{Indeed,
the rank of any such $x$ must lie between the ranks of $s$ and $t$, and thus
$x$ cannot be $0$ or $1$.}). Hence,%
\[
\sum_{\substack{x\in P;\\s\gtrdot x\blacktriangleright t}}\downslack^{s}%
x\overline{t}=\sum_{x\lessdot s}\downslack^{s}x\overline{t}%
=\underbrace{\downslack^{s}}_{\substack{=s\cdot\overline{\sum
\limits_{x\lessdot s}x}\\\text{(by (\ref{pf.lem.rect.conv.c4.pf.2}))}}}\left(
\sum_{x\lessdot s}x\right)  \overline{t}=s\cdot\underbrace{\overline
{\sum\limits_{x\lessdot s}x}\left(  \sum_{x\lessdot s}x\right)  }%
_{=1}\overline{t}=s\overline{t}.
\]
Comparing this with (\ref{pf.lem.rect.conv.c4.c1.pf.short.1}), we obtain
$\sum_{\substack{x\in P;\\s\blacktriangleright x\gtrdot t}}s\overline
{x}\upslack^{t}=\sum_{\substack{x\in P;\\s\gtrdot x\blacktriangleright
t}}\downslack^{s}x\overline{t}$. Thus, Claim 4 is proved in Case 1.
\end{vershort}

\begin{verlong}
Let us first consider Case 1. In this case, we have $i^{\prime}<i-1$. Thus,
$i^{\prime}+1<i$. Hence, each element $x$ of $P$ that satisfies $x\gtrdot t$
must satisfy $s\blacktriangleright x$ automatically\footnote{\textit{Proof.}
Let $x\in P$ be such that $x\gtrdot t$. We must show that
$s\blacktriangleright x$.
\par
From $x\gtrdot t$, we obtain $\rank t=\rank x-1$ (by
(\ref{eq.pf.lem.rect.conv.rank-1}), applied to $y=t$) and thus
$\rank x-\rank
t=1$. Now,%
\[
\rank s-\rank x=\underbrace{\left(  \rank s-\rank t\right)  }_{=2}%
-\underbrace{\left(  \rank x-\rank t\right)  }_{=1}=2-1=1.
\]
Thus, $\rank s=\rank x+1$.
\par
Furthermore, $x$ is either $\left(  i^{\prime}+1,j^{\prime}\right)  $ or
$\left(  i^{\prime},j^{\prime}+1\right)  $ (since $x\gtrdot t=\left(
i^{\prime},j^{\prime}\right)  $). Hence, $\first x\leq i^{\prime}+1<i=\first
s$ (since $s=\left(  i,j\right)  $), so that $\first s>\first x$. Combining
this with $\rank s=\rank x+1$, we obtain $s\blacktriangleright x$ (by the
definition of the relation $\blacktriangleright$).}. Therefore, the summation
sign $\sum_{\substack{x\in P;\\s\blacktriangleright x\gtrdot t}}$ can be
simplified to $\sum_{\substack{x\in P;\\x\gtrdot t}}$. In turn, the latter
summation sign $\sum_{\substack{x\in P;\\x\gtrdot t}}$ can be rewritten as
$\sum_{\substack{x\in\widehat{P};\\x\gtrdot t}}$ (because any $x\in
\widehat{P}$ that satisfies $x\gtrdot t$ must automatically belong to
$P$\ \ \ \ \footnote{\textit{Proof.} From $t=\left(  i^{\prime},j^{\prime
}\right)  $, we obtain $\first t=i^{\prime}<i-1<i\leq p=\first\left(
p,q\right)  $. Hence, $\first t\neq\first\left(  p,q\right)  $, so that
$t\neq\left(  p,q\right)  $. This shows that $t$ is not a maximal element of
$P$ (since the only maximal element of $P$ is $\left(  p,q\right)  $). In
other words, we don't have $t\lessdot1$ in $\widehat{P}$. In other words, we
don't have $1\gtrdot t$ in $\widehat{P}$. Hence, any $x\in\widehat{P}$ that
satisfies $x\gtrdot t$ must automatically belong to $P$ (since we have neither
$1\gtrdot t$ nor $0\gtrdot t$).}). Hence, we obtain the following equality of
summation signs:%
\[
\sum_{\substack{x\in P;\\s\blacktriangleright x\gtrdot t}}=\sum
_{\substack{x\in P;\\x\gtrdot t}}=\sum_{\substack{x\in\widehat{P};\\x\gtrdot
t}}=\sum_{x\gtrdot t}%
\]
(since our sums are understood to range over $\widehat{P}$ by default). Thus,%
\begin{align}
\sum_{\substack{x\in P;\\s\blacktriangleright x\gtrdot t}}s\overline
{x}\upslack^{t}  &  =\sum_{x\gtrdot t}s\overline{x}\upslack^{t}=s\left(
\sum_{x\gtrdot t}\overline{x}\right)  \underbrace{\upslack^{t}}%
_{\substack{=\overline{\sum\limits_{x\gtrdot t}\overline{x}}\cdot\overline
{t}\\\text{(by (\ref{pf.lem.rect.conv.c4.pf.1}))}}}=s\underbrace{\left(
\sum_{x\gtrdot t}\overline{x}\right)  \overline{\sum\limits_{x\gtrdot
t}\overline{x}}}_{=1}\cdot\,\overline{t}\nonumber\\
&  =s\overline{t}. \label{pf.lem.rect.conv.c4.c1.pf.1}%
\end{align}

Furthermore, $i^{\prime}<i-1$, so that $i-1>i^{\prime}$. Hence, each element
$x$ of $P$ that satisfies $s\gtrdot x$ must satisfy $x\blacktriangleright t$
automatically\footnote{\textit{Proof.} Let $x\in P$ be such that $s\gtrdot x$.
We must show that $x\blacktriangleright t$.
\par
From $s\gtrdot x$, we obtain $\rank x=\rank s-1$ (by
(\ref{eq.pf.lem.rect.conv.rank-1}), applied to $s$ and $x$ instead of $x$ and
$y$). Hence, $\rank s-\rank x=1$, so that%
\[
\rank x-\rank t=\underbrace{\left(  \rank s-\rank t\right)  }_{=2}%
-\underbrace{\left(  \rank s-\rank x\right)  }_{=1}=2-1=1.
\]
Thus, $\rank x=\rank t+1$.
\par
Furthermore, $x$ is either $\left(  i-1,j\right)  $ or $\left(  i,j-1\right)
$ (since $\left(  i,j\right)  =s\gtrdot x$). Hence, $\first x\geq
i-1>i^{\prime}=\first t$ (since $t=\left(  i^{\prime},j^{\prime}\right)  $).
Combining this with $\rank x=\rank t+1$, we obtain $x\blacktriangleright t$
(by the definition of the relation $\blacktriangleright$).}. Therefore, the
summation sign $\sum_{\substack{x\in P;\\s\gtrdot x\blacktriangleright t}}$
can be simplified to $\sum_{\substack{x\in P;\\s\gtrdot x}}$. In turn, the
latter summation sign $\sum_{\substack{x\in P;\\s\gtrdot x}}$ can be rewritten
as $\sum_{\substack{x\in\widehat{P};\\s\gtrdot x}}$ (because any
$x\in\widehat{P}$ that satisfies $s\gtrdot x$ must automatically belong to
$P$\ \ \ \ \footnote{\textit{Proof.} From $s=\left(  i,j\right)  $, we obtain
$\first s=i>i-1>i^{\prime}\geq1=\first\left(  1,1\right)  $. Hence, $\first
s\neq\first\left(  1,1\right)  $, so that $s\neq\left(  1,1\right)  $. This
shows that $s$ is not a minimal element of $P$ (since the only minimal element
of $P$ is $\left(  1,1\right)  $). In other words, we don't have $s\gtrdot0$
in $\widehat{P}$. Hence, any $x\in\widehat{P}$ that satisfies $s\gtrdot x$
must automatically belong to $P$ (since we have neither $s\gtrdot0$ nor
$s\gtrdot1$).}). Hence, we obtain the following equality of summation signs:%
\[
\sum_{\substack{x\in P;\\s\gtrdot x\blacktriangleright t}}=\sum
_{\substack{x\in P;\\s\gtrdot x}}=\sum_{\substack{x\in\widehat{P};\\s\gtrdot
x}}=\sum_{\substack{x\in\widehat{P};\\x\lessdot s}}=\sum_{x\lessdot s}%
\]
(since our sums are understood to range over $\widehat{P}$ by default). Thus,%
\[
\sum_{\substack{x\in P;\\s\gtrdot x\blacktriangleright t}}\downslack^{s}%
x\overline{t}=\sum_{x\lessdot s}\downslack^{s}x\overline{t}%
=\underbrace{\downslack^{s}}_{\substack{=s\cdot\overline{\sum
\limits_{x\lessdot s}x}\\\text{(by (\ref{pf.lem.rect.conv.c4.pf.2}))}}}\left(
\sum_{x\lessdot s}x\right)  \overline{t}=s\cdot\underbrace{\overline
{\sum\limits_{x\lessdot s}x}\left(  \sum_{x\lessdot s}x\right)  }%
_{=1}\overline{t}=s\overline{t}.
\]
Comparing this with (\ref{pf.lem.rect.conv.c4.c1.pf.1}), we obtain
$\sum_{\substack{x\in P;\\s\blacktriangleright x\gtrdot t}}s\overline
{x}\upslack^{t}=\sum_{\substack{x\in P;\\s\gtrdot x\blacktriangleright
t}}\downslack^{s}x\overline{t}$. Thus, Claim 4 is proved in Case 1.
\end{verlong}

Let us now consider Case 2. In this case, we have $i^{\prime}=i-1$. Hence,
$j^{\prime}=i+j-\underbrace{i^{\prime}}_{=i-1}-2=i+j-\left(  i-1\right)
-2=j-1$. Thus, $t=\left(  i^{\prime},j^{\prime}\right)  =\left(
i-1,j-1\right)  $ (since $i^{\prime}=i-1$ and $j^{\prime}=j-1$). Let
$v:=\left(  i,j-1\right)  $ and $w:=\left(  i-1,j\right)  $. In our coordinate
system, the four points%
\[
s=\left(  i,j\right)  ,\ \ \ \ \ \ \ \ \ \ t=\left(  i-1,j-1\right)
,\ \ \ \ \ \ \ \ \ \ v=\left(  i,j-1\right)  ,\ \ \ \ \ \ \ \ \ \ w=\left(
i-1,j\right)
\]
are arranged in a $1\times1$-square, which looks as follows:%
\begin{equation}%
\xymatrix@R=0.6pc@C=0.6pc{
& s \are[dl] \are[dr] \\
v \are[dr] & & w \are[dl] \\
& t & & .
}
\label{pf.lem.rect.conv.c4.c2.diag}%
\end{equation}
Hence, $v$ and $w$ belong to $P$ (since $s$ and $t$ belong to $P$), and
furthermore, Lemma \ref{lem.rect.four} \textbf{(b)} (applied to $s$ and $t$
instead of $u$ and $d$) yields
\[
\overline{w_{\ell}}\cdot\upslack_{\ell}^{t}\cdot t_{\ell}=\overline{s_{\ell}%
}\cdot\downslack_{\ell}^{s}\cdot v_{\ell}.
\]
Since we are omitting subscripts, we can rewrite this as follows:%
\[
\overline{w}\cdot\upslack^{t}\cdot t=\overline{s}\cdot\downslack^{s}\cdot v.
\]

The picture (\ref{pf.lem.rect.conv.c4.c2.diag}) shows that we have
$s\blacktriangleright w$ but not $s\blacktriangleright v$. Hence, there is
only one element $x\in P$ that satisfies $s\blacktriangleright x\gtrdot t$;
namely, this element $x$ is $w$. Hence,%
\begin{align}
\sum_{\substack{x\in P;\\s\blacktriangleright x\gtrdot t}}s\overline
{x}\upslack^{t}  &  =s\overline{w}\upslack^{t}=s\cdot\overline{w}%
\cdot\upslack^{t}\cdot\underbrace{1}_{=t\cdot\overline{t}}=s\cdot
\underbrace{\overline{w}\cdot\upslack^{t}\cdot t}_{=\overline{s}%
\cdot\downslack^{s}\cdot v}\cdot\,\overline{t}=\underbrace{s\cdot\overline{s}%
}_{=1}\cdot\downslack^{s}\cdot v\cdot\overline{t}\nonumber\\
&  =\downslack^{s}\cdot v\cdot\overline{t}. \label{pf.lem.rect.conv.c4.c2.L}%
\end{align}
On the other hand, the picture (\ref{pf.lem.rect.conv.c4.c2.diag}) shows that
we have $v\blacktriangleright t$ but not $w\blacktriangleright t$. Hence,
there is only one element $x\in P$ that satisfies $s\gtrdot
x\blacktriangleright t$; namely, this element $x$ is $v$. Hence,%
\[
\sum_{\substack{x\in P;\\s\gtrdot x\blacktriangleright t}}\downslack^{s}%
x\overline{t}=\downslack^{s}v\overline{t}=\downslack^{s}\cdot v\cdot
\overline{t}.
\]
Comparing this with (\ref{pf.lem.rect.conv.c4.c2.L}), we obtain $\sum
_{\substack{x\in P;\\s\blacktriangleright x\gtrdot t}}s\overline
{x}\upslack^{t}=\sum_{\substack{x\in P;\\s\gtrdot x\blacktriangleright
t}}\downslack^{s}x\overline{t}$. Thus, Claim 4 is proved in Case 2.

Let us finally consider Case 3. In this case, we have $i^{\prime}>i-1$. Thus,
$i^{\prime}\geq i$ (since $i^{\prime}$ and $i$ are integers), so that $i\leq
i^{\prime}$. Note that $i=\first s$ (since $s=\left(  i,j\right)  $) and
$i^{\prime}=\first t$ (since $t=\left(  i^{\prime},j^{\prime}\right)  $).

\begin{vershort}
There exists no $x\in P$ satisfying $s\blacktriangleright x\gtrdot t$ (because
if $x\in P$ satisfies $s\blacktriangleright x\gtrdot t$, then $x\gtrdot
t=\left(  i^{\prime},j^{\prime}\right)  $ entails $\first x\geq i^{\prime}\geq
i=\first s$, but this clearly contradicts $s\blacktriangleright x$). Hence,
the sum $\sum_{\substack{x\in P;\\s\blacktriangleright x\gtrdot t}%
}s\overline{x}\upslack^{t}$ is empty. Thus, $\sum_{\substack{x\in
P;\\s\blacktriangleright x\gtrdot t}}s\overline{x}\upslack^{t}=0$.
\end{vershort}

\begin{verlong}
There exists no $x\in P$ satisfying $s\blacktriangleright x\gtrdot
t$\ \ \ \ \footnote{\textit{Proof.} Assume the contrary. Thus, there exists an
$x\in P$ satisfying $s\blacktriangleright x\gtrdot t$. Consider this $x$.
\par
We have $x\gtrdot t=\left(  i^{\prime},j^{\prime}\right)  $; thus, $x$ is
either $\left(  i^{\prime}+1,j^{\prime}\right)  $ or $\left(  i^{\prime
},j^{\prime}+1\right)  $. Hence, $\first x\geq i^{\prime}\geq i=\first s$.
However, from $s\blacktriangleright x$, we obtain $\first s>\first x$ (by the
definition of the relation $\blacktriangleright$). This contradicts $\first
x\geq\first s$. This contradiction shows that our assumption was false, qed.}.
Hence, the sum $\sum_{\substack{x\in P;\\s\blacktriangleright x\gtrdot
t}}s\overline{x}\upslack^{t}$ is empty. Thus, $\sum_{\substack{x\in
P;\\s\blacktriangleright x\gtrdot t}}s\overline{x}\upslack^{t}=0$.
\end{verlong}

\begin{vershort}
Furthermore, there exists no $x\in P$ satisfying $s\gtrdot
x\blacktriangleright t$ (because if $x\in P$ satisfies $s\gtrdot
x\blacktriangleright t$, then $\left(  i,j\right)  =s\gtrdot x$ entails
$\first x\leq i\leq i^{\prime}=\first t$; but this clearly contradicts
$x\blacktriangleright t$). Hence, the sum $\sum_{\substack{x\in P;\\s\gtrdot
x\blacktriangleright t}}\downslack^{s}x\overline{t}$ is empty. Thus,
$\sum_{\substack{x\in P;\\s\gtrdot x\blacktriangleright t}}\downslack^{s}%
x\overline{t}=0$.
\end{vershort}

\begin{verlong}
Furthermore, there exists no $x\in P$ satisfying $s\gtrdot
x\blacktriangleright t$\ \ \ \ \footnote{\textit{Proof.} Assume the contrary.
Thus, there exists an $x\in P$ satisfying $s\gtrdot x\blacktriangleright t$.
Consider this $x$.
\par
We have $s\gtrdot x$, so that $x\lessdot s=\left(  i,j\right)  $; thus, $x$ is
either $\left(  i-1,j\right)  $ or $\left(  i,j-1\right)  $. Hence, $\first
x\leq i\leq i^{\prime}=\first t$. However, from $x\blacktriangleright t$, we
obtain $\first x>\first t$ (by the definition of the relation
$\blacktriangleright$). This contradicts $\first x\leq\first t$. This
contradiction shows that our assumption was false, qed.}. Hence, the sum
$\sum_{\substack{x\in P;\\s\gtrdot x\blacktriangleright t}}\downslack^{s}%
x\overline{t}$ is empty. Thus, $\sum_{\substack{x\in P;\\s\gtrdot
x\blacktriangleright t}}\downslack^{s}x\overline{t}=0$.
\end{verlong}

Comparing this with $\sum_{\substack{x\in P;\\s\blacktriangleright x\gtrdot
t}}s\overline{x}\upslack^{t}=0$, we obtain $\sum_{\substack{x\in
P;\\s\blacktriangleright x\gtrdot t}}s\overline{x}\upslack^{t}=\sum
_{\substack{x\in P;\\s\gtrdot x\blacktriangleright t}}\downslack^{s}%
x\overline{t}$. Thus, Claim 4 is proved in Case 3.

We have now proved Claim 4 in all three cases.
\end{proof}

We can now step to the proof of Claim 3:

\begin{proof}
[Proof of Claim 3.]Let $j\in\left\{  0,1,\ldots,r-2\right\}  $.

\begin{vershort}
We know that any path-jump-path from $u$ to $d^{\prime}$ must have the form
\newline$\left(  v_{0}\gtrdot v_{1}\gtrdot\cdots\gtrdot v_{i}%
\blacktriangleright v_{i+1}\gtrdot v_{i+2}\gtrdot\cdots\gtrdot v_{r}\right)
$. If such a path-jump-path has jump at $j$, then it must have the form
$\left(  v_{0}\gtrdot v_{1}\gtrdot\cdots\gtrdot v_{j}\blacktriangleright
v_{j+1}\gtrdot v_{j+2}\gtrdot\cdots\gtrdot v_{r}\right)  $. Thus,%
\begin{align*}
&  \sum_{\substack{\mathbf{p}\text{ is a path-jump-path}\\\text{from }u\text{
to }d^{\prime}\\\text{with jump at }j}}E_{\mathbf{p}}\\
&  =\sum_{\substack{\left(  v_{0}\gtrdot v_{1}\gtrdot\cdots\gtrdot
v_{j}\blacktriangleright v_{j+1}\gtrdot v_{j+2}\gtrdot\cdots\gtrdot
v_{r}\right)  \\\text{is a path-jump-path}\\\text{from }u\text{ to }d^{\prime
}\\\text{with jump at }j}}\underbrace{E_{\left(  v_{0}\gtrdot v_{1}%
\gtrdot\cdots\gtrdot v_{j}\blacktriangleright v_{j+1}\gtrdot v_{j+2}%
\gtrdot\cdots\gtrdot v_{r}\right)  }}_{\substack{=\downslack^{v_{0}%
}\downslack^{v_{1}}\cdots\downslack^{v_{j-1}}v_{j}\overline{v_{j+1}%
}\upslack^{v_{j+2}}\upslack^{v_{j+3}}\cdots\upslack^{v_{r}}\\\text{(by the
definition of }E_{\left(  v_{0}\gtrdot v_{1}\gtrdot\cdots\gtrdot
v_{j}\blacktriangleright v_{j+1}\gtrdot v_{j+2}\gtrdot\cdots\gtrdot
v_{r}\right)  }\text{)}}}\\
&  =\sum_{\substack{\left(  v_{0}\gtrdot v_{1}\gtrdot\cdots\gtrdot
v_{j}\blacktriangleright v_{j+1}\gtrdot v_{j+2}\gtrdot\cdots\gtrdot
v_{r}\right)  \\\text{is a path-jump-path}\\\text{from }u\text{ to }d^{\prime
}\\\text{with jump at }j}}\downslack^{v_{0}}\downslack^{v_{1}}\cdots
\downslack^{v_{j-1}}v_{j}\overline{v_{j+1}}\upslack^{v_{j+2}}\upslack^{v_{j+3}%
}\cdots\upslack^{v_{r}}\\
&  =\sum_{\substack{\left(  v_{0}\gtrdot v_{1}\gtrdot\cdots\gtrdot
v_{j}\right)  \\\text{is a path starting at }u}}\ \ \sum_{\substack{\left(
v_{j+2}\gtrdot v_{j+3}\gtrdot\cdots\gtrdot v_{r}\right)  \\\text{is a path
ending at }d^{\prime}}}\ \ \sum_{\substack{v_{j+1}\in P;\\v_{j}%
\blacktriangleright v_{j+1}\gtrdot v_{j+2}}}\downslack^{v_{0}}%
\downslack^{v_{1}}\cdots\downslack^{v_{j-1}}v_{j}\overline{v_{j+1}%
}\upslack^{v_{j+2}}\upslack^{v_{j+3}}\cdots\upslack^{v_{r}}\\
&  \ \ \ \ \ \ \ \ \ \ \ \ \ \ \ \ \ \ \ \ \left(
\begin{array}
[c]{c}%
\text{here, we have broken up our}\\
\text{path-jump-path }\left(  v_{0}\gtrdot v_{1}\gtrdot\cdots\gtrdot
v_{j}\blacktriangleright v_{j+1}\gtrdot v_{j+2}\gtrdot\cdots\gtrdot
v_{r}\right) \\
\text{into two paths }\left(  v_{0}\gtrdot v_{1}\gtrdot\cdots\gtrdot
v_{j}\right)  \text{ and }\left(  v_{j+2}\gtrdot v_{j+3}\gtrdot\cdots\gtrdot
v_{r}\right) \\
\text{and an intermediate vertex }v_{j+1}\text{ satisfying }v_{j}%
\blacktriangleright v_{j+1}\gtrdot v_{j+2}%
\end{array}
\right) \\
&  =\sum_{\substack{\left(  v_{0}\gtrdot v_{1}\gtrdot\cdots\gtrdot
v_{j}\right)  \\\text{is a path starting at }u}}\ \ \sum_{\substack{\left(
v_{j+2}\gtrdot v_{j+3}\gtrdot\cdots\gtrdot v_{r}\right)  \\\text{is a path
ending at }d^{\prime}}}\ \ \sum_{\substack{x\in P;\\v_{j}\blacktriangleright
x\gtrdot v_{j+2}}}\downslack^{v_{0}}\downslack^{v_{1}}\cdots
\downslack^{v_{j-1}}v_{j}\overline{x}\upslack^{v_{j+2}}\upslack^{v_{j+3}%
}\cdots\upslack^{v_{r}}\\
&  \ \ \ \ \ \ \ \ \ \ \ \ \ \ \ \ \ \ \ \ \left(  \text{here we have renamed
}v_{j+1}\text{ as }x\text{ in the inner sum}\right) \\
&  =\sum_{\substack{\left(  v_{0}\gtrdot v_{1}\gtrdot\cdots\gtrdot
v_{j}\right)  \\\text{is a path starting at }u}}\ \ \sum_{\substack{\left(
v_{j+2}\gtrdot v_{j+3}\gtrdot\cdots\gtrdot v_{r}\right)  \\\text{is a path
ending at }d^{\prime}}}\downslack^{v_{0}}\downslack^{v_{1}}\cdots
\downslack^{v_{j-1}}\underbrace{\sum_{\substack{x\in P;\\v_{j}%
\blacktriangleright x\gtrdot v_{j+2}}}v_{j}\overline{x}\upslack^{v_{j+2}}%
}_{\substack{=\sum_{\substack{x\in P;\\v_{j}\gtrdot x\blacktriangleright
v_{j+2}}}\downslack^{v_{j}}x\overline{v_{j+2}}\\\text{(by Claim 4,}%
\\\text{applied to }s=v_{j}\\\text{and }t=v_{j+2}\text{)}}}\upslack^{v_{j+3}%
}\upslack^{v_{j+4}}\cdots\upslack^{v_{r}}\\
&  =\sum_{\substack{\left(  v_{0}\gtrdot v_{1}\gtrdot\cdots\gtrdot
v_{j}\right)  \\\text{is a path starting at }u}}\ \ \sum_{\substack{\left(
v_{j+2}\gtrdot v_{j+3}\gtrdot\cdots\gtrdot v_{r}\right)  \\\text{is a path
ending at }d^{\prime}}}\downslack^{v_{0}}\downslack^{v_{1}}\cdots
\downslack^{v_{j-1}}\sum_{\substack{x\in P;\\v_{j}\gtrdot x\blacktriangleright
v_{j+2}}}\downslack^{v_{j}}x\overline{v_{j+2}}\upslack^{v_{j+3}}%
\upslack^{v_{j+4}}\cdots\upslack^{v_{r}}.
\end{align*}

We know that any path-jump-path from $u$ to $d^{\prime}$ must have the form
\newline$\left(  v_{0}\gtrdot v_{1}\gtrdot\cdots\gtrdot v_{i}%
\blacktriangleright v_{i+1}\gtrdot v_{i+2}\gtrdot\cdots\gtrdot v_{r}\right)
$. If such a path-jump-path has jump at $j+1$, then it must have the form
$\left(  v_{0}\gtrdot v_{1}\gtrdot\cdots\gtrdot v_{j+1}\blacktriangleright
v_{j+2}\gtrdot v_{j+3}\gtrdot\cdots\gtrdot v_{r}\right)  $. Thus,%
\begin{align*}
&  \sum_{\substack{\mathbf{p}\text{ is a path-jump-path}\\\text{from }u\text{
to }d^{\prime}\\\text{with jump at }j+1}}E_{\mathbf{p}}\\
&  =\sum_{\substack{\left(  v_{0}\gtrdot v_{1}\gtrdot\cdots\gtrdot
v_{j+1}\blacktriangleright v_{j+2}\gtrdot v_{j+3}\gtrdot\cdots\gtrdot
v_{r}\right)  \\\text{is a path-jump-path}\\\text{from }u\text{ to }d^{\prime
}\\\text{with jump at }j+1}}\underbrace{E_{\left(  v_{0}\gtrdot v_{1}%
\gtrdot\cdots\gtrdot v_{j+1}\blacktriangleright v_{j+2}\gtrdot v_{j+3}%
\gtrdot\cdots\gtrdot v_{r}\right)  }}_{\substack{=\downslack^{v_{0}%
}\downslack^{v_{1}}\cdots\downslack^{v_{j}}v_{j+1}\overline{v_{j+2}%
}\upslack^{v_{j+3}}\upslack^{v_{j+4}}\cdots\upslack^{v_{r}}\\\text{(by the
definition of }E_{\left(  v_{0}\gtrdot v_{1}\gtrdot\cdots\gtrdot
v_{j+1}\blacktriangleright v_{j+2}\gtrdot v_{j+3}\gtrdot\cdots\gtrdot
v_{r}\right)  }\text{)}}}\\
&  =\sum_{\substack{\left(  v_{0}\gtrdot v_{1}\gtrdot\cdots\gtrdot
v_{j+1}\blacktriangleright v_{j+2}\gtrdot v_{j+3}\gtrdot\cdots\gtrdot
v_{r}\right)  \\\text{is a path-jump-path}\\\text{from }u\text{ to }d^{\prime
}\\\text{with jump at }j+1}}\downslack^{v_{0}}\downslack^{v_{1}}%
\cdots\downslack^{v_{j}}v_{j+1}\overline{v_{j+2}}\upslack^{v_{j+3}%
}\upslack^{v_{j+4}}\cdots\upslack^{v_{r}}\\
&  =\sum_{\substack{\left(  v_{0}\gtrdot v_{1}\gtrdot\cdots\gtrdot
v_{j}\right)  \\\text{is a path starting at }u}}\ \ \sum_{\substack{\left(
v_{j+2}\gtrdot v_{j+3}\gtrdot\cdots\gtrdot v_{r}\right)  \\\text{is a path
ending at }d^{\prime}}}\ \ \sum_{\substack{v_{j+1}\in P;\\v_{j}\gtrdot
v_{j+1}\blacktriangleright v_{j+2}}}\downslack^{v_{0}}\downslack^{v_{1}}%
\cdots\downslack^{v_{j}}v_{j+1}\overline{v_{j+2}}\upslack^{v_{j+3}%
}\upslack^{v_{j+4}}\cdots\upslack^{v_{r}}\\
&  =\sum_{\substack{\left(  v_{0}\gtrdot v_{1}\gtrdot\cdots\gtrdot
v_{j}\right)  \\\text{is a path starting at }u}}\ \ \sum_{\substack{\left(
v_{j+2}\gtrdot v_{j+3}\gtrdot\cdots\gtrdot v_{r}\right)  \\\text{is a path
ending at }d^{\prime}}}\ \ \sum_{\substack{x\in P;\\v_{j}\gtrdot
x\blacktriangleright v_{j+2}}}\downslack^{v_{0}}\downslack^{v_{1}}%
\cdots\downslack^{v_{j}}x\overline{v_{j+2}}\upslack^{v_{j+3}}\upslack^{v_{j+4}%
}\cdots\upslack^{v_{r}}\\
&  \ \ \ \ \ \ \ \ \ \ \ \ \ \ \ \ \ \ \ \ \left(  \text{here we have renamed
}v_{j+1}\text{ as }x\text{ in the inner sum}\right) \\
&  =\sum_{\substack{\left(  v_{0}\gtrdot v_{1}\gtrdot\cdots\gtrdot
v_{j}\right)  \\\text{is a path starting at }u}}\ \ \sum_{\substack{\left(
v_{j+2}\gtrdot v_{j+3}\gtrdot\cdots\gtrdot v_{r}\right)  \\\text{is a path
ending at }d^{\prime}}}\downslack^{v_{0}}\downslack^{v_{1}}\cdots
\downslack^{v_{j-1}}\sum_{\substack{x\in P;\\v_{j}\gtrdot x\blacktriangleright
v_{j+2}}}\downslack^{v_{j}}x\overline{v_{j+2}}\upslack^{v_{j+3}}%
\upslack^{v_{j+4}}\cdots\upslack^{v_{r}}.
\end{align*}

\end{vershort}

\begin{verlong}
We know that any path-jump-path from $u$ to $d^{\prime}$ must have the form
\newline$\left(  v_{0}\gtrdot v_{1}\gtrdot\cdots\gtrdot v_{i}%
\blacktriangleright v_{i+1}\gtrdot v_{i+2}\gtrdot\cdots\gtrdot v_{r}\right)
$. If such a path-jump-path has jump at $j$, then it must have the form
$\left(  v_{0}\gtrdot v_{1}\gtrdot\cdots\gtrdot v_{j}\blacktriangleright
v_{j+1}\gtrdot v_{j+2}\gtrdot\cdots\gtrdot v_{r}\right)  $. Thus,%
\begin{align*}
&  \sum_{\substack{\mathbf{p}\text{ is a path-jump-path}\\\text{from }u\text{
to }d^{\prime}\\\text{with jump at }j}}E_{\mathbf{p}}\\
&  =\sum_{\substack{\left(  v_{0}\gtrdot v_{1}\gtrdot\cdots\gtrdot
v_{j}\blacktriangleright v_{j+1}\gtrdot v_{j+2}\gtrdot\cdots\gtrdot
v_{r}\right)  \\\text{is a path-jump-path}\\\text{from }u\text{ to }d^{\prime
}\\\text{with jump at }j}}\underbrace{E_{\left(  v_{0}\gtrdot v_{1}%
\gtrdot\cdots\gtrdot v_{j}\blacktriangleright v_{j+1}\gtrdot v_{j+2}%
\gtrdot\cdots\gtrdot v_{r}\right)  }}_{\substack{=\downslack^{v_{0}%
}\downslack^{v_{1}}\cdots\downslack^{v_{j-1}}v_{j}\overline{v_{j+1}%
}\upslack^{v_{j+2}}\upslack^{v_{j+3}}\cdots\upslack^{v_{r}}\\\text{(by the
definition of }E_{\left(  v_{0}\gtrdot v_{1}\gtrdot\cdots\gtrdot
v_{j}\blacktriangleright v_{j+1}\gtrdot v_{j+2}\gtrdot\cdots\gtrdot
v_{r}\right)  }\text{)}}}\\
&  =\sum_{\substack{\left(  v_{0}\gtrdot v_{1}\gtrdot\cdots\gtrdot
v_{j}\blacktriangleright v_{j+1}\gtrdot v_{j+2}\gtrdot\cdots\gtrdot
v_{r}\right)  \\\text{is a path-jump-path}\\\text{from }u\text{ to }d^{\prime
}\\\text{with jump at }j}}\downslack^{v_{0}}\downslack^{v_{1}}\cdots
\downslack^{v_{j-1}}v_{j}\overline{v_{j+1}}\upslack^{v_{j+2}}\upslack^{v_{j+3}%
}\cdots\upslack^{v_{r}}\\
&  =\sum_{\substack{\left(  v_{0}\gtrdot v_{1}\gtrdot\cdots\gtrdot
v_{j}\right)  \\\text{is a path starting at }u}}\ \ \sum_{\substack{\left(
v_{j+2}\gtrdot v_{j+3}\gtrdot\cdots\gtrdot v_{r}\right)  \\\text{is a path
ending at }d^{\prime}}}\ \ \sum_{\substack{v_{j+1}\in P;\\v_{j}%
\blacktriangleright v_{j+1}\gtrdot v_{j+2}}}\downslack^{v_{0}}%
\downslack^{v_{1}}\cdots\downslack^{v_{j-1}}v_{j}\overline{v_{j+1}%
}\upslack^{v_{j+2}}\upslack^{v_{j+3}}\cdots\upslack^{v_{r}}\\
&  \ \ \ \ \ \ \ \ \ \ \ \ \ \ \ \ \ \ \ \ \left(
\begin{array}
[c]{c}%
\text{here, we have broken up our}\\
\text{path-jump-path }\left(  v_{0}\gtrdot v_{1}\gtrdot\cdots\gtrdot
v_{j}\blacktriangleright v_{j+1}\gtrdot v_{j+2}\gtrdot\cdots\gtrdot
v_{r}\right) \\
\text{into two paths }\left(  v_{0}\gtrdot v_{1}\gtrdot\cdots\gtrdot
v_{j}\right)  \text{ and }\left(  v_{j+2}\gtrdot v_{j+3}\gtrdot\cdots\gtrdot
v_{r}\right) \\
\text{and an intermediate vertex }v_{j+1}\text{ satisfying }v_{j}%
\blacktriangleright v_{j+1}\gtrdot v_{j+2}%
\end{array}
\right) \\
&  =\sum_{\substack{\left(  v_{0}\gtrdot v_{1}\gtrdot\cdots\gtrdot
v_{j}\right)  \\\text{is a path starting at }u}}\ \ \sum_{\substack{\left(
v_{j+2}\gtrdot v_{j+3}\gtrdot\cdots\gtrdot v_{r}\right)  \\\text{is a path
ending at }d^{\prime}}}\ \ \underbrace{\sum_{\substack{x\in P;\\v_{j}%
\blacktriangleright x\gtrdot v_{j+2}}}\downslack^{v_{0}}\downslack^{v_{1}%
}\cdots\downslack^{v_{j-1}}v_{j}\overline{x}\upslack^{v_{j+2}}%
\upslack^{v_{j+3}}\cdots\upslack^{v_{r}}}_{\substack{=\sum_{\substack{x\in
P;\\v_{j}\blacktriangleright x\gtrdot v_{j+2}}}\downslack^{v_{0}%
}\downslack^{v_{1}}\cdots\downslack^{v_{j-1}}v_{j}\overline{x}%
\upslack^{v_{j+2}}\upslack^{v_{j+3}}\upslack^{v_{j+4}}\cdots\upslack^{v_{r}%
}\\=\downslack^{v_{0}}\downslack^{v_{1}}\cdots\downslack^{v_{j-1}}%
\sum_{\substack{x\in P;\\v_{j}\blacktriangleright x\gtrdot v_{j+2}}%
}v_{j}\overline{x}\upslack^{v_{j+2}}\upslack^{v_{j+3}}\upslack^{v_{j+4}}%
\cdots\upslack^{v_{r}}}}\\
&  \ \ \ \ \ \ \ \ \ \ \ \ \ \ \ \ \ \ \ \ \left(  \text{here we have renamed
}v_{j+1}\text{ as }x\text{ in the inner sum}\right) \\
&  =\sum_{\substack{\left(  v_{0}\gtrdot v_{1}\gtrdot\cdots\gtrdot
v_{j}\right)  \\\text{is a path starting at }u}}\ \ \sum_{\substack{\left(
v_{j+2}\gtrdot v_{j+3}\gtrdot\cdots\gtrdot v_{r}\right)  \\\text{is a path
ending at }d^{\prime}}}\downslack^{v_{0}}\downslack^{v_{1}}\cdots
\downslack^{v_{j-1}}\underbrace{\sum_{\substack{x\in P;\\v_{j}%
\blacktriangleright x\gtrdot v_{j+2}}}v_{j}\overline{x}\upslack^{v_{j+2}}%
}_{\substack{=\sum_{\substack{x\in P;\\v_{j}\gtrdot x\blacktriangleright
v_{j+2}}}\downslack^{v_{j}}x\overline{v_{j+2}}\\\text{(by Claim 4,}%
\\\text{applied to }s=v_{j}\\\text{and }t=v_{j+2}\text{)}}}\upslack^{v_{j+3}%
}\upslack^{v_{j+4}}\cdots\upslack^{v_{r}}\\
&  =\sum_{\substack{\left(  v_{0}\gtrdot v_{1}\gtrdot\cdots\gtrdot
v_{j}\right)  \\\text{is a path starting at }u}}\ \ \sum_{\substack{\left(
v_{j+2}\gtrdot v_{j+3}\gtrdot\cdots\gtrdot v_{r}\right)  \\\text{is a path
ending at }d^{\prime}}}\downslack^{v_{0}}\downslack^{v_{1}}\cdots
\downslack^{v_{j-1}}\sum_{\substack{x\in P;\\v_{j}\gtrdot x\blacktriangleright
v_{j+2}}}\downslack^{v_{j}}x\overline{v_{j+2}}\upslack^{v_{j+3}}%
\upslack^{v_{j+4}}\cdots\upslack^{v_{r}}.
\end{align*}

We know that any path-jump-path from $u$ to $d^{\prime}$ must have the form
\newline$\left(  v_{0}\gtrdot v_{1}\gtrdot\cdots\gtrdot v_{i}%
\blacktriangleright v_{i+1}\gtrdot v_{i+2}\gtrdot\cdots\gtrdot v_{r}\right)
$. If such a path-jump-path has jump at $j+1$, then it must have the form
$\left(  v_{0}\gtrdot v_{1}\gtrdot\cdots\gtrdot v_{j+1}\blacktriangleright
v_{j+2}\gtrdot v_{j+3}\gtrdot\cdots\gtrdot v_{r}\right)  $. Thus,%
\begin{align*}
&  \sum_{\substack{\mathbf{p}\text{ is a path-jump-path}\\\text{from }u\text{
to }d^{\prime}\\\text{with jump at }j+1}}E_{\mathbf{p}}\\
&  =\sum_{\substack{\left(  v_{0}\gtrdot v_{1}\gtrdot\cdots\gtrdot
v_{j+1}\blacktriangleright v_{j+2}\gtrdot v_{j+3}\gtrdot\cdots\gtrdot
v_{r}\right)  \\\text{is a path-jump-path}\\\text{from }u\text{ to }d^{\prime
}\\\text{with jump at }j+1}}\underbrace{E_{\left(  v_{0}\gtrdot v_{1}%
\gtrdot\cdots\gtrdot v_{j+1}\blacktriangleright v_{j+2}\gtrdot v_{j+3}%
\gtrdot\cdots\gtrdot v_{r}\right)  }}_{\substack{=\downslack^{v_{0}%
}\downslack^{v_{1}}\cdots\downslack^{v_{j}}v_{j+1}\overline{v_{j+2}%
}\upslack^{v_{j+3}}\upslack^{v_{j+4}}\cdots\upslack^{v_{r}}\\\text{(by the
definition of }E_{\left(  v_{0}\gtrdot v_{1}\gtrdot\cdots\gtrdot
v_{j+1}\blacktriangleright v_{j+2}\gtrdot v_{j+3}\gtrdot\cdots\gtrdot
v_{r}\right)  }\text{)}}}\\
&  =\sum_{\substack{\left(  v_{0}\gtrdot v_{1}\gtrdot\cdots\gtrdot
v_{j+1}\blacktriangleright v_{j+2}\gtrdot v_{j+3}\gtrdot\cdots\gtrdot
v_{r}\right)  \\\text{is a path-jump-path}\\\text{from }u\text{ to }d^{\prime
}\\\text{with jump at }j+1}}\downslack^{v_{0}}\downslack^{v_{1}}%
\cdots\downslack^{v_{j}}v_{j+1}\overline{v_{j+2}}\upslack^{v_{j+3}%
}\upslack^{v_{j+4}}\cdots\upslack^{v_{r}}\\
&  =\sum_{\substack{\left(  v_{0}\gtrdot v_{1}\gtrdot\cdots\gtrdot
v_{j}\right)  \\\text{is a path starting at }u}}\ \ \sum_{\substack{\left(
v_{j+2}\gtrdot v_{j+3}\gtrdot\cdots\gtrdot v_{r}\right)  \\\text{is a path
ending at }d^{\prime}}}\ \ \sum_{\substack{v_{j+1}\in P;\\v_{j}\gtrdot
v_{j+1}\blacktriangleright v_{j+2}}}\downslack^{v_{0}}\downslack^{v_{1}}%
\cdots\downslack^{v_{j}}v_{j+1}\overline{v_{j+2}}\upslack^{v_{j+3}%
}\upslack^{v_{j+4}}\cdots\upslack^{v_{r}}\\
&  \ \ \ \ \ \ \ \ \ \ \ \ \ \ \ \ \ \ \ \ \left(
\begin{array}
[c]{c}%
\text{here, we have broken up our}\\
\text{path-jump-path }\left(  v_{0}\gtrdot v_{1}\gtrdot\cdots\gtrdot
v_{j+1}\blacktriangleright v_{j+2}\gtrdot v_{j+3}\gtrdot\cdots\gtrdot
v_{r}\right) \\
\text{into two paths }\left(  v_{0}\gtrdot v_{1}\gtrdot\cdots\gtrdot
v_{j}\right)  \text{ and }\left(  v_{j+2}\gtrdot v_{j+3}\gtrdot\cdots\gtrdot
v_{r}\right) \\
\text{and an intermediate vertex }v_{j+1}\text{ satisfying }v_{j}\gtrdot
v_{j+1}\blacktriangleright v_{j+2}%
\end{array}
\right) \\
&  =\sum_{\substack{\left(  v_{0}\gtrdot v_{1}\gtrdot\cdots\gtrdot
v_{j}\right)  \\\text{is a path starting at }u}}\ \ \sum_{\substack{\left(
v_{j+2}\gtrdot v_{j+3}\gtrdot\cdots\gtrdot v_{r}\right)  \\\text{is a path
ending at }d^{\prime}}}\ \ \underbrace{\sum_{\substack{x\in P;\\v_{j}\gtrdot
x\blacktriangleright v_{j+2}}}\downslack^{v_{0}}\downslack^{v_{1}}%
\cdots\downslack^{v_{j}}x\overline{v_{j+2}}\upslack^{v_{j+3}}\upslack^{v_{j+4}%
}\cdots\upslack^{v_{r}}}_{\substack{=\sum_{\substack{x\in P;\\v_{j}\gtrdot
x\blacktriangleright v_{j+2}}}\downslack^{v_{0}}\downslack^{v_{1}}%
\cdots\downslack^{v_{j-1}}\downslack^{v_{j}}x\overline{v_{j+2}}%
\upslack^{v_{j+3}}\upslack^{v_{j+4}}\cdots\upslack^{v_{r}}\\=\downslack^{v_{0}%
}\downslack^{v_{1}}\cdots\downslack^{v_{j-1}}\sum_{\substack{x\in
P;\\v_{j}\gtrdot x\blacktriangleright v_{j+2}}}\downslack^{v_{j}}%
x\overline{v_{j+2}}\upslack^{v_{j+3}}\upslack^{v_{j+4}}\cdots\upslack^{v_{r}}%
}}\\
&  \ \ \ \ \ \ \ \ \ \ \ \ \ \ \ \ \ \ \ \ \left(  \text{here we have renamed
}v_{j+1}\text{ as }x\text{ in the inner sum}\right) \\
&  =\sum_{\substack{\left(  v_{0}\gtrdot v_{1}\gtrdot\cdots\gtrdot
v_{j}\right)  \\\text{is a path starting at }u}}\ \ \sum_{\substack{\left(
v_{j+2}\gtrdot v_{j+3}\gtrdot\cdots\gtrdot v_{r}\right)  \\\text{is a path
ending at }d^{\prime}}}\downslack^{v_{0}}\downslack^{v_{1}}\cdots
\downslack^{v_{j-1}}\sum_{\substack{x\in P;\\v_{j}\gtrdot x\blacktriangleright
v_{j+2}}}\downslack^{v_{j}}x\overline{v_{j+2}}\upslack^{v_{j+3}}%
\upslack^{v_{j+4}}\cdots\upslack^{v_{r}}.
\end{align*}

\end{verlong}

Comparing our last two equalities, we obtain%
\[
\sum_{\substack{\mathbf{p}\text{ is a path-jump-path}\\\text{from }u\text{ to
}d^{\prime}\\\text{with jump at }j}}E_{\mathbf{p}}=\sum_{\substack{\mathbf{p}%
\text{ is a path-jump-path}\\\text{from }u\text{ to }d^{\prime}\\\text{with
jump at }j+1}}E_{\mathbf{p}}.
\]
Thus, Claim 3 is proven.
\end{proof}

We have now proved all three Claims 1, 2 and 3. As we explained, this
completes the proof of Lemma \ref{lem.rect.conv}.
\end{proof}

\begin{remark}
\label{rmk.lem.rect.conv.2}Parts of the above proof of Lemma
\ref{lem.rect.conv} can be rewritten in a more abstract (although probably not
shorter) manner, avoiding the notion of a \textquotedblleft
path-jump-path\textquotedblright\ and the nested sums that appeared in our
proof of Claim 3.

To rewrite the proof, we need the notion of $P\times P$-matrices.
A $P\times P$\emph{-matrix} is a matrix whose rows and columns are indexed not
by integers but by elements of $P$. (That is, it is a family of elements of
$\mathbb{K}$ indexed by pairs $\left(  i,j\right)  \in P\times P$.) If $C$ is
any $P\times P$-matrix, and if $i$ and $j$ are two elements of $P$, then the
$\left(  i,j\right)  $-th entry of $C$ is denoted by $C_{i,j}$. Addition and
multiplication are defined for $P\times P$-matrices in the same way as they
are for usual matrices. That is, for any $P\times P$-matrices $C$ and $D$ and
any $\left(  i,j\right)  \in P\times P$, we have%
\[
\left(  C+D\right)  _{i,j}=C_{i,j}+D_{i,j}\ \ \ \ \ \ \ \ \ \ \text{and}%
\ \ \ \ \ \ \ \ \ \ \left(  CD\right)  _{i,j}=\sum_{k\in P}C_{i,k}D_{k,j}.
\]

For any statement $\mathcal{A}$, we let $\left[  \mathcal{A}\right]  $ be the
Iverson bracket (i.e., truth value) of $\mathcal{A}$. That is, $\left[
\mathcal{A}\right]  =1$ if $\mathcal{A}$ is true, and $\left[  \mathcal{A}%
\right]  =0$ if $\mathcal{A}$ is false.

Now, let $\ell\in\mathbb{N}$. Define three $P\times P$-matrices $\bfdownslack$%
, $\bfupslack$ and $\mathbf{U}$ by
\begin{align*}
\bfdownslack_{x,y}  &  :=\downslack^{x}\left[  x\gtrdot y\right]  ,\\
\bfupslack_{x,y}  &  :=\upslack^{y}\left[  x\gtrdot y\right]  ,\\
\mathbf{U}_{x,y}  &  :=x\overline{y}\left[  x\blacktriangleright y\right]
\ \ \ \ \ \ \ \ \ \ \ \text{for all }x,y\in P.
\end{align*}
Here, the relation $x\blacktriangleright y$ is defined as in the above proof
of Lemma \ref{lem.rect.conv}, and we are again omitting the \textquotedblleft%
$\ell$\textquotedblright\ subscripts, so (for instance) \textquotedblleft%
$x\overline{y}$\textquotedblright\ actually means $x_{\ell}\overline{y_{\ell}%
}$.

Now, Claim 4 in our above proof of Lemma \ref{lem.rect.conv} can be rewritten
in a nice and compact form as the equality%
\[
\bfdownslack \mathbf{U}=\mathbf{U}\bfupslack.
\]
From this, we easily obtain%
\begin{equation}
\bfdownslack^{k}\mathbf{U}=\mathbf{U}\bfupslack^{k}%
\ \ \ \ \ \ \ \ \ \ \text{for any }k\in\mathbb{N}.
\label{eq.rmk.lem.rect.conv.2.k}%
\end{equation}
This equality essentially replaces Claim 3 in the above proof.

Setting $k=\rank u-\rank d$ in (\ref{eq.rmk.lem.rect.conv.2.k}), and comparing
the $\left(  u,d^{\prime}\right)  $-entries of both sides, we quickly obtain
$\downslack^{u\rightarrow d}=\upslack^{u^{\prime}\rightarrow d^{\prime}}$
(since $x\blacktriangleright d^{\prime}$ holds only for $x=d$, and since
$u\blacktriangleright x$ holds only for $x=u^{\prime}$). This proves Lemma
\ref{lem.rect.conv} again.
\end{remark}

\section{\label{sec.Rfi1}Proof of reciprocity: the case $j=1$}

Using the conversion lemma, we can now easily prove Theorem
\ref{thm.rect.antip} in the case when $j=1$:

\begin{lemma}
\label{lem.rect.i1}Assume that $P$ is the $p\times q$-rectangle $\left[
p\right]  \times\left[  q\right]  $. Let $i\in\left[  p\right]  $. Let
$\ell\in\mathbb{N}$ satisfy $\ell\geq i$. Let $f\in\mathbb{K}^{\widehat{P}}$
be a $\mathbb{K}$-labeling such that $R^{\ell}f\neq\undf$. Let $a=f\left(
0\right)  $ and $b=f\left(  1\right)  $. Then, using the notations from
Section \ref{sec.proof-nots}, we have%
\[
\left(  i,1\right)  _{\ell}=a\cdot\overline{\left(  p+1-i,\ q\right)
_{\ell-i}}\cdot b.
\]

\end{lemma}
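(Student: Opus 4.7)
\emph{Plan.} The case $i = 1$ is exactly Lemma \ref{lem.rect.antip.11full}, after noting that $(p + 1 - 1, q)_{\ell - 1} = (p, q)_{\ell - 1} = (R^{\ell - 1} f)(p, q)$. For $i \geq 2$, I note that $\ell \geq i \geq 2$ together with $R^\ell f \neq \undf$ forces $R^2 f \neq \undf$ (by Lemma \ref{lem.R.wd-triv}), and hence $a = f(0)$ is invertible (by Lemma \ref{lem.R.01inv}). This invertibility (together with $R^\ell f \neq \undf$) guarantees by Lemma \ref{lem.slacks.wd} that all $\downslack$ and $\upslack$ quantities encountered below are well-defined and invertible.

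The strategy is to express both sides of the desired equality as path sums. By Theorem \ref{thm.rect.path}(c),
\[
(i, 1)_\ell = \overline{\upslack_\ell^{(p, q) \to (i, 1)}} \cdot b,
\]
and by Theorem \ref{thm.rect.path}(d) applied at subscript $\ell - i$ (noting $R^{\ell - i + 1} f \neq \undf$ holds because $\ell - i + 1 \leq \ell$),
\[
(p + 1 - i, q)_{\ell - i} = \downslack_{\ell - i}^{(p + 1 - i, q) \to (1, 1)} \cdot a.
\]
Consequently, the lemma reduces to proving the path-sum identity
\[
\upslack_\ell^{(p, q) \to (i, 1)} = \downslack_{\ell - i}^{(p + 1 - i, q) \to (1, 1)}.
\]

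I would establish this identity by alternately applying the transition equation $\upslack_{\ell'}^{u \to v} = \downslack_{\ell' - 1}^{u \to v}$ of Corollary \ref{cor.rect.trans-uv} and the conversion lemma (Lemma \ref{lem.rect.conv}). Starting from $\upslack_\ell^{(p, q) \to (i, 1)}$, one transition yields $\downslack_{\ell - 1}^{(p, q) \to (i, 1)}$, and one conversion then yields $\upslack_{\ell - 1}^{(p - 1, q) \to (i - 1, 1)}$. Each such pair decreases the subscript by one while simultaneously shifting the NE endpoint one step down the NE edge and the SW endpoint one step up the SW edge. After $i - 1$ pairs followed by one final transition, we arrive at $\downslack_{\ell - i}^{(p + 1 - i, q) \to (1, 1)}$, which is the desired identity. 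The main obstacle is purely a matter of bookkeeping: the conversion lemma requires subscript $\geq 1$ and a nonvanishing hypothesis one step above, so each intermediate step uses $\ell - k - 1 \geq 1$ for $k \leq i - 2$ (which holds because $\ell \geq i$) and $R^{\ell - k} f \neq \undf$ (which follows from $R^\ell f \neq \undf$ via Lemma \ref{lem.R.wd-triv}). All the substantive content has already been absorbed into Lemma \ref{lem.rect.conv}.
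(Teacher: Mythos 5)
Your proposal is correct and follows essentially the same route as the paper's proof: the case $i=1$ via Lemma \ref{lem.rect.antip.11full}, then for $i\geq 2$ the reduction via Theorem \ref{thm.rect.path} \textbf{(c)} and \textbf{(d)} to the path-sum identity $\upslack_{\ell}^{\left(p,q\right)\rightarrow\left(i,1\right)}=\downslack_{\ell-i}^{\left(p+1-i,\ q\right)\rightarrow\left(1,1\right)}$, which is established by the same alternation of Corollary \ref{cor.rect.trans-uv} and Lemma \ref{lem.rect.conv} with the same well-definedness bookkeeping.
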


\begin{vershort}

\begin{proof}
We have $\ell\geq i\geq1$. Hence, Lemma \ref{lem.rect.antip.11full} yields
that $\left(  R^{\ell}f\right)  \left(  1,1\right)  =a\cdot\overline{\left(
R^{\ell-1}f\right)  \left(  p,q\right)  }\cdot b$. In other words, $\left(
1,1\right)  _{\ell}=a\cdot\overline{\left(  p,q\right)  _{\ell-1}}\cdot b$.
This proves Lemma \ref{lem.rect.i1} in the case when $i=1$.

Hence, for the rest of this proof, we WLOG assume that $i\neq1$. Thus,
$i\geq2$, so that $\ell\geq i\geq2$, and therefore $R^{2}f\neq\undf$ (by Lemma
\ref{lem.R.wd-triv}, since $R^{\ell}f\neq\undf$). Hence, Lemma
\ref{lem.R.01inv} yields that $a$ and $b$ are invertible (since $a=f\left(
0\right)  $ and $b=f\left(  1\right)  $).

We have $\ell-i+1\geq1$ (since $\ell\geq i$) and $\ell-i+1\leq\ell$ (since
$i\geq1$). The latter inequality entails $R^{\ell-i+1}f\neq\undf$ (by Lemma
\ref{lem.R.wd-triv}, since $R^{\ell}f\neq\undf$). Thus, Lemma
\ref{lem.slacks.wd} \textbf{(b)} (applied to $\left(  p-i+1,\ q\right)  $ and
$\ell-i+1$ instead of $v$ and $\ell$) yields that the element $\left(
p-i+1,\ q\right)  _{\ell-i}$ is well-defined and invertible.

Theorem \ref{thm.rect.path} \textbf{(d)} (applied to $\left(
p-i+1,\ q\right)  $ and $\ell-i$ instead of $u$ and $\ell$) yields
\[
\left(  p-i+1,\ q\right)  _{\ell-i}=\downslack_{\ell-i}^{\left(
p-i+1,\ q\right)  \rightarrow\left(  1,\ 1\right)  }\cdot a.
\]
Solving this for $\downslack_{\ell-i}^{\left(  p-i+1,\ q\right)
\rightarrow\left(  1,\ 1\right)  }$, we obtain $\downslack_{\ell-i}^{\left(
p-i+1,\ q\right)  \rightarrow\left(  1,\ 1\right)  }=\left(  p-i+1,\ q\right)
_{\ell-i}\cdot\overline{a}$, and thus%
\begin{equation}
\overline{\downslack_{\ell-i}^{\left(  p-i+1,\ q\right)  \rightarrow\left(
1,\ 1\right)  }}=\overline{\left(  p-i+1,\ q\right)  _{\ell-i}\cdot
\overline{a}}=a\cdot\overline{\left(  p-i+1,\ q\right)  _{\ell-i}}.
\label{pf.lem.rect.i1.short.1}%
\end{equation}

For each $k\in\left\{  0,1,\ldots,i-2\right\}  $, we have%
\begin{align}
\upslack_{\ell-k}^{\left(  p-k,\ q\right)  \rightarrow\left(  i-k,\ 1\right)
}  &  =\downslack_{\ell-k-1}^{\left(  p-k,\ q\right)  \rightarrow\left(
i-k,\ 1\right)  }\ \ \ \ \ \ \ \ \ \ \left(
\begin{array}
[c]{c}%
\text{by (\ref{eq.prop.rect.trans.uv}), since we can easily}\\
\text{find }\ell-k\geq2\geq1\text{ and }R^{\ell-k}f\neq\undf
\end{array}
\right) \nonumber\\
&  =\upslack_{\ell-k-1}^{\left(  p-k-1,\ q\right)  \rightarrow\left(
i-k-1,\ 1\right)  }\ \ \ \ \ \ \ \ \ \ \left(
\begin{array}
[c]{c}%
\text{by Lemma \ref{lem.rect.conv}, applied to}\\
\left(  p-k,\ q\right)  \text{, }\left(  p-k-1,\ q\right)  \text{,}\\
\left(  i-k,\ 1\right)  \text{, }\left(  i-k-1,\ 1\right) \\
\text{and }\ell-k-1\\
\text{instead of }u\text{, }u^{\prime}\text{, }d\text{, }d^{\prime}\text{ and
}\ell
\end{array}
\right) \nonumber\\
&  =\upslack_{\ell-\left(  k+1\right)  }^{\left(  p-\left(  k+1\right)
,\ q\right)  \rightarrow\left(  i-\left(  k+1\right)  ,\ 1\right)  }.
\label{pf.lem.rect.i1.short.2}%
\end{align}
Now,%
\begin{align}
\upslack_{\ell}^{\left(  p,\ q\right)  \rightarrow\left(  i,\ 1\right)  }  &
=\upslack_{\ell-0}^{\left(  p-0,\ q\right)  \rightarrow\left(  i-0,\ 1\right)
}\nonumber\\
&  =\upslack_{\ell-1}^{\left(  p-1,\ q\right)  \rightarrow\left(
i-1,\ 1\right)  }\ \ \ \ \ \ \ \ \ \ \left(  \text{by
(\ref{pf.lem.rect.i1.short.2}), applied to }k=0\right) \nonumber\\
&  =\upslack_{\ell-2}^{\left(  p-2,\ q\right)  \rightarrow\left(
i-2,\ 1\right)  }\ \ \ \ \ \ \ \ \ \ \left(  \text{by
(\ref{pf.lem.rect.i1.short.2}), applied to }k=1\right) \nonumber\\
&  =\cdots\nonumber\\
&  =\upslack_{\ell-\left(  i-1\right)  }^{\left(  p-\left(  i-1\right)
,\ q\right)  \rightarrow\left(  i-\left(  i-1\right)  ,\ 1\right)
}\ \ \ \ \ \ \ \ \ \ \left(  \text{by (\ref{pf.lem.rect.i1.short.2}), applied
to }k=i-2\right) \nonumber\\
&  =\upslack_{\ell-i+1}^{\left(  p-i+1,\ q\right)  \rightarrow\left(
1,\ 1\right)  }\ \ \ \ \ \ \ \ \ \ \left(
\begin{array}
[c]{c}%
\text{since }p-\left(  i-1\right)  =p-i+1\\
\text{and }i-\left(  i-1\right)  =1\\
\text{and }\ell-\left(  i-1\right)  =\ell-i+1
\end{array}
\right) \nonumber\\
&  =\downslack_{\ell-i}^{\left(  p-i+1,\ q\right)  \rightarrow\left(
1,\ 1\right)  }\ \ \ \ \ \ \ \ \ \ \left(  \text{by
(\ref{eq.prop.rect.trans.uv})}\right)  . \label{pf.lem.rect.i1.short.3}%
\end{align}
However, Theorem \ref{thm.rect.path} \textbf{(c)} (applied to $u=\left(
i,\ 1\right)  $) yields
\begin{align*}
\left(  i,\ 1\right)  _{\ell}  &  =\overline{\upslack_{\ell}^{\left(
p,\ q\right)  \rightarrow\left(  i,\ 1\right)  }}\cdot b=\overline
{\downslack_{\ell-i}^{\left(  p-i+1,\ q\right)  \rightarrow\left(
1,\ 1\right)  }}\cdot b\ \ \ \ \ \ \ \ \ \ \left(  \text{by
(\ref{pf.lem.rect.i1.short.3})}\right) \\
&  =a\cdot\overline{\left(  p-i+1,\ q\right)  _{\ell-i}}\cdot
b\ \ \ \ \ \ \ \ \ \ \left(  \text{by (\ref{pf.lem.rect.i1.short.1})}\right)
\\
&  =a\cdot\overline{\left(  p+1-i,\ q\right)  _{\ell-i}}\cdot b.
\end{align*}
This proves Lemma \ref{lem.rect.i1}.
\end{proof}
\end{vershort}

\begin{verlong}

\begin{proof}
We have $\ell\geq i\geq1$ (since $i\in\left[  p\right]  $). Also, from
$i\in\left[  p\right]  $ and $1\in\left[  q\right]  $, we obtain $\left(
i,1\right)  \in\left[  p\right]  \times\left[  q\right]  =P$. Furthermore,
from $i\in\left[  p\right]  $, we obtain $p+1-i\in\left[  p\right]  $.
Combined with $q\in\left[  q\right]  $, this leads to $\left(
p+1-i,\ q\right)  \in\left[  p\right]  \times\left[  q\right]  =P$.

If $\ell=1$, then the claim of Lemma \ref{lem.rect.i1} easily follows from
Lemma \ref{lem.rect.antip.11full}\footnote{\textit{Proof.} Assume that
$\ell=1$. Then, $1=\ell\geq i$, so that $i\leq1$ and therefore $i=1$ (since
$i\in\left[  p\right]  $). However, Lemma \ref{lem.rect.antip.11full} yields%
\begin{equation}
\left(  R^{\ell}f\right)  \left(  1,1\right)  =a\cdot\overline{\left(
R^{\ell-1}f\right)  \left(  p,q\right)  }\cdot b.
\label{pf.lem.rect.i1.exclude.1}%
\end{equation}
Since we are using the notations from Section \ref{sec.proof-nots}, we have
\[
\left(  i,1\right)  _{\ell}=\left(  R^{\ell}f\right)  \left(  i,1\right)
=\left(  R^{\ell}f\right)  \left(  1,1\right)  \ \ \ \ \ \ \ \ \ \ \left(
\text{since }i=1\right)
\]
and
\begin{align*}
\left(  p+1-i,\ q\right)  _{\ell-i}  &  =\left(  R^{\ell-i}f\right)  \left(
p+1-i,\ q\right)  =\left(  R^{\ell-1}f\right)  \left(  \underbrace{p+1-1}%
_{=p},\ q\right)  \ \ \ \ \ \ \ \ \ \ \left(  \text{since }i=1\right) \\
&  =\left(  R^{\ell-1}f\right)  \left(  p,q\right)  .
\end{align*}
Thus, we can rewrite (\ref{pf.lem.rect.i1.exclude.1}) as%
\[
\left(  i,1\right)  _{\ell}=a\cdot\overline{\left(  p+1-i,\ q\right)
_{\ell-i}}\cdot b.
\]
Hence, Lemma \ref{lem.rect.i1} is proved under the assumption that $\ell=1$.}.
Thus, for the rest of this proof, we WLOG assume that $\ell\neq1$. Hence,
$\ell\geq2$ (since $\ell\geq1$). Thus, $2\leq\ell$. Hence, from $R^{\ell}%
f\neq\undf$, we obtain $R^{2}f\neq\undf$ (by Lemma \ref{lem.R.wd-triv}).
Hence, Lemma \ref{lem.R.01inv} yields that $f\left(  0\right)  $ and $f\left(
1\right)  $ are invertible. In other words, $a$ and $b$ are invertible (since
$a=f\left(  0\right)  $ and $b=f\left(  1\right)  $).

We have $\underbrace{\ell}_{\geq i}-i+1\geq i-i+1=1$. Furthermore,
$\ell-\underbrace{i}_{\geq1}+1\leq\ell-1+1=\ell$ and thus $R^{\ell-i+1}%
f\neq\undf$ (by Lemma \ref{lem.R.wd-triv}, since $R^{\ell}f\neq\undf$). Thus,
Lemma \ref{lem.slacks.wd} \textbf{(b)} (applied to $\left(  p-i+1,\ q\right)
$ and $\ell-i+1$ instead of $v$ and $\ell$) yields that the element $\left(
p-i+1,\ q\right)  _{\ell-i}$ is well-defined and invertible.

Furthermore, Theorem \ref{thm.rect.path} \textbf{(d)} (applied to $\ell-i$ and
$\left(  p-i+1,\ q\right)  $ instead of $\ell$ and $u$) yields $\left(
p-i+1,\ q\right)  _{\ell-i}=\downslack_{\ell-i}^{\left(  p-i+1,\ q\right)
\rightarrow\left(  1,1\right)  }\cdot a$ (since $R^{\ell-i+1}f\neq\undf$).
Solving this for $\downslack_{\ell-i}^{\left(  p-i+1,\ q\right)
\rightarrow\left(  1,1\right)  }$, we obtain
\begin{equation}
\downslack_{\ell-i}^{\left(  p-i+1,\ q\right)  \rightarrow\left(  1,1\right)
}=\left(  p-i+1,\ q\right)  _{\ell-i}\cdot\overline{a}.
\label{pf.lem.rect.i1.0}%
\end{equation}
The right hand side of this equality is a product of two invertible elements
(since both $\left(  p-i+1,\ q\right)  _{\ell-i}$ and $\overline{a}$ are
invertible), and thus is invertible. Hence, the left hand side is invertible
as well. Taking reciprocals on both sides of (\ref{pf.lem.rect.i1.0}), we now
obtain%
\begin{align}
\overline{\downslack_{\ell-i}^{\left(  p-i+1,\ q\right)  \rightarrow\left(
1,1\right)  }}  &  =\overline{\left(  p-i+1,\ q\right)  _{\ell-i}%
\cdot\overline{a}}\nonumber\\
&  =a\cdot\overline{\left(  p-i+1,\ q\right)  _{\ell-i}}.
\label{pf.lem.rect.i1.1}%
\end{align}

Now, using Lemma \ref{lem.rect.conv} and Proposition \ref{prop.rect.trans}, we
can easily see the following: For each $k\in\left\{  0,1,\ldots,i-2\right\}
$, we have%
\begin{equation}
\upslack_{\ell-k}^{\left(  p-k,\ q\right)  \rightarrow\left(  i-k,\ 1\right)
}=\upslack_{\ell-\left(  k+1\right)  }^{\left(  p-\left(  k+1\right)
,\ q\right)  \rightarrow\left(  i-\left(  k+1\right)  ,\ 1\right)  }.
\label{pf.lem.rect.i1.2}%
\end{equation}

[\textit{Proof of (\ref{pf.lem.rect.i1.2}):} Let $k\in\left\{  0,1,\ldots
,i-2\right\}  $. Then, $k\leq i-2<\underbrace{i}_{\leq\ell}-1\leq\ell-1$, so
that $\ell-1>k$ and thus $\ell-k>1$.

From $k<i-1$, we also obtain $1<i-k$, so that $1\leq i-k-1$ (since $1$ and
$i-k$ are integers). Also, $i\in\left[  p\right]  $, so that $i\leq p$ and
thus $i-k\leq p-k$. Furthermore, $k\geq0$, so that $p-k\leq p$.

Now, we have $i-k-1\in\left[  p\right]  $ (since $1\leq i-k-1$ and $i-k-1\leq
i-k\leq p-k\leq p$), so that $\left(  i-k-1,\ 1\right)  \in\left[  p\right]
\times\left[  q\right]  =P$.

Furthermore, we have $i-k\in\left[  p\right]  $ (since $1\leq i-k-1\leq i-k$
and $i-k\leq p-k\leq p$), so that $\left(  i-k,\ 1\right)  \in\left[
p\right]  \times\left[  q\right]  =P$.

Furthermore, we have $p-k-1\in\left[  p\right]  $ (since $1\leq\underbrace{i}%
_{\leq p}-k-1\leq p-k-1$ and $p-k-1\leq p-k\leq p$), so that $\left(
p-k-1,\ q\right)  \in\left[  p\right]  \times\left[  q\right]  =P$.

Furthermore, we have $p-k\in\left[  p\right]  $ (since $1\leq i-k-1\leq
i-k\leq p-k$ and $p-k\leq p$), so that $\left(  p-k,\ q\right)  \in\left[
p\right]  \times\left[  q\right]  =P$.

Also, $\ell-\underbrace{k}_{\geq0}\leq\ell$ and therefore $R^{\ell-k}%
f\neq\undf$ (by Lemma \ref{lem.R.wd-triv}, since $R^{\ell}f\neq\undf$). Hence,
(\ref{eq.prop.rect.trans.uv}) (applied to $\left(  p-k,\ q\right)  $, $\left(
i-k,\ 1\right)  $ and $\ell-k$ instead of $u$, $v$ and $\ell$) yields%
\[
\upslack_{\ell-k}^{\left(  p-k,\ q\right)  \rightarrow\left(  i-k,\ 1\right)
}=\downslack_{\ell-k-1}^{\left(  p-k,\ q\right)  \rightarrow\left(
i-k,\ 1\right)  }.
\]
However, $\left(  p-k,\ q\right)  $ and $\left(  p-k-1,\ q\right)  $ are two
elements of the northeastern edge of $P$ satisfying $\left(  p-k,\ q\right)
\gtrdot\left(  p-k-1,\ q\right)  $, whereas $\left(  i-k,\ 1\right)  $ and
$\left(  i-k-1,\ 1\right)  $ are two elements of the southwestern edge of $P$
satisfying $\left(  i-k,\ 1\right)  \gtrdot\left(  i-k-1,\ 1\right)  $. We
furthermore have $\ell-k-1\geq1$ (since $\ell-k>1$) and $R^{\ell
-k-1+1}f=R^{\ell-k}f\neq\undf$. Thus, Lemma \ref{lem.rect.conv} (applied to
$\left(  p-k,\ q\right)  $, $\left(  p-k-1,\ q\right)  $, $\left(
i-k,\ 1\right)  $, $\left(  i-k-1,\ 1\right)  $ and $\ell-k-1$ instead of $u$,
$u^{\prime}$, $d$, $d^{\prime}$ and $\ell$) yields%
\[
\downslack_{\ell-k-1}^{\left(  p-k,\ q\right)  \rightarrow\left(
i-k,\ 1\right)  }=\upslack_{\ell-k-1}^{\left(  p-k-1,\ q\right)
\rightarrow\left(  i-k-1,\ 1\right)  }.
\]
Combining what we have shown, we now obtain
\[
\upslack_{\ell-k}^{\left(  p-k,\ q\right)  \rightarrow\left(  i-k,\ 1\right)
}=\downslack_{\ell-k-1}^{\left(  p-k,\ q\right)  \rightarrow\left(
i-k,\ 1\right)  }=\upslack_{\ell-k-1}^{\left(  p-k-1,\ q\right)
\rightarrow\left(  i-k-1,\ 1\right)  }=\upslack_{\ell-\left(  k+1\right)
}^{\left(  p-\left(  k+1\right)  ,\ q\right)  \rightarrow\left(  i-\left(
k+1\right)  ,\ 1\right)  }%
\]
(since $\ell-k-1=\ell-\left(  k+1\right)  $ and $p-k-1=p-\left(  k+1\right)  $
and $i-k-1=i-\left(  k+1\right)  $). This proves (\ref{pf.lem.rect.i1.2}).]
\medskip

Now,%
\begin{align}
\upslack_{\ell}^{\left(  p,\ q\right)  \rightarrow\left(  i,\ 1\right)  }  &
=\upslack_{\ell-0}^{\left(  p-0,\ q\right)  \rightarrow\left(  i-0,\ 1\right)
}\ \ \ \ \ \ \ \ \ \ \left(  \text{since }p=p-0\text{ and }i=i-0\text{ and
}\ell=\ell-0\right) \nonumber\\
&  =\upslack_{\ell-1}^{\left(  p-1,\ q\right)  \rightarrow\left(
i-1,\ 1\right)  }\ \ \ \ \ \ \ \ \ \ \left(  \text{by (\ref{pf.lem.rect.i1.2}%
), applied to }k=0\right) \nonumber\\
&  =\upslack_{\ell-2}^{\left(  p-2,\ q\right)  \rightarrow\left(
i-2,\ 1\right)  }\ \ \ \ \ \ \ \ \ \ \left(  \text{by (\ref{pf.lem.rect.i1.2}%
), applied to }k=1\right) \nonumber\\
&  =\cdots\nonumber\\
&  =\upslack_{\ell-\left(  i-1\right)  }^{\left(  p-\left(  i-1\right)
,\ q\right)  \rightarrow\left(  i-\left(  i-1\right)  ,\ 1\right)
}\ \ \ \ \ \ \ \ \ \ \left(  \text{by (\ref{pf.lem.rect.i1.2}), applied to
}k=i-2\right) \nonumber\\
&  =\upslack_{\ell-i+1}^{\left(  p-i+1,\ q\right)  \rightarrow\left(
1,\ 1\right)  }\ \ \ \ \ \ \ \ \ \ \left(
\begin{array}
[c]{c}%
\text{since }p-\left(  i-1\right)  =p-i+1\\
\text{and }i-\left(  i-1\right)  =1\\
\text{and }\ell-\left(  i-1\right)  =\ell-i+1
\end{array}
\right) \nonumber\\
&  =\downslack_{\ell-i}^{\left(  p-i+1,\ q\right)  \rightarrow\left(
1,\ 1\right)  } \label{pf.lem.rect.i1.3}%
\end{align}
(by (\ref{eq.prop.rect.trans.uv}), applied to $\ell-i+1$, $\left(
p-i+1,\ q\right)  $ and $\left(  1,\ 1\right)  $ instead of $\ell$, $u$ and
$v$).

However, Theorem \ref{thm.rect.path} \textbf{(c)} (applied to $u=\left(
i,1\right)  $) yields
\begin{align*}
\left(  i,1\right)  _{\ell}  &  =\overline{\upslack_{\ell}^{\left(
p,\ q\right)  \rightarrow\left(  i,\ 1\right)  }}\cdot b=\overline
{\downslack_{\ell-i}^{\left(  p-i+1,\ q\right)  \rightarrow\left(
1,\ 1\right)  }}\cdot b\ \ \ \ \ \ \ \ \ \ \left(  \text{by
(\ref{pf.lem.rect.i1.3})}\right) \\
&  =a\cdot\overline{\left(  p-i+1,\ q\right)  _{\ell-i}}\cdot
b\ \ \ \ \ \ \ \ \ \ \left(  \text{by (\ref{pf.lem.rect.i1.1})}\right) \\
&  =a\cdot\overline{\left(  p+1-i,\ q\right)  _{\ell-i}}\cdot
b\ \ \ \ \ \ \ \ \ \ \left(  \text{since }p-i+1=p+1-i\right)  .
\end{align*}
This proves Lemma \ref{lem.rect.i1}.
\end{proof}
\end{verlong}

In analogy to Lemma \ref{lem.rect.i1}, we have the following:

\begin{lemma}
\label{lem.rect.1j}Assume that $P$ is the $p\times q$-rectangle $\left[
p\right]  \times\left[  q\right]  $. Let $j\in\left[  q\right]  $. Let
$\ell\in\mathbb{N}$ satisfy $\ell\geq j$. Let $f\in\mathbb{K}^{\widehat{P}}$
be a $\mathbb{K}$-labeling such that $R^{\ell}f\neq\undf$. Let $a=f\left(
0\right)  $ and $b=f\left(  1\right)  $. Then, using the notations from
Section \ref{sec.proof-nots}, we have%
\[
\left(  1,j\right)  _{\ell}=a\cdot\overline{\left(  p,\ q+1-j\right)
_{\ell-j}}\cdot b.
\]

\end{lemma}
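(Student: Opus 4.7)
The plan is to deduce Lemma \ref{lem.rect.1j} from Lemma \ref{lem.rect.i1} via the transpose symmetry of the rectangle. Set $P' := \left[q\right] \times \left[p\right]$, and define a bijection $\sigma : \widehat{P} \to \widehat{P'}$ by $\sigma(i,j) := (j,i)$ for $(i,j) \in P$, and $\sigma(0) := 0$ and $\sigma(1) := 1$. Since the partial order on a rectangle is componentwise, $\sigma$ restricts to a poset isomorphism $P \to P'$ and extends to a poset isomorphism $\widehat{P} \to \widehat{P'}$. In particular, $\sigma$ and $\sigma^{-1}$ both preserve the covering relation $\lessdot$. Define the transported $\mathbb{K}$-labeling $f' := f \circ \sigma^{-1} \in \mathbb{K}^{\widehat{P'}}$. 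Since $\sigma(0) = 0$ and $\sigma(1) = 1$, we have $f'(0) = f(0) = a$ and $f'(1) = f(1) = b$.

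Next, I would check that birational rowmotion is natural with respect to $\sigma$: denoting by $R'$ and $T'_v$ the rowmotion and toggles on $P'$, each toggle satisfies $T'_{\sigma(v)}(g \circ \sigma^{-1}) = (T_v g) \circ \sigma^{-1}$ for every $v \in P$ and $g \in \mathbb{K}^{\widehat{P}}$ (with matching domains of definition), because the defining formula \eqref{def.Tv.def} uses only $f$ and the covering relations of $\widehat{P}$, both of which are preserved by $\sigma$. Composing toggles along a linear extension transported via $\sigma$ (and invoking Corollary \ref{cor.R.welldef} for well-definedness), it follows by induction on $\ell$ that $R'^{\ell} f' \neq \undf$ whenever $R^\ell f \neq \undf$, and that
\begin{equation}
(R'^{\ell} f')(x) \;=\; (R^\ell f)(\sigma^{-1}(x)) \qquad \text{for all } x \in \widehat{P'}. \label{eq.transp.nat}
\end{equation}

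Now I would apply Lemma \ref{lem.rect.i1} to the $q \times p$-rectangle $P'$, the labeling $f'$, and the indices $j$ and $\ell$ (with $j$ playing the role of $i$, and $q,p$ playing the roles of $p,q$). The hypotheses $j \in [q]$, $\ell \geq j$, $R'^\ell f' \neq \undf$, $f'(0) = a$, $f'(1) = b$ are all satisfied. Writing $(x)'_\ell := (R'^\ell f')(x)$, the lemma yields
\[
(j,1)'_\ell \;=\; a \cdot \overline{(q+1-j,\ p)'_{\ell-j}} \cdot b.
\]
Using \eqref{eq.transp.nat}, we have $(j,1)'_\ell = (R^\ell f)(\sigma^{-1}(j,1)) = (R^\ell f)(1,j) = (1,j)_\ell$ and $(q+1-j,\ p)'_{\ell-j} = (R^{\ell-j}f)(\sigma^{-1}(q+1-j,\ p)) = (R^{\ell-j}f)(p,\ q+1-j) = (p,\ q+1-j)_{\ell-j}$. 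Substituting gives the claim.

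The only nontrivial point is the naturality statement \eqref{eq.transp.nat}; the main obstacle is just being careful about partial maps, i.e., checking that $R'^\ell f'$ is defined exactly when $R^\ell f$ is. This follows from the equally careful observation that $T'_{\sigma(v)}(g \circ \sigma^{-1}) = \undf$ iff $T_v g = \undf$ (both reducing to invertibility of the same elements of $\mathbb{K}$), combined with an induction on $\ell$. Everything else is formal bookkeeping.
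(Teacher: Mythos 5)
Your proof is correct, and it rests on the same coordinate-swap symmetry that the paper invokes; the difference is in how that symmetry is cashed out. The paper's proof of Lemma \ref{lem.rect.1j} is a one-liner declaring that the proof is \emph{analogous} to that of Lemma \ref{lem.rect.i1} with the two coordinates interchanged -- i.e., the reader is asked to rerun the entire argument (the chain of applications of Theorem \ref{thm.rect.path}, Corollary \ref{cor.rect.trans-uv} and Lemma \ref{lem.rect.conv}) mutatis mutandis on the northeastern/southwestern edges swapped. You instead derive Lemma \ref{lem.rect.1j} as a formal corollary of the \emph{statement} of Lemma \ref{lem.rect.i1}, applied to the transposed rectangle $\left[q\right]\times\left[p\right]$ with the transported labeling $f\circ\sigma^{-1}$. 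The price you pay is having to verify the equivariance of toggles and of $R$ under poset isomorphisms (including the matching of domains of definition, since these are partial maps); this is routine, since the defining formula (\ref{def.Tv.def}) depends only on the labels and the cover relations, and Corollary \ref{cor.R.welldef} lets you transport a linear extension. What you gain is that nothing from the earlier proof needs to be re-examined: the transposed rectangle is literally an instance of the hypothesis of Lemma \ref{lem.rect.i1}, so no hidden asymmetry in the earlier argument could invalidate the conclusion. Both routes are sound; yours is the more self-contained of the two.
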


\begin{proof}
The two coordinates $u$ and $v$ of an element $\left(  u,v\right)  \in P$ play
symmetric roles. Lemma \ref{lem.rect.1j} is just Lemma \ref{lem.rect.i1} with
the roles of these two coordinates interchanged. Thus, the proof of Lemma
\ref{lem.rect.1j} is analogous to the proof of Lemma \ref{lem.rect.i1}.
\end{proof}

\section{\label{sec.gencase}Proof of reciprocity: the general case}

Somewhat surprisingly, the general case of Theorem \ref{thm.rect.antip}
follows by a fairly straightforward induction argument from Lemma
\ref{lem.rect.i1}:

\begin{vershort}

\begin{proof}
[Proof of Theorem \ref{thm.rect.antip}.]We again use the notations from
Section \ref{sec.proof-nots}.

For any $\left(  i,j\right)  \in P$, we define $\tilt \left(  i,j\right)  $ to
be the positive integer $i+2j$. Our goal is to prove
(\ref{pf.thm.rect.antip.restate1}) for each $x=\left(  i,j\right)  \in P$ and
$\ell\in\mathbb{N}$ satisfying $\ell-i-j+1\geq0$ and $R^{\ell}f\neq\undf$. We
will now prove this by strong induction on $\tilt x$.

\textit{Induction step:} Fix $N\in\mathbb{N}$. Assume (as the induction
hypothesis) that

\begin{statement}
(\ref{pf.thm.rect.antip.restate1}) holds for each $x=\left(  i,j\right)  \in
P$ satisfying $\tilt x<N$ and each $\ell\in\mathbb{N}$ satisfying
$\ell-i-j+1\geq0$ and $R^{\ell}f\neq\undf$.
\end{statement}

We now fix an element $v=\left(  i,j\right)  \in P$ satisfying $\tilt v=N$ and
an $\ell\in\mathbb{N}$ satisfying $\ell-i-j+1\geq0$ and $R^{\ell}f\neq\undf$.
Our goal is to prove that (\ref{pf.thm.rect.antip.restate1}) holds for $x=v$.
In other words, our goal is to prove that $v_{\ell}=a\cdot\overline
{v_{\ell-i-j+1}^{\sim}}\cdot b$. \medskip

We have $N=\tilt v=i+2j$ (since $v=\left(  i,j\right)  $). We are in one of
the following six cases:

\textit{Case 1:} We have $i=1$.

\textit{Case 2:} We have $j=1$.

\textit{Case 3:} We have $j=2$ and $1<i<p$.

\textit{Case 4:} We have $j=2$ and $i=p>1$.

\textit{Case 5:} We have $j>2$ and $1<i<p$.

\textit{Case 6:} We have $j>2$ and $i=p>1$. \medskip

Let us first consider Case 1. In this case, we have $i=1$. Thus, $v=\left(
i,j\right)  =\left(  1,j\right)  $ (since $i=1$). The definition of $v^{\sim}$
thus yields $v^{\sim}=\left(  p+1-1,\ q+1-j\right)  =\left(  p,\ q+1-j\right)
$. Also, $\ell-\underbrace{i}_{=1}-j+1=\ell-1-j+1=\ell-j$, so that
$\ell-j=\ell-i-j+1\geq0$. In other words, $\ell\geq j$. Hence, Lemma
\ref{lem.rect.1j} yields%
\[
\left(  1,j\right)  _{\ell}=a\cdot\overline{\left(  p,\ q+1-j\right)
_{\ell-j}}\cdot b.
\]
In view of $v=\left(  1,j\right)  $ and $v^{\sim}=\left(  p,\ q+1-j\right)  $
and $\ell-i-j+1=\ell-j$, we can rewrite this as $v_{\ell}=a\cdot
\overline{v_{\ell-i-j+1}^{\sim}}\cdot b$. Thus, $v_{\ell}=a\cdot
\overline{v_{\ell-i-j+1}^{\sim}}\cdot b$ is proved in Case 1. \medskip

Similarly (but using Lemma \ref{lem.rect.i1} instead of Lemma
\ref{lem.rect.1j}), we can obtain the same result (viz., $v_{\ell}%
=a\cdot\overline{v_{\ell-i-j+1}^{\sim}}\cdot b$) in Case 2. \medskip

Next, let us analyze the four remaining cases: Cases 3, 4, 5 and 6. The most
complex of these four cases is Case 5, so it is this case that we start with.

In this case, we have $j>2$ and $1<i<p$. Recall that $v=\left(  i,j\right)  $.
Define the four further pairs%
\begin{align*}
m  &  :=\left(  i,j-1\right)  ,\ \ \ \ \ \ \ \ \ \ u:=\left(  i+1,j-1\right)
,\\
s  &  :=\left(  i,j-2\right)  ,\ \ \ \ \ \ \ \ \ \ t:=\left(  i-1,j-1\right)
.
\end{align*}
The conditions $j>2$ and $1<i<p$ entail that all these four pairs $m$, $u$,
$s$ and $t$ belong to $\left[  p\right]  \times\left[  q\right]  =P$. Here is
how the five elements $v,m,u,s,t$ of $P$ are aligned on the Hasse diagram of
$P$:
\begin{equation}%
\xymatrix@R=0.6pc@C=1pc{
u \are[rd] & & v \are[ld] \\
& m \are[rd] \are[ld] &\\
s & & t & & .
}
\label{pf.thm.rect.antip.short.diagram1}%
\end{equation}
In particular, the two elements of $\widehat{P}$ that cover $m$ are $u$ and
$v$, whereas the two elements of $\widehat{P}$ that are covered by $m$ are $s$
and $t$.

The map $P\rightarrow P,\ x\mapsto x^{\sim}$ (which can be visualized as
\textquotedblleft reflecting\textquotedblright\ each point in $P$ around the
center of the rectangle $\left[  p\right]  \times\left[  q\right]  $)
\textquotedblleft reverses\textquotedblright\ covering relations (i.e., if
$x,y\in P$ satisfy $x\gtrdot y$, then $x^{\sim}\lessdot y^{\sim}$). Hence,
applying this map to the diagram (\ref{pf.thm.rect.antip.short.diagram1})
yields%
\[%
\xymatrix@R=0.7pc@C=1pc{
t^\sim\are[rd] & & s^\sim\are[ld] \\
& m^\sim\are[ld] \are[rd] \\
v^\sim& & u^\sim& & .
}%
\]
In particular, the two elements of $\widehat{P}$ that are covered by $m^{\sim
}$ are $u^{\sim}$ and $v^{\sim}$, whereas the two elements of $\widehat{P}$
that cover $m^{\sim}$ are $s^{\sim}$ and $t^{\sim}$.

From $\ell-i-j+1\geq0$, we obtain $\ell\geq\underbrace{i}_{>1}+\underbrace{j}%
_{>2}-1>1+2-1=2$, so that $\ell\geq2$. Therefore, $\ell-1\in\mathbb{N}$ and
$2\leq\ell$.

Hence, from $R^{\ell}f\neq\undf$, we obtain $R^{2}f\neq\undf$ (by Lemma
\ref{lem.R.wd-triv}). Therefore, Lemma \ref{lem.R.01inv} yields that $a$ and
$b$ are invertible (since $a=f\left(  0\right)  $ and $b=f\left(  1\right)
$). Also, we have $R^{\ell-1}f\neq\undf$ (since $R\left(  R^{\ell-1}f\right)
=R^{\ell}f\neq\undf=R\left(  \undf\right)  $).

Set $k:=i+j-2$. Then, $k\geq0$ (since $i\geq1$ and $j\geq1$), so that
$k\in\mathbb{N}$.

Now, straightforward computations show that the four elements $m$, $u$, $s$
and $t$ of $P$ satisfy
\[
\tilt m<N,\ \ \ \ \ \ \ \ \ \ \tilt u<N,\ \ \ \ \ \ \ \ \ \ \tilt s<N,\ \ \ \ \ \ \ \ \ \ \tilt t<N
\]
(since $i+2j=N$). Hence, using the induction hypothesis, it is easy to see
that the five equalities%
\begin{align}
m_{\ell}  &  =a\cdot\overline{m_{\ell-k}^{\sim}}\cdot
b,\label{pf.thm.rect.antip.short.3}\\
s_{\ell-1}  &  =a\cdot\overline{s_{\ell-k}^{\sim}}\cdot
b,\label{pf.thm.rect.antip.short.4}\\
t_{\ell-1}  &  =a\cdot\overline{t_{\ell-k}^{\sim}}\cdot
b,\label{pf.thm.rect.antip.short.4b}\\
m_{\ell-1}  &  =a\cdot\overline{m_{\ell-k-1}^{\sim}}\cdot
b,\label{pf.thm.rect.antip.short.5}\\
u_{\ell}  &  =a\cdot\overline{u_{\ell-k-1}^{\sim}}\cdot b
\label{pf.thm.rect.antip.short.5b}%
\end{align}
hold\footnote{In more detail: The induction hypothesis tells us that ...
\par
\begin{itemize}
\item ... we can apply (\ref{pf.thm.rect.antip.restate1}) to $m$ and $\left(
i,j-1\right)  $ instead of $x$ and $\left(  i,j\right)  $ (since $m=\left(
i,j-1\right)  \in P$ and $\tilt m<N$ and $\ell\in\mathbb{N}$ and
$\ell-i-\underbrace{\left(  j-1\right)  }_{\leq j}+1\geq\ell-i-j+1\geq0$ and
$R^{\ell}f\neq\undf$). This yields (\ref{pf.thm.rect.antip.short.3}) (since an
easy computation shows that $\ell-i-\left(  j-1\right)  +1=\ell-k$).
\par
\item ... we can apply (\ref{pf.thm.rect.antip.restate1}) to $s$ and $\left(
i,j-2\right)  $ and $\ell-1$ instead of $x$ and $\left(  i,j\right)  $ and
$\ell$ (the reader can easily verify that the requirements for this are
satisfied). This yields (\ref{pf.thm.rect.antip.short.4}) (since an easy
computation shows that $\left(  \ell-1\right)  -i-\left(  j-2\right)
+1=\ell-k$).
\par
\item ... we can apply (\ref{pf.thm.rect.antip.restate1}) to $t$ and $\left(
i-1,j-1\right)  $ and $\ell-1$ instead of $x$ and $\left(  i,j\right)  $ and
$\ell$. This yields (\ref{pf.thm.rect.antip.short.4b}).
\par
\item ... we can apply (\ref{pf.thm.rect.antip.restate1}) to $m$ and $\left(
i,j-1\right)  $ and $\ell-1$ instead of $x$ and $\left(  i,j\right)  $ and
$\ell$. This yields (\ref{pf.thm.rect.antip.short.5}).
\par
\item ... we can apply (\ref{pf.thm.rect.antip.restate1}) to $u$ and $\left(
i+1,j-1\right)  $ instead of $x$ and $\left(  i,j\right)  $. This yields
(\ref{pf.thm.rect.antip.short.5b}).
\end{itemize}
}.

We have $\ell-1\in\mathbb{N}$ and $R^{\ell}f\neq\undf$. Hence, the transition
equation (\ref{eq.vl+1=}) (applied to $m$ and $\ell-1$ instead of $v$ and
$\ell$) yields%
\[
m_{\ell}=\left(  \sum_{x\lessdot m}x_{\ell-1}\right)  \cdot\overline
{m_{\ell-1}}\cdot\overline{\sum_{x\gtrdot m}\overline{x_{\ell}}}%
\]
(here we have renamed the summation indices $u$ from (\ref{eq.vl+1=}) as $x$,
since the letter $u$ is already being used for something else in our current
setting). Thus,%
\begin{align}
m_{\ell}  &  =\underbrace{\left(  \sum_{x\lessdot m}x_{\ell-1}\right)
}_{\substack{=s_{\ell-1}+t_{\ell-1}\\\text{(since the two elements of
}\widehat{P}\text{ that}\\\text{are covered by }m\text{ are }s\text{ and
}t\text{)}}}\cdot\,\overline{m_{\ell-1}}\,\cdot\underbrace{\overline
{\sum_{x\gtrdot m}\overline{x_{\ell}}}}_{\substack{=\overline{\overline
{u_{\ell}}+\overline{v_{\ell}}}\\\text{(since the two elements of }%
\widehat{P}\\\text{that cover }m\text{ are }u\text{ and }v\text{)}%
}}\nonumber\\
&  =\left(  s_{\ell-1}+t_{\ell-1}\right)  \cdot\overline{m_{\ell-1}}%
\cdot\overline{\overline{u_{\ell}}+\overline{v_{\ell}}}.
\label{pf.thm.rect.antip.short.6}%
\end{align}

On the other hand, from $k=i+j-2$, we obtain $\ell-k-1=\ell-i-j+1\geq0$. Thus,
$\ell-k-1\in\mathbb{N}$. Also, $\ell-\underbrace{k}_{\geq0}\leq\ell$, so that
$R^{\ell-k}f\neq\undf$ (by Lemma \ref{lem.R.wd-triv}, since $R^{\ell}%
f\neq\undf$). Hence, the transition equation (\ref{eq.vl+1=}) (applied to
$m^{\sim}$ and $\ell-k-1$ instead of $v$ and $\ell$) yields%
\begin{align}
m_{\ell-k}^{\sim}  &  =\underbrace{\left(  \sum_{x\lessdot m^{\sim}}%
x_{\ell-k-1}\right)  }_{\substack{=u_{\ell-k-1}^{\sim}+v_{\ell-k-1}^{\sim
}\\\text{(since the two elements of }\widehat{P}\text{ that}\\\text{are
covered by }m^{\sim}\text{ are }u^{\sim}\text{ and }v^{\sim}\text{)}}%
}\cdot\,\overline{m_{\ell-k-1}^{\sim}}\,\cdot\underbrace{\overline
{\sum_{x\gtrdot m^{\sim}}\overline{x_{\ell-k}}}}_{\substack{=\overline
{\overline{s_{\ell-k}^{\sim}}+\overline{t_{\ell-k}^{\sim}}}\\\text{(since the
two elements of }\widehat{P}\\\text{that cover }m^{\sim}\text{ are }s^{\sim
}\text{ and }t^{\sim}\text{)}}}\nonumber\\
&  =\left(  u_{\ell-k-1}^{\sim}+v_{\ell-k-1}^{\sim}\right)  \cdot
\overline{m_{\ell-k-1}^{\sim}}\cdot\overline{\overline{s_{\ell-k}^{\sim}%
}+\overline{t_{\ell-k}^{\sim}}}. \label{pf.thm.rect.antip.short.7}%
\end{align}
This entails that the elements $\overline{s_{\ell-k}^{\sim}}+\overline
{t_{\ell-k}^{\sim}}$ and $m_{\ell-k-1}^{\sim}$ of $\mathbb{K}$ are invertible
(since their inverses appear on the right hand side of this equality). Hence,
their product $\left(  \overline{s_{\ell-k}^{\sim}}+\overline{t_{\ell-k}%
^{\sim}}\right)  \cdot m_{\ell-k-1}^{\sim}$ is invertible as well.

Also, $\ell-k\geq1$ (since $\ell-k-1\geq0$) and $R^{\ell-k}f\neq\undf$. Hence,
Lemma \ref{lem.slacks.wd} \textbf{(a)} (applied to $\ell-k$ and $m^{\sim}$
instead of $\ell$ and $v$) shows that $m_{\ell-k}^{\sim}$ is well-defined and
invertible. Now, solving (\ref{pf.thm.rect.antip.short.7}) for $u_{\ell
-k-1}^{\sim}+v_{\ell-k-1}^{\sim}$, we obtain%
\[
u_{\ell-k-1}^{\sim}+v_{\ell-k-1}^{\sim}=m_{\ell-k}^{\sim}\cdot\left(
\overline{s_{\ell-k}^{\sim}}+\overline{t_{\ell-k}^{\sim}}\right)  \cdot
m_{\ell-k-1}^{\sim}.
\]
This shows that $u_{\ell-k-1}^{\sim}+v_{\ell-k-1}^{\sim}$ is invertible (since
the three factors $m_{\ell-k}^{\sim}$ and $\overline{s_{\ell-k}^{\sim}%
}+\overline{t_{\ell-k}^{\sim}}$ and $m_{\ell-k-1}^{\sim}$ on the right hand
side are invertible).

Taking reciprocals on both sides of (\ref{pf.thm.rect.antip.short.7}), we
obtain%
\begin{align}
\overline{m_{\ell-k}^{\sim}}  &  =\overline{\left(  u_{\ell-k-1}^{\sim
}+v_{\ell-k-1}^{\sim}\right)  \cdot\overline{m_{\ell-k-1}^{\sim}}%
\cdot\overline{\overline{s_{\ell-k}^{\sim}}+\overline{t_{\ell-k}^{\sim}}}%
}\nonumber\\
&  =\left(  \overline{s_{\ell-k}^{\sim}}+\overline{t_{\ell-k}^{\sim}}\right)
\cdot m_{\ell-k-1}^{\sim}\cdot\overline{u_{\ell-k-1}^{\sim}+v_{\ell-k-1}%
^{\sim}} \label{pf.thm.rect.antip.short.8}%
\end{align}
(by Proposition \ref{prop.inverses.ab} \textbf{(c)}).

Comparing (\ref{pf.thm.rect.antip.short.6}) with
(\ref{pf.thm.rect.antip.short.3}), we obtain%
\begin{align*}
a\cdot\overline{m_{\ell-k}^{\sim}}\cdot b  &  =\left(  \underbrace{s_{\ell-1}%
}_{\substack{=a\cdot\overline{s_{\ell-k}^{\sim}}\cdot b\\\text{(by
(\ref{pf.thm.rect.antip.short.4}))}}}+\underbrace{t_{\ell-1}}%
_{\substack{=a\cdot\overline{t_{\ell-k}^{\sim}}\cdot b\\\text{(by
(\ref{pf.thm.rect.antip.short.4b}))}}}\right)  \cdot\underbrace{\overline
{m_{\ell-1}}}_{\substack{=\overline{a\cdot\overline{m_{\ell-k-1}^{\sim}}\cdot
b}\\\text{(by (\ref{pf.thm.rect.antip.short.5}))}}}\cdot\,\overline
{\overline{u_{\ell}}+\overline{v_{\ell}}}\\
&  =\underbrace{\left(  a\cdot\overline{s_{\ell-k}^{\sim}}\cdot b+a\cdot
\overline{t_{\ell-k}^{\sim}}\cdot b\right)  }_{=a\cdot\left(  \overline
{s_{\ell-k}^{\sim}}+\overline{t_{\ell-k}^{\sim}}\right)  \cdot b}%
\cdot\underbrace{\overline{a\cdot\overline{m_{\ell-k-1}^{\sim}}\cdot b}%
}_{\substack{=\overline{b}\cdot m_{\ell-k-1}^{\sim}\cdot\overline
{a}\\\text{(by Proposition \ref{prop.inverses.ab} \textbf{(c)},}\\\text{since
}a\text{ and }\overline{m_{\ell-k-1}^{\sim}}\text{ and }b\text{ are
invertible)}}}\cdot\,\overline{\overline{u_{\ell}}+\overline{v_{\ell}}}\\
&  =a\cdot\left(  \overline{s_{\ell-k}^{\sim}}+\overline{t_{\ell-k}^{\sim}%
}\right)  \cdot\underbrace{b\cdot\overline{b}}_{=1}\cdot\,m_{\ell-k-1}^{\sim
}\cdot\overline{a}\cdot\overline{\overline{u_{\ell}}+\overline{v_{\ell}}}\\
&  =a\cdot\left(  \overline{s_{\ell-k}^{\sim}}+\overline{t_{\ell-k}^{\sim}%
}\right)  \cdot m_{\ell-k-1}^{\sim}\cdot\overline{a}\cdot\overline
{\overline{u_{\ell}}+\overline{v_{\ell}}}.
\end{align*}
Multiplying both sides of this equality by $\overline{a}$ on the left and by
$\overline{b}$ on the right (this is allowed, since $a$ and $b$ are
invertible), we obtain%
\begin{align*}
\overline{m_{\ell-k}^{\sim}}  &  =\left(  \overline{s_{\ell-k}^{\sim}%
}+\overline{t_{\ell-k}^{\sim}}\right)  \cdot m_{\ell-k-1}^{\sim}%
\cdot\underbrace{\overline{a}\cdot\overline{\overline{u_{\ell}}+\overline
{v_{\ell}}}\cdot\overline{b}}_{\substack{=\overline{b\cdot\left(
\overline{u_{\ell}}+\overline{v_{\ell}}\right)  \cdot a}\\\text{(by
Proposition \ref{prop.inverses.ab} \textbf{(c)})}}}\\
&  =\left(  \overline{s_{\ell-k}^{\sim}}+\overline{t_{\ell-k}^{\sim}}\right)
\cdot m_{\ell-k-1}^{\sim}\cdot\overline{b\cdot\left(  \overline{u_{\ell}%
}+\overline{v_{\ell}}\right)  \cdot a}.
\end{align*}
Comparing this with (\ref{pf.thm.rect.antip.short.8}), we obtain
\[
\left(  \overline{s_{\ell-k}^{\sim}}+\overline{t_{\ell-k}^{\sim}}\right)
\cdot m_{\ell-k-1}^{\sim}\cdot\overline{b\cdot\left(  \overline{u_{\ell}%
}+\overline{v_{\ell}}\right)  \cdot a}=\left(  \overline{s_{\ell-k}^{\sim}%
}+\overline{t_{\ell-k}^{\sim}}\right)  \cdot m_{\ell-k-1}^{\sim}\cdot
\overline{u_{\ell-k-1}^{\sim}+v_{\ell-k-1}^{\sim}}.
\]
Cancelling the $\left(  \overline{s_{\ell-k}^{\sim}}+\overline{t_{\ell
-k}^{\sim}}\right)  \cdot m_{\ell-k-1}^{\sim}$ factors on the left of this
equality (this is allowed, since $\left(  \overline{s_{\ell-k}^{\sim}%
}+\overline{t_{\ell-k}^{\sim}}\right)  \cdot m_{\ell-k-1}^{\sim}$ is
invertible), we obtain%
\[
\overline{b\cdot\left(  \overline{u_{\ell}}+\overline{v_{\ell}}\right)  \cdot
a}=\overline{u_{\ell-k-1}^{\sim}+v_{\ell-k-1}^{\sim}}.
\]
Taking reciprocals on both sides, we find%
\[
b\cdot\left(  \overline{u_{\ell}}+\overline{v_{\ell}}\right)  \cdot
a=u_{\ell-k-1}^{\sim}+v_{\ell-k-1}^{\sim}.
\]
Expanding the left hand side by distributivity, we rewrite this as%
\begin{equation}
b\cdot\overline{u_{\ell}}\cdot a+b\cdot\overline{v_{\ell}}\cdot a=u_{\ell
-k-1}^{\sim}+v_{\ell-k-1}^{\sim}. \label{pf.thm.rect.antip.short.14}%
\end{equation}

However, (\ref{pf.thm.rect.antip.short.5b}) yields%
\[
\overline{u_{\ell}}=\overline{a\cdot\overline{u_{\ell-k-1}^{\sim}}\cdot
b}=\overline{b}\cdot u_{\ell-k-1}^{\sim}\cdot\overline{a}%
\ \ \ \ \ \ \ \ \ \ \left(  \text{by Proposition \ref{prop.inverses.ab}
\textbf{(c)}}\right)  .
\]
Multiplying both sides by $b$ from the left and by $a$ from the right, we can
transform this into%
\[
b\cdot\overline{u_{\ell}}\cdot a=u_{\ell-k-1}^{\sim}.
\]
Subtracting this equality from (\ref{pf.thm.rect.antip.short.14}), we obtain%
\begin{equation}
b\cdot\overline{v_{\ell}}\cdot a=v_{\ell-k-1}^{\sim}.
\label{pf.thm.rect.antip.short.16}%
\end{equation}
This equality expresses $v_{\ell-k-1}^{\sim}$ as a product of three invertible
elements (namely, $b$, $\overline{v_{\ell}}$ and $a$). Thus, $v_{\ell
-k-1}^{\sim}$ is itself invertible.

Taking reciprocals on both sides of (\ref{pf.thm.rect.antip.short.16}), we now
obtain $\overline{b\cdot\overline{v_{\ell}}\cdot a}=\overline{v_{\ell
-k-1}^{\sim}}$. Hence,%
\[
\overline{v_{\ell-k-1}^{\sim}}=\overline{b\cdot\overline{v_{\ell}}\cdot
a}=\overline{a}\cdot v_{\ell}\cdot\overline{b}\ \ \ \ \ \ \ \ \ \ \left(
\text{by Proposition \ref{prop.inverses.ab} \textbf{(c)}}\right)  .
\]
Solving this for $v_{\ell}$, we obtain%
\[
v_{\ell}=a\cdot\overline{v_{\ell-k-1}^{\sim}}\cdot b=a\cdot\overline
{v_{\ell-i-j+1}^{\sim}}\cdot b\ \ \ \ \ \ \ \ \ \ \left(  \text{since }%
\ell-k-1=\ell-i-j+1\right)  .
\]
Thus, $v_{\ell}=a\cdot\overline{v_{\ell-i-j+1}^{\sim}}\cdot b$ is proved in
Case 5. \medskip

The arguments required to prove $v_{\ell}=a\cdot\overline{v_{\ell-i-j+1}%
^{\sim}}\cdot b$ in the Cases 3, 4 and 6 are similar to the one we have used
in Case 5, but simpler:

\begin{itemize}
\item In Case 3, we have $s\notin P$. The \textquotedblleft
neighborhood\textquotedblright\ of $m$ thus looks as follows:%
\[%
\xymatrix@R=0.6pc@C=1pc{
u \are[rd] & & v \are[ld] \\
& m \are[rd] \\
& & t
}%
\]
(instead of looking as in (\ref{pf.thm.rect.antip.short.diagram1})). This
necessitates some changes to the proof; in particular, all addends that
involve $s$ or $s^{\sim}$ in any way need to be removed, along with the
equality (\ref{pf.thm.rect.antip.short.4}).

\item Case 6 is similar, but now we have $u\notin P$ instead. (Subtraction is
no longer required in this case.)

\item In Case 4, we have both $s\notin P$ and $u\notin P$.
\end{itemize}

Thus, we have proved the equality $v_{\ell}=a\cdot\overline{v_{\ell
-i-j+1}^{\sim}}\cdot b$ in all six Cases 1, 2, 3, 4, 5 and 6. Hence, this
equality always holds. In other words, (\ref{pf.thm.rect.antip.restate1})
holds for $x=v$. This completes the induction step. Thus,
(\ref{pf.thm.rect.antip.restate1}) is proved by induction. In other words,
Theorem \ref{thm.rect.antip} is proven.
\end{proof}
\end{vershort}

\begin{verlong}
\begin{proof}
[Proof of Theorem \ref{thm.rect.antip}.]We again use the notations from
Section \ref{sec.proof-nots}.

For any $\left(  i,j\right)  \in P$, we define $\tilt \left(  i,j\right)  $ to
be the positive integer $i+2j$.

Our goal is to prove Theorem \ref{thm.rect.antip}. In other words, our goal is
to prove (\ref{pf.thm.rect.antip.restate1}) for each $x=\left(  i,j\right)
\in P$ and $\ell\in\mathbb{N}$ satisfying $\ell-i-j+1\geq0$ and $R^{\ell}%
f\neq\undf$. We will now prove this by strong induction on $\tilt x$.

\textit{Induction step:} Fix $N\in\mathbb{N}$. Assume (as the induction
hypothesis) that

\begin{statement}
(\ref{pf.thm.rect.antip.restate1}) holds for each $x=\left(  i,j\right)  \in
P$ satisfying $\tilt x<N$ and each $\ell\in\mathbb{N}$ satisfying
$\ell-i-j+1\geq0$ and $R^{\ell}f\neq\undf$.
\end{statement}

We now fix an element $v=\left(  i,j\right)  \in P$ satisfying $\tilt v=N$ and
an $\ell\in\mathbb{N}$ satisfying $\ell-i-j+1\geq0$ and $R^{\ell}f\neq\undf$.
Our goal is to prove that (\ref{pf.thm.rect.antip.restate1}) holds for $x=v$.
In other words, our goal is to prove that $v_{\ell}=a\cdot\overline
{v_{\ell-i-j+1}^{\sim}}\cdot b$. \medskip

We have $N=\tilt \underbrace{v}_{=\left(  i,j\right)  }=\tilt \left(
i,j\right)  =i+2j$ (by the definition of $\tilt \left(  i,j\right)  $). We are
in one of the following six cases:

\textit{Case 1:} We have $i=1$.

\textit{Case 2:} We have $j=1$.

\textit{Case 3:} We have $j=2$ and $1<i<p$.

\textit{Case 4:} We have $j=2$ and $i=p>1$.

\textit{Case 5:} We have $j>2$ and $1<i<p$.

\textit{Case 6:} We have $j>2$ and $i=p>1$. \medskip

Let us first consider Case 1. In this case, we have $i=1$. Thus, $v=\left(
i,j\right)  =\left(  1,j\right)  $ (since $i=1$). The definition of $v^{\sim}$
thus yields $v^{\sim}=\left(  p+1-1,\ q+1-j\right)  =\left(  p,\ q+1-j\right)
$. Also, $\ell-\underbrace{i}_{=1}-j+1=\ell-1-j+1=\ell-j$, so that
$\ell-j=\ell-i-j+1\geq0$. In other words, $\ell\geq j$. Hence, Lemma
\ref{lem.rect.1j} yields%
\[
\left(  1,j\right)  _{\ell}=a\cdot\overline{\left(  p,\ q+1-j\right)
_{\ell-j}}\cdot b.
\]
In view of $v=\left(  1,j\right)  $ and $v^{\sim}=\left(  p,\ q+1-j\right)  $
and $\ell-i-j+1=\ell-j$, we can rewrite this as $v_{\ell}=a\cdot
\overline{v_{\ell-i-j+1}^{\sim}}\cdot b$. Thus, $v_{\ell}=a\cdot
\overline{v_{\ell-i-j+1}^{\sim}}\cdot b$ is proved in Case 1. \medskip

Similarly (but using Lemma \ref{lem.rect.i1} instead of Lemma
\ref{lem.rect.1j}), we can obtain the same result (viz., $v_{\ell}%
=a\cdot\overline{v_{\ell-i-j+1}^{\sim}}\cdot b$) in Case 2. \medskip

Next, let us analyze the four remaining cases: Cases 3, 4, 5 and 6. The most
complex of these four cases is Case 5, so it is this case that we start with.

Thus, let us consider Case 5. In this case, we have $j>2$ and $1<i<p$. Recall
that $v=\left(  i,j\right)  $. Define the four further pairs%
\begin{align*}
m  &  :=\left(  i,j-1\right)  ,\ \ \ \ \ \ \ \ \ \ u:=\left(  i+1,j-1\right)
,\\
s  &  :=\left(  i,j-2\right)  ,\ \ \ \ \ \ \ \ \ \ t:=\left(  i-1,j-1\right)
.
\end{align*}
The conditions $j>2$ and $1<i<p$ entail that all these four pairs $m$, $u$,
$s$ and $t$ belong to $\left[  p\right]  \times\left[  q\right]  =P$. Here is
how the five elements $v,m,u,s,t$ of $P$ are aligned on the Hasse diagram of
$P$:
\begin{equation}%
\xymatrix@R=0.6pc@C=1pc{
u \are[rd] & & v \are[ld] \\
& m \are[rd] \are[ld] &\\
s & & t
}%
\ \ . \label{pf.thm.rect.antip.diagram1}%
\end{equation}
In particular, the two elements of $P$ that cover $m$ are $u$ and $v$, whereas
the two elements of $P$ that are covered by $m$ are $s$ and $t$. Clearly, we
can replace $P$ by $\widehat{P}$ in this sentence (since $1$ only covers those
elements of $P$ that are not covered by any element of $P$, and since $0$ is
covered only by those elements of $P$ that do not cover any element of $P$).
Thus, we obtain the following: The two elements of $\widehat{P}$ that cover
$m$ are $u$ and $v$, whereas the two elements of $\widehat{P}$ that are
covered by $m$ are $s$ and $t$.

Moreover, the map $P\rightarrow P,\ x\mapsto x^{\sim}$ (which can be
visualized as \textquotedblleft reflecting\textquotedblright\ each point in
$P$ around the center of the rectangle $\left[  p\right]  \times\left[
q\right]  $) \textquotedblleft reverses\textquotedblright\ covering relations:
That is, if two elements $x$ and $y$ of $P$ satisfy $x\gtrdot y$, then
$x^{\sim}\lessdot y^{\sim}$. Hence, the two elements of $P$ that are covered
by $m^{\sim}$ are $u^{\sim}$ and $v^{\sim}$ (since the two elements of $P$
that cover $m$ are $u$ and $v$), whereas the two elements of $P$ that cover
$m^{\sim}$ are $s^{\sim}$ and $t^{\sim}$ (since the two elements of $P$ that
are covered by $m$ are $s$ and $t$). Clearly, we can replace $P$ by
$\widehat{P}$ in this sentence (since $1$ only covers those elements of $P$
that are not covered by any element of $P$, and since $0$ is covered only by
those elements of $P$ that do not cover any element of $P$). Thus, we obtain
the following: The two elements of $\widehat{P}$ that are covered by $m^{\sim
}$ are $u^{\sim}$ and $v^{\sim}$, whereas the two elements of $\widehat{P}$
that cover $m^{\sim}$ are $s^{\sim}$ and $t^{\sim}$.

All in all, applying the map $P\rightarrow P,\ x\mapsto x^{\sim}$ to the
diagram (\ref{pf.thm.rect.antip.diagram1}) yields%
\[%
\xymatrix@R=0.7pc@C=1pc{
t^\sim\are[rd] & & s^\sim\are[ld] \\
& m^\sim\are[ld] \are[rd] \\
v^\sim& & u^\sim\ \ .
}%
\ \ .
\]

From $\ell-i-j+1\geq0$, we obtain $\ell\geq\underbrace{i}_{>1}+\underbrace{j}%
_{>2}-1>1+2-1=2$, so that $\ell\geq2$. Therefore, $\ell-1\geq1\geq0$ and thus
$\ell-1\in\mathbb{N}$. Also, $\ell\geq2$ entails $2\leq\ell$.

Hence, from $R^{\ell}f\neq\undf$, we obtain $R^{2}f\neq\undf$ (by Lemma
\ref{lem.R.wd-triv}). Therefore, Lemma \ref{lem.R.01inv} yields that $f\left(
0\right)  $ and $f\left(  1\right)  $ are invertible. In other words, $a$ and
$b$ are invertible (since $a=f\left(  0\right)  $ and $b=f\left(  1\right)
$). Also, we have $R^{\ell-1}f\neq\undf$ (since $R\left(  R^{\ell-1}f\right)
=R^{\ell}f\neq\undf=R\left(  \undf\right)  $).

Set $k:=i+j-2$. Then, $k=\underbrace{i}_{\geq1}+\underbrace{j}_{\geq1}%
-2\geq1+1-2=0$, so that $k\in\mathbb{N}$.

Now, it is easy to see that the four elements $m$, $u$, $s$ and $t$ of $P$
satisfy
\[
\tilt m<N,\ \ \ \ \ \ \ \ \ \ \tilt u<N,\ \ \ \ \ \ \ \ \ \ \tilt s<N,\ \ \ \ \ \ \ \ \ \ \tilt t<N
\]
\footnote{\textit{Proof.} Recall that $\tilt \left(  i^{\prime},j^{\prime
}\right)  =i^{\prime}+2j^{\prime}$ for each $\left(  i^{\prime},j^{\prime
}\right)  \in P$ (by the definition of $\tilt \left(  i^{\prime},j^{\prime
}\right)  $). Thus:
\par
\begin{itemize}
\item From $m=\left(  i,j-1\right)  $, we obtain $\tilt m=i+2\left(
j-1\right)  =i+2j-2<i+2j=N$.
\par
\item From $u=\left(  i+1,j-1\right)  $, we obtain $\tilt u=i+1+2\left(
j-1\right)  =i+2j-1<i+2j=N$.
\par
\item From $s=\left(  i,j-2\right)  $, we obtain $\tilt s=i+2\left(
j-2\right)  =i+2j-4<i+2j=N$.
\par
\item From $t=\left(  i-1,j-1\right)  $, we obtain $\tilt t=i-1+2\left(
j-1\right)  =i+2j-3<i+2j=N$.
\end{itemize}
}. Hence, using the induction hypothesis, it is easy to see that the five
equalities%
\begin{align}
m_{\ell}  &  =a\cdot\overline{m_{\ell-k}^{\sim}}\cdot
b,\label{pf.thm.rect.antip.3}\\
s_{\ell-1}  &  =a\cdot\overline{s_{\ell-k}^{\sim}}\cdot
b,\label{pf.thm.rect.antip.4}\\
t_{\ell-1}  &  =a\cdot\overline{t_{\ell-k}^{\sim}}\cdot
b,\label{pf.thm.rect.antip.4b}\\
m_{\ell-1}  &  =a\cdot\overline{m_{\ell-k-1}^{\sim}}\cdot
b,\label{pf.thm.rect.antip.5}\\
u_{\ell}  &  =a\cdot\overline{u_{\ell-k-1}^{\sim}}\cdot b
\label{pf.thm.rect.antip.5b}%
\end{align}
hold\footnote{\textit{Proof.} The induction hypothesis tells us that we can
apply (\ref{pf.thm.rect.antip.restate1}) to $m$ and $\left(  i,j-1\right)  $
instead of $x$ and $\left(  i,j\right)  $ (since $m=\left(  i,j-1\right)  \in
P$ and $\tilt m<N$ and $\ell\in\mathbb{N}$ and $\ell-i-\underbrace{\left(
j-1\right)  }_{\leq j}+1\geq\ell-i-j+1\geq0$ and $R^{\ell}f\neq\undf$). Thus,
we obtain%
\[
m_{\ell}=a\cdot\overline{m_{\ell-i-\left(  j-1\right)  +1}^{\sim}}\cdot
b=a\cdot\overline{m_{\ell-k}^{\sim}}\cdot b
\]
(since $\ell-i-\left(  j-1\right)  +1=\ell-\underbrace{\left(  i+j-2\right)
}_{=k}=\ell-k$). This proves (\ref{pf.thm.rect.antip.3}).
\par
The induction hypothesis tells us that we can apply
(\ref{pf.thm.rect.antip.restate1}) to $s$ and $\left(  i,j-2\right)  $ and
$\ell-1$ instead of $x$ and $\left(  i,j\right)  $ and $\ell$ (since
$s=\left(  i,j-2\right)  \in P$ and $\tilt s<N$ and $\ell-1\in\mathbb{N}$ and
$\ell-1-i-\left(  j-2\right)  +1=\ell-i-j+2\geq\ell-i-j+1\geq0$ and
$R^{\ell-1}f\neq\undf$). Thus, we obtain%
\[
s_{\ell-1}=a\cdot\overline{s_{\left(  \ell-1\right)  -i-\left(  j-2\right)
+1}^{\sim}}\cdot b=a\cdot\overline{s_{\ell-k}^{\sim}}\cdot b
\]
(since $\left(  \ell-1\right)  -i-\left(  j-2\right)  +1=\ell
-\underbrace{\left(  i+j-2\right)  }_{=k}=\ell-k$). This proves
(\ref{pf.thm.rect.antip.4}).
\par
The induction hypothesis tells us that we can apply
(\ref{pf.thm.rect.antip.restate1}) to $t$ and $\left(  i-1,j-1\right)  $ and
$\ell-1$ instead of $x$ and $\left(  i,j\right)  $ and $\ell$ (since
$t=\left(  i-1,j-1\right)  \in P$ and $\tilt t<N$ and $\ell-1\in\mathbb{N}$
and $\ell-1-\left(  i-1\right)  -\left(  j-1\right)  +1=\ell-i-j+2\geq
\ell-i-j+1\geq0$ and $R^{\ell-1}f\neq\undf$). Thus, we obtain%
\[
t_{\ell-1}=a\cdot\overline{t_{\left(  \ell-1\right)  -\left(  i-1\right)
-\left(  j-1\right)  +1}^{\sim}}\cdot b=a\cdot\overline{t_{\ell-k}^{\sim}%
}\cdot b
\]
(since $\left(  \ell-1\right)  -\left(  i-1\right)  -\left(  j-1\right)
+1=\ell-\underbrace{\left(  i+j-2\right)  }_{=k}=\ell-k$). This proves
(\ref{pf.thm.rect.antip.4b}).
\par
The induction hypothesis tells us that we can apply
(\ref{pf.thm.rect.antip.restate1}) to $m$ and $\left(  i,j-1\right)  $ and
$\ell-1$ instead of $x$ and $\left(  i,j\right)  $ and $\ell$ (since
$m=\left(  i,j-1\right)  \in P$ and $\tilt m<N$ and $\ell-1\in\mathbb{N}$ and
$\ell-1-i-\left(  j-1\right)  +1=\ell-i-j+1\geq0$ and $R^{\ell-1}f\neq\undf$).
Thus, we obtain%
\[
m_{\ell-1}=a\cdot\overline{m_{\left(  \ell-1\right)  -i-\left(  j-1\right)
+1}^{\sim}}\cdot b=a\cdot\overline{m_{\ell-k-1}^{\sim}}\cdot b
\]
(since $\left(  \ell-1\right)  -i-\left(  j-1\right)  +1=\ell
-\underbrace{\left(  i+j-2\right)  }_{=k}-1=\ell-k-1$). This proves
(\ref{pf.thm.rect.antip.5}).
\par
The induction hypothesis tells us that we can apply
(\ref{pf.thm.rect.antip.restate1}) to $u$ and $\left(  i+1,j-1\right)  $
instead of $x$ and $\left(  i,j\right)  $ (since $u=\left(  i+1,j-1\right)
\in P$ and $\tilt u<N$ and $\ell\in\mathbb{N}$ and $\ell-\left(  i+1\right)
-\left(  j-1\right)  +1=\ell-i-j+1\geq0$ and $R^{\ell}f\neq\undf$). Thus, we
obtain%
\[
u_{\ell}=a\cdot\overline{u_{\ell-\left(  i+1\right)  -\left(  j-1\right)
+1}^{\sim}}\cdot b=a\cdot\overline{u_{\ell-k-1}^{\sim}}\cdot b
\]
(since $\ell-\left(  i+1\right)  -\left(  j-1\right)  +1=\ell
-\underbrace{\left(  i+j-2\right)  }_{=k}-1=\ell-k-1$). This proves
(\ref{pf.thm.rect.antip.5b}).}.

We have $\ell-1\in\mathbb{N}$ and $R^{\ell-1+1}f=R^{\ell}f\neq\undf$. Hence,
the equality (\ref{eq.vl+1=}) (applied to $m$ and $\ell-1$ instead of $v$ and
$\ell$) yields%
\[
m_{\ell-1+1}=\left(  \sum_{x\lessdot m}x_{\ell-1}\right)  \cdot\overline
{m_{\ell-1}}\cdot\overline{\sum_{x\gtrdot m}\overline{x_{\ell-1+1}}}%
\]
(here we have renamed the summation indices $u$ from (\ref{eq.vl+1=}) as $x$,
since the letter $u$ is already being used for something else in our current
setting). Since $\ell-1+1=\ell$, this can be simplified to%
\begin{equation}
m_{\ell}=\left(  \sum_{x\lessdot m}x_{\ell-1}\right)  \cdot\overline
{m_{\ell-1}}\cdot\overline{\sum_{x\gtrdot m}\overline{x_{\ell}}}.
\label{pf.thm.rect.antip.6a}%
\end{equation}

However, recall that the two elements of $\widehat{P}$ that are covered by $m$
are $s$ and $t$. In other words, the two elements $x\in\widehat{P}$ satisfying
$x\lessdot m$ are $s$ and $t$. Hence, $\sum_{x\lessdot m}x_{\ell-1}=s_{\ell
-1}+t_{\ell-1}$.

Also, recall that the two elements of $\widehat{P}$ that cover $m$ are $u$ and
$v$. In other words, the two elements $x\in\widehat{P}$ satisfying $x\gtrdot
m$ are $u$ and $v$. Hence, $\sum_{x\gtrdot m}\overline{x_{\ell}}%
=\overline{u_{\ell}}+\overline{v_{\ell}}$.

Now we know that $\sum_{x\lessdot m}x_{\ell-1}=s_{\ell-1}+t_{\ell-1}$ and
$\sum_{x\gtrdot m}\overline{x_{\ell}}=\overline{u_{\ell}}+\overline{v_{\ell}}%
$. Using these formulas, we can rewrite (\ref{pf.thm.rect.antip.6a}) as%
\begin{equation}
m_{\ell}=\left(  s_{\ell-1}+t_{\ell-1}\right)  \cdot\overline{m_{\ell-1}}%
\cdot\overline{\overline{u_{\ell}}+\overline{v_{\ell}}}.
\label{pf.thm.rect.antip.6}%
\end{equation}

On the other hand, from $k=i+j-2$, we obtain $\ell-k-1=\ell-\left(
i+j-2\right)  -1=\ell-i-j+1\geq0$. Thus, $\ell-k-1\in\mathbb{N}$. Also,
$\ell-\underbrace{k}_{\geq0}\leq\ell$, so that $R^{\ell-k}f\neq\undf$ (by
Lemma \ref{lem.R.wd-triv}, since $R^{\ell}f\neq\undf$). Hence, $R^{\ell
-k-1+1}f=R^{\ell-k}f\neq\undf$. Hence, the equality (\ref{eq.vl+1=}) (applied
to $m^{\sim}$ and $\ell-k-1$ instead of $v$ and $\ell$) yields%
\[
m_{\ell-k-1+1}^{\sim}=\left(  \sum_{x\lessdot m^{\sim}}x_{\ell-k-1}\right)
\cdot\overline{m_{\ell-k-1}^{\sim}}\cdot\overline{\sum_{x\gtrdot m^{\sim}%
}\overline{x_{\ell-k-1+1}}}.
\]
Since $\ell-k-1+1=\ell-k$, this can be simplified to%
\begin{equation}
m_{\ell-k}^{\sim}=\left(  \sum_{x\lessdot m^{\sim}}x_{\ell-k-1}\right)
\cdot\overline{m_{\ell-k-1}^{\sim}}\cdot\overline{\sum_{x\gtrdot m^{\sim}%
}\overline{x_{\ell-k}}}. \label{pf.thm.rect.antip.7a}%
\end{equation}

However, recall that the two elements of $\widehat{P}$ that are covered by
$m^{\sim}$ are $u^{\sim}$ and $v^{\sim}$. In other words, the two elements
$x\in\widehat{P}$ satisfying $x\lessdot m^{\sim}$ are $u^{\sim}$ and $v^{\sim
}$. Hence, $\sum_{x\lessdot m^{\sim}}x_{\ell-k-1}=u_{\ell-k-1}^{\sim}%
+v_{\ell-k-1}^{\sim}$.

Also, recall that the two elements of $\widehat{P}$ that cover $m^{\sim}$ are
$s^{\sim}$ and $t^{\sim}$. In other words, the two elements $x\in\widehat{P}$
satisfying $x\gtrdot m^{\sim}$ are $s^{\sim}$ and $t^{\sim}$. Hence,
$\sum_{x\gtrdot m^{\sim}}\overline{x_{\ell-k}}=\overline{s_{\ell-k}^{\sim}%
}+\overline{t_{\ell-k}^{\sim}}$.

We now know that $\sum_{x\lessdot m^{\sim}}x_{\ell-k-1}=u_{\ell-k-1}^{\sim
}+v_{\ell-k-1}^{\sim}$ and $\sum_{x\gtrdot m^{\sim}}\overline{x_{\ell-k}%
}=\overline{s_{\ell-k}^{\sim}}+\overline{t_{\ell-k}^{\sim}}$. In light of
these two equalities, we can rewrite (\ref{pf.thm.rect.antip.7a}) as%
\begin{equation}
m_{\ell-k}^{\sim}=\left(  u_{\ell-k-1}^{\sim}+v_{\ell-k-1}^{\sim}\right)
\cdot\overline{m_{\ell-k-1}^{\sim}}\cdot\overline{\overline{s_{\ell-k}^{\sim}%
}+\overline{t_{\ell-k}^{\sim}}}. \label{pf.thm.rect.antip.7}%
\end{equation}
This entails that the inverses $\overline{\overline{s_{\ell-k}^{\sim}%
}+\overline{t_{\ell-k}^{\sim}}}$ and $\overline{m_{\ell-k-1}^{\sim}}$ are
well-defined (since they appear on the right hand side of this equality). In
other words, the elements $\overline{s_{\ell-k}^{\sim}}+\overline{t_{\ell
-k}^{\sim}}$ and $m_{\ell-k-1}^{\sim}$ of $\mathbb{K}$ are invertible. Hence,
their product $\left(  \overline{s_{\ell-k}^{\sim}}+\overline{t_{\ell-k}%
^{\sim}}\right)  \cdot m_{\ell-k-1}^{\sim}$ is invertible as well.

Also, $\ell-k\geq1$ (since $\ell-k-1\geq0$) and $R^{\ell-k}f\neq\undf$. Hence,
Lemma \ref{lem.slacks.wd} \textbf{(a)} (applied to $\ell-k$ and $m^{\sim}$
instead of $\ell$ and $v$) shows that $m_{\ell-k}^{\sim}$ is well-defined and
invertible. Now, multiplying both sides of (\ref{pf.thm.rect.antip.7}) with
$\left(  \overline{s_{\ell-k}^{\sim}}+\overline{t_{\ell-k}^{\sim}}\right)
\cdot m_{\ell-k-1}^{\sim}$, on the right, we obtain%
\begin{align*}
&  m_{\ell-k}^{\sim}\cdot\left(  \overline{s_{\ell-k}^{\sim}}+\overline
{t_{\ell-k}^{\sim}}\right)  \cdot m_{\ell-k-1}^{\sim}\\
&  =\left(  u_{\ell-k-1}^{\sim}+v_{\ell-k-1}^{\sim}\right)  \cdot
\overline{m_{\ell-k-1}^{\sim}}\cdot\underbrace{\overline{\overline{s_{\ell
-k}^{\sim}}+\overline{t_{\ell-k}^{\sim}}}\cdot\left(  \overline{s_{\ell
-k}^{\sim}}+\overline{t_{\ell-k}^{\sim}}\right)  }_{=1}\cdot\,m_{\ell
-k-1}^{\sim}\\
&  =\left(  u_{\ell-k-1}^{\sim}+v_{\ell-k-1}^{\sim}\right)  \cdot
\underbrace{\overline{m_{\ell-k-1}^{\sim}}\cdot m_{\ell-k-1}^{\sim}}%
_{=1}=u_{\ell-k-1}^{\sim}+v_{\ell-k-1}^{\sim}.
\end{align*}
Hence,%
\[
u_{\ell-k-1}^{\sim}+v_{\ell-k-1}^{\sim}=m_{\ell-k}^{\sim}\cdot\left(
\overline{s_{\ell-k}^{\sim}}+\overline{t_{\ell-k}^{\sim}}\right)  \cdot
m_{\ell-k-1}^{\sim}.
\]
This shows that $u_{\ell-k-1}^{\sim}+v_{\ell-k-1}^{\sim}$ is a product of
three invertible elements (since $m_{\ell-k}^{\sim}$ and $\overline{s_{\ell
-k}^{\sim}}+\overline{t_{\ell-k}^{\sim}}$ and $m_{\ell-k-1}^{\sim}$ are
invertible). Thus, $u_{\ell-k-1}^{\sim}+v_{\ell-k-1}^{\sim}$ itself is invertible.

Taking reciprocals on both sides of (\ref{pf.thm.rect.antip.7}), we obtain%
\begin{align}
\overline{m_{\ell-k}^{\sim}}  &  =\overline{\left(  u_{\ell-k-1}^{\sim
}+v_{\ell-k-1}^{\sim}\right)  \cdot\overline{m_{\ell-k-1}^{\sim}}%
\cdot\overline{\overline{s_{\ell-k}^{\sim}}+\overline{t_{\ell-k}^{\sim}}}%
}\nonumber\\
&  =\left(  \overline{s_{\ell-k}^{\sim}}+\overline{t_{\ell-k}^{\sim}}\right)
\cdot m_{\ell-k-1}^{\sim}\cdot\overline{u_{\ell-k-1}^{\sim}+v_{\ell-k-1}%
^{\sim}} \label{pf.thm.rect.antip.8}%
\end{align}
(by Proposition \ref{prop.inverses.ab} \textbf{(c)}).

Comparing (\ref{pf.thm.rect.antip.6}) with (\ref{pf.thm.rect.antip.3}), we
obtain%
\begin{align*}
a\cdot\overline{m_{\ell-k}^{\sim}}\cdot b  &  =\left(  \underbrace{s_{\ell-1}%
}_{\substack{=a\cdot\overline{s_{\ell-k}^{\sim}}\cdot b\\\text{(by
(\ref{pf.thm.rect.antip.4}))}}}+\underbrace{t_{\ell-1}}_{\substack{=a\cdot
\overline{t_{\ell-k}^{\sim}}\cdot b\\\text{(by (\ref{pf.thm.rect.antip.4b}))}%
}}\right)  \cdot\underbrace{\overline{m_{\ell-1}}}_{\substack{=\overline
{a\cdot\overline{m_{\ell-k-1}^{\sim}}\cdot b}\\\text{(by
(\ref{pf.thm.rect.antip.5}))}}}\cdot\,\overline{\overline{u_{\ell}}%
+\overline{v_{\ell}}}\\
&  =\underbrace{\left(  a\cdot\overline{s_{\ell-k}^{\sim}}\cdot b+a\cdot
\overline{t_{\ell-k}^{\sim}}\cdot b\right)  }_{=a\cdot\left(  \overline
{s_{\ell-k}^{\sim}}+\overline{t_{\ell-k}^{\sim}}\right)  \cdot b}%
\cdot\underbrace{\overline{a\cdot\overline{m_{\ell-k-1}^{\sim}}\cdot b}%
}_{\substack{=\overline{b}\cdot m_{\ell-k-1}^{\sim}\cdot\overline
{a}\\\text{(by Proposition \ref{prop.inverses.ab} \textbf{(c)},}\\\text{since
}a\text{ and }\overline{m_{\ell-k-1}^{\sim}}\text{ and }b\text{ are
invertible)}}}\cdot\,\overline{\overline{u_{\ell}}+\overline{v_{\ell}}}\\
&  =a\cdot\left(  \overline{s_{\ell-k}^{\sim}}+\overline{t_{\ell-k}^{\sim}%
}\right)  \cdot\underbrace{b\cdot\overline{b}}_{=1}\cdot\,m_{\ell-k-1}^{\sim
}\cdot\overline{a}\cdot\overline{\overline{u_{\ell}}+\overline{v_{\ell}}}\\
&  =a\cdot\left(  \overline{s_{\ell-k}^{\sim}}+\overline{t_{\ell-k}^{\sim}%
}\right)  \cdot m_{\ell-k-1}^{\sim}\cdot\overline{a}\cdot\overline
{\overline{u_{\ell}}+\overline{v_{\ell}}}.
\end{align*}
Multiplying both sides of this equality by $\overline{a}$ on the left and by
$\overline{b}$ on the right (this is allowed, since $a$ and $b$ are
invertible), we obtain%
\begin{align*}
\overline{a}\cdot a\cdot\overline{m_{\ell-k}^{\sim}}\cdot b\cdot\overline{b}
&  =\underbrace{\overline{a}\cdot a}_{=1}\cdot\,\left(  \overline{s_{\ell
-k}^{\sim}}+\overline{t_{\ell-k}^{\sim}}\right)  \cdot m_{\ell-k-1}^{\sim
}\cdot\underbrace{\overline{a}\cdot\overline{\overline{u_{\ell}}%
+\overline{v_{\ell}}}\cdot\overline{b}}_{\substack{=\overline{b\cdot\left(
\overline{u_{\ell}}+\overline{v_{\ell}}\right)  \cdot a}\\\text{(by
Proposition \ref{prop.inverses.ab} \textbf{(c)})}}}\\
&  =\left(  \overline{s_{\ell-k}^{\sim}}+\overline{t_{\ell-k}^{\sim}}\right)
\cdot m_{\ell-k-1}^{\sim}\cdot\overline{b\cdot\left(  \overline{u_{\ell}%
}+\overline{v_{\ell}}\right)  \cdot a}.
\end{align*}
Hence,
\begin{align*}
&  \left(  \overline{s_{\ell-k}^{\sim}}+\overline{t_{\ell-k}^{\sim}}\right)
\cdot m_{\ell-k-1}^{\sim}\cdot\overline{b\cdot\left(  \overline{u_{\ell}%
}+\overline{v_{\ell}}\right)  \cdot a}\\
&  =\underbrace{\overline{a}\cdot a}_{=1}\cdot\,\overline{m_{\ell-k}^{\sim}%
}\cdot\underbrace{b\cdot\overline{b}}_{=1}=\overline{m_{\ell-k}^{\sim}}\\
&  =\left(  \overline{s_{\ell-k}^{\sim}}+\overline{t_{\ell-k}^{\sim}}\right)
\cdot m_{\ell-k-1}^{\sim}\cdot\overline{u_{\ell-k-1}^{\sim}+v_{\ell-k-1}%
^{\sim}}\ \ \ \ \ \ \ \ \ \ \left(  \text{by (\ref{pf.thm.rect.antip.8}%
)}\right)  .
\end{align*}
Cancelling the $\left(  \overline{s_{\ell-k}^{\sim}}+\overline{t_{\ell
-k}^{\sim}}\right)  \cdot m_{\ell-k-1}^{\sim}$ factors on the left of this
equality (this is allowed, since $\left(  \overline{s_{\ell-k}^{\sim}%
}+\overline{t_{\ell-k}^{\sim}}\right)  \cdot m_{\ell-k-1}^{\sim}$ is
invertible), we obtain%
\[
\overline{b\cdot\left(  \overline{u_{\ell}}+\overline{v_{\ell}}\right)  \cdot
a}=\overline{u_{\ell-k-1}^{\sim}+v_{\ell-k-1}^{\sim}}.
\]
Taking reciprocals on both sides, we find%
\[
b\cdot\left(  \overline{u_{\ell}}+\overline{v_{\ell}}\right)  \cdot
a=u_{\ell-k-1}^{\sim}+v_{\ell-k-1}^{\sim}.
\]
In other words,%
\begin{equation}
b\cdot\overline{u_{\ell}}\cdot a+b\cdot\overline{v_{\ell}}\cdot a=u_{\ell
-k-1}^{\sim}+v_{\ell-k-1}^{\sim} \label{pf.thm.rect.antip.14}%
\end{equation}
(since $b\cdot\left(  \overline{u_{\ell}}+\overline{v_{\ell}}\right)  \cdot
a=b\cdot\overline{u_{\ell}}\cdot a+b\cdot\overline{v_{\ell}}\cdot a$).

However, (\ref{pf.thm.rect.antip.5b}) yields%
\[
\overline{u_{\ell}}=\overline{a\cdot\overline{u_{\ell-k-1}^{\sim}}\cdot
b}=\overline{b}\cdot u_{\ell-k-1}^{\sim}\cdot\overline{a}%
\ \ \ \ \ \ \ \ \ \ \left(  \text{by Proposition \ref{prop.inverses.ab}
\textbf{(c)}}\right)  .
\]
Thus,%
\[
b\cdot\overline{u_{\ell}}\cdot a=\underbrace{b\cdot\overline{b}}_{=1}%
\cdot\,u_{\ell-k-1}^{\sim}\cdot\underbrace{\overline{a}\cdot a}_{=1}%
=u_{\ell-k-1}^{\sim}.
\]
Subtracting this equality from (\ref{pf.thm.rect.antip.14}), we obtain%
\begin{equation}
b\cdot\overline{v_{\ell}}\cdot a=v_{\ell-k-1}^{\sim}.
\label{pf.thm.rect.antip.16}%
\end{equation}
The left hand side of this equality is a product of three invertible elements
(since $b$, $\overline{v_{\ell}}$ and $a$ are invertible), and thus itself
invertible. Hence, the right hand side is invertible as well. In other words,
$v_{\ell-k-1}^{\sim}$ is invertible.

Taking reciprocals on both sides of (\ref{pf.thm.rect.antip.16}), we now
obtain $\overline{b\cdot\overline{v_{\ell}}\cdot a}=\overline{v_{\ell
-k-1}^{\sim}}$. Hence,%
\[
\overline{v_{\ell-k-1}^{\sim}}=\overline{b\cdot\overline{v_{\ell}}\cdot
a}=\overline{a}\cdot v_{\ell}\cdot\overline{b}\ \ \ \ \ \ \ \ \ \ \left(
\text{by Proposition \ref{prop.inverses.ab} \textbf{(c)}}\right)  .
\]
Thus,%
\[
a\cdot\overline{v_{\ell-k-1}^{\sim}}\cdot b=\underbrace{a\cdot\overline{a}%
}_{=1}\cdot\,v_{\ell}\cdot\underbrace{\overline{b}\cdot b}_{=1}=v_{\ell}.
\]
In other words,%
\[
v_{\ell}=a\cdot\overline{v_{\ell-k-1}^{\sim}}\cdot b=a\cdot\overline
{v_{\ell-i-j+1}^{\sim}}\cdot b\ \ \ \ \ \ \ \ \ \ \left(  \text{since }%
\ell-k-1=\ell-i-j+1\right)  .
\]
Thus, $v_{\ell}=a\cdot\overline{v_{\ell-i-j+1}^{\sim}}\cdot b$ is proved in
Case 5. \medskip

The arguments required to prove $v_{\ell}=a\cdot\overline{v_{\ell-i-j+1}%
^{\sim}}\cdot b$ in the Cases 3, 4 and 6 are similar to the one we have used
in Case 5, but simpler in some ways. The specific differences are as follows:

\begin{itemize}
\item In Case 3, we have $s\notin P$. The \textquotedblleft
neighborhood\textquotedblright\ of $m$ thus looks as follows:%
\[%
\xymatrix@R=0.6pc@C=1pc{
u \are[rd] & & v \are[ld] \\
& m \are[rd] \\
& & t
}%
\]
(instead of looking as in (\ref{pf.thm.rect.antip.diagram1})). This
necessitates some changes to the proof; in particular, all addends that
involve $s$ or $s^{\sim}$ in any way need to be removed, along with the
equality (\ref{pf.thm.rect.antip.4}).

\item In Case 6, we have $u\notin P$. The \textquotedblleft
neighborhood\textquotedblright\ of $m$ thus looks as follows:%
\[%
\xymatrix@R=0.6pc@C=1pc{
& & v \are[ld] \\
& m \are[rd] \are[ld]  \\
s & & t
}%
\]
(instead of looking as in (\ref{pf.thm.rect.antip.diagram1})). This
necessitates some changes to the proof; in particular, all addends that
involve $u$ or $u^{\sim}$ in any way need to be removed, along with the
equality (\ref{pf.thm.rect.antip.5b}). (Subtraction is no longer required in
this case.)

\item In Case 4, we have $s\notin P$ and $u\notin P$. The \textquotedblleft
neighborhood\textquotedblright\ of $m$ thus looks as follows:%
\[%
\xymatrix@R=0.6pc@C=1pc{
& & v \are[ld] \\
& m \are[rd] \\
& & t
}%
\]
(instead of looking as in (\ref{pf.thm.rect.antip.diagram1})). This
necessitates some changes to the proof; in particular, all addends that
involve $u$ or $u^{\sim}$ or $s$ or $s^{\sim}$ in any way need to be removed,
along with the equalities (\ref{pf.thm.rect.antip.4}) and
(\ref{pf.thm.rect.antip.5b}).
\end{itemize}

Thus, we have proved the equality $v_{\ell}=a\cdot\overline{v_{\ell
-i-j+1}^{\sim}}\cdot b$ in all six Cases 1, 2, 3, 4, 5 and 6. Hence, this
equality always holds. In other words, (\ref{pf.thm.rect.antip.restate1})
holds for $x=v$. This completes the induction step. Thus,
(\ref{pf.thm.rect.antip.restate1}) is proved by induction. In other words,
Theorem \ref{thm.rect.antip} is proven.
\end{proof}
\end{verlong}

As we have already seen (in Section \ref{sec.antip-to-ord}), this entails that
Theorem \ref{thm.rect.ord} is proven as well.

\section{\label{sec.semiring}The case of a semiring}

An attentive reader may have noticed that nowhere in the definitions of
$v$-toggles and birational rowmotion do any subtraction sign appear. This
means that all these definitions can be extended to the case when $\mathbb{K}$
is not a ring but a \emph{semiring}.

A \emph{semiring} is a set $\mathbb{K}$ equipped with a structure of an
abelian semigroup $\left(  \mathbb{K},+\right)  $ and the structure of a (not
necessarily abelian) monoid $\left(  \mathbb{K},\cdot,1\right)  $ such that
the distributive laws $\left(  a+b\right)  c=ac+bc$ and $a\left(  b+c\right)
=ab+ac$ are satisfied (where we use the shorthand notation $xy$ for $x\cdot
y$). Some standard concepts defined for rings can be straightforwardly
generalized to semirings; in particular, any nonempty finite family $\left(
a_{i}\right)  _{i\in I}$ of elements of a semiring $\mathbb{K}$ has a
well-defined sum $\sum_{i\in I}a_{i}$. Definition \ref{def.inverses.inv}, too,
applies verbatim to the case when $\mathbb{K}$ is a semiring instead of a
ring. Thus, the definition of a $v$-toggle (Definition \ref{def.Tv}) and the
definition of birational rowmotion (Definition \ref{def.rm}) can be applied to
a semiring $\mathbb{K}$ as well. We thus can wonder:

\begin{question}
\label{quest.rect.semiring}Do twisted periodicity (Theorem \ref{thm.rect.ord})
and reciprocity (Theorem \ref{thm.rect.antip}) still hold if $\mathbb{K}$ is
not a ring but merely a semiring?
\end{question}

If we assume that $\mathbb{K}$ is commutative, then the answer to this
question is positive, for fairly simple general reasons (see \cite[Remark
10]{bir-row-1}). However, no such general reasoning helps for noncommutative
$\mathbb{K}$. Indeed, there are subtraction-free identities involving inverses
that hold for all rings but fail for some semirings. One example is the
identity $a\cdot\overline{a+b}\cdot b=b\cdot\overline{a+b}\cdot a$ from
Proposition \ref{prop.inverses.a+b} \textbf{(a)}: David Speyer has constructed
an example of a semiring $\mathbb{K}$ and two elements $a$ and $b$ of
$\mathbb{K}$ such that $a+b$ is invertible (actually, $a+b=1$ in his example),
but this identity does not hold. See \cite{Speyer21} for details.

Of course, this does not mean that the answer to Question
\ref{quest.rect.semiring} is negative; we are, in fact, inclined to suspect
that the question has a positive answer. Our proofs of Lemma \ref{lem.rect.i1}
and Lemma \ref{lem.rect.1j} apply in the semiring setting (i.e., when
$\mathbb{K}$ is a semiring rather than a ring) without any need for changes;
thus, Theorem \ref{thm.rect.antip} holds over any semiring $\mathbb{K}$ at
least in the case when one of $i$ and $j$ is $1$. Unfortunately, subtraction
is used in the proof of Theorem \ref{thm.rect.antip}, and we have so far been
unable to excise it from the argument. (With a bit of thought, we can convince
ourselves that subtraction is actually unnecessary if $p=2$ or $q=2$, so the
first interesting case is obtained for $P=\left[  3\right]  \times\left[
3\right]  $.)

\section{\label{sec.otherposets}Other posets: conjectures and results}

We now proceed to discuss the behavior of $R$ on some other families of posets
$P$. We no longer use the notations introduced in Section \ref{sec.proof-nots}.

\subsection{The $\Delta$ and $\nabla$ triangles}

When $p=q$, the $p\times q$-rectangle $\left[  p\right]  \times\left[
q\right]  $ becomes a square. By cutting this square in half along its
horizontal axis, we obtain two triangles:

\begin{definition}
\label{def.DelNabtri}Let $p$ be a positive integer. Define two subsets
$\Delta\left(  p\right)  $ and $\nabla\left(  p\right)  $ of the $p\times
p$-rectangle $\left[  p\right]  \times\left[  p\right]  $ by%
\begin{align*}
\Delta\left(  p\right)   &  =\left\{  \left(  i,k\right)  \in\left[  p\right]
\times\left[  p\right]  \ \mid\ i+k>p+1\right\}  ;\\
\nabla\left(  p\right)   &  =\left\{  \left(  i,k\right)  \in\left[  p\right]
\times\left[  p\right]  \ \mid\ i+k<p+1\right\}  .
\end{align*}
Each of these two subsets $\Delta\left(  p\right)  $ and $\nabla\left(
p\right)  $ inherits a poset structure from $\left[  p\right]  \times\left[
p\right]  $. In the following, we will consider $\Delta\left(  p\right)  $ and
$\nabla\left(  p\right)  $ as posets using these structures.
\end{definition}

The Hasse diagrams of these posets $\Delta\left(  p\right)  $ and
$\nabla\left(  p\right)  $ look like triangles; if we draw $\left[  p\right]
\times\left[  p\right]  $ as agreed in Convention \ref{conv.rect.draw}, then
$\Delta\left(  p\right)  $ is the \textquotedblleft upper
half\textquotedblright\ of the square $\left[  p\right]  \times\left[
p\right]  $, whereas $\nabla\left(  p\right)  $ is the \textquotedblleft lower
half\textquotedblright\ of this square.

\Needspace{24\baselineskip}

\begin{example}
Here is the Hasse diagram of the poset $\Delta\left(  4\right)  $:%
\[
\xymatrixrowsep{0.9pc}\xymatrixcolsep{0.20pc}\xymatrix{
& & & \left(4,4\right) \ar@{-}[rd] \ar@{-}[ld] & & & \\
& & \left(4,3\right) \ar@{-}[rd] \ar@{-}[ld] & & \left(3,4\right) \ar@{-}[rd] \ar@{-}[ld] & & \\
& \left(4,2\right) & & \left(3,3\right) & & \left(2,4\right) & & & .
}
\]
Here, on the other hand, is the Hasse diagram of the poset $\nabla\left(
4\right)  $:
\[
\xymatrixrowsep{0.9pc}\xymatrixcolsep{0.20pc}\xymatrix{
& \left(3,1\right) \ar@{-}[rd] & & \left(2,2\right) \ar@{-}[rd] \ar@{-}[ld] & & \left(1,3\right) \ar@{-}[ld] & \\
& & \left(2,1\right) \ar@{-}[rd] & & \left(1,2\right) \ar@{-}[ld] & & \\
& & & \left(1,1\right) & & & & & .
}
\]

\end{example}

Note that $\Delta\left(  p\right)  =\varnothing$ when $p=1$.

Computations with SageMath \cite{sage} for $p=3$ have made us suspect a
periodicity-like phenomenon similar to Theorem \ref{thm.rect.ord}:

\begin{conjecture}
[periodicity conjecture for $\Delta$-triangle]\label{conj.Deltri.per}Let
$p\geq2$ be an integer. Assume that $P$ is the poset $\Delta\left(  p\right)
$. Let $f\in\mathbb{K}^{\widehat{P}}$ be a $\mathbb{K}$-labeling such that
$R^{p}f\neq\undf$. Let $a=f\left(  0\right)  $ and $b=f\left(  1\right)  $.
Let $x\in\widehat{P}$. We define an element $x^{\prime}\in\widehat{P}$ as follows:

\begin{itemize}
\item If $x=0$ or $x=1$, then we set $x^{\prime}:=x$.

\item Otherwise, we write $x$ in the form $x=\left(  i,j\right)  $, and we set
$x^{\prime}:=\left(  j,i\right)  $.
\end{itemize}

Then, $a$ and $b$ are invertible, and we have
\[
\left(  R^{p}f\right)  \left(  x\right)  =a\overline{b}\cdot f\left(
x^{\prime}\right)  \cdot\overline{a}b.
\]

\end{conjecture}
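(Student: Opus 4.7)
The plan is to mimic the two-stage strategy used for the rectangle: first prove a \emph{reciprocity theorem} in the spirit of Theorem \ref{thm.rect.antip}, and then derive the twisted periodicity from it by two successive applications, exactly as Theorem \ref{thm.rect.ord} was deduced from Theorem \ref{thm.rect.antip} in Section \ref{sec.antip-to-ord}. Concretely, once the reciprocity is known, invertibility of $a$ and $b$ follows from $R^p f \neq \undf$ and $p \geq 2$ via Lemma \ref{lem.R.01inv}, the cases $x=0,1$ follow from Corollary \ref{cor.R.implicit.01}, and for $x = (i,j) \in \widehat{P} \setminus \{0,1\}$ two applications of the reciprocity formula at times $\ell = p$ and $\ell - r(i,j)$ will telescope to the claimed $a\overline{b}\cdot f(x')\cdot \overline{a}b$.

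First I would compute $R^\ell f$ symbolically for $\Delta(p)$ with small $p$ (say $p = 3, 4$) to conjecture the precise reciprocity identity. Based on the target form of the conjecture I expect something like
\[
(R^\ell f)(i,j) \;=\; a \cdot \overline{(R^{\ell-r(i,j)}f)(\sigma(i,j))} \cdot b
\]
for a suitable involution $\sigma$ on $\Delta(p)$ and a rank-like function $r$ satisfying the compatibility $r(x) + r(\sigma(x)) = p$, so that double iteration produces conjugation by $a\overline{b}$ and exactly a time-shift of $p$. Note that the transposition $(i,j)\mapsto(j,i)$ appearing in the conjecture is \emph{order-preserving} on $\Delta(p)$, whereas the rectangle's antipode $(i,j)\mapsto(p+1-i,q+1-j)$ was order-reversing; hence $\sigma$ need not itself be the transposition, only its square (composed with the appropriate twist) needs to reproduce it.

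To prove the reciprocity I would reuse as much of Sections \ref{sec.proof-nots}--\ref{sec.conversion} as possible. By Remark \ref{rmk.slacks.gen}, the transition equation in $\downslack$-$\upslack$ form (Proposition \ref{prop.rect.trans} and Corollary \ref{cor.rect.trans-uv}), the recursions of Proposition \ref{prop.slacks.rec}, and the path formulas of Theorem \ref{thm.rect.path} all apply to $\Delta(p)$: parts \textbf{(a)} and \textbf{(b)} verbatim, and parts \textbf{(c)} and \textbf{(d)} after replacing $(p,q)$ by the unique maximum $(p,p)$ and $(1,1)$ by the sum over the set $\{(k,p+2-k) : 2 \leq k \leq p\}$ of minimal elements of $\Delta(p)$. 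What genuinely requires new work is an analog of the conversion lemma (Lemma \ref{lem.rect.conv}): an identity turning $\downslack$-sums into $\upslack$-sums \emph{at the same time-index}, adapted to the boundary geometry of $\Delta(p)$.

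The main obstacle will be precisely this conversion lemma. In the rectangle it exploited the clean partition of the boundary into a northeast edge and a southwest edge together with a path-jump-path interpolation; for $\Delta(p)$ the boundary has three distinct pieces---the top row $\{(p,j)\}$, the right column $\{(i,p)\}$, and the slanted minimal anti-diagonal $\{(i,j) : i+j = p+2\}$---so the pairings of endpoints and the shape of the interpolating ``jump'' step $\blacktriangleright$ must be redesigned. I would first try to reformulate the rectangle's argument in the matrix language of Remark \ref{rmk.lem.rect.conv.2} (via the identity $\bfdownslack \mathbf{U} = \mathbf{U} \bfupslack$) and then seek an analogous matrix identity on $\Delta(p)$, where the local lemma (Claim 4 in the proof of Lemma \ref{lem.rect.conv}, itself a consequence of the four-neighbors Lemma \ref{lem.rect.four}) still holds pointwise; the challenge is finding the right $\mathbf{U}$-matrix encoding jumps compatible with the triangular boundary.
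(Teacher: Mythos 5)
The statement you are trying to prove is stated in the paper as an open \emph{conjecture} (verified only computationally for small $p$); the paper contains no proof of it, and your text is a research plan rather than a proof. By your own admission the decisive ingredient --- a $\Delta(p)$-analogue of the conversion lemma (Lemma \ref{lem.rect.conv}) --- is missing, so there is a genuine gap that your proposal names but does not close. The parts you do claim (reusing Proposition \ref{prop.rect.trans}, Proposition \ref{prop.slacks.rec}, and Theorem \ref{thm.rect.path} \textbf{(a)}, \textbf{(b)}, \textbf{(c)} via Remark \ref{rmk.slacks.gen}, since $\Delta(p)$ has a global maximum) are fine, but they were never the hard part.

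Beyond the acknowledged gap, there is a structural obstacle to the specific reciprocity you posit. The rectangle proof of Theorem \ref{thm.rect.antip} leans essentially on the fact that the antipode $x\mapsto x^{\sim}$ is an order-\emph{reversing} bijection of $P$: in the induction step of Section \ref{sec.gencase}, the covering neighborhood of $m$ is carried to the upside-down covering neighborhood of $m^{\sim}$, which is exactly what lets the transition equations at $m$ (time $\ell$) and at $m^{\sim}$ (time $\ell-k-1$) be matched term by term. For $p\geq 3$ the poset $\Delta(p)$ has one maximal element and $p-1$ minimal elements, so it admits \emph{no} order-reversing bijection at all; hence a reciprocity of the form $(R^{\ell}f)(x)=a\cdot\overline{(R^{\ell-r(x)}f)(\sigma(x))}\cdot b$ with $\sigma$ a self-map of $\Delta(p)$ cannot be proved by transporting the rectangle argument, and it is not even clear such an identity can hold (the left side is generically invertible by Lemma \ref{lem.slacks.wd}, forcing strong constraints on $\sigma$ at the minimal elements). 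Note also that the known commutative proof (\cite[Theorem 65]{bir-row-2}) proceeds by folding triangle labelings into labelings of the square $[p]\times[p]$ rather than by a triangle-intrinsic reciprocity, and the paper's discussion of Conjecture \ref{conj.Leftri.per} shows that this folding route itself acquires a nontrivial obstruction (invertibility of $2$) in the noncommutative setting. So neither your route nor the classical one is currently known to succeed; what you have is a plausible program, not a proof.
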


\begin{conjecture}
[periodicity conjecture for $\nabla$-triangle]\label{conj.Nabtri.per}The same
holds if $P=\nabla\left(  p\right)  $ instead of $P=\Delta\left(  p\right)  $.
\end{conjecture}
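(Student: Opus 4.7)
My plan is to derive Conjecture~\ref{conj.Nabtri.per} from Conjecture~\ref{conj.Deltri.per} by exploiting the order-reversing bijection $\phi: \Delta(p) \to \nabla(p)$ sending $(i,k) \mapsto (p+1-i, p+1-k)$, which identifies $\nabla(p)$ with $\Delta(p)^{op}$. The first step would be to establish a general duality: if $P$ is any finite poset and $P^{op}$ is its opposite, then iterated birational rowmotion on $P^{op}$ corresponds to iterated \emph{inverse} birational rowmotion on $P$, subject to swapping $f(0)$ and $f(1)$ under the identification $\widehat{P^{op}} \cong \widehat{P}$ (this swap is forced because the global minimum and maximum of the extended poset trade places). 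In the commutative case this is the classical conjugacy $R_{P^{op}} \sim R_P^{-1}$. In the noncommutative case, while $R_P$ has no honest inverse, each toggle $T_v$ has a one-sided inverse --- the \emph{noncommutative order elggot} of~\cite{JosRob20} --- obtained by reversing the order of the three factors on the right-hand side of~\eqref{def.Tv.def}. Composing these elggots in reverse linear-extension order produces the appropriate partial map $R_P^{-1}$, and a toggle-by-toggle check should confirm that $R_{P^{op}}$ equals this $R_P^{-1}$ after swapping $f(0) \leftrightarrow f(1)$.

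Given such a duality, Conjecture~\ref{conj.Deltri.per} --- which asserts $(R^p_{\Delta(p)} f)(x) = a \overline{b} \cdot f(x') \cdot \overline{a} b$ --- can be inverted. Solving for $f(x')$ gives $f(x') = \overline{a \overline{b}} \cdot (R^p_{\Delta(p)} f)(x) \cdot \overline{\overline{a} b} = b\overline{a} \cdot (R^p_{\Delta(p)} f)(x) \cdot \overline{b} a$. Re-expressing this as a statement about $R_{\Delta(p)}^{-p}$, then translating through the duality (which swaps $a \leftrightarrow b$), should yield exactly the twisted periodicity of Conjecture~\ref{conj.Nabtri.per} on $\nabla(p)$. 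The transposition map $x \mapsto x'$ is preserved under $\phi$ because $\phi$ commutes with coordinate-swapping up to the antipodal shift $(i,k) \mapsto (p+1-i, p+1-k)$, so the formulations match.

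As a backup, if the duality proves too delicate to establish cleanly, one could attempt a direct proof along the lines of Sections~\ref{sec.proof-nots}--\ref{sec.gencase}. By Remark~\ref{rmk.slacks.gen}, the machinery of $\downslack_\ell^v$, $\upslack_\ell^v$, the recursions of Proposition~\ref{prop.slacks.rec}, and parts~(a)--(b) of Theorem~\ref{thm.rect.path} all apply verbatim to $\nabla(p)$. What is missing is an analog of the conversion lemma (Lemma~\ref{lem.rect.conv}): one would want, for adjacent elements along the two legs of the triangle, an identity relating certain $\downslack_\ell^{u \to d}$ and $\upslack_\ell^{u' \to d'}$ sums, provable via a path-jump-path interpolation like the one in Section~\ref{sec.conversion}.

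The main obstacle, in either approach, is handling the multiplicity of maximal elements of $\nabla(p)$: the hypotenuse $\{(i, p-i) : i \in [p-1]\}$ is a whole antichain, whereas the rectangle had a unique maximum $(p,q)$. In the direct approach, parts~(c)--(d) of Theorem~\ref{thm.rect.path} (which recovered labels $u_\ell$ from $\upslack_\ell^{(p,q) \to u}$ and $\downslack_\ell^{u \to (1,1)}$ using the unique extreme elements) have no immediate counterpart, and a conversion lemma for $\nabla(p)$ would need to sum over all possible endpoints on the hypotenuse simultaneously. In the duality approach, the fact that the elggot is only a one-sided inverse of the toggle (and not a genuine inverse on all of $\mathbb{K}^{\widehat{P}}$) means that the translation of a periodicity statement from $\Delta(p)$ to $\nabla(p)$ requires careful bookkeeping of domains of definition --- exactly the technicality that has kept the noncommutative case so much harder than the commutative one throughout this paper.
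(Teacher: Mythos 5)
This statement is a \emph{conjecture} in the paper: the authors offer no proof of Conjecture~\ref{conj.Nabtri.per} (nor of Conjecture~\ref{conj.Deltri.per}), and explicitly present both as open even in the noncommutative setting, with only the commutative case known. So there is no proof in the paper to compare against, and your proposal does not close the gap either, for two reasons.

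First, your primary route reduces the $\nabla(p)$ statement to the $\Delta(p)$ statement, but Conjecture~\ref{conj.Deltri.per} is itself unproven. Even if every step of your duality argument were airtight, you would have proved only a conditional implication between two open conjectures, not the statement itself. Second, the duality step is not available in the paper's framework and is genuinely problematic noncommutatively. The identification of $R_{P^{\mathrm{op}}}$ with an ``inverse'' of $R_P$ rests, already at the level of a single toggle, on $T_v$ being invertible; but as the paper notes after Definition~\ref{def.Tv}, the noncommutative toggle is \emph{not} an involution, and its ``elggot'' is only a partial one-sided inverse whose domain of definition need not match. Composing these in reverse order gives a partial map whose relationship to $R_{P^{\mathrm{op}}}$ (after the forced swap $0\leftrightarrow 1$, i.e.\ $a\leftrightarrow b$) is exactly the kind of claim that, throughout this paper, requires a from-scratch argument rather than an appeal to the commutative picture. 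Moreover, solving the $\Delta$-formula for $f(x')$ yields $b\overline{a}\cdot(R^p f)(x)\cdot\overline{b}a$, and you would still need to verify that after the $a\leftrightarrow b$ swap this lands precisely on the form $a\overline{b}\cdot f(x')\cdot\overline{a}b$ demanded by Conjecture~\ref{conj.Nabtri.per}; you assert this ``should'' happen but do not check it. Your backup plan (a direct adaptation of Sections~\ref{sec.proof-nots}--\ref{sec.gencase}) correctly identifies the real obstruction --- the hypotenuse of $\nabla(p)$ is an antichain of maximal elements, so Theorem~\ref{thm.rect.path}~(c) and the conversion lemma have no counterpart --- but identifying an obstruction is not the same as overcoming it. As written, the proposal is a research plan, not a proof.
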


If true, these two conjectures would generalize \cite[Theorem 65]{bir-row-2},
where $\mathbb{K}$ is commutative.

\subsection{The \textquotedblleft right half\textquotedblright\ triangle}

We can also cut the square $\left[  p\right]  \times\left[  p\right]  $ along
its vertical axis:

\begin{definition}
\label{def.Leftri}Let $p$ be a positive integer. Define a subset
$\operatorname*{Tria}\left(  p\right)  $ of the $p\times p$-rectangle $\left[
p\right]  \times\left[  p\right]  $ by%
\[
\operatorname*{Tria}\left(  p\right)  :=\left\{  \left(  i,k\right)
\in\left[  p\right]  \times\left[  p\right]  \ \mid\ i\leq k\right\}  .
\]
This subset $\operatorname*{Tria}\left(  p\right)  $ inherits a poset
structure from $\left[  p\right]  \times\left[  p\right]  $.

\end{definition}

The Hasse diagram of this poset $\operatorname*{Tria}\left(  p\right)  $ has
the shape of a triangle; if we draw $\left[  p\right]  \times\left[  p\right]
$ as agreed in Convention \ref{conv.rect.draw}, then $\operatorname*{Tria}%
\left(  p\right)  $ is the \textquotedblleft right half\textquotedblright\ of
the square $\left[  p\right]  \times\left[  p\right]  $.

\Needspace{16\baselineskip}

\begin{example}
Here is the Hasse diagram of the poset $\operatorname*{Tria}\left(  4\right)
$:%
\[
\xymatrixrowsep{0.9pc}\xymatrixcolsep{0.20pc}\xymatrix{
& & & \left(4,4\right) \ar@{-}[rd] & & & \\
& & & & \left(3,4\right) \ar@{-}[rd] \ar@{-}[ld] & & \\
& & & \left(3,3\right) \ar@{-}[rd] & & \left(2,4\right) \ar@{-}[rd] \ar@{-}[ld] & \\
& & & & \left(2,3\right) \ar@{-}[rd] \ar@{-}[ld] & & \left(1,4\right) \ar@{-}[ld] \\
& & & \left(2,2\right) \ar@{-}[rd] & & \left(1,3\right) \ar@{-}[ld] & \\
& & & & \left(1,2\right) \ar@{-}[ld] & & \\
& & & \left(1,1\right) & & & & & .
}
\]

\end{example}

The inequality $i\leq k$ in Definition \ref{def.Leftri} could just as well be
replaced by the reverse inequality $i\geq k$; the resulting poset would be
isomorphic to $\operatorname*{Tria}\left(  p\right)  $. But we have to agree
on something.

Now, we again suspect a periodicity-like phenomenon:

\begin{conjecture}
[periodicity conjecture for \textquotedblleft right half\textquotedblright%
\ triangle]\label{conj.Leftri.per}Let $p$ be a positive integer. Assume that
$P$ is the poset $\operatorname*{Tria}\left(  p\right)  $. Let $f\in
\mathbb{K}^{\widehat{P}}$ be a $\mathbb{K}$-labeling such that $R^{2p}%
f\neq\undf$. Let $a=f\left(  0\right)  $ and $b=f\left(  1\right)  $. Let
$x\in\widehat{P}$. Then, $a$ and $b$ are invertible, and we have%
\[
\left(  R^{2p}f\right)  \left(  x\right)  =a\overline{b}\cdot f\left(
x\right)  \cdot\overline{a}b.
\]

\end{conjecture}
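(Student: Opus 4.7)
The plan is to deduce Conjecture \ref{conj.Leftri.per} from the rectangle periodicity Theorem \ref{thm.rect.ord} by realizing $\operatorname*{Tria}(p)$ as the symmetric part of $[p] \times [p]$. Given $f \in \mathbb{K}^{\widehat{\operatorname*{Tria}(p)}}$, I would form the symmetrization $\widetilde{f} \in \mathbb{K}^{\widehat{[p] \times [p]}}$ by setting $\widetilde{f}(i,k) := f(\min(i,k), \max(i,k))$ for $(i,k) \in [p]\times[p]$, together with $\widetilde{f}(0) := f(0)$ and $\widetilde{f}(1) := f(1)$. The coordinate-swap $\sigma:(i,k) \mapsto (k,i)$ is a poset automorphism of $[p] \times [p]$, and the intrinsic description of rowmotion as a composition of toggles (independent of the chosen linear extension, by Corollary \ref{cor.R.welldef}) makes it manifest that $R$ commutes with $\sigma$; hence the symmetry of $\widetilde{f}$ is preserved under every iterate $R^\ell \widetilde{f}$, and these iterates restrict to well-defined labelings of $\widehat{\operatorname*{Tria}(p)}$.

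The core comparison is then between $R^\ell \widetilde{f}$ (restricted to $\widehat{\operatorname*{Tria}(p)}$) and $R^\ell f$. A direct toggle-by-toggle analysis shows that the two agree at each off-diagonal element $(i,k)$ with $i < k$: there the lower and upper rectangle-neighbors all lie in the triangle and carry the corresponding $f$-values, so the rectangle toggle formula reduces to the triangle one. At a diagonal element $(i,i)$, however, the two lower rectangle-neighbors $(i-1,i)$ and $(i,i-1)$ both receive the label $f(i-1,i)$ under $\widetilde{f}$, and the two upper neighbors both carry $f(i,i+1)$. Consequently the rectangle toggle at $(i,i)$ produces $2 \cdot (\text{triangle toggle}) \cdot \overline{2}$, with one-sided variants at the endpoints $(1,1)$ and $(p,p)$. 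In the commutative case $2$ is central and $2\overline{2} = 1$, so the discrepancy vanishes identically; Theorem \ref{thm.rect.ord} applied to $\widetilde{f}$ then restricts directly to the desired identity $R^{2p} f(x) = a\overline{b} \cdot f(x) \cdot \overline{a} b$. \textbf{The main obstacle} is the accumulation of these noncommutative conjugations by $2$ at the diagonal through $2p$ iterations; one would need to argue that they ultimately telescope back to the identity, or more precisely are absorbed into the overall $a\overline{b}\cdot(-)\cdot\overline{a}b$ twist. I am not currently aware of a clean bookkeeping scheme for this.

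Should this folding approach prove intractable, I would instead adapt the internal machinery of Sections \ref{sec.proof-nots}--\ref{sec.gencase} directly to $\operatorname*{Tria}(p)$. The elements $\downslack_\ell^v$, $\upslack_\ell^v$ and the path sums $\downslack_\ell^{u \to v}$, $\upslack_\ell^{u \to v}$ transfer verbatim to any finite poset, and Remark \ref{rmk.slacks.gen} already confirms that Corollary \ref{cor.rect.trans-uv}, Proposition \ref{prop.slacks.rec}, and Theorem \ref{thm.rect.path}(a,b) continue to hold in this generality. What is missing is a triangle analog of the conversion lemma (Lemma \ref{lem.rect.conv}); I would try to adapt the path-jump-path argument using the hypotenuse $\{(i,i)\}$ and the top leg $\{(i,p)\}$ as the relevant boundaries of $\operatorname*{Tria}(p)$, verifying the key four-neighbors identity (Lemma \ref{lem.rect.four}) in this geometry. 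Constructing the correct family of path-jump-paths and checking the analog of Claim 4 in the proof of Lemma \ref{lem.rect.conv} would be the principal technical challenge of this fallback route.
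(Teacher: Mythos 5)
This statement is a \emph{conjecture} in the paper (Conjecture \ref{conj.Leftri.per}); the authors supply no proof of it, and what you have written is likewise not a proof but a plan with an explicitly acknowledged gap. So the honest verdict is that the gap is genuine, and it is in fact the very obstruction that the authors themselves name: they remark that the commutative-case argument of \cite[Theorem 58]{bir-row-2} can be adapted to a general ring $\mathbb{K}$ \emph{only under the extra hypothesis that $2$ is invertible in $\mathbb{K}$}, and that they know of no way to remove this hypothesis noncommutatively.

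Concretely, your folding approach breaks down in two places. First, even to write the rectangle toggle at a diagonal element $(i,i)$ as $2\cdot(\text{triangle toggle})\cdot\overline{2}$ you must invert the element $2=1+1$ of $\mathbb{K}$: the lower-neighbor sum becomes $2\,f(i-1,i)$ and the upper-neighbor sum of inverses becomes $2\,\overline{(\,\cdot\,)}$, and the toggle formula requires the inverse of the latter. The conjecture places no such hypothesis on $\mathbb{K}$ (e.g.\ $\mathbb{K}=\mathbb{Z}$ or a matrix ring over $\mathbb{F}_2$ is allowed), so the symmetrized labeling $\widetilde{f}$ may satisfy $R\widetilde{f}=\undf$ even though $R^{2p}f\neq\undf$, and the comparison between $R^{\ell}\widetilde{f}$ and $R^{\ell}f$ never gets off the ground. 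Second, even granting that $2$ is invertible, the conjugations by $2$ at diagonal elements do not obviously cancel: they interleave with the labels through $2p$ iterations, and since $2$ need not be central, there is no telescoping. You correctly flag this as the main obstacle, but flagging it is not resolving it; this is precisely where the authors' own attempt stalls. Your fallback route (a triangle analogue of the conversion lemma, Lemma \ref{lem.rect.conv}) is a reasonable research direction, but the key combinatorial identity --- the analogue of Claim 4, i.e.\ the four-neighbors relation along the hypotenuse, where a diagonal element has only one neighbor on each side within the triangle --- is exactly what is missing, and you do not supply it. As it stands, neither route constitutes a proof, and the statement remains open.
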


If true, this conjecture would generalize \cite[Theorem 58]{bir-row-2}, where
$\mathbb{K}$ is commutative.

In a sense, we can \textquotedblleft almost\textquotedblright\ prove
Conjecture \ref{conj.Leftri.per}: Namely, the proof of its commutative case
(\cite[Theorem 58]{bir-row-2}) given in \cite{bir-row-2} can be adapted to the
case of a general ring $\mathbb{K}$, as long as the number $2$ is invertible
in $\mathbb{K}$. The latter condition has all the earmarks of a technical
assumption that should not matter for the validity of the result;
unfortunately, however, we are not aware of a rigorous argument that would
allow us to dispose of such an assumption in the noncommutative case.

\subsection{Trapezoids}

Nathan Williams's conjecture \cite[Conjecture 75]{bir-row-2}, too, seems to
extend to the noncommutative setting:

\begin{conjecture}
[periodicity conjecture for the trapezoid]\label{conj.trapezoid.per}Let $p$ be
an integer $>1$. Let $s\in\mathbb{N}$. Assume that $P$ is the subposet
\[
\left\{  \left(  i,k\right)  \in\left[  p\right]  \times\left[  p\right]
\ \mid\ i+k>p+1\text{ and }i\leq k\text{ and }k\geq s\right\}
\]
of $\left[  p\right]  \times\left[  p\right]  $. Let $f\in\mathbb{K}%
^{\widehat{P}}$ be a $\mathbb{K}$-labeling such that $R^{p}f\neq\undf$. Let
$a=f\left(  0\right)  $ and $b=f\left(  1\right)  $. Let $x\in\widehat{P}$.
Then, $a$ and $b$ are invertible, and we have%
\[
\left(  R^{p}f\right)  \left(  x\right)  =a\overline{b}\cdot f\left(
x\right)  \cdot\overline{a}b.
\]

\end{conjecture}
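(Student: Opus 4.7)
The plan is to mimic the path-based strategy for the rectangle (Sections \ref{sec.proof-nots}--\ref{sec.gencase}), with the trapezoid's boundary playing the role of the rectangle's two edges. The first step is to transplant the slack notation: define $\downslack_\ell^v$, $\upslack_\ell^v$, $\downslack_\ell^{\mathbf{p}}$, $\upslack_\ell^{\mathbf{p}}$, $\downslack_\ell^{u \rightarrow v}$, and $\upslack_\ell^{u \rightarrow v}$ by the same formulas as in Section \ref{sec.proof-nots}, with all cover relations read inside $\widehat{P}$. By Remark \ref{rmk.slacks.gen}, Proposition \ref{prop.slacks.rec}, the transition equation of Proposition \ref{prop.rect.trans}, Corollary \ref{cor.rect.trans-uv}, and parts \textbf{(a)} and \textbf{(b)} of Theorem \ref{thm.rect.path} transplant verbatim to any finite poset. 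Since the trapezoid $P$ has a unique maximum $M$ and a unique minimum $m$, parts \textbf{(c)} and \textbf{(d)} also survive with $(p,q)$ and $(1,1)$ replaced by $M$ and $m$, yielding the two path representations $u_\ell = \overline{\upslack_\ell^{M \rightarrow u}}\cdot b$ and $u_\ell = \downslack_\ell^{u \rightarrow m}\cdot a$ for every $u \in P$.

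The crucial next step is to prove a conversion lemma analogous to Lemma \ref{lem.rect.conv}. The underlying local identity (Claim 4 in that proof) continues to hold at every interior element whose four neighbors in $\widehat{P}$ form a unit diamond, via Lemma \ref{lem.rect.four}, so the path-jump-path interpolation argument should yield an equality $\downslack_\ell^{u \rightarrow d} = \upslack_\ell^{u' \rightarrow d'}$ for suitable pairs $(u,u')$ and $(d,d')$ of boundary elements, provided one identifies the correct pair of boundary edges in the trapezoid. The natural candidates are the slanted diagonal $\{(i,i) \in P\}$, the truncation edge $\{(i,k) \in P : k = s\}$, and the antidiagonal $\{(i,k) \in P : i+k = p+2\}$; the correct assignment will be dictated by which edges consist of elements with a unique cover in $\widehat{P}$. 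Combined with the path formulas from Step~1, such a conversion lemma would directly produce closed formulas for $(R^p f)(v)$ when $v$ lies on one of these boundary edges, in the spirit of Lemmas \ref{lem.rect.i1} and \ref{lem.rect.1j}.

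From there, one would propagate the identity $(R^p f)(v) = a\overline{b}\cdot f(v)\cdot \overline{a}b$ from the boundary to all interior elements by an induction analogous to the one in Section \ref{sec.gencase}: design a custom rank function on $P$ that decreases when passing to any of the four diamond-neighbors $u, m, s, t$ of an interior element $v$, then use the transition equation (\ref{eq.vl+1=}) at $m$ together with Lemma \ref{lem.rect.four} to express $(R^p f)(v)$ in terms of $(R^p f)$ at these neighbors. Several case distinctions will arise along the slanted boundary, depending on how many of the four neighbors actually belong to $P$, just as in the proof of Theorem \ref{thm.rect.antip}. The hard part will be the conversion lemma itself: the trapezoid's boundary is not axis-aligned, so it is not obvious that the path-jump-path construction produces a genuine bijection between paths of the two types without leaving $P$ during the interpolation. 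If this combinatorial difficulty can be resolved --- possibly by enlarging the class of path-jump-paths to allow multiple jumps or reflections off the slanted edge, so that each intermediate configuration remains inside the trapezoid --- the remainder of the proof should go through in close analogy with the rectangle case, with periodicity being extracted directly rather than via a separate reciprocity reduction.
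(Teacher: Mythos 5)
This statement is an open conjecture in the paper, not a theorem: the authors state explicitly that it has only been verified computationally for some $p$, $s$ and random matrix labelings, and that a proof is unknown even when $\mathbb{K}$ is commutative. Your text is accordingly not a proof but a research plan, and by your own admission the decisive step --- a conversion lemma for the trapezoid --- is left unresolved (``if this combinatorial difficulty can be resolved\ldots''). That is precisely where the rectangle argument breaks down, so the gap is not a technicality. Concretely: the local identity underlying the interpolation (Claim 4 in the proof of Lemma \ref{lem.rect.conv}, which in its nontrivial case invokes Lemma \ref{lem.rect.four}) requires that every interior element sit inside a full unit diamond $\{d,v,w,u\}$ contained in $P$; along the slanted edge $i=k$ and the truncation edge $k=s$ of the trapezoid these diamonds are missing a corner, and the three-case analysis of Claim 4 (which is keyed to the axis-aligned geometry and the rank function $i+j-1$ of the rectangle) does not carry over. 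The path-jump-path bijection also uses that the starting and ending edges consist of elements with a unique upper (resp.\ lower) cover in $\widehat{P}$, which singles out the rectangle's two edges; the trapezoid's three boundary edges do not pair up in any evident way.

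A second, independent obstruction: the claimed period is $p$, whereas the rectangle machinery, run from boundary to boundary, naturally produces statements after $\operatorname{rank}(M)-\operatorname{rank}(m)$ many steps, and the reduction from reciprocity to periodicity in Section \ref{sec.antip-to-ord} uses the antipode $x\mapsto x^{\sim}$ of the rectangle, which does not map the trapezoid to itself. You propose to ``extract periodicity directly,'' but give no mechanism by which the boundary formulas would close up after exactly $p$ iterations. Since even the commutative trapezoid case has resisted all known techniques (the paper cites only partial progress), any proof along these lines would need a genuinely new idea at the conversion step, not merely a transplantation of the rectangle argument.
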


Again, this has been verified using SageMath for certain values of $p$ and $s$
and some randomly chosen $\mathbb{K}$-labelings with $\mathbb{K}%
=\mathbb{Q}^{3\times3}$. Even for commutative $\mathbb{K}$, a proof is yet to
be found, although significant advances have been recently made (see
\cite[Chapter 4]{Johnso23}\footnote{See also \cite{DWYWZ20} for a proof on the
level of order ideals.}).

\subsection{Ill-behaved posets}

The above results and conjectures may suggest that every finite poset $P$ for
which birational rowmotion $R$ has finite order when $\mathbb{K}$ is
commutative must also satisfy a similar (if slightly more complicated)
property when $\mathbb{K}$ is noncommutative. In particular, one might expect
that if some positive integer $m$ satisfies $R^{m}=\operatorname*{id}$ (as
rational maps) for all fields $\mathbb{K}$, then $R^{m}f=f$ should also hold
for all noncommutative rings $\mathbb{K}$ and all $\mathbb{K}$-labelings
$f\in\mathbb{K}^{\widehat{P}}$ that satisfy $f\left(  0\right)  =f\left(
1\right)  =1$ (the latter condition ensures, e.g., that the $a\overline{b}$
and $\overline{a}b$ factors in Theorem \ref{thm.rect.ord} can be removed).
However, this expectation is foiled by the following example:

\begin{example}
\label{exa.illbehaved.claw} Let $P$ be the four-element poset $\left\{
p,q_{1},q_{2},q_{3}\right\}  $ with order relation defined by setting
$p<q_{i}$ for each $i\in\left\{  1,2,3\right\}  $. This poset has Hasse
diagram
\[
\xymatrixrowsep{0.9pc}\xymatrixcolsep{2pc}\xymatrix{
q_1 \ar@{-}[rd] & q_2 \ar@{-}[d] & q_3 \ar@{-}[ld] \\
& p & & & .
}
\]
It is known (see \cite[Example 18]{bir-row-1} or \cite[Corollary
76]{bir-row-1}) that the birational rowmotion $R$ of this poset $P$ satisfies
$R^{6}=\operatorname*{id}$ (as rational maps) if $\mathbb{K}$ is a field. In
other words, if $\mathbb{K}$ is a field, and if $f\in\mathbb{K}^{\widehat{P}}$
is a $\mathbb{K}$-labeling such that $R^{6}f\neq\undf$, then $R^{6}f=f$. But
nothing like this holds when $\mathbb{K}$ is a noncommutative ring. For
instance, if we let $\mathbb{K}$ be the matrix ring $\mathbb{Q}^{2\times2}$,
and if we define a $\mathbb{K}$-labeling $f\in\mathbb{K}^{\widehat{P}}$ by%
\begin{align*}
f\left(  0\right)   &  =I_{2}\ \ \ \ \ \ \ \ \ \ \left(  \text{the identity
matrix in }\mathbb{K}\right)  ,\\
f\left(  1\right)   &  =I_{2},\ \ \ \ \ \ \ \ \ \ f\left(  p\right)
=I_{2},\ \ \ \ \ \ \ \ \ \ f\left(  q_{1}\right)  =I_{2},\\
f\left(  q_{2}\right)   &  =\left(
\begin{array}
[c]{cc}%
1 & 0\\
0 & -1
\end{array}
\right)  ,\ \ \ \ \ \ \ \ \ \ f\left(  q_{3}\right)  =\left(
\begin{array}
[c]{cc}%
1 & 1\\
0 & 1
\end{array}
\right)  ,
\end{align*}
then $R^{m}f$ is distinct from $f$ (and also distinct from $\undf$) for all
positive integers $m$.
\end{example}

\begin{vershort}
(See the detailed version of this article for a proof.)
\end{vershort}

\begin{verlong}
\begin{proof}
[Proof sketch.]Let $\mathbb{K}$ be the matrix ring $\mathbb{Q}^{2\times2}$.
For any row vector $\left(  y,z\right)  \in\mathbb{Q}^{2}$, we define a
$\mathbb{K}$-labeling $f_{\left(  y,z\right)  }\in\mathbb{K}^{\widehat{P}}$ by
setting%
\begin{align*}
f_{\left(  y,z\right)  }\left(  0\right)   &  =f_{\left(  y,z\right)  }\left(
1\right)  =f_{\left(  y,z\right)  }\left(  p\right)  =I_{2};\\
f_{\left(  y,z\right)  }\left(  q_{2}\right)   &  =\left(
\begin{array}
[c]{cc}%
1 & 0\\
0 & -1
\end{array}
\right)  ;\ \ \ \ \ \ \ \ \ \ f_{\left(  y,z\right)  }\left(  q_{1}\right)
=\left(
\begin{array}
[c]{cc}%
1 & y\\
0 & 1
\end{array}
\right)  ;\ \ \ \ \ \ \ \ \ \ f_{\left(  y,z\right)  }\left(  q_{3}\right)
=\left(
\begin{array}
[c]{cc}%
1 & z\\
0 & 1
\end{array}
\right)  .
\end{align*}
Using direct computation (by hand or using SageMath \cite{sage}), we can see
the following:

\begin{statement}
\textit{Claim 1:} For any $y,z\in\mathbb{Q}$, we have $R^{6}f_{\left(
y,z\right)  }=f_{\left(  y^{\prime},z^{\prime}\right)  }$, where%
\[
y^{\prime}:=\dfrac{5y+4z}{9}\ \ \ \ \ \ \ \ \ \ \text{and}%
\ \ \ \ \ \ \ \ \ \ z^{\prime}:=\dfrac{4y+5z}{9}.
\]

\end{statement}

We define a $\mathbb{Q}$-linear map $\Phi:\mathbb{Q}^{2}\rightarrow
\mathbb{Q}^{2}$ that sends each row vector $\left(  y,z\right)  \in
\mathbb{Q}^{2}$ to $\left(  y^{\prime},z^{\prime}\right)  $, where $y^{\prime
}$ and $z^{\prime}$ are as in Claim 1. Then, Claim 1 says that $R^{6}%
f_{v}=f_{\Phi\left(  v\right)  }$ for each $v\in\mathbb{Q}^{2}$. Thus, for
each $v\in\mathbb{Q}^{2}$ and each $i\in\mathbb{N}$, we have $R^{6i}%
f_{v}=f_{\Phi^{i}\left(  v\right)  }$.

However, the endomorphism $\Phi$ of $\mathbb{Q}^{2}$ is diagonalizable with
eigenvalues $1$ and $\dfrac{1}{9}$. Hence, if a vector $\left(  y,z\right)
\in\mathbb{Q}^{2}$ satisfies $y\neq z$, then its iterative images $\Phi
^{0}\left(  y,z\right)  $, $\Phi^{1}\left(  y,z\right)  $, $\Phi^{2}\left(
y,z\right)  $, $\ldots$ are (pairwise) distinct. Therefore, if $y,z\in
\mathbb{Q}^{2}$ satisfy $y\neq z$, then the $\mathbb{K}$-labelings
$R^{6i}f_{\left(  y,z\right)  }=f_{\Phi^{i}\left(  y,z\right)  }$ are
(pairwise) distinct and thus, in particular, distinct from $f_{\left(
y,z\right)  }$. Therefore, in this case, $R^{m}f_{\left(  y,z\right)  }$ is
distinct from $f_{\left(  y,z\right)  }$ for any positive integer $m$ (because
if we had $R^{m}f_{\left(  y,z\right)  }=f_{\left(  y,z\right)  }$ for some
$m$, then we would also have $R^{6m}f_{\left(  y,z\right)  }=f_{\left(
y,z\right)  }$). In particular, by taking $y=0$ and $z=1$, we obtain the
specific labeling constructed in Example \ref{exa.illbehaved.claw}.
\end{proof}
\end{verlong}

\begin{example}
Let $P$ be the four-element poset $\left\{  p_{1},p_{2},q_{1},q_{2}\right\}  $
with order relation defined by setting $p_{i}<q_{j}$ for each $i,j$. It
follows from \cite[Proposition 74 \textbf{(b)} and Proposition 61]{bir-row-1}
that the birational rowmotion $R$ of this poset $P$ satisfies $R^{6}%
=\operatorname*{id}$ (as rational maps) if $\mathbb{K}$ is a field. On the
other hand, if $\mathbb{K}$ is the matrix ring $\mathbb{Q}^{2\times2}$, then
we can easily find a $\mathbb{K}$-labeling $f$ of $P$ such that $R^{m}f\neq f$
for all $1\leq m\leq10\ 000$ (and probably for all positive $m$, but we have
not verified this formally), despite $f\left(  0\right)  $ and $f\left(
1\right)  $ both being the identity matrix $I_{2}$.
\end{example}

\section{\label{sec.Rf11}A note on general posets}

We finish with some curiosities. While Theorem \ref{thm.rect.antip} is
specific to rectangles, its $\left(  i,j\right)  =\left(  1,1\right)  $ case
can be generalized to arbitrary finite posets $P$ in the following form:

\begin{proposition}
\label{prop.R.bottom-top}Let $P$ be any finite poset. Let $f\in\mathbb{K}%
^{\widehat{P}}$ be a labeling of $P$ such that $Rf\neq\undf$. Let $a=f\left(
0\right)  $ and $b=f\left(  1\right)  $. Then,%
\begin{equation}
b\cdot\sum_{\substack{u\in\widehat{P};\\u\gtrdot0}}\overline{\left(
Rf\right)  \left(  u\right)  }\cdot a=\sum_{\substack{u\in\widehat{P}%
;\\u\lessdot1}}f\left(  u\right)  , \label{eq.prop.R.bottom-top.eq}%
\end{equation}
assuming that the inverses $\overline{\left(  Rf\right)  \left(  u\right)  }$
on the left-hand side are well-defined.
\end{proposition}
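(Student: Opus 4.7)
The plan is to derive the identity from the implicit formula for birational rowmotion (Proposition \ref{prop.R.implicit}) via a telescoping / boundary-cancellation argument, using the fact that Proposition \ref{prop.R.implicit} is valid for arbitrary finite posets. The $P = \varnothing$ case is trivial (both sides are empty), so I would assume $P \neq \varnothing$; then Lemma \ref{lem.R.1inv} gives $b$ invertible, Lemma \ref{lem.R.inv} gives $f(v)$ invertible for every $v \in P$, and Lemma \ref{lem.R.inv-not-min} gives $(Rf)(v)$ invertible for every non-minimal $v \in P$. The minimal elements of $P$ are exactly those $v$ with $v \gtrdot 0$, so the hypothesis of the proposition supplies invertibility of $(Rf)(v)$ for these $v$ as well. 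Finally, $\sum_{u \gtrdot v} \overline{(Rf)(u)}$ is invertible for every $v \in P$, since its inverse appears in Proposition \ref{prop.R.implicit}.

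First I would rearrange Proposition \ref{prop.R.implicit} into the ``local balance'' identity
\[
\sum_{u \gtrdot v} \overline{(Rf)(u)} \cdot f(v) \;=\; \overline{(Rf)(v)} \cdot \sum_{u \lessdot v} f(u) \qquad \text{for every } v \in P, \quad (\diamond)
\]
obtained by multiplying the implicit formula on the right by $\bigl(\sum_{u \gtrdot v} \overline{(Rf)(u)}\bigr) f(v)$ and then on the left by $\overline{(Rf)(v)}$.

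Next I would sum $(\diamond)$ over all $v \in P$ and swap the order of summation on each side, reindexing by the outer variable $u$. On the left-hand side, the outer index $u$ ranges over $\widehat{P}$ and splits into: (i) $u = 1$, contributing $\overline{b} \cdot \sum_{v \lessdot 1} f(v)$; (ii) $u \in P$, where the inner sum $\sum_{v \in P,\ v \lessdot u} f(v)$ equals $\sum_{v \lessdot u} f(v) - a \cdot [u \gtrdot 0]$ (the correction removes the $v = 0$ term, which occurs exactly when $u$ is minimal in $P$). On the right-hand side, after renaming indices, I just get $\sum_{u \in P} \overline{(Rf)(u)} \sum_{v \lessdot u} f(v)$, now taken over all $v \lessdot u$ in $\widehat{P}$ without any correction. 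Equating the two sides and cancelling the common ``bulk'' contribution $\sum_{u \in P} \overline{(Rf)(u)} \sum_{v \lessdot u} f(v)$, I am left with
\[
\overline{b} \cdot \sum_{v \lessdot 1} f(v) \;=\; \sum_{u \gtrdot 0} \overline{(Rf)(u)} \cdot a,
\]
and multiplying both sides on the left by $b$ gives the claim.

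The main obstacle is purely the bookkeeping of case distinctions when the outer summation variable $u$ ranges over $\widehat{P} = P \sqcup \{0,1\}$: isolating the boundary contribution $a \cdot [u \gtrdot 0]$ from the interior sum requires carefully distinguishing whether $u$ is $1$, $0$, minimal in $P$, or non-minimal in $P$. Once this is done, the cancellation is immediate and the rest is pure algebra; in particular, no assumption on $a$ being invertible is needed.
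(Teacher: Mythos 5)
Your proof is correct, and it takes a genuinely different route from the paper's. The paper deduces the identity from its path-sum machinery: it invokes Theorem \ref{thm.rect.path} \textbf{(a)} (with $\ell=1$) to write $b\cdot\overline{(Rf)(u)}=\upslack_{1}^{1\rightarrow u}$, sums these via the recursion (\ref{eq.prop.slacks.rec.up2}) to get $\upslack_{1}^{1\rightarrow 0}$, converts this to $\downslack_{0}^{1\rightarrow 0}$ by Corollary \ref{cor.rect.trans-uv}, and unwinds the latter using Theorem \ref{thm.rect.path} \textbf{(b)}; in particular it must first establish that $a$ is invertible (which does follow from the hypotheses, via the toggle at a minimal element). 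Your argument instead sums the local balance identity $(\diamond)$ -- a direct rearrangement of Proposition \ref{prop.R.implicit} -- over all of $P$ and cancels the interior contributions, which is essentially an Abel-summation across the Hasse diagram. Your route is more elementary and self-contained (it needs only Proposition \ref{prop.R.implicit} and the basic invertibility lemmas, and it genuinely avoids any appeal to the invertibility of $a$), whereas the paper's route reuses infrastructure it has already built and, notably, is subtraction-free; your cancellation of the common bulk term $\sum_{u\in P}\overline{(Rf)(u)}\sum_{v\lessdot u}f(v)$ uses subtraction, so your proof would not survive the passage to semirings contemplated in Section \ref{sec.semiring}, while the paper's would. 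All the invertibility facts you cite are correctly sourced ($b$ from Lemma \ref{lem.R.1inv}, $f(v)$ from Lemma \ref{lem.R.inv}, $(Rf)(v)$ for non-minimal $v$ from Lemma \ref{lem.R.inv-not-min}, and for minimal $v$ from the stated hypothesis), and the bookkeeping you flag -- the boundary term $\overline{b}\sum_{v\lessdot 1}f(v)$ at $u=1$ and the correction $a\cdot[u\gtrdot 0]$ at minimal $u$ -- works out exactly as you describe.
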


\begin{vershort}
\begin{proof}
Even though we are not requiring $P$ to be a rectangle, we shall use some of
the notations introduced in Section \ref{sec.proof-nots}. Specifically, we
shall use the notation $x_{\ell}$ defined in (\ref{eq.xl=}), the notion of a
\textquotedblleft path\textquotedblright, and the notations $\downslack_{\ell
}^{v}$, $\upslack_{\ell}^{v}$, $\downslack_{\ell}^{\mathbf{p}}$,
$\upslack_{\ell}^{\mathbf{p}}$, $\downslack_{\ell}^{u\rightarrow v}$ and
$\upslack_{\ell}^{u\rightarrow v}$ defined afterwards. Hence, the equality
(\ref{eq.prop.R.bottom-top.eq}) (which we must prove) can be rewritten as
\begin{equation}
b\cdot\sum_{\substack{u\in\widehat{P};\\u\gtrdot0}}\overline{u_{1}}\cdot
a=\sum_{\substack{u\in\widehat{P};\\u\lessdot1}}u_{0}
\label{pf.prop.R.bottom-top.short.goal}%
\end{equation}
(since $u_{1}=\left(  Rf\right)  \left(  u\right)  $ and $u_{0}=f\left(
u\right)  $).

We assume that the inverses $\overline{\left(  Rf\right)  \left(  u\right)  }$
on the left-hand side of (\ref{eq.prop.R.bottom-top.eq}) are well-defined
(since the claim of Proposition \ref{prop.R.bottom-top} requires this). We
furthermore WLOG assume that $P\neq\varnothing$ (since the claim is easily
checked otherwise). Using these two assumptions, it is not hard to show that
both $a$ and $b$ are invertible. (See the detailed version for a proof.)

In Remark \ref{rmk.slacks.gen}, we have observed that Corollary
\ref{cor.rect.trans-uv}, Proposition \ref{prop.slacks.rec} and parts
\textbf{(a)} and \textbf{(b)} of Theorem \ref{thm.rect.path} hold for our
poset $P$ (even though $P$ is not necessarily a rectangle).

Now, Theorem \ref{thm.rect.path} \textbf{(a)} (applied to $\ell=1$) shows that
each $u\in P$ satisfies%
\[
u_{1}=\overline{\upslack_{1}^{1\rightarrow u}}\cdot b
\]
and thus%
\[
b\cdot\overline{u_{1}}=b\cdot\underbrace{\overline{\overline{\upslack_{1}%
^{1\rightarrow u}}\cdot b}}_{=\overline{b}\cdot\upslack_{1}^{1\rightarrow u}%
}=\underbrace{b\cdot\overline{b}}_{=1}\cdot\,\upslack_{1}^{1\rightarrow
u}=\upslack_{1}^{1\rightarrow u}.
\]
This latter equality also holds for $u=1$ (indeed, from $1_{1}=b$, we obtain
$b\cdot\overline{1_{1}}=b\cdot\overline{b}=1$; but it is easy to prove that
$\upslack_{1}^{1\rightarrow1}=1$ as well, and thus we obtain $b\cdot
\overline{1_{1}}=1=\upslack_{1}^{1\rightarrow1}$). Therefore, it holds for all
$u\in P\cup\left\{  1\right\}  $. Hence, in particular, it holds for all
$u\in\widehat{P}$ satisfying $u\gtrdot0$. Summing it over all such $u$, we
obtain%
\[
\sum_{\substack{u\in\widehat{P};\\u\gtrdot0}}b\cdot\overline{u_{1}}%
=\sum_{\substack{u\in\widehat{P};\\u\gtrdot0}} \upslack_{1}^{1\rightarrow u}
=\upslack_{1}^{1\rightarrow0}%
\]
(since (\ref{eq.prop.slacks.rec.up2}) (applied to $\ell=1$ and $s=1$ and
$t=0$) yields $\upslack_{1}^{1\rightarrow0}=\sum_{\substack{u\in
\widehat{P};\\u\gtrdot0}}\upslack_{1}^{1\rightarrow u}
\underbrace{\upslack_{1}^{0}}_{=1} =\sum_{\substack{u\in\widehat{P}%
;\\u\gtrdot0}} \upslack_{1}^{1\rightarrow u}$). Therefore,%
\begin{equation}
b\cdot\sum_{\substack{u\in\widehat{P};\\u\gtrdot0}}\overline{u_{1}}%
=\sum_{\substack{u\in\widehat{P};\\u\gtrdot0}}b\cdot\overline{u_{1}%
}=\upslack_{1}^{1\rightarrow0}=\downslack_{0}^{1\rightarrow0}
\label{pf.prop.R.bottom-top.short.5}%
\end{equation}
(by Corollary \ref{cor.rect.trans-uv}, applied to $\ell=1$ and $u=1$ and
$v=0$). Hence,%
\begin{align*}
b\cdot\sum_{\substack{u\in\widehat{P};\\u\gtrdot0}}\overline{u_{1}}  &
=\downslack_{0}^{1\rightarrow0} =\underbrace{\downslack_{0}^{1}}_{=1}%
\sum_{\substack{u\in\widehat{P};\\1\gtrdot u}}\downslack_{0}^{u\rightarrow
0}\ \ \ \ \ \ \ \ \ \ \left(
\begin{array}
[c]{c}%
\text{by (\ref{eq.prop.slacks.rec.down1}), applied to }\ell=0\\
\text{and }s=1\text{ and }t=0
\end{array}
\right) \\
&  =\sum_{\substack{u\in\widehat{P};\\1\gtrdot u}}\downslack_{0}%
^{u\rightarrow0} = \sum_{\substack{u\in\widehat{P};\\u\lessdot1}%
}\downslack_{0}^{u\rightarrow0}.
\end{align*}
Multiplying both sides of this equality by $a$ on the right, we obtain%
\begin{equation}
b\cdot\sum_{\substack{u\in\widehat{P};\\u\gtrdot0}}\overline{u_{1}}\cdot
a=\sum_{\substack{u\in\widehat{P};\\u\lessdot1}}\downslack_{0}^{u\rightarrow
0}\cdot a. \label{pf.prop.R.bottom-top.short.9}%
\end{equation}

However, Theorem \ref{thm.rect.path} \textbf{(b)} (applied to $\ell=0$) shows
that each $u\in P$ satisfies%
\[
u_{0}=\downslack_{0}^{u\rightarrow0}\cdot a.
\]
This equality also holds for $u=0$ (since $0_{0}=a$ equals
$\underbrace{\downslack_{0}^{0\rightarrow0}}_{=1}\cdot\, a=a$). Thus, it holds
for all $u\in P\cup\left\{  0\right\}  $. In particular, it therefore holds
for all $u\in\widehat{P}$ satisfying $u\lessdot1$. Summing it over all such
$u$, we obtain
\[
\sum_{\substack{u\in\widehat{P};\\u\lessdot1}}u_{0}=\sum_{\substack{u\in
\widehat{P};\\u\lessdot1}}\downslack_{0}^{u\rightarrow0}\cdot a.
\]
Comparing this with (\ref{pf.prop.R.bottom-top.short.9}), we obtain%
\[
b\cdot\sum_{\substack{u\in\widehat{P};\\u\gtrdot0}}\overline{u_{1}}\cdot
a=\sum_{\substack{u\in\widehat{P};\\u\lessdot1}}u_{0}.
\]
This proves (\ref{pf.prop.R.bottom-top.short.goal}) and, with it, Proposition
\ref{prop.R.bottom-top}.
\end{proof}
\end{vershort}

\begin{verlong}
\begin{proof}
Even though we are not requiring $P$ to be a rectangle, we shall use some of
the notations introduced in Section \ref{sec.proof-nots}. Specifically, we
shall use the notation $x_{\ell}$ defined in (\ref{eq.xl=}), the notion of a
\textquotedblleft path\textquotedblright, and the notations $\downslack_{\ell
}^{v}$, $\upslack_{\ell}^{v}$, $\downslack_{\ell}^{\mathbf{p}}$,
$\upslack_{\ell}^{\mathbf{p}}$, $\downslack_{\ell}^{u\rightarrow v}$ and
$\upslack_{\ell}^{u\rightarrow v}$ defined afterwards. Every $u\in\widehat{P}$
satisfies%
\begin{align}
u_{0}  &  =\underbrace{\left(  R^{0}f\right)  }_{\substack{=f\\\text{(since
}R^{0}=\operatorname*{id}\text{)}}}\left(  u\right)
\ \ \ \ \ \ \ \ \ \ \left(  \text{by (\ref{eq.xl=})}\right) \nonumber\\
&  =f\left(  u\right)  \label{pf.prop.R.bottom-top.u0=}%
\end{align}
and%
\begin{align}
u_{1}  &  =\left(  \underbrace{R^{1}}_{=R}f\right)  \left(  u\right)
\ \ \ \ \ \ \ \ \ \ \left(  \text{by (\ref{eq.xl=})}\right) \nonumber\\
&  =\left(  Rf\right)  \left(  u\right)  . \label{pf.prop.R.bottom-top.u1=}%
\end{align}

We assume that the inverses $\overline{\left(  Rf\right)  \left(  u\right)  }$
on the left-hand side of (\ref{eq.prop.R.bottom-top.eq}) are well-defined
(since the claim of Proposition \ref{prop.R.bottom-top} requires this). In
other words, we assume that%
\begin{equation}
\left(  Rf\right)  \left(  u\right)  \text{ is invertible for every }%
u\in\widehat{P}\text{ satisfying }u\gtrdot0. \label{pf.prop.R.bottom-top.ible}%
\end{equation}

It is easy to see that Proposition \ref{prop.R.bottom-top} holds when
$P=\varnothing$\ \ \ \ \footnote{\textit{Proof.} Assume that $P=\varnothing$.
Thus, $\widehat{P}=\left\{  0,1\right\}  $ with $0<1$. Hence, the only
$u\in\widehat{P}$ satisfying $u\gtrdot0$ is $1$. Therefore, $\sum
_{\substack{u\in\widehat{P};\\u\gtrdot0}}\overline{\left(  Rf\right)  \left(
u\right)  }=\overline{\left(  Rf\right)  \left(  1\right)  }=\overline{b}$
(since Proposition \ref{prop.R.implicit.01} yields $\left(  Rf\right)  \left(
1\right)  =f\left(  1\right)  =b$). Moreover, the only $u\in\widehat{P}$
satisfying $u\lessdot1$ is $0$ (since $\widehat{P}=\left\{  0,1\right\}  $
with $0<1$). Thus, $\sum_{\substack{u\in\widehat{P};\\u\lessdot1}}f\left(
u\right)  =f\left(  0\right)  =a$. Now,%
\[
b\cdot\underbrace{\sum_{\substack{u\in\widehat{P};\\u\gtrdot0}}\overline
{\left(  Rf\right)  \left(  u\right)  }}_{=\overline{b}}\cdot\,
a=\underbrace{b\cdot\overline{b}}_{=1}\cdot\, a=a=\sum_{\substack{u\in
\widehat{P};\\u\lessdot1}}f\left(  u\right)  .
\]
Hence, Proposition \ref{prop.R.bottom-top} is proved (under the assumption
that $P=\varnothing$).}. Thus, we WLOG assume that $P\neq\varnothing$. Hence,
Lemma \ref{lem.R.1inv} yields that $f\left(  1\right)  $ is invertible. In
other words, $b$ is invertible (since $b=f\left(  1\right)  $).

Furthermore, we can easily see that $a$ is invertible\footnote{\textit{Proof.}
Proposition \ref{prop.poset-minmax} \textbf{(a)} yields that the poset $P$ has
a minimal element. Consider such an element, and denote it by $p$. Then, $p$
is a minimal element of $P$, and therefore satisfies $0\lessdot p$ in
$\widehat{P}$ (by Remark \ref{rmk.Phat.covers} \textbf{(a)}). Hence,
$p\gtrdot0$ in $\widehat{P}$. Therefore, (\ref{pf.prop.R.bottom-top.ible})
(applied to $u=p$) yields that $\left(  Rf\right)  \left(  p\right)  $ is
invertible. However, Proposition \ref{prop.R.implicit} (applied to $v=p$)
yields%
\begin{equation}
\left(  Rf\right)  \left(  p\right)  =\left(  \sum\limits_{\substack{u\in
\widehat{P};\\u\lessdot p}}f\left(  u\right)  \right)  \cdot\overline{f\left(
p\right)  }\cdot\overline{\sum\limits_{\substack{u\in\widehat{P};\\u\gtrdot
p}}\overline{\left(  Rf\right)  \left(  u\right)  }}.
\label{pf.prop.R.bottom-top.fn2.1}%
\end{equation}
Thus, the two elements $f\left(  p\right)  $ and $\sum\limits_{\substack{u\in
\widehat{P};\\u\gtrdot p}}\overline{\left(  Rf\right)  \left(  u\right)  }$ of
$\mathbb{K}$ are invertible (since their inverses appear in
(\ref{pf.prop.R.bottom-top.fn2.1})).
\par
However, $p$ is a minimal element of $P$. Thus, no $u\in P$ satisfies
$u\lessdot p$. Therefore, the only $u\in\widehat{P}$ that satisfies $u\lessdot
p$ is $0$. Therefore, $\sum\limits_{\substack{u\in\widehat{P};\\u\lessdot
p}}f\left(  u\right)  =f\left(  0\right)  =a$. Thus, we can rewrite the
equality (\ref{pf.prop.R.bottom-top.fn2.1}) as%
\[
\left(  Rf\right)  \left(  p\right)  =a\cdot\overline{f\left(  p\right)
}\cdot\overline{\sum\limits_{\substack{u\in\widehat{P};\\u\gtrdot p}%
}\overline{\left(  Rf\right)  \left(  u\right)  }}.
\]
Multiplying both sides of this equality by $\left(  \sum
\limits_{\substack{u\in\widehat{P};\\u\gtrdot p}}\overline{\left(  Rf\right)
\left(  u\right)  }\right)  \cdot f\left(  p\right)  $ on the right, we obtain%
\begin{align*}
\left(  Rf\right)  \left(  p\right)  \cdot\left(  \sum\limits_{\substack{u\in
\widehat{P};\\u\gtrdot p}}\overline{\left(  Rf\right)  \left(  u\right)
}\right)  \cdot f\left(  p\right)   &  =a\cdot\overline{f\left(  p\right)
}\cdot\underbrace{\overline{\sum\limits_{\substack{u\in\widehat{P};\\u\gtrdot
p}}\overline{\left(  Rf\right)  \left(  u\right)  }}\cdot\left(
\sum\limits_{\substack{u\in\widehat{P};\\u\gtrdot p}}\overline{\left(
Rf\right)  \left(  u\right)  }\right)  }_{=1}\cdot f\left(  p\right) \\
&  =a\cdot\underbrace{\overline{f\left(  p\right)  }\cdot f\left(  p\right)
}_{=1} =a.
\end{align*}
The left hand side of this equality is a product of three invertible elements
of $\mathbb{K}$ (since the elements $\left(  Rf\right)  \left(  p\right)  $,
$\sum\limits_{\substack{u\in\widehat{P};\\u\gtrdot p}}\overline{\left(
Rf\right)  \left(  u\right)  }$ and $f\left(  p\right)  $ are invertible), and
thus itself must be invertible. Hence, the right hand side is invertible as
well. In other words, $a$ is invertible.}.

In Remark \ref{rmk.slacks.gen}, we have observed that Corollary
\ref{cor.rect.trans-uv}, Proposition \ref{prop.slacks.rec} and parts
\textbf{(a)} and \textbf{(b)} of Theorem \ref{thm.rect.path} hold for our
poset $P$ (even though $P$ is not necessarily a rectangle). In particular,
Corollary \ref{cor.rect.trans-uv} (applied to $\ell=1$, $u=1$ and $v=0$)
yields that
\begin{equation}
\upslack_{1}^{1\rightarrow0}=\downslack_{0}^{1\rightarrow0}
\label{pf.prop.R.bottom-top.u=d}%
\end{equation}
(since $1\geq1$ and $R^{1}f=Rf\neq\undf$).

Now, $1\geq1$ and $R^{1}f=Rf\neq\undf$. Hence, Theorem \ref{thm.rect.path}
\textbf{(a)} (applied to $\ell=1$) shows that each $u\in P$ satisfies%
\begin{equation}
u_{1}=\overline{\upslack_{1}^{1\rightarrow u}}\cdot b.
\label{pf.prop.R.bottom-top.u1=2}%
\end{equation}
Hence, for each $u\in\widehat{P}$ satisfying $u\gtrdot0$, we have%
\begin{equation}
b\cdot\overline{\left(  Rf\right)  \left(  u\right)  }=\upslack_{1}%
^{1\rightarrow u} \label{pf.prop.R.bottom-top.u1=3}%
\end{equation}
\footnote{\textit{Proof of (\ref{pf.prop.R.bottom-top.u1=3}):} Let
$u\in\widehat{P}$ satisfy $u\gtrdot0$. We must prove
(\ref{pf.prop.R.bottom-top.u1=3}). We note that $\left(  Rf\right)  \left(
u\right)  $ is invertible (by (\ref{pf.prop.R.bottom-top.ible})), so that
$\overline{\left(  Rf\right)  \left(  u\right)  }$ is well-defined.
\par
We are in one of the following two cases:
\par
\textit{Case 1:} We have $u=1$.
\par
\textit{Case 2:} We have $u\neq1$.
\par
Let us first consider Case 1. In this case, we have $u=1$. Thus, $\left(
Rf\right)  \left(  u\right)  =\left(  Rf\right)  \left(  1\right)  =f\left(
1\right)  $ (by Proposition \ref{prop.R.implicit.01}). Hence, $\left(
Rf\right)  \left(  u\right)  =f\left(  1\right)  =b$. Thus, $b\cdot
\overline{\left(  Rf\right)  \left(  u\right)  }=b\cdot\overline{b}=1$.
\par
However, the only path from $1$ to $1$ is the trivial path $\left(  1\right)
$. Thus, $\upslack_{1}^{1\rightarrow1}=\upslack_{1}^{\left(  1\right)
}=\upslack_{1}^{1}=1$ (by the definition of $\upslack_{1}^{1}$). Comparing
this with $b\cdot\overline{\left(  Rf\right)  \left(  u\right)  }=1$, we
obtain $b\cdot\overline{\left(  Rf\right)  \left(  u\right)  }=\upslack_{1}%
^{1\rightarrow1}$. In other words, $b\cdot\overline{\left(  Rf\right)  \left(
u\right)  }=\upslack_{1}^{1\rightarrow u}$ (since $1=u$). Thus,
(\ref{pf.prop.R.bottom-top.u1=3}) is proved in Case 1.
\par
Let us now consider Case 2. In this case, we have $u\neq1$. Also, we have
$u>0$ (since $u\gtrdot0$), so that $u\neq0$. Combining $u\in\widehat{P}$ with
$u\neq0$ and $u\neq1$, we obtain $u\in\widehat{P}\setminus\left\{
0,1\right\}  =P$. Hence, (\ref{pf.prop.R.bottom-top.u1=2}) yields
$u_{1}=\overline{\upslack_{1}^{1\rightarrow u}}\cdot b$. In view of
(\ref{pf.prop.R.bottom-top.u1=}), we can rewrite this as $\left(  Rf\right)
\left(  u\right)  =\overline{\upslack_{1}^{1\rightarrow u}}\cdot b$. Since
$\left(  Rf\right)  \left(  u\right)  $ is invertible, we can take inverses on
both sides of this equality. We thus obtain%
\begin{align*}
\overline{\left(  Rf\right)  \left(  u\right)  }  &  =\overline{\overline
{\upslack_{1}^{1\rightarrow u}}\cdot b}=\overline{b}\cdot\underbrace{\overline
{\overline{\upslack_{1}^{1\rightarrow u}}}}_{= \upslack_{1}^{1\rightarrow u}}
\ \ \ \ \ \ \ \ \ \ \left(  \text{since }\overline{\upslack_{1}^{1\rightarrow
u}}\text{ and }b\text{ are invertible}\right) \\
&  =\overline{b}\cdot\upslack_{1}^{1\rightarrow u}.
\end{align*}
Thus,%
\[
b\cdot\overline{\left(  Rf\right)  \left(  u\right)  }=\underbrace{b\cdot
\overline{b}}_{=1}\cdot\,\upslack_{1}^{1\rightarrow u}=\upslack_{1}%
^{1\rightarrow u}.
\]
Thus, (\ref{pf.prop.R.bottom-top.u1=3}) is proved in Case 2.
\par
We have now proved (\ref{pf.prop.R.bottom-top.u1=3}) in both Cases 1 and 2.
Hence, (\ref{pf.prop.R.bottom-top.u1=3}) always holds.}. Thus,%
\begin{align}
b\cdot\sum_{\substack{u\in\widehat{P};\\u\gtrdot0}}\overline{\left(
Rf\right)  \left(  u\right)  }\cdot a  &  =\sum_{\substack{u\in\widehat{P}%
;\\u\gtrdot0}}\underbrace{b\cdot\overline{\left(  Rf\right)  \left(  u\right)
}}_{\substack{=\upslack_{1}^{1\rightarrow u}\\\text{(by
(\ref{pf.prop.R.bottom-top.u1=3}))}}}\cdot\, a\nonumber\\
&  =\sum_{\substack{u\in\widehat{P};\\u\gtrdot0}}\upslack_{1}^{1\rightarrow
u}\cdot a. \label{pf.prop.R.bottom-top.4}%
\end{align}
However, $\upslack_{1}^{0}=1$ (by definition of $\upslack_{1}^{0}$). Now,
(\ref{eq.prop.slacks.rec.up2}) (applied to $\ell=1$ and $s=1$ and $t=0$)
yields%
\begin{equation}
\upslack_{1}^{1\rightarrow0}=\sum_{\substack{u\in\widehat{P};\\u\gtrdot
0}}\upslack_{1}^{1\rightarrow u}\underbrace{\upslack_{1}^{0}}_{=1}%
=\sum_{\substack{u\in\widehat{P};\\u\gtrdot0}}\upslack_{1}^{1\rightarrow u}.
\label{pf.prop.R.bottom-top.5}%
\end{equation}
Thus, (\ref{pf.prop.R.bottom-top.4}) becomes%
\begin{align}
b\cdot\sum_{\substack{u\in\widehat{P};\\u\gtrdot0}}\overline{\left(
Rf\right)  \left(  u\right)  }\cdot a  &  =\underbrace{\sum_{\substack{u\in
\widehat{P};\\u\gtrdot0}}\upslack_{1}^{1\rightarrow u}}%
_{\substack{=\upslack_{1}^{1\rightarrow0}\\\text{(by
(\ref{pf.prop.R.bottom-top.5}))}}}\cdot\, a=\underbrace{\upslack_{1}%
^{1\rightarrow0}}_{\substack{=\downslack_{0}^{1\rightarrow0}\\\text{(by
(\ref{pf.prop.R.bottom-top.u=d}))}}}\cdot\, a\nonumber\\
&  =\downslack_{0}^{1\rightarrow0}\cdot a. \label{pf.prop.R.bottom-top.6}%
\end{align}

However, $R^{0+1}f=R^{1}f=Rf\neq\undf$. Hence, Theorem \ref{thm.rect.path}
\textbf{(b)} (applied to $\ell=0$) shows that each $u\in P$ satisfies%
\begin{equation}
u_{0}=\downslack_{0}^{u\rightarrow0}\cdot a. \label{pf.prop.R.bottom-top.u0=2}%
\end{equation}
Hence, for each $u\in\widehat{P}$ satisfying $u\lessdot1$, we have%
\begin{equation}
f\left(  u\right)  =\downslack_{0}^{u\rightarrow0}\cdot a
\label{pf.prop.R.bottom-top.u0=3}%
\end{equation}
\footnote{\textit{Proof of (\ref{pf.prop.R.bottom-top.u0=3}):} Let
$u\in\widehat{P}$ satisfy $u\lessdot1$. We must prove
(\ref{pf.prop.R.bottom-top.u0=3}).
\par
We are in one of the following two cases:
\par
\textit{Case 1:} We have $u=0$.
\par
\textit{Case 2:} We have $u\neq0$.
\par
Let us first consider Case 1. In this case, we have $u=0$. Thus, $f\left(
u\right)  =f\left(  0\right)  =a$.
\par
However, the only path from $0$ to $0$ is the trivial path $\left(  0\right)
$. Thus, $\downslack_{0}^{0\rightarrow0}=\downslack_{0}^{\left(  0\right)
}=\downslack_{0}^{0}=1$ (by the definition of $\downslack_{0}^{0}$). From
$u=0$, we obtain $\downslack_{0}^{u\rightarrow0}\cdot
a=\underbrace{\downslack_{0}^{0\rightarrow0}}_{=1}\cdot\, a=a$. Comparing this
with $f\left(  u\right)  =a$, we obtain $f\left(  u\right)  =\downslack_{0}%
^{u\rightarrow0}\cdot a$. Thus, (\ref{pf.prop.R.bottom-top.u0=3}) is proved in
Case 1.
\par
Let us now consider Case 2. In this case, we have $u\neq0$. Also, we have
$u<1$ (since $u\lessdot1$), so that $u\neq1$. Combining $u\in\widehat{P}$ with
$u\neq0$ and $u\neq1$, we obtain $u\in\widehat{P}\setminus\left\{
0,1\right\}  =P$. Hence, (\ref{pf.prop.R.bottom-top.u0=2}) yields
$u_{0}=\downslack_{0}^{u\rightarrow0}\cdot a$. In view of
(\ref{pf.prop.R.bottom-top.u0=}), we can rewrite this as $f\left(  u\right)
=\downslack_{0}^{u\rightarrow0}\cdot a$. Thus,
(\ref{pf.prop.R.bottom-top.u0=3}) is proved in Case 2.
\par
We have now proved (\ref{pf.prop.R.bottom-top.u0=3}) in both Cases 1 and 2.
Hence, (\ref{pf.prop.R.bottom-top.u0=3}) always holds.}. Thus,%
\begin{equation}
\sum_{\substack{u\in\widehat{P};\\u\lessdot1}}\underbrace{f\left(  u\right)
}_{\substack{=\downslack_{0}^{u\rightarrow0}\cdot a\\\text{(by
(\ref{pf.prop.R.bottom-top.u0=3}))}}}=\sum_{\substack{u\in\widehat{P}%
;\\u\lessdot1}}\downslack_{0}^{u\rightarrow0}\cdot a.
\label{pf.prop.R.bottom-top.8}%
\end{equation}

However, (\ref{eq.prop.slacks.rec.down1}) (applied to $\ell=0$ and $s=1$ and
$t=0$) yields%
\[
\downslack_{0}^{1\rightarrow0}=\underbrace{\downslack_{0}^{1}}%
_{\substack{=1\\\text{(by the definition of }\downslack_{0}^{1}\text{)}}%
}\sum_{\substack{u\in\widehat{P};\\1\gtrdot u}}\downslack_{0}^{u\rightarrow
0}=\sum_{\substack{u\in\widehat{P};\\1\gtrdot u}}\downslack_{0}^{u\rightarrow
0}=\sum_{\substack{u\in\widehat{P};\\u\lessdot1}}\downslack_{0}^{u\rightarrow
0}%
\]
(since the condition \textquotedblleft$1\gtrdot u$\textquotedblright\ under
the summation sign is equivalent to \textquotedblleft$u\lessdot1$%
\textquotedblright). Thus,%
\[
\downslack_{0}^{1\rightarrow0}\cdot a=\sum_{\substack{u\in\widehat{P}%
;\\u\lessdot1}}\downslack_{0}^{u\rightarrow0}\cdot a=\sum_{\substack{u\in
\widehat{P};\\u\lessdot1}}f\left(  u\right)
\]
(by (\ref{pf.prop.R.bottom-top.8})). Therefore, (\ref{pf.prop.R.bottom-top.6})
can be rewritten as%
\[
b\cdot\sum_{\substack{u\in\widehat{P};\\u\gtrdot0}}\overline{\left(
Rf\right)  \left(  u\right)  }\cdot a=\sum_{\substack{u\in\widehat{P}%
;\\u\lessdot1}}f\left(  u\right)  .
\]
Proposition \ref{prop.R.bottom-top} is thus proven.
\end{proof}
\end{verlong}

\begin{proposition}
\label{prop.R.invariant}Let $P$ be any finite poset. Let $f\in\mathbb{K}%
^{\widehat{P}}$ be a labeling of $P$ such that $Rf\neq\undf$ and $f\left(
0\right)  =f\left(  1\right)  =1$. Then,%
\[
\sum_{\substack{u,v\in\widehat{P};\\u\lessdot v}}\left(  Rf\right)  \left(
u\right)  \cdot\overline{\left(  Rf\right)  \left(  v\right)  }=\sum
_{\substack{u,v\in\widehat{P};\\u\lessdot v}}f\left(  u\right)  \cdot
\overline{f\left(  v\right)  },
\]
assuming that the inverses $\overline{\left(  Rf\right)  \left(  v\right)  }$
on the left-hand side are well-defined.
\end{proposition}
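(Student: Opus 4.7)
The strategy is to express $S(g):=\sum_{u\lessdot v}g(u)\overline{g(v)}$ (the sum ranges over covering pairs $u\lessdot v$ in $\widehat{P}$) in two ``dual'' ways using the slacks $\downslack^v_\ell$ and $\upslack^v_\ell$ of Section~\ref{sec.proof-nots}, and then to combine Proposition~\ref{prop.rect.trans} with Proposition~\ref{prop.R.bottom-top} to conclude $S(Rf)=S(f)$. The slack notations and the lemmas I need do not rely on $P$ being a rectangle (see Remark~\ref{rmk.slacks.gen}), so they apply to our arbitrary finite $P$. The case $P=\varnothing$ is trivial (there $R=\id$, so $Rf=f$); assume henceforth $P\neq\varnothing$. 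In the notation of Section~\ref{sec.proof-nots} we have $a=b=1$; both are invertible, and Proposition~\ref{prop.R.implicit.01} yields $(Rf)(0)=(Rf)(1)=1$.

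Group the pairs $u\lessdot v$ by the larger element $v$: the $v=0$ contribution is empty, the $v=1$ contribution (when $g(1)=1$) is $\sum_{u\text{ max in }P}g(u)=:N(g)$, and for $v\in P$ the contribution is $\bigl(\sum_{u\lessdot v}g(u)\bigr)\overline{g(v)}$. Writing $\downslack^v_g:=g(v)\overline{\sum_{u\lessdot v}g(u)}$, invertibility of $\downslack^v_g$ lets us invert (via Proposition~\ref{prop.inverses.ab}\textbf{(b)}) to rewrite the last expression as $\overline{\downslack^v_g}$. Thus
\[
S(g)\ =\ N(g)\ +\ \sum_{v\in P}\overline{\downslack^v_g} \qquad \text{whenever } g(1)=1 \text{ and each } \downslack^v_g\ (v\in P) \text{ is invertible.} \quad (\mathrm{A})
\]
By the mirror argument (grouping by the smaller element $u$ and using $\upslack^u_g:=\overline{\sum_{v\gtrdot u}\overline{g(v)}}\cdot\overline{g(u)}$),
\[
S(g)\ =\ M(g)\ +\ \sum_{u\in P}\overline{\upslack^u_g},\qquad M(g):=\sum_{v\text{ min in }P}\overline{g(v)}, \quad (\mathrm{B})
\]
valid whenever $g(0)=1$ and each $\upslack^u_g$ ($u\in P$) is invertible.

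Now apply (A) to $g=f$ (the $\downslack^v_f=\downslack^v_0$ are well-defined and invertible by Lemma~\ref{lem.slacks.wd}\textbf{(c)} with $\ell=1$) and (B) to $g=Rf$ (the $\upslack^u_{Rf}=\upslack^u_1$ are well-defined and invertible by Lemma~\ref{lem.slacks.wd}\textbf{(d)} with $\ell=1$). Subtracting the two identities and invoking Proposition~\ref{prop.rect.trans} with $\ell=1$, which gives $\upslack^v_1=\downslack^v_0$ for every $v\in P$, the two $P$-sums cancel, leaving $S(Rf)-S(f)=M(Rf)-N(f)$. Finally, Proposition~\ref{prop.R.bottom-top} specialized to $a=b=1$ is exactly $M(Rf)=N(f)$, whence $S(Rf)=S(f)$. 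The argument is essentially pure bookkeeping; there is no real obstacle once one notices that (A) and (B) give matched ``dual'' expressions for $S$, with Propositions~\ref{prop.rect.trans} and~\ref{prop.R.bottom-top} exactly bridging the two.
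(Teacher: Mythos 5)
Your proof is correct and follows essentially the same route as the paper: group the $Rf$-sum by the covered element and the $f$-sum by the covering element, match the $P$-indexed contributions via the transition equation (your $\overline{\upslack^v_1}=\overline{\downslack^v_0}$ from Proposition~\ref{prop.rect.trans} is exactly the paper's identity $(Rf)(v)\cdot\sum_{u\gtrdot v}\overline{(Rf)(u)}=\sum_{u\lessdot v}f(u)\cdot\overline{f(v)}$ obtained from Proposition~\ref{prop.R.implicit}), and handle the two boundary contributions with Proposition~\ref{prop.R.bottom-top}. The only cosmetic difference is your use of the slack notation to package the per-vertex identity.
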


Proposition \ref{prop.R.invariant} is essentially saying that the sum
$\sum_{\substack{u,v\in\widehat{P};\\u\lessdot v}}f\left(  u\right)
\cdot\overline{f\left(  v\right)  }$ is an invariant under birational
rowmotion $R$ when $f\left(  0\right)  =f\left(  1\right)  =1$. This is a
noncommutative analogue of the conservation of the \textquotedblleft
superpotential\textquotedblright\ $\mathcal{F}_{G}\left(  X\right)  $ of an
$R$-system (\cite[Proposition 5.2]{GalPyl19}). We do not know whether such
invariants exist in the general case.

\begin{vershort}
\begin{proof}
[Proof of Proposition \ref{prop.R.invariant}.]This follows by combining
Proposition \ref{prop.R.bottom-top} with Proposition \ref{prop.R.implicit}.
(This is elaborated in the detailed version.)
\end{proof}
\end{vershort}

\begin{verlong}
\begin{proof}
[Proof of Proposition \ref{prop.R.invariant}.]We have $f\left(  0\right)
=f\left(  1\right)  =1$. Thus, $1=f\left(  0\right)  $ and $1=f\left(
1\right)  $. Hence, Proposition \ref{prop.R.bottom-top} (applied to $a=1$ and
$b=1$) yields%
\[
1\cdot\sum_{\substack{u\in\widehat{P};\\u\gtrdot0}}\overline{\left(
Rf\right)  \left(  u\right)  }\cdot1=\sum_{\substack{u\in\widehat{P}%
;\\u\lessdot1}}f\left(  u\right)  .
\]
This obviously simplifies to
\begin{equation}
\sum_{\substack{u\in\widehat{P};\\u\gtrdot0}}\overline{\left(  Rf\right)
\left(  u\right)  }=\sum_{\substack{u\in\widehat{P};\\u\lessdot1}}f\left(
u\right)  . \label{pf.prop.R.invariant.01}%
\end{equation}

Proposition \ref{prop.R.implicit.01} yields $\left(  Rf\right)  \left(
0\right)  =f\left(  0\right)  =1$. Also, from $f\left(  1\right)  =1$, we
obtain $\overline{f\left(  1\right)  }=\overline{1}=1$.

Now, let $v\in P$. Then, Proposition \ref{prop.R.implicit} yields%
\[
\left(  Rf\right)  \left(  v\right)  =\left(  \sum\limits_{\substack{u\in
\widehat{P};\\u\lessdot v}}f\left(  u\right)  \right)  \cdot\overline{f\left(
v\right)  }\cdot\overline{\sum\limits_{\substack{u\in\widehat{P};\\u\gtrdot
v}}\overline{\left(  Rf\right)  \left(  u\right)  }}.
\]
Multiplying both sides of this equality by $\sum\limits_{\substack{u\in
\widehat{P};\\u\gtrdot v}}\overline{\left(  Rf\right)  \left(  u\right)  }$ on
the right, we obtain%
\begin{align}
\left(  Rf\right)  \left(  v\right)  \cdot\sum\limits_{\substack{u\in
\widehat{P};\\u\gtrdot v}}\overline{\left(  Rf\right)  \left(  u\right)  }  &
=\left(  \sum\limits_{\substack{u\in\widehat{P};\\u\lessdot v}}f\left(
u\right)  \right)  \cdot\overline{f\left(  v\right)  }\,\cdot
\underbrace{\overline{\sum\limits_{\substack{u\in\widehat{P};\\u\gtrdot
v}}\overline{\left(  Rf\right)  \left(  u\right)  }}\cdot\sum
\limits_{\substack{u\in\widehat{P};\\u\gtrdot v}}\overline{\left(  Rf\right)
\left(  u\right)  }}_{=1}\nonumber\\
&  =\left(  \sum\limits_{\substack{u\in\widehat{P};\\u\lessdot v}}f\left(
u\right)  \right)  \cdot\overline{f\left(  v\right)  }\nonumber\\
&  =\sum_{\substack{u\in\widehat{P};\\u\lessdot v}}f\left(  u\right)
\cdot\overline{f\left(  v\right)  }. \label{pf.prop.R.invariant.1}%
\end{align}

Forget that we fixed $v$. We thus have proved (\ref{pf.prop.R.invariant.1})
for each $v\in P$.

Now,
\begin{align*}
&  \sum_{\substack{u,v\in\widehat{P};\\u\lessdot v}}\left(  Rf\right)  \left(
u\right)  \cdot\overline{\left(  Rf\right)  \left(  v\right)  }\\
&  =\sum_{\substack{u,v\in\widehat{P};\\v\gtrdot u}}\left(  Rf\right)  \left(
u\right)  \cdot\overline{\left(  Rf\right)  \left(  v\right)  }%
\ \ \ \ \ \ \ \ \ \ \left(
\begin{array}
[c]{c}%
\text{since the condition \textquotedblleft}u\lessdot
v\text{\textquotedblright}\\
\text{is equivalent to \textquotedblleft}v\gtrdot u\text{\textquotedblright}%
\end{array}
\right) \\
&  =\underbrace{\sum_{\substack{v,u\in\widehat{P};\\u\gtrdot v}}}_{=\sum
_{v\in\widehat{P}}\ \ \sum_{\substack{u\in\widehat{P};\\u\gtrdot v}}}\left(
Rf\right)  \left(  v\right)  \cdot\overline{\left(  Rf\right)  \left(
u\right)  }\ \ \ \ \ \ \ \ \ \ \left(
\begin{array}
[c]{c}%
\text{here, we have renamed the}\\
\text{summation indices }u\text{ and }v\text{ as }v\text{ and }u
\end{array}
\right) \\
&  =\sum_{v\in\widehat{P}}\ \ \sum_{\substack{u\in\widehat{P};\\u\gtrdot
v}}\left(  Rf\right)  \left(  v\right)  \cdot\overline{\left(  Rf\right)
\left(  u\right)  }\\
&  =\sum_{v\in P\cup\left\{  0,1\right\}  }\ \ \sum_{\substack{u\in
\widehat{P};\\u\gtrdot v}}\left(  Rf\right)  \left(  v\right)  \cdot
\overline{\left(  Rf\right)  \left(  u\right)  }\ \ \ \ \ \ \ \ \ \ \left(
\text{since }\widehat{P}=P\cup\left\{  0,1\right\}  \right) \\
&  =\underbrace{\sum_{\substack{u\in\widehat{P};\\u\gtrdot0}}\left(
Rf\right)  \left(  0\right)  \cdot\overline{\left(  Rf\right)  \left(
u\right)  }}_{=\left(  Rf\right)  \left(  0\right)  \cdot\sum_{\substack{u\in
\widehat{P};\\u\gtrdot0}}\overline{\left(  Rf\right)  \left(  u\right)  }%
}+\underbrace{\sum_{\substack{u\in\widehat{P};\\u\gtrdot1}}\left(  Rf\right)
\left(  1\right)  \cdot\overline{\left(  Rf\right)  \left(  u\right)  }%
}_{\substack{=\left(  \text{empty sum}\right)  \\\text{(since there exists no
}u\in\widehat{P}\\\text{satisfying }u\gtrdot1\text{)}}}+\sum_{v\in
P}\ \ \underbrace{\sum_{\substack{u\in\widehat{P};\\u\gtrdot v}}\left(
Rf\right)  \left(  v\right)  \cdot\overline{\left(  Rf\right)  \left(
u\right)  }}_{=\left(  Rf\right)  \left(  v\right)  \cdot\sum
\limits_{\substack{u\in\widehat{P};\\u\gtrdot v}}\overline{\left(  Rf\right)
\left(  u\right)  }}\\
&  \ \ \ \ \ \ \ \ \ \ \ \ \ \ \ \ \ \ \ \ \left(
\begin{array}
[c]{c}%
\text{here, we have split off the addends}\\
\text{for }v=0\text{ and for }v=1\text{ from the sum}%
\end{array}
\right) \\
&  =\left(  Rf\right)  \left(  0\right)  \cdot\sum_{\substack{u\in
\widehat{P};\\u\gtrdot0}}\overline{\left(  Rf\right)  \left(  u\right)
}+\underbrace{\left(  \text{empty sum}\right)  }_{=0}+\sum_{v\in P}\left(
Rf\right)  \left(  v\right)  \cdot\sum\limits_{\substack{u\in\widehat{P}%
;\\u\gtrdot v}}\overline{\left(  Rf\right)  \left(  u\right)  }\\
&  =\underbrace{\left(  Rf\right)  \left(  0\right)  }_{=1}\cdot
\underbrace{\sum_{\substack{u\in\widehat{P};\\u\gtrdot0}}\overline{\left(
Rf\right)  \left(  u\right)  }}_{\substack{=\sum_{\substack{u\in
\widehat{P};\\u\lessdot1}}f\left(  u\right)  \\\text{(by
(\ref{pf.prop.R.invariant.01}))}}}+\sum_{v\in P}\underbrace{\left(  Rf\right)
\left(  v\right)  \cdot\sum\limits_{\substack{u\in\widehat{P};\\u\gtrdot
v}}\overline{\left(  Rf\right)  \left(  u\right)  }}_{\substack{=\sum
_{\substack{u\in\widehat{P};\\u\lessdot v}}f\left(  u\right)  \cdot
\overline{f\left(  v\right)  }\\\text{(by (\ref{pf.prop.R.invariant.1}))}}}\\
&  =\sum_{\substack{u\in\widehat{P};\\u\lessdot1}}f\left(  u\right)
+\sum_{v\in P}\ \ \sum_{\substack{u\in\widehat{P};\\u\lessdot v}}f\left(
u\right)  \cdot\overline{f\left(  v\right)  }.
\end{align*}
Comparing this with%
\begin{align*}
&  \underbrace{\sum_{\substack{u,v\in\widehat{P};\\u\lessdot v}}}_{=\sum
_{v\in\widehat{P}}\ \ \sum_{\substack{u\in\widehat{P};\\u\lessdot v}}}f\left(
u\right)  \cdot\overline{f\left(  v\right)  }\\
&  =\sum_{v\in\widehat{P}}\ \ \sum_{\substack{u\in\widehat{P};\\u\lessdot
v}}f\left(  u\right)  \cdot\overline{f\left(  v\right)  }\\
&  =\sum_{v\in P\cup\left\{  0,1\right\}  }\ \ \sum_{\substack{u\in
\widehat{P};\\u\lessdot v}}f\left(  u\right)  \cdot\overline{f\left(
v\right)  }\ \ \ \ \ \ \ \ \ \ \left(  \text{since }\widehat{P}=P\cup\left\{
0,1\right\}  \right) \\
&  =\underbrace{\sum_{\substack{u\in\widehat{P};\\u\lessdot0}}f\left(
u\right)  \cdot\overline{f\left(  0\right)  }}_{\substack{=\left(  \text{empty
sum}\right)  \\\text{(since there exists no }u\in\widehat{P}\\\text{satisfying
}u\lessdot0\text{)}}}+\sum_{\substack{u\in\widehat{P};\\u\lessdot1}}f\left(
u\right)  \cdot\underbrace{\overline{f\left(  1\right)  }}_{=1}+\sum_{v\in
P}\ \ \sum_{\substack{u\in\widehat{P};\\u\lessdot v}}f\left(  u\right)
\cdot\overline{f\left(  v\right)  }\\
&  \ \ \ \ \ \ \ \ \ \ \ \ \ \ \ \ \ \ \ \ \left(
\begin{array}
[c]{c}%
\text{here, we have split off the addends}\\
\text{for }v=0\text{ and for }v=1\text{ from the sum}%
\end{array}
\right) \\
&  =\underbrace{\left(  \text{empty sum}\right)  }_{=0}+\sum_{\substack{u\in
\widehat{P};\\u\lessdot1}}f\left(  u\right)  +\sum_{v\in P}\ \ \sum
_{\substack{u\in\widehat{P};\\u\lessdot v}}f\left(  u\right)  \cdot
\overline{f\left(  v\right)  }\\
&  =\sum_{\substack{u\in\widehat{P};\\u\lessdot1}}f\left(  u\right)
+\sum_{v\in P}\ \ \sum_{\substack{u\in\widehat{P};\\u\lessdot v}}f\left(
u\right)  \cdot\overline{f\left(  v\right)  },
\end{align*}
we obtain%
\[
\sum_{\substack{u,v\in\widehat{P};\\u\lessdot v}}\left(  Rf\right)  \left(
u\right)  \cdot\overline{\left(  Rf\right)  \left(  v\right)  }=\sum
_{\substack{u,v\in\widehat{P};\\u\lessdot v}}f\left(  u\right)  \cdot
\overline{f\left(  v\right)  }.
\]
This proves Proposition \ref{prop.R.invariant}.
\end{proof}
\end{verlong}

\end{document}